\documentclass[12pt]{amsart}
\usepackage[margin=1in]{geometry}
\usepackage[utf8]{inputenc}
\usepackage{amsfonts,amsmath,amsthm,amssymb}
\usepackage[unicode]{hyperref}
\usepackage{listings}
\usepackage{lastpage}
\usepackage{tikz}
\usepackage[justification=centering]{caption}
\usetikzlibrary{cd}
\usepackage[mathscr]{euscript}
\usepackage{xcolor}
\usepackage{graphicx, subcaption}
\usepackage{mathabx}

\graphicspath{{./pics/}}	

\newcommand{\C}{{\mathbb C}}
\newcommand{\I}{{\mathbb I}}
\newcommand{\D}{{\mathbb D}}
\renewcommand{\H}{{\mathbb H}}
\newcommand{\Z}{{\mathbb Z}}
\newcommand{\R}{{\mathbb R}}
\newcommand{\Rose}{\mathcal{R}}
\newcommand{\N}{{\mathbb N}}
\renewcommand{\D}{{\mathbb D}}
\renewcommand{\S}{\mathbb{S}}
\renewcommand{\P}{\mathcal{P}}
\newcommand{\Homeo}{\operatorname{Homeo}}

\newcommand{\id}{\operatorname{id}}
\newcommand{\Rg}{\operatorname{Rg}}
\newcommand{\Dom}{\operatorname{Dom}}

\newcommand{\Teich}{\operatorname{Teich}}
\newcommand{\inter}{\operatorname{int}}

\renewcommand{\Re}{\operatorname{Re}}

\newcommand{\alphal}{\alpha{\uparrow}}
\newcommand{\gammal}{\gamma{\uparrow}}
\newcommand{\deltal}{\delta{\uparrow}}
\renewcommand{\mod}{\mathrm{mod}}


\newcommand{\hide}[1]{}


\newcounter{main}

\theoremstyle{plain}
        \newtheorem{theorem}{Theorem}[section]
        \newtheorem*{theorem*}{Theorem}
        \newtheorem*{conj*}{Conjecture}
        \newtheorem{lemma}[theorem]{Lemma}
        \newtheorem{corollary}[theorem]{Corollary}
        \newtheorem{proposition}[theorem]{Proposition}
        \newtheorem{maintheorem}[main]{Main Theorem}        

\theoremstyle{definition}
        \newtheorem{definition}[theorem]{Definition}
        \newtheorem*{definition*}{Definition}

\theoremstyle{remark}
        \newtheorem{remark}[theorem]{Remark}

        \newtheorem{example}[theorem]{Example}

        \newtheorem*{example*}{Example}
        \newtheorem*{examples*}{Examples}        
        \newtheorem*{claim}{Claim}
        \newtheorem*{claim1}{Claim 1}
        \newtheorem*{claim2}{Claim 2}
        \newtheorem*{claim3}{Claim 3}

\newenvironment{subproof}[1][\proofname]{%
  \begin{proof}[#1]%
}{%
  \end{proof}%
}

\hfuzz 5pt	

\title[Dynamical approximations of postsingularly finite entire maps]{Dynamical approximations of postsingularly finite entire maps}

\author{Malavika Mukundan}
\address {Dept. of Mathematics, 530 Church Street, University of Michigan Ann Arbor, MI, 48109}
\email{malavim@umich.edu}

\author{Nikolai Prochorov}
\address {Aix--Marseille Univ., Institut de Math\'{e}matiques de Marseille, 
13003 Marseille, France}
\email{nikolai.prochorov@etu.univ-amu.fr}

\author{Bernhard Reinke}
\address{Max Planck Institute for Mathematics in the Sciences, 
Inselstrasse 22, Leipzig, 04107}
\email{bernhard.reinke@mis.mpg.de}


\date{\today}

\keywords{Thurston maps, postsingularly finite entire maps, Teich\"muller spaces, Thurston pullback maps, planar embedded graphs, locally uniform convergence.}
\subjclass[2020]{Primary 37F20; Secondary 37F10, 37F34.}

\begin{document}

\begin{abstract}
    We prove that every postsingularly finite entire map $g$ can be approximated by a sequence of postcritically finite complex polynomials $(g_n)$ such that their postsingular dynamics $g|P_g$ and $g_n|P_{g_n}$ are conjugate for every $n \in \N$. To establish this result, we introduce the notion of combinatorial convergence for sequences of entire Thurston maps defined on the topological plane $\R^2$ and having the same marked set $A$. We prove that if such a sequence $(f_n)$ converges combinatorially to a Thurston map $f$, then the sequence of Thurston pullback maps $(\sigma_{f_n})$ converges to $\sigma_f$ locally uniformly on the Teichm\"{u}ller space $\Teich(\R^2, A)$.
\end{abstract}
\maketitle

\tableofcontents

\newpage
\section{Introduction}

The family of complex polynomials has some of the most well-understood dynamics among all entire maps. At the same time, every entire map $g$ can be approximated by a sequence of polynomials $(g_n)$ that are easier to handle compared with $g$. \textit{A priori}, we cannot expect such approximations to be helpful in understanding the dynamics of $g$;  even if $g$ and $g_n$ are close from the numerical point of view, their dynamical properties can still be drastically different. For instance, it is unclear to what extent Taylor approximations preserve dynamics. This raises the question: can we produce ``dynamically meaningful'' approximations of entire maps by sequences of complex polynomials? This problem is especially natural  for families of maps that share several crucial dynamical properties with polynomials, such as entire maps of \textit{finite type} (i.e., maps with finitely many \textit{singular values}).

The problem of dynamical approximation was first studied for the exponential family $\{z\mapsto \lambda \exp(z), \lambda \in \C^*\}$ in \cite{Hubbard_et_al}. The authors considered approximations provided by unicritical polynomials of the form $\lambda(1 + z/n)^n$, and showed that hyperbolic components in the parameter spaces of these polynomials of degree $n$ converge to those in the exponential family as the degree $n$ tends to infinity. They also established that certain \textit{hairs} in the parameter plane for the exponential family are limits of corresponding \textit{external rays} for the polynomial ones. In other words, these results show that the classical approximation of $\lambda\exp(z)$ by the sequence $(\lambda(1+z/n)^n)$ is meaningful from the dynamical point of view. Further developments on this topic include \cite{dyn_approx}, where the author introduced a ``dynamical'' metric on the space of all non-constant non-linear entire maps and showed that convergence in this metric implies kernel convergence of hyperbolic components in the corresponding parameter spaces.

In this paper, we consider \textit{postsingularly finite} (or, shortly, \textit{psf}) entire maps, i.e., maps of finite type for which all singular values are (pre-)periodic. Psf polynomials are also called \textit{postcritically finite} (\textit{pcf} in short) since all their singular values are critical values. While postsingularly finite entire maps might seem a rather special subset of the space of all entire maps, they play an important role in modern holomorphic dynamics owing to their utility in studying the structure of the class of finite type functions. For instance, the complicated structure of the Mandelbrot set can be described in terms of pcf polynomials \cite{DH_Orsay} (see also \cite{fibers}). Moreover, in the rational setting, McMullen conjectured the existence of a strong dynamical similarity between a dense subset of the space of all rational maps with connected \textit{Julia sets} and postcritically finite rational maps (see \cite[Conjecture 1.1, Section 4]{McM_renorm}).

One of the crucial properties of postcritically finite polynomials is that they admit a complete description in purely combinatorial terms, for instance, by \textit{Hubbard trees} or \textit{external angles} (see \cite{Poirier_Thesis}, \cite{Poirier}). Extending such combinatorial descriptions to the family of psf entire maps is an active area of research (see \cite{class_of_exp}, \cite{hubbard_trees_for_exp} for the exponential family, and \cite{Pfrang_thesis}, \cite{dreadlock} for the general case). However, many  questions still remain open. Dynamically meaningful polynomial approximations for postsingularly finite entire maps would provide intuition and potentially chalk out a pathway for studying the combinatorics of such maps.

One of the reasons for the successful development of the theory of postcritically finite polynomials is Thurston theory. 
This theory is based on the idea that holomorphic maps are too rigid, and one should abstract from  the underlying complex structure and consider the more general setup of \textit{branched self-coverings} of the 2-sphere $\S^2$ or \textit{topologically holomorphic maps} defined on the plane $\R^2$ (see Definition \ref{def: topologically holomorphic} for further details). When such a map is postsingularly finite, it is called a \textit{Thurston map} (for a precise version see Definition \ref{def: thurston map}). An important application of Thurston theory is to generate holomorphic psf (or pcf) maps on $\C$ defined, for instance, by combinatorial objects such as \textit{spiders}; see \cite[Chapter~10]{Hubbard_Book_2}.

The core of this theory is a method of determining  whether a given Thurston map is \textit{realized} by a postsingularly finite holomorphic map (see Definition \ref{def: thurston equivalence}). This involves the dynamics of an operator called the \textit{Thurston pullback map}, defined on a suitably chosen Teichm\"{u}ller space (see Sections~\ref{subsec: quasiconformal maps and teichmuller spaces} and ~\ref{subsec: thurston pullback map}). When a Thurston map is not realised holomorphically, it is said to be \textit{obstructed}. 
Crucially, it is known when a \textit{polynomial} Thurston map is realized by a postcritically finite complex polynomial. The Levy-Berstein theorem \cite[Theorem 10.3.8]{Hubbard_Book_2}, and Thurston's characterization theorem \cite{DH_Th_char} (which applies in the more general setting of pcf branched self-covers of $\S^2$) provide a complete answer to this question. 

Thurston theory lays out the relationship between the topological properties of a map, its dynamics, and its geometry in terms of the existence of a holomorphic realization. The last few decades have seen the development of this theory in the transcendental setting, but several  unsolved questions remain. 
The biggest of  these is the nature of obstructions for a \textit{transcendental Thurston map} (in other words, necessary and sufficient conditions for such a map to be obstructed). Obstructions are understood in special classes such as  \textit{exponential Thurston maps} (see  \cite{HSS}), and have recently been studied for \textit{structurally finite Thurston maps} in \cite{Shemyakov_Thesis}, but the general case remains wide open. 

This discussion raises, in this topological setting, the same approximation question  mentioned earlier for entire maps: 
can we view entire Thurston maps as dynamically meaningful limits of polynomial Thurston maps? As a natural extension, we can ask approximation-type questions for other constructions in Thurston theory such as Thurston pullback maps. This article contributes to transcendental Thurston theory by providing a positive answer to the above questions. It broadens our understanding of the relationships between transcendental and polynomial Thurston maps. Furthermore, it translates these results in the Thurston theory regime back to the holomorphic one, using them to construct  meaningful approximations of postsingularly finite entire maps on the dynamical plane. Concurrent work by the first author \cite{mukundan2023dynamical} uses spiders to prove similar results for the special case of postsingularly finite exponential maps $z \mapsto \lambda \exp(z)$  and gives further information on the approximating polynomials in terms of their location in the parameter space of unicritical polynomials of a given degree.

\subsection{Main results.} One of the main theorems of this paper shows that all postsingularly finite entire maps can be viewed as dynamically meaningful limits of postcritical finite complex polynomials. 

\begin{maintheorem}\label{mainthm: main theorem A}
    Let $g$ be an arbitrary postsingularly finite entire map. Then there exists a sequence of postcritically finite polynomials $(g_n)$ converging locally uniformly to $g$ such that $g$ and $g_n$ have the same postsingular portrait for all $n \in \N$.
\end{maintheorem}

See Definition~\ref{def: dynamical portrait} for the precise notion of a \textit{postsingular portrait}. In particular, Main Theorem \ref{mainthm: main theorem A} implies that for each $n\in \N$, there is a map $\varphi_n\colon P_g \to P_{g_n}$ (where $P_g$ and $P_{g_n}$ are the corresponding \textit{postsingular sets}), that conjugates $g|P_g$ and $g_n|P_{g_n}$ and converges to $\id_{P_g}$ as $n$ tends to infinity. Note that the behavior of singular values under iteration is one of the most important dynamical features of an entire map. This behavior, in fact, controls the global dynamics of an entire map of finite type \cite{entire_maps}. Roughly speaking, in the setting of Main Theorem \ref{mainthm: main theorem A}, the ``dynamical cores'' of the polynomials $g_n$ look more and more like the ``dynamical core'' of the limiting map $g$ as $n$ tends to infinity. This supplies a contrast with Taylor approximations $(h_n)$ for $g$, where it is possible that the number of critical values of~$h_n$ tends to infinity as $n \to \infty$. This results in the sets $P_{h_n}$ having a structure very different from $P_g$.

The combinatorics of the pcf polynomials $g_n$ from Main Theorem \ref{mainthm: main theorem A} also approach the combinatorics of $g$ in the following sense:  let $\gamma \subset \C \setminus P_g$ be a simple closed curve and $\widetilde{\gamma}$ be a connected component of $g^{-1}(\gamma)$. 
If $\widetilde{\gamma}$ is also a simple closed curve, then, as it is shown in Corollary \ref{corr: convergence of different lifts}, for any sufficiently small $\varepsilon > 0$ and sufficiently large $n$, there exists a unique simple closed curve $\widetilde{\gamma}_n \subset g_n^{-1}(\gamma)$ that lies in the $\varepsilon$-neighbourhood of $\widetilde{\gamma}$. In particular, for sufficiently large $n$, the curves $\widetilde{\gamma}_n$ and $\widetilde{\gamma}$ are free-homotopic relative $P_{g_n} \cup P_g$ and, moreover, $\deg(g|\widetilde{\gamma}) = \deg(g_n|\widetilde{\gamma}_n)$. If $\widetilde{\gamma}$ is not a simple closed curve (i.e., $\deg(g|\widetilde{\gamma})$ is infinite), there exists a unique (at least for $n$ large enough) simple closed curve $\widetilde{\gamma}_n \subset g_n^{-1}(\gamma)$ so that $d(z, \widetilde{\gamma}_n) \to 0$ for every $z \in \widetilde{\gamma}$ and, moreover, $\deg(g_n|\widetilde{\gamma}_n) \to \infty$ as $n$ tends to~$\infty$. For a more detailed version see Corollary \ref{corr: convergence of different lifts}.
The above remarks provide a summary of the various points of view on the approximating maps we  construct, along with the implications for their dynamics.

The main engine of the proof of Main Theorem \ref{mainthm: main theorem A} is Thurston theory. As it is common in the study of postsingularly finite maps, we start by establishing a desired result in the topological context, i.e., for Thurston maps. 
But talking about pointwise or locally uniform convergence for Thurston maps is not meaningful since they are usually considered up to isotopy (see Definition \ref{def: isotopy of thurston maps}) relative to their postsingular sets. To sidestep this issue, we introduce the notion of \textit{combinatorial convergence}.

Let $f_n\colon \R^2 \to \R^2, n \in \N$ and $f\colon \R^2 \to \R^2$ be Thurston maps with the same postsingular set~$A$ for all $n \in \N$. Further assume that there exist points $b, t \in \R^2 \setminus A$ such that $f(b) = f_n(b) = t$ for all $n \in \N$. Let $\gamma \subset \R^2 \setminus A$ be a loop based at $t$, and let $\gammal(f, {b})$ and $\gammal({f_n},{b})$ respectively denote the $f$- and $f_n$-lifts of $\gamma$ beginning at $b$. We say that the sequence $(f_n)$ \textit{converges combinatorially} to $f$ if for every such loop $\gamma$, the following conditions are satisfied for all $n$ sufficiently large:

\begin{itemize}
    \item the closing behavior of  $\gammal({f},{b})$ 
(i.e., whether it is a loop or a non-closed path) is the same as that of $\gammal({f_n},{b})$, and 
    \item if $\gammal({f},{b})$ is a loop, then it is path-homotopic in $\R^2 \setminus A$ to $\gammal({f_n},{b})$.
\end{itemize}

See Definition \ref{def: combinatorial convergence} for a more general and precise formulation. In particular, if $(f_n)$ converges combinatorially to $f$, then the functions $f|A$ and $f_n|A$ coincide for all $n$ large enough.

We investigate more profound properties of this type of convergence and establish a topological version of Main Theorem \ref{mainthm: main theorem A}. 

\begin{proposition}\label{prop: prop from intro}
    Suppose that $f\colon \R^2 \to \R^2$ is an entire Thurston map, then there exists a sequence $f_n\colon \R^2 \to \R^2, n \in \N$ of polynomial Thurston maps converging combinatorially to $f$.   
\end{proposition}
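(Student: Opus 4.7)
I will construct the approximating sequence $(f_n)$ by performing a topological surgery on $f$ near infinity, in which each logarithmic tract over a (finite) asymptotic value $v$ is replaced by a finite-degree branched cover ramified at $v$. The model for this surgery is the classical analytic approximation $e^z \approx (1+z/n)^n$, which converts the infinite-cyclic covering of a punctured disk by the left half-plane into a finite degree-$n$ branched covering by a bounded topological disk. The surgery should be arranged so that $f_n$ coincides with $f$ on an increasing sequence of compact topological disks $K_n \subset \R^2$ with $A \subset K_n$ and $\bigcup_n K_n = \R^2$. Combinatorial convergence will then follow because lifts of a fixed loop under $f$ are compact paths in $\R^2 \setminus f^{-1}(A)$, hence eventually contained in the region where $f$ and $f_n$ coincide.

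\textbf{Setup and construction.} Let $V_c(f)$ and $V_a(f)$ denote the sets of critical values and finite asymptotic values of $f$; both are finite subsets of $A$. For each $v \in V_a(f)$ fix a small Jordan disk $U_v \ni v$ so that the disks $\{U_v\}$ are pairwise disjoint and $\ovl{U_v} \cap A = \{v\}$. Each connected component $T$ of $f^{-1}(U_v \setminus \{v\})$ is an unbounded simply-connected domain (a logarithmic tract) on which $f$ is the universal cover onto $U_v \setminus \{v\}$, and the total number of tracts is finite. For each $n$ and each tract $T$ over $v$, construct the polynomial surgery as follows: near the asymptotic end of $T$ in $\R^2$, introduce a topological disk $E_n^T$ together with a degree-$n$ topologically holomorphic branched-cover model $p_n^T\colon E_n^T \to \ovl{U_v}$, branched at a single interior point $c_n^T \in E_n^T$ mapping to $v$. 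Glue $E_n^T$ into $T$ along a suitable identification, modifying $f$ in an annular region inside $T$ (not merely beyond a Jordan cut, for the reason explained below) to accommodate the transition from the infinite-cyclic covering to the finite degree-$n$ one. Let $K_n \subset \R^2$ be the region on which the surgery makes no modification (so $f_n = f$ there); arrange so that $K_n$ is increasing with $A \subset K_1$ and $\bigcup_n K_n = \R^2$. The resulting $f_n\colon \S^2 \to \S^2$ is a finite-degree branched self-cover fixing $\infty$, hence a polynomial Thurston map, with singular values $V_c(f_n) = V_c(f) \cup V_a(f) = S_f$ and, since $f_n = f$ on $K_n \supset A$, postsingular set $P_{f_n} = A$.

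\textbf{Combinatorial convergence and main obstacle.} Pick any $t \in \R^2 \setminus A$ and $b \in K_1$ with $f(b) = t$; then $f_n(b) = t$ for every $n$. For any loop $\gamma \subset \R^2 \setminus A$ based at $t$, the lift $\gammal(f, b)$ is the continuous image of $[0,1]$ in $\R^2 \setminus f^{-1}(A)$, hence a compact set; for $n$ large, $\gammal(f, b) \subset K_n$, where $f \equiv f_n$, so $\gammal(f_n, b) = \gammal(f, b)$ by uniqueness of path lifts. Both bullets in the definition of combinatorial convergence follow immediately. The main technical challenge lies in the surgery step: constructing the modifications in each tract so that the resulting $f_n$ is a bona fide polynomial Thurston map of finite degree on $\S^2$ with no extraneous critical values. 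A subtlety here is that any Jordan curve contained in a single tract $T$ projects under $f$ to a null-homotopic loop in $U_v \setminus \{v\}$, since $T$ is the universal cover of $U_v \setminus \{v\}$; hence the modification cannot be carried out by a naive disk excision inside $T$, but must involve an annular region extending to $\partial T$ so that the boundary of the excised region can wind $n$ times around $v$ under $f_n$. Once this is handled, combinatorial convergence is a direct corollary of the compact-exhaustion property of $\{K_n\}$.
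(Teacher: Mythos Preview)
Your argument for combinatorial convergence (once $f_n|K_n = f|K_n$ with $K_n \nearrow \R^2$ is achieved) is fine and essentially amounts to topological convergence. The gap is in the construction itself: you only perform surgery on logarithmic tracts over \emph{finite} asymptotic values, and this is not enough to make $f_n$ a topological polynomial.

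Consider the paper's own running example $G_1(z) = \pi\cos(z)/2$. This map is transcendental, yet $V_a(G_1) = \emptyset$: all its singular values are critical. Your procedure therefore makes no modification at all, and $f_n = f$ for every $n$, which is certainly not a polynomial Thurston map. The obstruction is that the infinite degree of a transcendental entire map of finite type need not be concentrated in asymptotic tracts over finite values: it can equally well be spread over infinitely many simple critical points (as for $\cos$), or sit in the tract over $\infty$, neither of which your surgery touches. Even for maps that do have asymptotic values, once you have converted each such tract to a finite branched cover, the complement may still carry infinitely many critical points, and the map will not extend to a finite-degree branched self-cover of $\S^2$.

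The paper sidesteps this entirely by working not with tracts but with the preimage graph $\Gamma = f^{-1}(\Rose)$ of a rose graph $\Rose$ surrounding $A$. It takes an exhaustion of $\Gamma$ by finite connected subgraphs $K_n$ and ``closes off'' each $K_n$ by adding a single edge across every face of $\Gamma$ whose boundary is only partially contained in $K_n$. The resulting finite planar graph $\Gamma_n$ is the preimage graph of a polynomial Thurston map $f_n$, and convergence follows from the graph-theoretic criterion of Proposition~\ref{prop: quadruples and convergence}. This truncation is agnostic to the \emph{source} of the transcendence: asymptotic tracts, infinitely many critical points, and the tract over $\infty$ are all handled uniformly by cutting the infinite graph down to a finite one.
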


Let $f\colon \R^2 \to \R^2$ be a Thurston map. Associated with $f$ is a holomorphic function $\sigma_f:~\Teich(\R^2, P_f) \to \Teich(\R^2, P_f)$ called the Thurston pullback map (or the $\sigma$-map), where  $\Teich(\R^2, P_f)$ is a complex manifold called the \textit{Teichm\"{u}ller space of $\R^2$ with the marked set $P_f$} or the Teichm\"{u}ller space \textit{modelled on} $\R^2 \setminus P_f$ (see Sections \ref{subsec: quasiconformal maps and teichmuller spaces}, \ref{subsec: thurston pullback map} for further details). Pullback maps provide a powerful tool to study the associated Thurston maps. In particular, $\sigma_f$ has a (unique)  fixed point $\tau \in \Teich(\R^2, P_f)$ if and only if $f$ is realized by a postsingularly finite entire map (see Proposition \ref{prop: fixed point of sigma}). Our strategy in proving Main Theorem~\ref{mainthm: main theorem A} is to first solve the approximation problem for Thurston pullback maps. In a sense, we translate the original question to the Teichm\"{u}ller space setting. The following theorem forms the second main result of this paper and illustrates the effect of combinatorial convergence on the behavior of the corresponding $\sigma$-maps.

\begin{maintheorem}\label{mainthm: main theorem B}
    Let $f_n\colon \R^2 \to \R^2, n \in \N$ and $f\colon \R^2 \to \R^2$ be Thurston maps with the same postsingular set $A$ for all $n \in \N$. If the sequence $(f_n)$ converges combinatorially to $f$, then the sequence $(\sigma_{f_n})$ converges locally uniformly on $\Teich(\R^2, A)$ to $\sigma_f$.
\end{maintheorem}
It is easy to see that Main Theorem~\ref{mainthm: main theorem A} follows from the points below:
\begin{enumerate}
    \item \label{it: first step of the proof} Main Theorem~\ref{mainthm: main theorem B}, taken together with Proposition~\ref{prop: prop from intro}, tells us that for any Thurston map $f$ with $P_f = A$, there exists a sequence of polynomial Thurston maps $(f_n)$ with $P_{f_n} = A$ that converges combinatorially to $f$ so that the sequence $(\sigma_{f_n})$ converges locally uniformly on $\Teich(\R^2, A)$ to $\sigma_f$.
    \item \label{it: second step of the proof}  If $f$ is realised by a postsingularly finite entire map $g$, we use the weak contraction properties of $\sigma$-maps (see Proposition~\ref{prop: sigma is contracting}) and further analysis to show that for sufficiently large $n$, the map $f_n$ is realised by a postcritically finite complex polynomial $g_n$ such that
    \begin{itemize}
        \item $(g_n)$ converges  to $g$ locally uniformly on $\C$, and
        \item $g_n$ has the same dynamical portrait as $g$, in particular, $g_n |P_{g_n}$ is conjugate to~$g|P_g$.
    \end{itemize}
   This statement is proved in Theorem~\ref{thm: dynamical approximations}.
\end{enumerate}

Proposition~\ref{prop: prop from intro} and Main Theorem \ref{mainthm: main theorem B} allow us to think of entire Thurston maps and their $\sigma$-maps as limits of polynomial Thurston maps and the corresponding $\sigma$-maps. These results form the first direct link between polynomial and entire Thurston theory, and present a new viewpoint for studying other problems in the latter setting. In particular, Main Theorem~\ref{mainthm: main theorem B} as a standalone result relates to the realizability problem for entire Thurston maps. For $f$ and $f_n$'s as in point (\ref{it: first step of the proof}) above, we have shown that if $f$ is realised, then so are the $f_n$'s for $n$ large enough (see Corollary \ref{corr: fixed points of approximating maps}). Conversely, if the $f_n$'s are realised for sufficiently large $n$, then $\sigma_{f_n}$ has a unique fixed point $\tau_n \in \Teich(\R^2, A)$ for all $n$ large enough; in this case $f$ is obstructed if and only if the sequence $(\tau_n)$ leaves every compact set of $\Teich(\R^2, A)$ as $n\rightarrow \infty$. By understanding the ways this divergence might occur, we may hope to unravel information about the obstructions for $f$.  We may also attempt to see if the results in this paper hold for transcendental meromorphic Thurston maps, if we wish to approximate them by rational ones.

\subsection{Organization of the paper.} Our paper is organized as follows.  Section \ref{subsec: notation and basic concepts} is an introduction to basic definitions and our notation. In Section~\ref{subsec: planar embedded graphs}, we discuss planar embedded graphs, and in Section~\ref{subsec: topologically holomorphic maps}, we introduce topologically holomorphic maps and discuss their basic properties. Section \ref{subsec: entire thurston maps} provides the necessary background in Thurston theory, and Section \ref{subsec: quasiconformal maps and teichmuller spaces} discusses quasiconformal maps and Teichm\"{u}ller spaces. We define Thurston pullback maps in Section \ref{subsec: thurston pullback map}, and provide a framework for defining topologically holomorphic maps using planar embedded graphs in Section \ref{subsec: admissible quadruples}.

Section \ref{sec: approximations of entire thurston maps} is an investigation of different notions of convergence for sequences of entire Thurston maps. In Section \ref{subsec: combinatorial convergence}, we introduce the idea of combinatorial convergence and study its  properties. We define \textit{topological convergence} in Section~\ref{subsec: topological convergence} and show that the above two notions of convergence are  equivalent. In Section \ref{subsec: polynomial approximations}, we prove that every entire Thurston map can be approximated by a sequence of polynomial Thurston maps and provide tools for constructing such approximations. Finally, in Section \ref{subsec: convergence of sigma-maps}, we establish Main Theorem~\ref{mainthm: main theorem B}, i.e., show that combinatorial convergence of entire Thurston maps implies convergence of the corresponding $\sigma$-maps in Teichm\"{u}ller space.

In Section \ref{sec: approximations of postsingularly finite entire maps}, we upgrade the topological results of Section \ref{sec: approximations of entire thurston maps} to the holomorphic setting. In Section \ref{subsec: topological properties of locally uniform convergence}, we provide a way of analyzing the convergence of a sequence of entire maps of finite type based on their combinatorial and topological properties. Then we prove Main Theorem \ref{mainthm: main theorem A} and discuss its implications in Section \ref{subsec: dynamical approximations}.

\textbf{Acknowledgments.} 
This material is based upon work supported by the National Science Foundation under Grant No.\ DMS-1928930, while the first and the second authors were in residence at the Mathematical Sciences Research Institute (now Simons Laufer Mathematical Sciences Institute) in Berkeley, California, during the Spring 2022 semester. The second and the third authors were  partially supported by the ERC advanced grant ``HOLOGRAM''. The third author was also partially supported by the ERC advanced grant ``EMERGENCE''.

We would like to thank Dierk Schleicher for proposing this topic and his continued guidance, John Hubbard for introducing the problem of dynamical approximations, and Sarah Koch for providing insightful comments and supporting us through this process.  We are also grateful to Walter Bergweiler, Kostiantyn Drach, Alexandre Eremenko, N\'{u}ria Fagella, Lukas Geyer, Kevin Pilgrim, Lasse Rempe, and Dylan Thurston for inspiring discussions. 

\section{Background}\label{sec: background}

\subsection{Notation and basic concepts}\label{subsec: notation and basic concepts}
The sets of positive integers, integers, real and complex numbers are denoted by $\N$, $\Z$, $\R$, and $\C$, respectively. We use the notation $\I:=[0,1]$ for the closed unit interval on the real line, $\D :=\{z \in \C\colon |z|<1\}$ for the
open unit disk in~$\C$, $\D^* := \D \setminus \{0\}$ for the punctured unit disk, $\H := \{z \in \C: \Re(z) < 0\}$ for the left half-plane, and $\widehat{\C}:=\C\cup\{\infty\}$ for the Riemann sphere. The open disk of radius~$r>0$ centered at~$z_0 \in \C$ is denoted by $\D(z_0, r) := \{z \in \C:|z - z_0| < r\}$, and, for simplicity, $\D_r$ is the disk of radius~$r$ centered at 0. We denote the 2-dimensional plane and the 2-dimensional sphere by $\R^2$ and~$\S^2$, respectively. In this article, we treat them as purely topological objects. In particular, our convention is to write $f\colon \C \to \C$ or $f \colon \widehat{\C} \to \widehat{\C}$ to indicate that $f$ is holomorphic, and $f\colon \R^2 \to \R^2$ or $f \colon \S^2 \to \S^2$ if $f$ is continuous, but not necessarily holomorphic or smooth (cf. Definition \ref{def: topologically holomorphic}).

The cardinality of a set $X$ is denoted by $|X|$ and the identity map on $X$ by $\id_X$. If $f\colon X\to Y$ is a map and $U\subset X$, then $f|U$ stands for the restriction of $f$ to $U$. If $X$ is a topological space and $U\subset X$, then $\overline U$ denotes the closure, $\inter(U)$ the
interior, and $\partial U$ the boundary of $U$ in $X$. By $(f_n)$, we denote a sequence of maps $f_n$ (similarly, numbers, points, sets, groups, etc.) indexed by the set $\N$.

A subset $U \subset \R^2$ is called an \textit{open} or \textit{closed Jordan region} (in $\R^2$) if there exists an injective continuous map $\eta \colon \overline{\D} \to \R^2$ such that $U = \eta(\D)$ or $U = \eta(\overline{\D})$, respectively.

We assume that every topological surface $X$ is oriented. We say that $\mathcal{A}$ is a \textit{complex structure} on $X$ if $\mathcal{A}$ is a \textit{maximal} (under inclusion) \textit{complex atlas} on $X$, i.e., a collection of complex charts compatible with the orientation on $X$ and covering $X$, where any two of them are holomorphically compatible with each other.
We write $(X, \mathcal{A})$ to emphasize that $\mathcal{A}$ is a complex structure on $X$, and we say that $f\colon (X, \mathcal{A}) \to (Y, \mathcal{B})$ is holomorphic to emphasize that $f$ is holomorphic with respect to the complex structures $\mathcal{A}$ and $\mathcal{B}$ on the topological surfaces $X$ and $Y$, respectively. 


If $X$ is a topological space, then a \textit{path} $\alpha$ in $X$ is a continuous map $\alpha \colon \I \to X$. Points $x = \alpha(0)$ and $y = \alpha(1)$ are called \textit{endpoints} of the path $\alpha$ and we say that $\alpha$ \textit{joins} $x$ with~$y$. The interior of the path $\alpha$ is the set $\alpha((0, 1)) =: \inter(\alpha)$. The path $\alpha$ is called a \textit{loop} if $\alpha(0) = \alpha(1)$, otherwise we say that $\alpha$ is a \textit{non-closed path}. We say that the path $\alpha$ \textit{starts} at $x \in X$ if $\alpha(0) = x$. When $\alpha$ is a loop, we also say that $\alpha$ is \textit{based at} $x$. 
A non-closed path $\alpha$ is called \textit{simple} if it has no self-intersections (i.e., $\alpha\colon \I \to X$ is injective). A loop $\alpha$ is called a \textit{simple} if it has no self-intersections except at endpoints (i.e., $\alpha|(0, 1)$ is injective). Loop $\alpha$ is called \textit{constant} if the map $\alpha \colon \I \to X$ is constant. Sometimes we are going to conflate paths and their images. For instance, we write $\alpha \subset Y$, where $\alpha \colon \I \to X$ is a path and $Y \subset X$, to indicate that $\alpha(\I) \subset Y$.
We say that $\gamma \subset X$ is a \textit{simple closed curve} if $\gamma = \alpha(\I)$ for some simple loop $\alpha$ in $X$. 

If $\alpha$ and $\beta$ are two paths in $X$, then we denote by $\alpha \cdot \beta$ their \textit{concatenation}, i.e., the path that first traverses $\alpha$ and then $\beta$. By $\overline{\alpha}$ we denote the path in $X$ such that $\overline{\alpha}(t) = \alpha(1 - t)$ for all $t \in \I$. For $n \in \Z$, we define $\alpha^n$ to be a constant loop based at $\alpha(0)$ if $n = 0$, a concatenation of $\alpha$ with itself $n$ times, if $n > 0$, and concatenation of $\overline{\alpha}$ with itself $-n$ times, if $n < 0$.


Two paths $\alpha\colon \I \to X$ and $\beta \colon \I \to X$ are called \textit{path-homotopic} (or simply \textit{homotopic}) if there exists a continuous map $H\colon \I \times \I \to X$ called a \textit{homotopy} so that $H(t, 0) = \alpha(t)$, $H(t, 1) = \beta(t)$ for all $t \in \I$, and $H(0, s) = \alpha(0) = \beta(0)$, $H(1, s) = \alpha(1) = \beta(1)$ for all $s \in \I$. 
Let $A$ be a subset of $X$. We say that two paths $\alpha$ and $\beta$ are \textit{path-homotopic relative $A$} (abbreviated as ``$\alpha$ and $\beta$ are homotopic rel.\ $A$'' and denoted $\alpha \sim \beta$ rel.\ $A$) if $\alpha \subset X \setminus A$ and $\beta \subset X \setminus A$ are homotopic in $X \setminus A$, i.e.,  $H(s, t) \in X \setminus A$ for all $(s, t) \in \I \times \I$, where $H$ is the corresponding homotopy. 


If $X$ is a topological space and $x \in X$, then by $\pi_1(X, x)$ we denote the \textit{fundamental group} of $X$ at $x$, i.e., the set of all homotopy equivalence classes of loops in $X$ based at~$x$ endowed with the operation of path concatenation. If $f\colon X \to Y$ is a continuous map between topological spaces $X$ and $Y$ such that $f(x) = y$, then the group homomorphism $f_*\colon \pi_1(X, x) \to \pi_1(Y, y)$ is defined as $f_*([\alpha]) = [f \circ \alpha]$ for any loop $\alpha \subset X$ based at $x$, where~$[\cdot]$ denotes a homotopy equivalence class. Finally, we say that path $\alpha \subset Y$ \textit{lifts under~$f$} to a path $\beta \subset X$, if $\alpha = f \circ \beta$, and $\beta$ is called the \textit{$f$-lift} (or \textit{lift under $f$}) of $\alpha$. If $f\colon X \to Y$ is a covering map and $f(x) = y$, then every path $\alpha$ starting at $y$ has a unique $f$-lift starting at $x$, which is  denoted by $\alphal(f, x)$. By definition, the path $\alphal(f,x)$ is a loop if and only if $[\alpha] \in f_*\pi_1(X, x)$.

Suppose that $X$ and $Y$ are topological spaces. We say that homeomorphisms $\varphi \colon X \to Y$ and $\psi \colon X \to Y$ are \textit{isotopic} if there exists a continuous map $H\colon X \times \I \to Y$ called an \textit{isotopy} such that $H(x, 0) = \varphi(x)$ and $H(x, 1) = \psi(x)$ for all $x \in X$, and $H(\cdot, t)\colon X \to Y$ is a homeomorphism for every $t \in \I$. We say that $\varphi$ and $\psi$ are \textit{isotopic rel.\ $A$}, where $A \subset X$, if additionally $H(x, t) = x$ for all $(x, t) \in A \times \I$.

Let $X$ be a topological surface. We denote by $\Homeo^+(X)$ and $\Homeo^+(X, A)$ the group of all orientation-preserving self-homeomorphisms of $X$ and the group of all orientation-preserving self-homeomorphisms of $X$ fixing $A$ pointwise, respectively. We use the notation $\Homeo^+_0(X, A)$ for the subgroup of $\Homeo^+(X, A)$ consisting of all homeomorphisms isotopic rel.\ $A$ to $\id_X$.

We use the following notion of convergence, in the sense of \cite{Chabauty}, of sequences of subgroups of a given group endowed with the discrete topology.

\begin{definition}\label{def: convergence of groups}
    Let $G$ be a group and $(H_n)$ be a sequence of its subgroups. We say that the sequence $(H_n)$ \textit{converges} to a subgroup $H$ of $G$ and write $\lim\limits_{n \to \infty} H_n = H$ if for every $g \in G$, there exists $N = N(g)$ so that for each $n \geq N$, $g \in H_n$ if and only if $g \in H$.

\end{definition}

\subsection{Planar embedded graphs} \label{subsec: planar embedded graphs} We refer the reader to \cite{DiestelGraph} for classical background in graph theory. A \emph{planar embedded graph} is a pair ${G=(V, E)}$, where

\begin{enumerate}
    \item $V$ is a discrete (in particular, countable) set of points in $\R^2$, and

    \item $E$ is a set of simple paths and simple loops (viewed as subsets of $\R^2$, i.e., as images of the corresponding maps) such that their endpoints belong to $V$, their interiors are pairwise disjoint and lie in $\R^2 \setminus V$, and every compact set $K \subset \R^2$ intersects finitely many elements of $E$.
\end{enumerate}

The sets $V$ and $E$ are called the \emph{vertex set} and \emph{edge set} of $G$, respectively. Note that our notion of a planar embedded graph allows \emph{multi-edges}, i.e., distinct edges that connect the same pair of vertices, and \emph{loop-edges}, i.e., edges that connect a vertex to itself. 

In the following, suppose $G = (V, E)$ is a planar embedded graph. We say that $G$ is \textit{finite} if $V$ and $E$ are finite sets. The \emph{degree} of a vertex $v$ in $G$, denoted by $\deg_G(v)$, is the number $n_1 + 2n_2$, where $n_1$ and $n_2$ are the numbers of simple paths and simple loops in $E$ incident to $v$, respectively. We say that $G$ is $k$-\textit{regular} if $\deg_G(v) = k$ for every $v \in V$. A \emph{subgraph} of $G$ is a planar embedded graph $G' = (V', E')$ such that $V' \subset V$ and $E' \subset E$. 

The subset $\mathcal{G}:=V \cup \bigcup_{e \in E} e$ of $\R^2$ is called the \emph{realization} of $G$. A \emph{face} of the graph $G$ is a connected component of $\R^2 \setminus \mathcal{G}$. The set of all faces of $G$ is denoted by $F(G)$. Note that due to the definition of a planar embedded graph, the set $\mathcal{G}$ is closed in $\R^2$, and thus, every face $F$ of $G$ is open. If $F$ is a face of $G$, then we denote by $\partial F$ the subgraph of $G$ forming the (topological) boundary of $F$ in $\R^2$. 


The graph $G$ is called connected if its realization $\mathcal{G}$ is connected (or, equivalently, path-connected). It follows that $G$ is connected if and only if each face of $G$ is simply connected. 
We will often conflate a planar embedded graph $G$ with its realization~$\mathcal{G}$.




Let $G_1 = (V_1, E_1)$ and 
$G_2 = (V_2, E_2)$ be two planar embedded graphs. We say that $G_1$ is \emph{isomorphic} to $G_2$ if there exists a homeomorphism $\varphi \in \Homeo^+(\R^2)$ that maps vertices and edges of $G_1$ into vertices and edges of $G_2$, that is, $\varphi(V_1) = V_2$ and $\varphi(E_1) = E_2$. In this case, we call $\varphi$ an \emph{isomorphism} between $G_1$ and $G_2$. If $\varphi$ is isotopic rel.\ $A$ to $\id_{\R^2}$ for some set $A\subset \R^2$, we say that $G_1$ is \textit{isotopic} rel.\ $A$ to $G_2$.

Suppose that $f\colon U \to W$ is a covering map, where $U$ and $W$ are open subsets of $\R^2$. Additionally, assume that the realization $\mathcal{G}$ of a graph $G = (V, E)$ lies in $U$. Then we can define a new graph $f^{-1}(G)$, called the \textit{preimage} of $G$ under $f$, where $V(f^{-1}(G)) := f^{-1}(V(G))$ and the set $E(f^{-1}(G))$ consists of all $f$-lifts of the edges of $G$.

Examples of planar embedded graphs are \textit{cycles}, \textit{chains} and \textit{rose graphs}. We say that $G=(V, E)$ is a cycle if $G$ is finite, connected, and 2-regular. The graph $G$ is a finite chain if it consists of a single vertex with no edges, or it is connected and has exactly two vertices of degree one and all other vertices have degree two. In the case when $G$ is a finite chain, all vertices of $G$ of degree less than 2 are called \textit{endpoints} of $G$. If $G$ is infinite, connected, and 2-regular, then it is called an infinite chain. Finally, if $|V| = 1$ and for every edge $e$ of $G$, the bounded connected component of $\R^2 \setminus e$ does not intersect the realization $\mathcal{G}$ of $G$, then we say that $G$ is a rose graph.  If $\Rose$ is a rose graph, then we call its unique vertex the \textit{center} of $\Rose$ and its edges the \textit{petals} of the rose graph $\Rose$. Given a finite set $A \subset \R^2$ and a rose graph $\Rose$, we say that $\Rose$ \textit{surrounds} $A$ if every bounded face of $\Rose$ contains a unique point of $A$, and every point of $A$ is contained in some bounded face of $\Rose$.

We say that graph $G' = (V', E')$ is the result of \textit{subdivision of an edge} $e \in E(G)$ of the planar embedded graph $G = (V, E)$ if $G'$ is obtained from $G$ by adding a new vertex in the interior of $e$. More precisely, $V' = V \cup \{v\}$ and $E = (E \setminus \{e\}) \cup \{e_1, e_2\}$, where $v \in \inter(e)$, and $e_1, e_2$ are the closures of the connected components of $e \setminus \{v\}$.  In particular, subdivision of an edge does not change the realization of the graph, and the result is uniquely defined up to an isotopy rel.\ $V$.

Let $G = (V, E)$ be a planar embedded graph and $e \in E$ be one of its edges. We say that a continuous map $\alpha \colon \I \to e$ is a \textit{parametrization} of $e$ if $\alpha|(0,1)$ is bijective onto~$\inter(e)$. We say that two parametrizations $\alpha_1$ and $\alpha_2$ of the same edge $e$ are equivalent if $\alpha_1^{-1} \circ \alpha_2 |(0, 1) \colon (0, 1) \to (0, 1)$ is an increasing function. One can easily see that for every edge $e \in E(G)$ there are two distinct equivalence classes of parametrizations. We call each of these equivalence classes a \textit{direction} of the edge $e$. We say that the graph $G$ is \textit{directed} if each of its edges is endowed with a unique direction (called the  \textit{forward} direction). A choice of forward directions for all edges of a graph $G$ is also called an \textit{orientation} of~$G$. Directions that are omitted from the orientation of $G$ are called \textit{backward}. In a similar way, we introduce notions of forward and backward parametrizations of the edges of the directed graph $G$. 
If $v$ is a vertex of $G$ and $e$ is an edge incident to $v$, then there is a natural way to call $e$ \textit{incoming} or \textit{outgoing} at $v$ unless $e$ is a loop-edge.

Suppose that $\Phi\colon G_1 \to G_2$ is a continuous map, where $G_1$ is a planar embedded graph and $G_2$ is a directed planar embedded graph, such that $\Phi^{-1}(V(G_2)) = V(G_1)$ and $\Phi|\inter(e)$ is injective for each edge $e \in E(G_1)$. The map $\Phi$ and the orientation of the graph $G_2$ naturally induce an orientation of the graph $G_1$. Indeed, we choose a forward direction for $e \in E(G_1)$ so that if $\alpha$ is a parametrization of $e$, then $\alpha$ is forward if and only if $\Phi \circ \alpha$ is a forward parametrization of the edge $\Phi(e) \in E(G_2)$.
Similarly, there is a natural way to define forward directions for a subgraph and graph obtained by a subdivision of an edge of a directed planar embedded graph. 

We say that a directed planar embedded graph $G = (V, E)$ is \textit{unilaterally connected} if for every pair of vertices $u, v \in V$, there exists a path $\alpha$ with endpoints at $u$ and $v$ that is obtained by concatenation of forward parametrizations of edges of $G$. 


We say that a rose graph $\Rose$ is \textit{counterclockwise directed} if for each edge $e \in E(\Rose)$ any forward parametrization $\alpha$ orients $e$ counterclockwise if viewed from the interior of the bounded connected component of $\R^2 \setminus e$. Graphs at the top in Figures \ref{fig: nondyn_quadruple for cosine}, \ref{fig: nondyn_quadruple or eg2}, \ref{fig: dyn_pair for cosine}, and \ref{fig: dyn_pair for multi_error} provide examples of counterclockwise directed rose graphs with two or three petals (in particular, arrows on their petals indicate forward directions of the corresponding edges). In a similar way we can define a \textit{counterclockwise directed cycle}.







\subsection{Topologically holomorphic maps}\label{subsec: topologically holomorphic maps}

\begin{definition}\label{def: topologically holomorphic}
     Let $X$ and $Y$ be topological surfaces. Then a map $f\colon X \to Y$ is called \textit{topologically holomorphic} if for every $x \in X$ there exists $d \in \N$, an open neighbourhood $U$ of~$x$, and two orientation-preserving homeomorphisms $\psi\colon U \to \D$ and $\varphi\colon f(U) \to \D$ such that $\psi(x) = \varphi(f(x)) = 0$ and $(\varphi \circ f \circ \psi^{-1})(z) = z^d$ for all $z \in \D$.

     In particular, the integer $\deg(f,x) := d$ is independent of $U, \psi$ and $\varphi$ and is called the \textit{local degree} of $f$ at $x$.         
\end{definition}


\begin{remark}
    The notion of a topologically holomorphic map is close in spirit to the notion of \textit{branched covering map}. However, these two notions are generally not the same (cf. \cite[Definition A.7]{THEBook}).
\end{remark}
\begin{definition}
    Let $X$ and $Y$ be topological spaces. A map $f: X\rightarrow Y$ is called \textit{discrete} if $f^{-1}(y)$ is a discrete subset of $X$ for every $y \in Y$.
\end{definition}

Theorem \cite[Theorem 1.3, Remark 1.4]{Stoilow} (see also \cite{Stoilow_original_paper}, \cite{Stoilow_original_book}) imply that we can give an alternative definition of a topologically holomorphic map.


\begin{proposition}\label{prop: alternative definition}
    Let $X$ and $Y$ be topological surfaces. Then a map $f\colon X \to Y$ is topologically holomorphic if and only if it is continuous, open, discrete, and for every $x \in X$ such that $f$ is locally injective at $x$, there exists an open neighbourhood $U$ of $x$ for which $f|U\colon U \to f(U)$ is an orientation-preserving homeomorphism.
\end{proposition}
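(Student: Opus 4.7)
The plan is to prove each direction separately. The forward direction is a direct unpacking of the definition, while the backward direction is essentially a citation of Sto\"{\i}low's theorem together with a short check that orientation-preservation pins down the local model as $z \mapsto z^d$ rather than $z \mapsto \ovl{z}^d$.

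For the forward direction, suppose $f\colon X \to Y$ is topologically holomorphic. Given $x \in X$, pick an open neighbourhood $U$ of $x$ and orientation-preserving homeomorphisms $\psi\colon U \to \D$ and $\varphi\colon f(U)\to \D$ conjugating $f|U$ to $z\mapsto z^d$ on $\D$. Continuity is immediate. Openness follows because $z\mapsto z^d$ is an open map on $\D$ and $\varphi, \psi$ are homeomorphisms, so $f(U')$ is open in $f(U)$ for every open $U' \subset U$, and $f(U)$ itself is open by the existence of the chart $\varphi$. Discreteness follows since for any $y \in Y$, the fibre $(f|U)^{-1}(y)$ corresponds under $\psi$ and $\varphi$ to the $d$-th roots of a point in $\D$, which is finite; a standard cover-and-exhaust argument gives that $f^{-1}(y)$ is discrete in $X$. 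Finally, if $f$ is locally injective at $x$, then the local model $z \mapsto z^d$ must be injective on some neighbourhood of $0$, forcing $d = 1$; then $f|U = \varphi^{-1} \circ \psi$ is a composition of orientation-preserving homeomorphisms, hence an orientation-preserving homeomorphism onto $f(U)$.

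For the backward direction, suppose $f\colon X \to Y$ is continuous, open, and discrete, and orientation-preservingly locally homeomorphic at its points of local injectivity. By Sto\"{\i}low's theorem (\cite[Theorem~1.3, Remark~1.4]{Stoilow}), around any $x \in X$ there exist charts $\psi\colon U \to \D$ and $\varphi\colon f(U) \to \D$ with $\psi(x) = \varphi(f(x)) = 0$ such that the conjugated map $h := \varphi \circ f \circ \psi^{-1}\colon \D \to \D$ is either $z \mapsto z^d$ or $z \mapsto \ovl z^d$ for some $d \in \N$. We are free to choose $\psi$ and $\varphi$ orientation-preserving, since $X$ and $Y$ are oriented surfaces.

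It remains to rule out the antiholomorphic model. If $d \geq 2$, we can adjust the chart $\psi$ by postcomposing with $z \mapsto \ovl z$ (and keep $\varphi$ unchanged) to turn $z \mapsto \ovl z^d$ into $z \mapsto z^d$; the only issue is that this reverses the orientation of $\psi$. To avoid this, we apply the hypothesis at some nearby point $x' \in U$ where $f$ is locally injective; such $x'$ exists because the branch points of $f$ in $U$ form a discrete set (under the normal form $z\mapsto z^d$ or $z\mapsto \ovl z^d$, only $0$ is a branch point). At $x'$, the local homeomorphism $f$ is orientation-preserving by hypothesis; but in the normal form coordinates $h$ restricted to a small disk around $\psi(x') \ne 0$ is orientation-preserving if and only if $h$ is holomorphic, i.e.\ $h(z) = z^d$. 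Hence the model must be $z \mapsto z^d$ with orientation-preserving $\psi,\varphi$, as required. If $d = 1$, then $f|U$ is a local homeomorphism at $x$, so the orientation hypothesis applies directly at $x$ and again selects the holomorphic model.

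The main obstacle is the second step: invoking Sto\"{\i}low to get the local normal form is a black box, but one must still argue carefully that the hypothesis on orientation at locally injective points is enough to discard the antiholomorphic branch globally, which is why we pass to a nearby non-branch point where the orientation hypothesis has teeth.
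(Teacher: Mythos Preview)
Your argument is correct and follows the same route as the paper, which simply cites Sto\"{\i}low's theorem \cite[Theorem~1.3, Remark~1.4]{Stoilow} for the nontrivial direction and gives no further details. Your write-up is in fact more complete: the paper does not spell out the orientation check that rules out the antiholomorphic model, whereas you do so carefully by passing to a nearby non-branch point.
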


It is known that we can pull back complex structures under topologically holomorphic maps (see \cite[Sto\"{i}low's factorisation theorem]{Stoilow} and \cite[Lemma A.12]{THEBook}) as the following proposition states. 

\begin{proposition} \label{prop: pulling back complex structures}
    Let $f\colon X\to Y$ be topologically holomorphic. Then for every complex structure $\mathcal{A}$ on $Y$ there exists a unique complex structure $f^*\mathcal{A}$ on $X$ such that the map $f\colon (X, f^* \mathcal{A}) \to (Y, \mathcal{A})$ is holomorphic.
    
    In particular, for any orientation-preserving homeomorphism $\varphi\colon Y \to S_Y$, where $S_Y$ is a Riemann surface, there exists a Riemann surface $S_X$ and an orientation-preserving homeomorphism $\psi\colon X \to S_X$ such that $\varphi \circ f \circ \psi^{-1}\colon S_X \to S_Y$ is holomorphic.
\end{proposition}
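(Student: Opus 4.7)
The plan is to build $f^*\mathcal{A}$ one chart at a time. Fix $x \in X$, set $d := \deg(f, x)$, and pick $(V, \varphi) \in \mathcal{A}$ with $f(x) \in V$ and $\varphi(f(x)) = 0$. Definition \ref{def: topologically holomorphic} supplies a neighbourhood $U$ of $x$ and orientation-preserving homeomorphisms $\psi \colon U \to \D$ and $\varphi' \colon f(U) \to \D$ with $\varphi' \circ f \circ \psi^{-1}(z) = z^d$. Setting $h := \varphi \circ (\varphi')^{-1} \colon \D \to \C$, the composition $\varphi \circ f \circ \psi^{-1}$ takes the form $z \mapsto h(z^d)$. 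I will take the chart at $x$ to be $\eta := h_1 \circ \psi$, where $h_1 \colon \D \to \C$ is a continuous $d$-th root of $z \mapsto h(z^d)$ extended by $h_1(0) = 0$. Such an $h_1$ exists because the map $z \mapsto h(z^d)$ on $\D^*$ has winding number $d$ around $0$---since $h$ is an orientation-preserving homeomorphism fixing $0$ (hence winds once around $0$) and $z \mapsto z^d$ multiplies winding numbers by $d$---so a chosen branch of its $d$-th root has winding number $1$ and is single-valued on $\D^*$. A short monodromy computation yields $h_1(\zeta z) = \zeta h_1(z)$ for every $d$-th root of unity $\zeta$, and injectivity of $h$ then yields injectivity of $h_1$, so $\eta \colon U \to \eta(U)$ is an orientation-preserving homeomorphism onto an open neighbourhood of $0$ satisfying $\eta^d = \varphi \circ f$.

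The next step is to check compatibility of two such charts $(U_1, \eta_1)$ and $(U_2, \eta_2)$; after shrinking, each $U_j$ contains a unique branch point $x_j$ of $f$. If $x_1 = x_2$, then $d_1 = d_2 =: d$ and $\eta_1^d = g(\eta_2^d)$ on $U_1 \cap U_2$, where $g := \varphi_1 \circ \varphi_2^{-1}$ is a holomorphic chart transition in $\mathcal{A}$ fixing $0$. Writing $g(w) = w \cdot u(w)$ with $u$ holomorphic and $u(0) = g'(0) \neq 0$, one obtains $g(w^d) = w^d \cdot u(w^d)$, and the relevant $d$-th root $g(w^d)^{1/d} = w \cdot u(w^d)^{1/d}$ is a holomorphic function of $w$, since $u$ is holomorphic and nonvanishing near $0$ and thus admits a holomorphic local $d$-th root. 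Hence the transition $\eta_1 \circ \eta_2^{-1}$ is holomorphic. If $x_1 \neq x_2$, shrink further so that $U_1 \cap U_2$ avoids both branch points; the same manipulation produces $\eta_2 \circ \eta_1^{-1}(w) = g(w^{d_1})^{1/d_2}$ with $g = \varphi_2 \circ \varphi_1^{-1}$, which is holomorphic away from $0$. Declaring $f^*\mathcal{A}$ to be the maximal atlas generated by these charts, holomorphy of $f \colon (X, f^*\mathcal{A}) \to (Y, \mathcal{A})$ is immediate from $\varphi \circ f \circ \eta^{-1}(w) = w^d$.

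For uniqueness, suppose $\mathcal{B}$ is another complex structure on $X$ making $f$ holomorphic. At each non-branch point, the identity $\id_X$ factors locally as $f^{-1} \circ f$ with $f^{-1}$ a holomorphic local inverse, so $\id_X \colon (X, f^*\mathcal{A}) \to (X, \mathcal{B})$ is holomorphic off the discrete branch locus; Riemann's removable singularity theorem extends this holomorphy across the branch points, giving $\mathcal{B} = f^*\mathcal{A}$. For the second assertion, the homeomorphism $\varphi \colon Y \to S_Y$ pulls the complex structure of $S_Y$ back to a complex structure $\mathcal{A}$ on $Y$; setting $S_X := (X, f^*\mathcal{A})$ and $\psi := \id_X$ then makes $\varphi \circ f \circ \psi^{-1}$ holomorphic by the first part. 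The main technical obstacle is the chart compatibility at branch points: the key enabling fact is that for a biholomorphism $g$ fixing $0$, the function $g(w^d)$ admits a holomorphic $d$-th root near $0$, which upgrades the purely topological $d$-th root construction of $\eta$ to a genuine holomorphic transition.
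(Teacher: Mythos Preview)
The paper does not prove this proposition; it simply cites Sto\"{i}low's factorisation theorem and \cite[Lemma A.12]{THEBook}. Your argument is a correct, self-contained direct construction of the pullback atlas---essentially the standard proof underlying those references. The crucial technical step, building a local chart $\eta$ with $\varphi\circ f\circ\eta^{-1}(w)=w^d$ via a continuous $d$-th root of $z\mapsto h(z^d)$ and then verifying that the chart transitions are holomorphic using $g(w^d)=w^d u(w^d)$ with $u$ nonvanishing, is handled correctly.

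Two minor expository points. First, your phrase ``unique branch point $x_j$'' is slightly off: $x_j$ is the distinguished centre of the chart $U_j$, not necessarily a branch point (the case $d_j=1$ is allowed and your construction degenerates to $\eta=h\circ\psi$ there). Second, you should state explicitly that $h_1$ is orientation-preserving (this follows from your winding-number-$1$ computation), so that the charts $\eta$ are compatible with the given orientation on $X$; otherwise the resulting complex structure might not be the one you want. Neither point affects the correctness of the argument.
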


In particular, the second part of Proposition \ref{prop: pulling back complex structures} provides the third equivalent definition of a topologically holomorphic map.

Several classical notions of one-dimensional complex analysis can be extended to the setting of topologically holomorphic maps.

\begin{definition}\label{def: values}
    Let $X$ and $Y$ be topological surfaces and $f\colon X \to Y$ be a  topologically holomorphic map. 
    \begin{enumerate}
        \item A point $y \in Y$ is  a \textit{regular value of $f$} if there exists a neighbourhood $U$ of $y$ such that $f^{-1}(U) = \bigsqcup_{i \in I} U_i$ and $f|U_i\colon U_i \to U$ is a homeomorphism for each $i \in I$. If such a neighborhood $U$ does not exist, we call $y$ a \textit{singular value of $f$}. The set of all singular values of $f$ is denoted by $S_f$. If $S_f$ is finite, then we say that $f$ is a map \textit{of finite type} or $f$ belongs to the \textit{Speiser class} $\mathcal{S}$. 
    
        \item We say that $x \in X$ is a \textit{critical point} of $f$ if $\deg(f, x) > 1$. A point $y \in Y$ is called a \textit{critical value} of $f$ if there exists a critical point $x \in X$ such that $f(x) = y$.
        
        \item We say that $y \in Y$ is an \textit{asymptotic value} of $f$ if there exists a continuous curve $\gamma\colon[0,\infty) \rightarrow X$ such that $\gamma$ leaves every compact set of $X$ and $f(\gamma(t)) \rightarrow y$ as~$t \rightarrow \infty$. 
    \end{enumerate}
\end{definition}

\begin{remark}
    \begin{enumerate}
        \item Critical and asymptotic values are always singular.

        \item If $f\colon X \to Y$ is topologically holomorphic, then the restriction $$f|X \setminus f^{-1}(S_f) \colon X \setminus f^{-1}(S_f) \to Y \setminus S_f$$ is a covering map. 
    \end{enumerate}
\end{remark}


        

Every locally non-constant holomorphic function between two Riemann surfaces is an obvious example of a topologically holomorphic map. Suppose that $f \colon X \to Y$ is topologically holomorphic, we say that $f$ is \textit{entire} if $X = Y = \R^2$, \textit{meromorphic} if $X = \R^2, Y = \S^2$ and if $X = Y = \S^2$, then $f$ is a branched covering map. Thus, topologically holomorphic maps serve as topological models for one-dimensional holomorphic functions, and the types of maps listed above are the underlying models for the corresponding families of holomorphic maps.


Our focus in this article is on entire topologically holomorphic maps. Interesting phenomena can occur when we pull back complex structures in this setting. Suppose that $\mathcal{A}$ is a complex structure on $\R^2$ such that $(\R^2, \mathcal{A})$ is biholomorphic to $\D$. It is evident that $(\R^2, f^*\mathcal{A})$ is biholomorphic to $\D$, since there is no non-constant map from $\C$ to $\D$. However, if $(\R^2, \mathcal{A})$ is biholomorphic to $\C$, then by the Uniformization Theorem,  $(\R^2, f^*\mathcal{A})$ is biholomorphic to either $\C$ or $\D$. This ambiguity in the biholomorphism class of $(\R^2, f^*\mathcal{A})$ is known as the  \textit{type problem} \cite{geometric_function_theory}.

\begin{definition}\label{def: types of maps}
    We say that a topologically holomorphic map $f\colon \R^2 \to \R^2$ has \textit{stable parabolic type}, if for every complex structure $\mathcal{A}$ on $\R^2$ such that $(\R^2, \mathcal{A})$ is biholomorphic to~$\C$, $(\R^2, f^*\mathcal{A})$ is biholomorphic to $\C$, as well. 
    
    If for every such complex structure $\mathcal{A}$, $(\R^2, f^*\mathcal{A})$ is biholomorphic to $\D$, we say that $f$ has \textit{stable hyperbolic type}.
\end{definition}
\begin{remark}\label{rmk:two_types}
    It is shown in \cite[pp.~3-4]{geometric_function_theory} 
 that every topologically holomorphic map of finite type has either stable parabolic or stable hyperbolic type (see also \cite{Teichmuller}).
\end{remark}
If $f \colon \R^2 \to \R^2$ is a topologically holomorphic map having stable parabolic type, then by Proposition \ref{prop: pulling back complex structures}, for any orientation-preserving homeomorphism $\varphi \colon \R^2 \to \C$, there exist another orientation-preserving homeomorphism $\psi \colon \R^2 \to \C$ and entire map $g$ such that $f = \varphi^{-1} \circ g \circ \psi$. Note that $f \in \mathcal{S}$ if and only if $g \in \mathcal{S}$.


In the rest of the paper, we consider only entire topologically holomorphic maps of finite type. By Remark~\ref{rmk:two_types}, the type problem in this class of maps has only has two possible answers, and we simply call such a map parabolic or hyperbolic.


We say that a topologically holomorphic map $f$ of finite type is a \textit{topological polynomial} of degree $d$ if the covering map $f|\R^2 \setminus f^{-1}(S_f)$ has a finite degree $d =: \deg(f)$. Otherwise, we say that $f$ is \textit{transcendental}. It is easy to see that any topological polynomial is parabolic.


The following propositions lay out the structure of the preimage of a neighborhood of an isolated singular value (or infinity) under an entire topologically holomorphic map.

\begin{proposition} \label{prop: preimages without infinity}

    Let $f\colon \R^2 \to \R^2$ be a topologically holomorphic map, $U \subset \R^2$ be a bounded simply connected domain, and $V$ be a connected component of $f^{-1}(U)$.
    If $U\cap S_f = \{y\}$, then $V$ is simply connected and exactly one of the following is true:

    \begin{enumerate}
        \item there exist orientation-preserving homeomorphisms $\varphi \colon U \to \D$ and $\psi\colon V \to \D$ and an integer $d\in \N$ such that $\psi(y) = 0$ and $(\varphi \circ (f|V) \circ \psi^{-1})(z) =  z^d$ for all $z \in \D$. In particular, $V\setminus f^{-1}(y)$ is an annulus and $f|V\setminus f^{-1}(y)\colon V\setminus f^{-1}(y) \to U\setminus \{y\}$ is a covering map of degree $d$. 
        Additionally, if $\partial U \cap S_f = \emptyset$, then $V$ is  bounded;
        \item there exist orientation-preserving homeomorphisms $\varphi\colon U \to \D$ and $\psi \colon V \to \H$ such that $\varphi(y) = 0$ and $(\varphi \circ (f|V) \circ \psi^{-1})(z) = \exp(z)$ for all $z \in \H$. In particular, $V$ is unbounded, and $f|V\colon V \to U\setminus \{y\}$ is a universal covering map. 
    \end{enumerate}

    
    If $U$ is a bounded simply connected domain such that $U \cap S_f = \emptyset$, then each connected component $V$ of $f^{-1}(U)$ is simply connected and $f|V \colon V \to U$ is an orientation-preserving homeomorphism. If additionally $\partial U \cap S_f = \emptyset$, then $V$ is bounded.

    \end{proposition}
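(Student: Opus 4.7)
The strategy is to reduce to the classification of connected covers of the annulus $U\setminus\{y\}$ and then to use the local form of topological holomorphy at preimages of $y$ to match the two algebraic possibilities with the geometric models in (1) and (2).

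The regular case $U\cap S_f=\emptyset$ is immediate: by Definition~\ref{def: values} the restriction $f|\R^2\setminus f^{-1}(S_f)\colon \R^2\setminus f^{-1}(S_f)\to \R^2\setminus S_f$ is a covering map, and since $U$ lies in its base and is simply connected, it is evenly covered, so each component $V$ of $f^{-1}(U)$ maps homeomorphically onto $U$. When additionally $\partial U\cap S_f=\emptyset$, one can thicken $\overline U$ to a simply connected open neighbourhood still disjoint from $S_f$; lifting this neighbourhood and using compactness of $\overline U$ show that each sheet has compact closure, so $V$ is bounded.

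Assume now $U\cap S_f=\{y\}$, and put $U^*:=U\setminus\{y\}$ (an open annulus with $\pi_1(U^*)\cong\Z$) and $V^*:=V\setminus f^{-1}(y)$. Since topologically holomorphic maps are discrete (Proposition~\ref{prop: alternative definition}), $f^{-1}(y)$ is discrete in $\R^2$, and removing a discrete subset from the connected open surface $V$ keeps it connected; a short verification using that $V$ is both open and closed in $f^{-1}(U)$ shows $V^*$ is a connected component of $f^{-1}(U^*)$, so $f|V^*\colon V^*\to U^*$ is a connected covering of an annulus. By covering-space theory this cover is either a finite cyclic cover, modelled on $z\mapsto z^d\colon\D^*\to\D^*$ for some $d\in\N$, or the universal cover, modelled on $\exp\colon\H\to\D^*$. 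Fixing any orientation-preserving homeomorphism $\varphi\colon U\to \D$ with $\varphi(y)=0$, covering theory supplies a corresponding homeomorphism $\psi^*$ of $V^*$ onto $\D^*$ or onto $\H$ that intertwines $f|V^*$ with $z\mapsto z^d$ or with $\exp$, respectively.

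The two alternatives are distinguished by $V\cap f^{-1}(y)$. If $x\in V\cap f^{-1}(y)$, the local form of Definition~\ref{def: topologically holomorphic} provides a disk neighbourhood $W\subset V$ of $x$ on which $f$ is conjugate to $z\mapsto z^k$ with $k=\deg(f,x)$; the annulus $W\setminus\{x\}$ is then an end-neighbourhood of $V^*$ covering a punctured disk around $y$ with degree $k$, which forces the finite cyclic classification with $d=k$. Since the annulus $V^*\cong\D^*$ has only one puncture available to be filled, such $x$ is unique, and $\psi^*$ extends continuously to a homeomorphism $\psi\colon V\to \D$ with $\psi(x)=0$ conjugating $f|V$ to $z\mapsto z^d$, giving model (1); here $V$ is a topological disk, hence simply connected, and the boundedness of $V$ when $\partial U\cap S_f=\emptyset$ follows from properness of the finite-degree cover over a compact annular neighbourhood of $\partial U$ in $\overline{U^*}$. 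Conversely, if $V\cap f^{-1}(y)=\emptyset$ the cover must be universal: a finite cyclic cover would force any sequence $(z_n)\subset V$ with $f(z_n)\to y$ either to accumulate at a point of $V\cap f^{-1}(y)$ (a contradiction) or to leave every compact subset of $\R^2$, producing a finite-degree proper access to an asymptotic value — a configuration ruled out for topologically holomorphic maps, most cleanly by pulling back a complex structure via Proposition~\ref{prop: pulling back complex structures} and invoking the classical fact that isolated asymptotic values of holomorphic functions are of logarithmic (universal-cover) type. Thus $V=V^*$ is homeomorphic to $\H$, is simply connected and unbounded, and $f|V$ realises the universal cover $\exp$, giving model (2).

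I expect the principal obstacle to be precisely this last step, namely excluding a finite cyclic cover embedded as an unbounded annulus with an end escaping to infinity over $y$; aside from this delicate point, the argument consists of standard covering-space and boundary-lifting verifications.
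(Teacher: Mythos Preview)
Your proof is correct. The paper's own proof is a two-line reduction: apply Proposition~\ref{prop: pulling back complex structures} to pass to the genuinely holomorphic setting, then cite \cite[Theorems~5.10, 5.11]{Forster}. Your argument instead carries out the covering-space classification of connected covers of the annulus $U^*=U\setminus\{y\}$ by hand, uses end-counting to see that $|V\cap f^{-1}(y)|\le 1$, and matches the cases to the models~(1) and~(2). This is more informative and essentially self-contained, but at the one genuinely delicate point—excluding a finite cyclic cover $V\cong\D^*$ with $V\cap f^{-1}(y)=\emptyset$—you also fall back on Proposition~\ref{prop: pulling back complex structures}, so the two approaches ultimately rest on the same holomorphic input.

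One small imprecision: the phrase ``isolated asymptotic values of holomorphic functions are of logarithmic (universal-cover) type'' is not literally the fact you need (asymptotic values can have indirect, non-logarithmic singularities). What actually rules out the bad case is a removable-singularity argument: after pulling back a complex structure, the inner end of $V\cong\D^*$ gives a bounded injective holomorphic map $\psi\colon\D^*\to S$ with $S\in\{\C,\D\}$, which extends across $0$; the open-mapping theorem then forces $\psi(0)\in S$, producing a point of $V\cap f^{-1}(y)$ after all. Stating it this way makes the step airtight.
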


    \begin{proof}
        This proposition follows from \cite[Theorem 5.10, 5.11]{Forster} in the case when $f$ is an entire holomorphic map. The general case follows easily by applying Proposition \ref{prop: pulling back complex structures}.  
    \end{proof}

    \begin{proposition} \label{prop: preimages with infinity}

    Let $f\colon \R^2 \to \R^2$ be a topologically holomorphic map, $U$ be an unbounded simply connected domain, and $V$ be a connected component of $f^{-1}(U)$. If $U \cap S_f = \emptyset$, then $V$ is an unbounded simply connected domain, and $f|V \colon V \to U$ is an orientation-preserving homeomorphism. If $U = \R^2 \setminus W$, where $W$ is a compact simply connected set containing $S_f$, then $V$ is an unbounded domain and one of the following is true:

    \begin{enumerate}
        \item if $f$ is a topological polynomial of degree $d$, then there exist orientation-preserving homeomorphisms $\varphi \colon U \to \D^*$ and $\psi\colon V \to \D^*$ such that $(\varphi \circ (f|V) \circ \psi^{-1})(z) =  z^d$ for all $z \in \D^*$. In particular, $V$ is an annulus, $\R^2 \setminus V$ is compact, and $f|V\colon V \to U$ is a covering map of degree $d$;

        \item if $f$ is transcendental, then there exist orientation-preserving homeomorphisms $\varphi\colon U \to \D^*$ and $\psi \colon V \to \H$ such that $(\varphi \circ (f|V) \circ \psi^{-1})(z) = \exp(z)$ for all $z \in \H$. In particular, $f|V\colon V \to U$ is a universal covering map. 
    \end{enumerate}

    \end{proposition}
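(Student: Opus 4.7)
The plan is to first apply Proposition \ref{prop: pulling back complex structures} and pull back complex structures, so that without loss of generality $f\colon \C \to \C$ is an entire holomorphic map --- either a polynomial of degree $d$ or transcendental, according to the dichotomy. In both parts of the statement, the assumption $U \cap S_f = \emptyset$ makes $f\colon f^{-1}(U) \to U$ a covering map. For any connected component $V$ of $f^{-1}(U)$, the restriction $f|V\colon V \to U$ is a connected cover with $f(V) = U$; since $U$ is unbounded and $f$ is continuous on $\overline V$, the set $V$ must be unbounded as well (otherwise $f(\overline V)$ would be compact while containing the unbounded $U$).

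For the first part, $U$ is simply connected, so every connected cover of $U$ is trivial and $f|V$ is an orientation-preserving homeomorphism onto $U$. For the second part, $U = \R^2 \setminus W$ is homeomorphic to $\D^*$ (since $W$ is compact and simply connected) and has $\pi_1(U) \cong \Z$. By the classification of connected coverings of $\D^*$, there exist orientation-preserving homeomorphisms $\varphi\colon U \to \D^*$ and $\psi\colon V \to Y$ conjugating $f|V$ to one of two model covers: either the finite cyclic cover $z \mapsto z^d$ on $Y = \D^*$ for some $d \in \N$, or the universal cover $\exp\colon \H \to \D^*$.

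It remains to match each case to the correct model. In the polynomial case, $f$ is proper, so $f^{-1}(W)$ is compact; hence $f^{-1}(U) = \C \setminus f^{-1}(W)$ has a unique unbounded component, which coincides with $V$ by unboundedness of every component of $f^{-1}(U)$. The cover $f\colon V \to U$ therefore has total degree $\deg(f) = d$, giving the finite cyclic model, and $\R^2 \setminus V = f^{-1}(W)$ is compact as required. For the transcendental case I would argue by contradiction. If $f|V$ were a finite cover, then since $U$ is conformally $\D^*$ (a doubly connected domain of infinite modulus), so is $V$, and the conformal puncture of $V$ must be mapped under the cover to the conformal puncture of $U$, which sits at $\infty$. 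Since $f$ is entire, this puncture of $V$ cannot lie at any finite point of $\C$ --- $f$ would blow up at such a point --- and so it lies at $\infty$, forcing $V = \C \setminus K$ for a compact connected set $K \subset \C$. Unboundedness of every component of $f^{-1}(U)$ then forces $V$ to be the only component, so $f^{-1}(W) = K$ is compact; combined with properness of the finite cover $f|V \to U$, this would make $f$ proper on $\C$, hence a polynomial --- contradicting transcendence. The main obstacle is this conformal argument pinning the puncture of $V$ at $\infty$, which is the step where the analytic structure of $f$ after pullback enters essentially.
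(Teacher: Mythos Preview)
Your approach is essentially the paper's: reduce to the holomorphic setting via Proposition~\ref{prop: pulling back complex structures}, then invoke the classification of connected covers of $\D^*$ (what the paper obtains by citing Forster through Proposition~\ref{prop: preimages without infinity}). Your argument for unboundedness of $V$, for the first part, and for the polynomial case is correct, and your conformal ``puncture at $\infty$'' argument in the transcendental case is a clean way to do the matching that the paper leaves implicit.

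There is one oversight. Your reduction ``without loss of generality $f\colon \C \to \C$ is an entire holomorphic map'' presupposes parabolic type, but the proposition is stated for arbitrary finite-type maps, and a hyperbolic $f$ (necessarily transcendental) pulls back to a holomorphic $g\colon \D \to \C$, not an entire map. Your contradiction in the transcendental case then needs a small addendum. It is in fact easier: if $f|V$ were a finite cover, $V$ is conformally $\D^*$ via some univalent $\phi\colon \D^* \to V \subset \D$, which extends (by boundedness) to $\phi\colon \D \to \overline{\D}$; the puncture $p = \phi(0)$ cannot lie in $\D$ (your blow-up argument: $g$ is finite there but $g\circ\phi \to \infty$), and cannot lie on $\partial\D$ either, since then $|\phi|$ attains its maximum $1$ at the interior point $0$, forcing $\phi$ constant by the maximum principle. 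So no finite cover exists in the hyperbolic case, and case~(2) holds. With this patch your proof is complete.
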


    \begin{proof}
        The proof is similar to that of Proposition \ref{prop: preimages without infinity}.
    \end{proof}








Note that Proposition \ref{prop: preimages without infinity} implies that every singular value of a topologically holomorphic map $f\colon \R^2 \to \R^2$ of finite type is either critical or asymptotic.


We say that an unbounded simply connected domain $V \subset \R^2$ is an \textit{asymptotic tract} of~$f$ over $y \in S_f$ if there exists a bounded simply connected domain $U \subset \R^2$ such that $U \cap S_f = \overline{U} \cap S_f = \{y\}$ and $V$ is a connected component of $f^{-1}(U)$. Similarly, $V$ is an asymptotic tract of $f$ over $\infty$ if there exists a simply connected compact set $W$ such that $S_f \subset \inter(W)$ and $V$ is a connected component of $f^{-1}(\R^2 \setminus W)$. Note that if $V$ is a tract of $f$, then by Propositions \ref{prop: preimages without infinity} and \ref{prop: preimages with infinity}, 
 $f|V \colon V \to f(V)$ is a universal covering map.
 
Analogous to \cite[Propostion 2.3]{Rempe_Epstein} and proven in a similar way, we have the following isotopy lifting property for entire topologically holomorphic maps of finite type.

\begin{proposition}\label{prop: isotopy lifting property}
    Let $f \colon \R^2 \to \R^2$ and $\widehat{f}\colon \R^2 \to \R^2$ be topologically holomorphic maps of finite type with $\varphi_0 \circ f = \widehat{f} \circ \psi_0$ for some $\varphi_0, \psi_0 \in \Homeo^+(\R^2)$.

    Let  $A \subset \R^2$ be a finite set containing $S_{f}$,  and $\varphi_1 \in \Homeo^+(\R^2)$ be isotopic rel.\ $A$ to $\varphi_0$. Then $\varphi_1 \circ f = \widehat{f} \circ \psi_1$ for some $\psi_1 \in \Homeo^+(\R^2)$ isotopic rel.\ $f^{-1}(A)$ to $\psi_0$.
\end{proposition}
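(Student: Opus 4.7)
The plan is to lift an isotopy from $\varphi_0$ to $\varphi_1$ through the covering map $\widehat{f} \colon \R^2 \setminus \widehat{f}^{-1}(S_{\widehat{f}}) \to \R^2 \setminus S_{\widehat{f}}$ and then continuously extend the lift across $f^{-1}(A)$. To set this up, fix an isotopy $(\varphi_t)_{t \in \I}$ from $\varphi_0$ to $\varphi_1$ rel $A$, so that $\varphi_t|A = \varphi_0|A$ for every $t$. Since $\varphi_0 \circ f = \widehat{f} \circ \psi_0$, one has $\varphi_0(S_f) = S_{\widehat{f}}$, and hence $\varphi_t(S_f) = S_{\widehat{f}}$ for every $t$. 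Define $H \colon (\R^2 \setminus f^{-1}(A)) \times \I \to \R^2$ by $H(z,t) := \varphi_t(f(z))$; for $z \notin f^{-1}(A)$, $f(z) \notin S_f$, and since $\varphi_t$ is a homeomorphism preserving $S_f$ setwise, $\varphi_t(f(z)) \notin S_{\widehat{f}}$. Thus $H$ maps into $\R^2 \setminus S_{\widehat{f}}$, and we can apply the covering homotopy theorem to $\widehat{f}$ with the initial lift $\psi_0|(\R^2 \setminus f^{-1}(A))$ (which takes values in $\R^2 \setminus \widehat{f}^{-1}(\varphi_0(A)) \subset \R^2 \setminus \widehat{f}^{-1}(S_{\widehat{f}})$), producing a unique continuous lift $\widetilde{H} \colon (\R^2 \setminus f^{-1}(A)) \times \I \to \R^2 \setminus \widehat{f}^{-1}(S_{\widehat{f}})$ with $\widetilde{H}(\cdot, 0) = \psi_0|(\R^2 \setminus f^{-1}(A))$.

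Next, extend $\widetilde{H}$ to $\R^2 \times \I$ by setting $\widetilde{H}(x, t) := \psi_0(x)$ for $x \in f^{-1}(A)$ and all $t \in \I$; by construction this extension is constant in $t$ on $f^{-1}(A)$. To verify continuity at $(x, t_0)$ with $x \in f^{-1}(A)$, invoke Definition~\ref{def: topologically holomorphic}: in appropriate local coordinates $f$ looks like $z \mapsto z^d$ near $x$ with $d := \deg(f, x)$, and comparing local degrees in $\varphi_0 \circ f = \widehat{f} \circ \psi_0$ at $x$ forces $\deg(\widehat{f}, \psi_0(x)) = d$. As $(z, t) \to (x, t_0)$, one has $\widehat{f}(\widetilde{H}(z,t)) = \varphi_t(f(z)) \to \varphi_{t_0}(f(x)) = \varphi_0(f(x)) = \widehat{f}(\psi_0(x))$; working in the local $d$-fold power-map model for $\widehat{f}$ near $\psi_0(x)$, the lift $\widetilde{H}$ remains in the small connected component around $\psi_0(x)$ by continuity in $t$ from the base case $\widetilde{H}(\cdot, 0) = \psi_0$, so $\widetilde{H}(z, t) \to \psi_0(x)$.

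It remains to show each $\widetilde{H}(\cdot, t)$ is an orientation-preserving homeomorphism of $\R^2$. Off $f^{-1}(A)$, $\widetilde{H}(\cdot, t)$ is a local orientation-preserving homeomorphism because it is a lift under the covering $\widehat{f}$ there; at $x \in f^{-1}(A)$, the identity $\widehat{f} \circ \widetilde{H}(\cdot, t) = \varphi_t \circ f$ combined with the matching local degrees $d = \deg(\varphi_t \circ f, x) = \deg(\widehat{f}, \psi_0(x))$ forces $\deg(\widetilde{H}(\cdot, t), x) = 1$, so $\widetilde{H}(\cdot, t)$ is a local orientation-preserving homeomorphism on all of $\R^2$. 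Since $\widetilde{H}(\cdot, 0) = \psi_0$ is a global homeomorphism and the family is continuous in $t$, a standard open-and-closed argument using properness shows $\widetilde{H}(\cdot, t) \in \Homeo^+(\R^2)$ for every $t \in \I$. Setting $\psi_1 := \widetilde{H}(\cdot, 1)$ yields $\widehat{f} \circ \psi_1 = \varphi_1 \circ f$, and $\widetilde{H}$ realizes an isotopy from $\psi_0$ to $\psi_1$ that is constant in $t$ on $f^{-1}(A)$, as required. I expect the main obstacle to be the combined continuity/homeomorphism verification across $f^{-1}(A)$, which can be an infinite discrete set when $f$ is transcendental; both parts ultimately rest on the local power-map normal form of Definition~\ref{def: topologically holomorphic} together with the degree identity $\deg(f, x) = \deg(\widehat{f}, \psi_0(x))$ extracted from the original semiconjugacy.
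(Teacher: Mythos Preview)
The paper does not actually prove this proposition; it only says the result is ``analogous to \cite[Proposition~2.3]{Rempe_Epstein} and proven in a similar way.'' Your approach---lift the isotopy through the covering $\widehat{f}|\R^2\setminus\widehat{f}^{-1}(S_{\widehat f})$, extend across $f^{-1}(A)$ by declaring the lift constant there, and check each slice is a homeomorphism---is exactly the standard argument and is correct in outline. The continuity of the extension and the local-homeomorphism verification at points of $f^{-1}(A)$ are fine once you use the tube lemma together with the local power-map model; the degree matching $\deg(f,x)=\deg(\widehat f,\psi_0(x))$ is used correctly.

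The one step that is genuinely underjustified is the last one: the assertion that a ``standard open-and-closed argument using properness'' shows each $\psi_t:=\widetilde H(\cdot,t)$ is a global homeomorphism. Properness of $\psi_t$ does \emph{not} follow directly from the semiconjugacy $\widehat f\circ\psi_t=\varphi_t\circ f$ when $f$ is transcendental: one can have $z_n\to\infty$ with $f(z_n)$ bounded, so $\widehat f(\psi_t(z_n))$ bounded says nothing about $\psi_t(z_n)$. A clean and short way to close this gap is to lift the inverse isotopy as well. Since $\varphi_0^{-1}\circ\widehat f=f\circ\psi_0^{-1}$ and $(\varphi_t^{-1})_{t\in\I}$ is an isotopy rel.\ $\varphi_0(A)\supset S_{\widehat f}$, the same construction produces a continuous family $\chi_t\colon\R^2\to\R^2$ with $f\circ\chi_t=\varphi_t^{-1}\circ\widehat f$ and $\chi_0=\psi_0^{-1}$. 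Then $\widehat f\circ(\psi_t\circ\chi_t)=\widehat f$ and $f\circ(\chi_t\circ\psi_t)=f$, so $\psi_t\circ\chi_t$ and $\chi_t\circ\psi_t$ take values in fibers of $\widehat f$ and $f$, respectively; since these fibers are discrete, both compositions are locally constant, hence constant in $t$, and equal $\id_{\R^2}$ at $t=0$. This gives $\psi_t\in\Homeo^+(\R^2)$ for every $t$, with the required isotopy rel.\ $f^{-1}(A)$.
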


\subsection{Entire Thurston maps}\label{subsec: entire thurston maps}



Let $f\colon X \to X$ be a topologically holomorphic map, where $X$ is a topological surface. The \textit{postsingular set} of $f$ is defined as
$$
    P_f := \bigcup_{n \geq 0} f^{\circ n}(S_f),
$$
and its elements are called \textit{postsingular values}. We say that $f$ is \textit{postsingularly finite} (or, shortly, \textit{psf}) if $P_f$ is finite, i.e., $f \in \mathcal{S}$ and every singular value is (pre-)periodic under $f$.  Postsingularly finite topological polynomials are also called \textit{postcritically finite} (\textit{pcf} in short), and their postsingular values are called \textit{postcritical} since all their singular values are critical values.

Now we are ready to state one of the key definitions of this section.

\begin{definition}\label{def: thurston map}
    A non-injective topologically holomorphic map $f\colon \R^2 \to \R^2$ is called an \textit{entire Thurston map} if it is parabolic and postsingularly finite. 
    
    Given a finite set $A \subset \R^2$ with $P_f \subset A$ and $f(A) \subset A$, we call the pair $(f, A)$ a \textit{marked entire Thurston map} and $A$ its \textit{marked set}.
\end{definition}

We often treat marked Thurston maps the same as usual Thurston maps and use the notation $f\colon (\R^2, A) \righttoleftarrow$. If no marked set is mentioned, we assume it to be $P_f$. 

Similarly to Definition \ref{def: thurston map}, we can introduce the notion of a \textit{rational Thurston map} $f\colon \S^2 \to \S^2$ and a \textit{meromorphic Thurston map} $f\colon \R^2 \to \S^2$. In the literature, the phrase ``Thurston map'' usually refers to a rational Thurston map, but we use it as shorthand for an entire Thurston map instead. 
Natural examples of entire Thurston maps are provided by (holomorphic) postsingularly finite entire maps, particularly by postcritically finite complex polynomials.


\begin{definition}\label{def: thurston equivalence}
    We say that two Thurston maps $f\colon (\R^2, A) \righttoleftarrow$ and $\widehat{f}\colon (\R^2, B) \righttoleftarrow$ are \textit{Thurston} (or \textit{combinatorially}) equivalent, if there exist homeomorphisms $\varphi, \psi \in \Homeo^+(\R^2)$ such that $\varphi(A) = \psi(A) = B$, $\varphi$ and $\psi$ are isotopic rel.\ $A$, and $\varphi \circ f = \widehat{f} \circ \psi$.
\end{definition}

A Thurston map $f\colon (\R^2, A)\righttoleftarrow$ is said to be  \textit{realized} if it is combinatorially equivalent to a postsingularly finite entire map $g$. If $f$ is not realized, we say that it is \textit{obstructed}.





Frequently, Thurston maps we work with are defined uniquely only up to the following notions of equivalence (for instance, when we define Thurston maps using combinatorial constructions).



\begin{definition}\label{def: isotopy of thurston maps}
    Two Thurston maps $f_1\colon (\R^2, A) \righttoleftarrow$ and $f_2 \colon (\R^2, A) \righttoleftarrow$ are called \textit{isotopic} (rel.\ $A$) if there exists $\varphi \in \Homeo_0^+(\R^2, A)$ such that $f_1 = f_2 \circ \varphi$.
\end{definition}


The following proposition provides a simple classification of topologically holomorphic maps with only one singular value, as well as Thurston maps with a single postsingular value.

\begin{proposition}\label{prop: small postsingular set}
    Suppose that $f\colon \R^2 \to \R^2$ is a topologically holomorphic map such that $|S_f| = 1$. Then $f = \varphi^{-1} \circ g \circ \psi$, for some orientation-preserving homeomorphisms  $\varphi, \psi \colon \R^2 \to~\C$ and a unique $g \in \{z \mapsto z^d| d \geq 2\} \cup \{z \mapsto \exp(z)\}$.

    If $f\colon (\R^2, A) \righttoleftarrow$ is a Thurston map with $|A| = 1$, then $f$ is combinatorially equivalent to $z \mapsto z^d$ for some $d \geq 2$. In particular, $A$ consists of a unique fixed critical point of $f$.
\end{proposition}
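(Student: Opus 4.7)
The plan is to analyze $f$ via the covering map arising from the single singular value hypothesis, and to distinguish between the finite-degree (polynomial) and infinite-degree (transcendental) cases, producing the normal forms $z \mapsto z^d$ and $\exp$ respectively. Write $S_f = \{y\}$. Since $y$ is the only singular value, $f$ restricts to a covering map $\R^2 \setminus f^{-1}(y) \to \R^2 \setminus \{y\}$. The base deformation retracts onto a circle, so $\pi_1(\R^2 \setminus \{y\}) \cong \Z$, and its connected covering spaces are classified by the subgroups $d\Z$ for $d \geq 1$ (the finite $d$-fold covers) and $\{0\}$ (the universal cover). The total space $\R^2 \setminus f^{-1}(y)$ is connected because $f^{-1}(y)$ is discrete, so exactly one of these two alternatives occurs.

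In the finite-degree case, where $f$ is a topological polynomial of degree $d$, the cover is an annulus, and comparing Euler characteristics (or counting ends) forces $|f^{-1}(y)| = 1$. Applying Proposition~\ref{prop: preimages without infinity}(1) to a small open disk around $y$, the unique component of its preimage is a topological disk on which $f$ is modeled by $z \mapsto z^d$ on $\D$, with $d \geq 2$ (otherwise $y$ would not be singular). By Proposition~\ref{prop: preimages with infinity}(1), the complementary unbounded region is an annulus on which $f$ is modeled by $z \mapsto z^d$ on $\D^*$. Gluing these two local normal forms along their common boundary will yield orientation-preserving homeomorphisms $\varphi, \psi\colon \R^2 \to \C$ realizing $f = \varphi^{-1} \circ (z \mapsto z^d) \circ \psi$.

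In the universal-cover case, the total space is simply connected; being $\R^2$ minus a discrete set, this forces $f^{-1}(y) = \emptyset$, so $f$ is itself a universal cover of $\R^2 \setminus \{y\}$. By the uniqueness of universal covers, $f$ is then conjugate to the standard model $\exp\colon \C \to \C \setminus \{0\}$. Explicitly, I would choose any orientation-preserving homeomorphism $\varphi\colon \R^2 \to \C$ sending $y$ to $0$, and then use the universal covering property of $\exp$ to lift $\varphi \circ f$ to an orientation-preserving homeomorphism $\psi\colon \R^2 \to \C$ satisfying $\exp \circ \psi = \varphi \circ f$. Uniqueness of $g$ in the statement is immediate: the degree of the covering (finite vs.\ infinite, and if finite, its value) is a topological invariant that distinguishes all members of the list.

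For Part 2, write $A = \{a\}$; forward invariance gives $f(a) = a$, and non-injectivity of $f$ together with $S_f \subseteq P_f \subseteq A$ forces $S_f = \{a\}$, so Part 1 applies. The exponential alternative is ruled out because it would give $f^{-1}(a) = \emptyset$, contradicting $a \in f^{-1}(a)$. Hence $f$ is conjugate to $z \mapsto z^d$ for some $d \geq 2$, and the conjugating homeomorphisms automatically satisfy $\varphi(a) = \psi(a) = 0$: indeed, $a$ is simultaneously the unique critical value (so $\varphi$ sends it to the critical value $0$ of $z^d$) and the unique critical point of $f$ (which $\psi$ must send to $0$). Isotopy of $\varphi$ and $\psi$ rel.\ $\{a\}$ then follows from the path-connectedness of $\Homeo^+(\R^2, \{a\})$ (Alexander trick), completing the combinatorial equivalence. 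The main obstacle I anticipate is the gluing step in the finite-degree case: matching the local conjugacies from Propositions~\ref{prop: preimages without infinity} and~\ref{prop: preimages with infinity} along their common annular boundary requires careful bookkeeping of orientations and parametrizations, although it is otherwise direct given the rigidity of annular covers.
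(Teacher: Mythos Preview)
Your proposal is correct and follows essentially the same approach as the paper. For Part~1 the paper simply says ``essentially follows from Proposition~\ref{prop: preimages with infinity}'' (applied with $W = \{y\}$, which directly yields the $z^d$ model on $\D^*$ in the finite-degree case and the $\exp$ model in the transcendental case, then extends over the puncture), whereas you spell this out more explicitly via the covering-space classification of $\R^2\setminus\{y\}$ and invoke both Propositions~\ref{prop: preimages without infinity} and~\ref{prop: preimages with infinity} before gluing; this is the same argument at a finer level of detail, and your gluing concern is a routine matter rather than a genuine obstacle. For Part~2 your argument and the paper's coincide almost verbatim: rule out the exponential model since a fixed singular value must lie in its own preimage, deduce $\varphi(a)=\psi(a)=0$, and conclude isotopy rel.\ $\{a\}$ from the connectedness of $\Homeo^+(\R^2,\{a\})$.
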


\begin{proof}
    The first part essentially follows from Proposition \ref{prop: preimages with infinity}.


    Suppose now that $f\colon (\R^2,A) \righttoleftarrow$ is a Thurston map with $|A| = 1$. Thus, $f$ has a fixed singular value, so it cannot be of the form $\varphi^{-1} \circ \exp \circ \psi$ for some orientation-preserving homeomorphisms $\varphi \colon \R^2 \to \C$ and $\psi \colon \R^2 \to \C$. Therefore,  $(\varphi \circ f \circ \psi^{-1})(z)  = z^d$ for all $z \in \C$ and $d = \deg(f)$, where $A = \{\varphi(0)\} = \{\psi(0)\}$. In particular, $\varphi$ and $\psi$ are isotopic rel.\ $A$ and $f$ is combinatorially equivalent to $z \mapsto z^d$.
\end{proof}


The dynamics of a Thurston map on its marked set can also be represented visually, in a way that turns out to be useful in study. The following definition makes this precise.

\begin{definition} \label{def: dynamical portrait}
    Let $f\colon (\R^2, A) \righttoleftarrow$ be a Thurston map. The \textit{marked portrait (rel.\ $A$) of $f$} is a weighted directed abstract graph $\P_{f, A}$ such that the vertex set of $\P_{f, A}$ equals $A$, and for each vertex $v \in A$ there exists a unique directed edge from $v$ to $f(v)$ with weight $\deg(f, v)$. Additionally, among all vertices of $\P_{f, A}$, we label the ones that are singular values of $f$.

    We say that two Thurston maps $f\colon (\R^2, A)\righttoleftarrow$ and $\widehat{f}\colon (\R^2, B)\righttoleftarrow$ have \textit{the same marked portrait} if there exists a bijective map $\varphi\colon A \to B$ such that 
    
    \begin{itemize}
        \item $\varphi(S_f) = S_{\widehat{f}}$, and
        \item $\varphi$ is an \textit{isomorphism} between the weighted directed abstract graphs $\P_{f, A}$ and $\P_{\widehat{f}, B}$, i.e., there is an edge $e_{u,v}$ of $\P_{f, A}$ joining $u$ with $v$ if and only if there is an edge $\widehat{e}_{\varphi(u),\varphi(v)}$ of  $\P_{\widehat{f}, B}$ joining $\varphi(u)$ with $\varphi(v)$. The weights of $e_{u,v}$ and $\widehat{e}_{\varphi(u),\varphi(v)}$ coincide.
    \end{itemize}

\end{definition}
In the setting of Definition~\ref{def: dynamical portrait}, if $A = P_f$, then, for simplicity, we denote by $\P_f$ the marked portrait rel.\ $P_f$ of $f$  and call it the  \textit{postsingular portrait} of $f$.

If Thurston maps $f \colon (\R^2, A)\righttoleftarrow$ and $\widehat{f} \colon (\R^2, B)\righttoleftarrow$ have the same marked  portrait, then it is clear that $f|A$ and $\widehat{f}|B$ are conjugate dynamical systems. Combinatorial equivalence of $f$ and $\widehat{f}$ is a sufficient condition for them to have the same marked portrait.

\begin{example}\label{ex: portraits}
    Given below are two postsingularly finite entire maps that will be used as prototypical examples throughout this paper.

    \begin{enumerate}
        \item The map $G_1(z) = \pi \cos (z) / 2$ has no asymptotic values and two critical values $\pm \pi / 2$, with $P_{G_1} = \{0, -\pi/2, \pi/2\}$;
        
        \item The map $G_2(z) = \sqrt{\ln 2}(1-\exp {(z^2)})$ has a unique critical value $0$ and a unique asymptotic value $\sqrt{\ln 2}$, with $P_{G_2} = \{0, -\sqrt{\ln 2}, \sqrt{\ln 2}\}$.
    \end{enumerate}

   Figure \ref{fig: portraits} illustrates $\P_{G_1}$ and $\P_{G_2}$. Singular and non-singular vertices of the corresponding graphs are labeled by solid and hollow squares,  respectively.

\end{example}

\begin{figure}[t]
    \begin{subfigure}[b]{0.45\textwidth}
        \centering
        \includegraphics[width=0.6\textwidth]{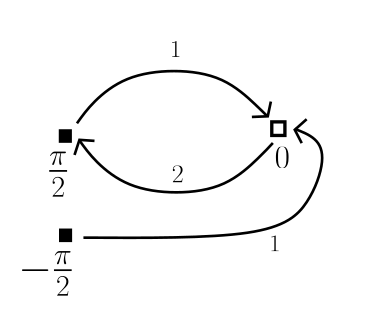}
        \caption{$G_1(z) = \pi \cos(z) / 2$.}
        \label{fig:portrait_cosine}
    \end{subfigure} 
        \begin{subfigure}[b]{0.45\textwidth}
        \centering
        \includegraphics[width=0.7\textwidth]{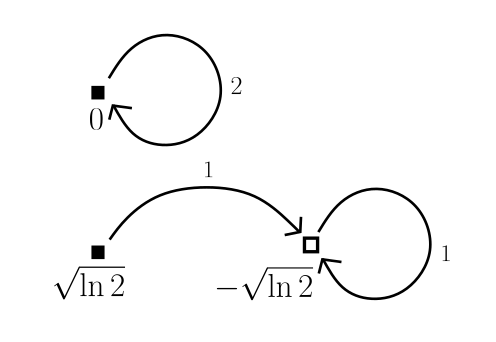}
        \caption{$G_2(z) = \sqrt{\ln 2} (1 - \exp(z^2))$.}
        \label{fig:portrait_multi_error}
    \end{subfigure}
    \caption{Examples of postsingular portraits}
    \label{fig: portraits} 
\end{figure}

\subsection{Quasincoformal maps and Teichm\"{u}ller spaces}\label{subsec: quasiconformal maps and teichmuller spaces}

We say that an orientation-preserving homeomorphism $\varphi\colon \C \to \C$ is \textit{$K$-quasiconformal} for $K\geq 1$ if for every annulus $V \subset \C$ of finite modulus, we have $$\mod(V)/K \leq \mod(\varphi(V)) \leq K \mod(V),$$ where $\mod(V)$ and $\mod(\varphi(V))$ are the moduli of the annuli $V$ and $\varphi(V)$, respectively (see \cite[Proposition 3.2.1]{Hubbard_Book_1} for the definition of modulus). 

The infimum of all values $K$ such that $\varphi$ is $K$-quasiconformal is called the \textit{dilatation} of $\varphi$, and it is denoted by $K(\varphi)$ (in particular, $\varphi$ is $K(\varphi)$-quasiconformal). For further properties and equivalent definitions of quasiconformal maps, see \cite{Ahlfors}, \cite{Hubbard_Book_1}, or \cite{branner_fagella_2014}.

\begin{proposition} \label{prop: qc homeos fixing three points}
    Let $\varphi_n\colon \C \to \C, n \in \N$ be a sequence of $K_n$-quasiconformal maps fixing two distinct points $a, b \in \C$. Suppose that $(K_n)$ converges to 1 as $n\to \infty$, then the sequence~$(\varphi_n)$ converges locally uniformly to $\mathrm{id}_{\C}$. 
\end{proposition}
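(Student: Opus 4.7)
The plan is to reduce the statement to the classical compactness theorem for quasiconformal maps of the Riemann sphere with three fixed points. Since each $\varphi_n\colon \C\to\C$ is a homeomorphism of the plane, it extends uniquely to a homeomorphism $\widehat{\varphi}_n\colon \Cbar\to\Cbar$ fixing $\infty$, and the extended map is still $K_n$-quasiconformal (quasiconformality is a local property, and the modulus condition passes through the point at infinity). Thus each $\widehat{\varphi}_n$ is a $K_n$-quasiconformal self-homeomorphism of $\Cbar$ fixing the three distinct points $a,b,\infty$.

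Next I invoke the standard compactness/normality theorem for quasiconformal maps (see, e.g., \cite[Chapter 4]{Hubbard_Book_1} or \cite{Ahlfors}): a family of $K$-quasiconformal self-homeomorphisms of $\Cbar$ fixing three prescribed points is equicontinuous (with respect to the spherical metric), and the family of $K$-quasiconformal maps is closed under locally uniform limits. In particular, since $K_n\to 1$, the sequence $(\widehat{\varphi}_n)$ is eventually contained in a compact family, so every subsequence of $(\widehat{\varphi}_n)$ has a further subsequence converging uniformly on $\Cbar$ to some homeomorphism $\widehat{\varphi}$. The limit $\widehat{\varphi}$ is $K$-quasiconformal for every $K>1$, hence $1$-quasiconformal, hence conformal by Weyl's lemma. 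Thus $\widehat{\varphi}$ is a Möbius transformation of $\Cbar$, and since it fixes the three distinct points $a,b,\infty$, it must equal $\id_{\Cbar}$.

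Since every subsequential limit of $(\widehat{\varphi}_n)$ in the topology of uniform convergence on $\Cbar$ equals the identity, the whole sequence $(\widehat{\varphi}_n)$ converges uniformly on $\Cbar$ to $\id_{\Cbar}$. Restricting to $\C$ and noting that uniform convergence on $\Cbar$ in the spherical metric implies locally uniform convergence on $\C$ in the Euclidean metric (since the spherical and Euclidean metrics are bi-Lipschitz equivalent on any compact subset of $\C$), we conclude that $(\varphi_n)$ converges locally uniformly on $\C$ to $\id_{\C}$.

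The only subtlety I anticipate is the bookkeeping around the extension to $\Cbar$: one must justify that $\varphi_n$ extends continuously at $\infty$ and that the extension remains $K_n$-quasiconformal. This follows from the removable-singularities-type results for quasiconformal maps, or, more concretely, by conjugating by $z\mapsto 1/z$ and observing that quasiconformality is preserved. Apart from this routine verification, the argument is a direct application of the three-point normalization trick, which is why having two finite fixed points together with the implicit fixed point at infinity is exactly the right amount of normalization.
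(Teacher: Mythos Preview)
Your proof is correct and takes essentially the same approach as the paper: a subsequence--subsequence argument using compactness of normalized quasiconformal maps, followed by Weyl's lemma to identify the limit as conformal, and then rigidity to conclude it is the identity. The only cosmetic difference is that the paper works directly on $\C$ (using that a conformal automorphism of $\C$ fixing two points is $\id_\C$), while you pass to $\Cbar$ to pick up $\infty$ as a third fixed point; both routes invoke the same underlying compactness theorem and lead to the same conclusion.
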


\begin{proof}
    Let us consider an arbitrary subsequence $(\varphi_{n_k})$ of the sequence $(\varphi_n)$. Theorem \cite[Theorem 1.26]{branner_fagella_2014} implies that from $(\varphi_{n_k})$, we can extract a further subsequence converging locally uniformly to the quasiconformal limit $\varphi \colon \C \to \C$. Moreover, from the same theorem, one can easily derive that the dilatation of $\varphi$  equals 1. Thus, by Weyl's Lemma \cite[Theorem 1.14]{branner_fagella_2014}, we see that $\varphi$ is holomorphic; since it fixes two distinct points of $\C$, it coincides with  $\id_{\C}$. This argument applies to an arbitrary subsequence of $(\varphi_n)$. Therefore, we can conclude that the full sequence $(\varphi_n)$ converges locally uniformly (on $\C$) to $\id_{\C}$.
\end{proof}

Let $A \subset \R^2$ be a finite set. Then \textit{Teichm\"{u}ller space of $\R^2$ with the marked set $A$} (or Teichm\"{u}ller space \textit{modelled on} $\R^2 \setminus A$) is defined as
$$
    \Teich (\R^2,A) := \{\varphi | \varphi: \R^2 \rightarrow \C \text{ is an orientation-preserving homeomorphism}\} / \sim
$$
where $\varphi \sim \psi$ if there exists an affine map $M$ such that $\varphi $ is isotopic rel.\ $A$ to $M \circ \psi$. 

Note that $\Teich (\R^2,A)$ is essentially the same space as more commonly known Teichm\"{u}ller space of the topological sphere $\S^2 = \R^2 \cup \{\infty\}$ with the marked set $A \cup \{\infty\}$ (see, for instance, \cite[Section 1.1]{Buff}). However, since transcendental Thurston maps cannot be extended to $\S^2$, the point $\infty$ plays a special role. Hence, for every $\tau = [\varphi] \in \Teich(\S^2, A \cup \{\infty\})$, we make a choice of representative $\varphi$ so that $\varphi(\infty) = \infty$ leading to the definition of $\Teich(\R^2, A)$~above.

    
The space $\Teich(\R^2, A)$ has a natural structure of a complex $(|A| - 2)$-dimensional manifold \cite[Theorem 6.5.1]{Hubbard_Book_1}. It is also possible to define a complete metric, called the \textit{Teichm\"{u}ller metric}, on $\Teich(\R^2,A)$ by setting
$$
    d([\varphi_1],[\varphi_2]) := \inf_{\psi} \log K(\psi),
$$
where the infimum is taken over all quasiconformal homeomorphisms $\psi \colon \C \to \C$ such that $\varphi_1$ and $\psi \circ \varphi_2$ are isotopic rel.\ $A$ (see \cite[Section 6.4]{Hubbard_Book_1} for details).


\subsection{Thurston pullback map}\label{subsec: thurston pullback map}

In this section, we introduce the notion of Thurston pullback map for an (entire) Thurston map and provide its essential properties. This notion is classical for rational Thurston maps (e.g., \cite[Definition 10.6.1]{Hubbard_Book_1}), but it is less well-known in the transcendental setting (see \cite{HSS} for the case of \textit{exponential Thurston maps} and \cite[Section 5.1]{Pfrang_thesis}, \cite{Shemyakov_Thesis} for the general case).

\begin{proposition}\label{prop: def of sigma map}
    Let $f \colon (\R^2, A) \righttoleftarrow$ be a Thurston map and $\varphi \colon \R^2 \to \C$ be an orientation-preserving homeomorphism. Then there exists an orientation-preserving homeomorphism $\psi\colon \R^2 \to \C$ such that $g_\varphi := \varphi \circ f \circ \psi^{-1}\colon \C \to \C$ is an entire holomorphic map. In other words, the following diagram commutes
   $$
    \begin{tikzcd}
        (\R^2, A) \arrow[r,"\psi"] \arrow[d,"f"] & (\C, \psi(A)) \arrow[d,"g_\varphi := \varphi \circ f \circ \psi^{-1}"]\\
     (\R^2, A) \arrow[r,"\varphi"] & (\C, \varphi(A))\\
    \end{tikzcd}
    $$
    The homeomorphism $\psi$ is unique up to post-composition with an affine map. Different choices of $\varphi$ that represent the same point in  $\Teich(\R^2, A)$ yield maps $\psi$ that represent the same point in $\Teich(\R^2, A)$. 
    
    In other words, we have a well-defined map $\sigma_f \colon \Teich(\R^2, A) \to \Teich(\R^2, A)$ such that $\sigma_f([\varphi]) = [\psi]$, called the Thurston pullback map (or simply the $\sigma$-map) associated with $f$. As $\varphi$ ranges across all maps representing a single point in $\Teich(\R^2,A)$, the function $g_\varphi$ is uniquely defined up to pre- and post-composition with affine maps.
\end{proposition}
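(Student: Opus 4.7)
The plan is to construct $\psi$ by pulling back complex structures, then verify the two uniqueness assertions separately.

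\textbf{Existence of $\psi$.} Equip the target $\R^2$ with the complex structure $\mathcal{A}_\varphi := \varphi^*\mathcal{A}_{\mathrm{std}}$, where $\mathcal{A}_{\mathrm{std}}$ is the standard structure on $\C$; by construction $\varphi\colon(\R^2,\mathcal{A}_\varphi)\to\C$ is biholomorphic. By Proposition \ref{prop: pulling back complex structures} applied to the topologically holomorphic map $f$, there is a unique complex structure $f^*\mathcal{A}_\varphi$ on the source $\R^2$ making $f$ holomorphic. Since $f$ is an entire Thurston map, it is parabolic (Definition \ref{def: thurston map}), so by Definition \ref{def: types of maps} the Riemann surface $(\R^2, f^*\mathcal{A}_\varphi)$ is biholomorphic to $\C$. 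Choose any biholomorphism $\psi\colon(\R^2,f^*\mathcal{A}_\varphi)\to\C$; then $g_\varphi := \varphi\circ f\circ\psi^{-1}\colon\C\to\C$ is a composition of three holomorphic maps, hence entire.

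\textbf{Uniqueness of $\psi$ up to affine post-composition.} Suppose $\psi_1,\psi_2$ both yield entire maps $g_{\varphi,i} = \varphi\circ f\circ\psi_i^{-1}$. Then $f = \varphi^{-1}\circ g_{\varphi,i}\circ\psi_i$ is holomorphic with respect to both complex structures $\psi_i^*\mathcal{A}_{\mathrm{std}}$ on the source and $\mathcal{A}_\varphi$ on the target. The uniqueness clause of Proposition~\ref{prop: pulling back complex structures} forces $\psi_1^*\mathcal{A}_{\mathrm{std}} = \psi_2^*\mathcal{A}_{\mathrm{std}} = f^*\mathcal{A}_\varphi$. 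Hence $\psi_2\circ\psi_1^{-1}\colon\C\to\C$ is a biholomorphism of $\C$, which is necessarily affine. This also implies the final sentence of the proposition: two valid choices of $g_\varphi$ coming from a fixed $\varphi$ differ by post-composition with an affine map.

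\textbf{Independence of the representative $\varphi$ in $\Teich(\R^2,A)$.} Suppose $\varphi'$ represents the same class as $\varphi$, so that $\varphi'$ is isotopic rel.\ $A$ to $M\circ\varphi$ for some affine $M\colon\C\to\C$. Starting from the identity $\varphi\circ f = g_\varphi\circ\psi$, pre-compose with $M$ to obtain $(M\circ\varphi)\circ f = (M\circ g_\varphi)\circ\psi$, where $M\circ g_\varphi$ is again entire. Since $\varphi'$ is isotopic rel.\ $A \supset S_f$ to $M\circ\varphi$ (and $A \supset S_f$ because $A \supset P_f \supset S_f$), Proposition~\ref{prop: isotopy lifting property} produces $\psi' \in\Homeo^+(\R^2)$ isotopic rel.\ $f^{-1}(A)$ to $\psi$ with $\varphi'\circ f = (M\circ g_\varphi)\circ\psi'$. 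Because $f(A)\subset A$, we have $A\subset f^{-1}(A)$, so $\psi'$ and $\psi$ are isotopic rel.\ $A$ and represent the same point of $\Teich(\R^2,A)$. Combining this with the affine ambiguity in the previous step shows that $[\psi]$ depends only on $[\varphi]$, defining $\sigma_f$; it also yields that $g_\varphi$ is well-defined up to pre- and post-composition with affine maps as $\varphi$ varies over a Teichm\"uller class.

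The only real subtlety is the last step: naively $\varphi'$ gives a different target complex structure and hence, a priori, a different Riemann surface $(\R^2,f^*\mathcal{A}_{\varphi'})$ and a different $\psi'$. The isotopy lifting property of Proposition \ref{prop: isotopy lifting property} is exactly the tool that reconciles this apparent dependence with the Teichm\"uller equivalence relation, and supplying it is the main point requiring care.
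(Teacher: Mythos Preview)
Your proof is correct and follows essentially the same route as the paper: existence via Proposition~\ref{prop: pulling back complex structures} together with parabolicity, uniqueness of $\psi$ from the uniqueness of the pulled-back structure, and well-definedness of $\sigma_f$ via the isotopy lifting property (Proposition~\ref{prop: isotopy lifting property}); you supply more detail than the paper, but the logical skeleton is identical. One small slip: when $\varphi$ is fixed and $\psi_2=M\circ\psi_1$, you get $g_{\varphi,2}=g_{\varphi,1}\circ M^{-1}$, so the two maps differ by \emph{pre}-composition with an affine map, not post-composition; the full statement about pre- and post-composition only follows once you combine this with the effect of varying $\varphi$ within its Teichm\"uller class, as you correctly do in your third paragraph.
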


\begin{proof}
    The existence of a homeomorphism $\psi$ follows from Proposition \ref{prop: pulling back complex structures} and the Uniformization theorem. It is also clear that $\psi$ is unique up to post-composition with an affine map and that post-composing $\varphi$ with an affine map does not affect $\psi$.
    Due to Proposition \ref{prop: isotopy lifting property}, changing $\varphi$ by isotopy rel.\ $A$ does not change $[\psi]$. Thus, changing $\varphi$ within its equivalence class in $\Teich(\R^2,A)$ does not affect $[\psi]$, showing that $\sigma_f$ introduced as above is well-defined. These arguments also show that $g_\varphi$ is uniquely defined up to pre- and post-composition with affine maps.
\end{proof}
It is important to note that $\sigma_f$ is a holomorphic map with respect to the natural complex structure on $\Teich(\R^2, A)$ (for a proof, see, for instance, \cite[Section 1.3]{Buff}). Moreover, it is well-behaved with respect to the Teichm\"{u}ller metric, as the following proposition suggests. 

\begin{proposition}\label{prop: sigma is contracting}
    Let $f\colon (\R^2, A) \righttoleftarrow$ be a Thurston map, then

    \begin{enumerate}
        \item $\sigma_f$ is 1-Lipschitz, i.e., $d(\sigma_f(\tau_1), \sigma_f(\tau_2)) \leq d(\tau_1, \tau_2)$ for every $\tau_1, \tau_2 \in \Teich(\R^2, A)$;
    
        \item if $f$ is transcendental, then $\sigma_f$ is locally uniformly contracting, i.e., for any compact set $K \subset \Teich(\R^2, A)$ there exists $\varepsilon_K > 0$ such that $d(\sigma_f(\tau_1), \sigma_f(\tau_2))~\leq~(1~-~\varepsilon_K)~d(\tau_1, \tau_2)$ for every $\tau_1, \tau_2 \in K$;

        \item if $f$ is polynomial, then $\sigma^{\circ 2}_f$ is locally uniformly contracting.
    \end{enumerate}
    
\end{proposition}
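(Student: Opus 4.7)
The plan is to factor $\sigma_f$ through an auxiliary Teichm\"uller space and reduce the statement to contraction properties of holomorphic maps between Teichm\"uller spaces. Since $f(A) \subset A$ we have $A \subset f^{-1}(A)$, so I consider the lifted pullback
\[
    f^{*} \colon \Teich(\R^2, A) \to \Teich(\R^2, f^{-1}(A)), \qquad [\varphi] \mapsto [\psi],
\]
where $\psi$ is the homeomorphism produced by Proposition \ref{prop: def of sigma map}, and the forgetful projection $\pi \colon \Teich(\R^2, f^{-1}(A)) \to \Teich(\R^2, A)$ that discards the marking of points in $f^{-1}(A) \setminus A$. By construction, $\sigma_f = \pi \circ f^{*}$. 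When $f$ is transcendental, $f^{-1}(A)$ is countably infinite, so $\Teich(\R^2, f^{-1}(A))$ is of infinite analytic type; one handles this by working with Beltrami differentials on $\R^2 \setminus f^{-1}(A)$ and invoking the infinite-dimensional analogues of Royden's theorem and the Schwarz lemma (available via Earle--Kra--Krushkal).

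I would first show that $f^{*}$ is an isometric embedding. Given a qc map $h \colon \C \to \C$ with $\varphi_1 \sim h \circ \varphi_2$ rel.\ $A$, I pull back the Beltrami coefficient of $h$ via the straightening $g_{\varphi_1}$ and solve by the measurable Riemann mapping theorem to obtain a qc map $\tilde h$ of the same dilatation satisfying $\tilde h \circ \psi_2 \sim \psi_1$ rel.\ $f^{-1}(A)$; Proposition \ref{prop: isotopy lifting property} makes this isotopy descent rigorous. Conversely, any qc witness for the distance between $f^{*}(\tau_1)$ and $f^{*}(\tau_2)$ has a Beltrami coefficient invariant under the deck transformations of the cover $f \colon \R^2 \setminus f^{-1}(A) \to \R^2 \setminus A$, hence descends to $\C$ with the same dilatation. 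The forgetful projection $\pi$ is holomorphic between Teichm\"uller spaces and therefore $1$-Lipschitz by Royden's identification of the Teichm\"uller and Kobayashi metrics. Composing, $\sigma_f = \pi \circ f^{*}$ is $1$-Lipschitz, which proves~(1).

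For parts (2) and~(3), I would invoke the following principle: a holomorphic map between Teichm\"uller spaces is strictly contracting on any prescribed compact set unless it is a local isometric embedding there. Since $f^{*}$ is already an isometric embedding, this reduces to studying the fibers of $\pi$. In the transcendental case, $|f^{-1}(A) \setminus A| = \infty$, so $\pi$ has infinite-dimensional fibers and is strictly contracting on compacts, giving~(2). In the polynomial case, $\pi$ has only finite-dimensional fibers and $\sigma_f$ may fail to contract on the first iterate (for instance when $\Teich(\R^2, A)$ is trivial, as for $f(z) = z^d$ with $A = \{0\}$, or in other orbifold-degenerate situations). I would therefore pass to the second iterate using the identity $\sigma_f^{\circ 2} = \sigma_{f^{\circ 2}}$, which follows by unwrapping the commuting diagram of Proposition \ref{prop: def of sigma map} twice, and rerun the same argument with $f^{\circ 2}$ and its preimage set $f^{-2}(A)$; the resulting forgetful projection has sufficiently rich fibers to force strict contraction on compacts, yielding~(3).

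The main obstacle is the rigorous treatment of the transcendental case, where the intermediate Teichm\"uller space is of infinite analytic type. One must verify that Royden's equality of the Teichm\"uller and Kobayashi metrics, as well as the Schwarz-type strict contraction on compacts, extend to this setting with fibers of countably infinite dimension; these extensions are available in the literature but require careful citation and verification of hypotheses. A more concrete alternative, bypassing infinite-type Teichm\"uller theory entirely, would be to manipulate pulled-back Beltrami coefficients directly and extract contraction from the infinitude of deck transformations of the cover $f \colon \R^2 \setminus f^{-1}(A) \to \R^2 \setminus A$ together with compactness of Teichm\"uller balls; this is more hands-on but requires sharper and more delicate estimates.
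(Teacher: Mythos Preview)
The paper does not actually prove this proposition: it simply cites Hubbard for part~(1) (holomorphic self-maps of Teichm\"uller space are $1$-Lipschitz), HSS (and Pfrang, Shemyakov) for part~(2), and Hubbard again for part~(3). Your proposal is an attempt to sketch the content of those references, and the factorization $\sigma_f = \pi \circ f^{*}$ is indeed the right skeleton. For part~(1) in particular your argument is more elaborate than needed, since $\sigma_f$ is itself holomorphic and Royden applies directly.

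There is, however, a genuine error in your isometry argument for $f^{*}$. You claim that any qc witness for the distance between $f^{*}(\tau_1)$ and $f^{*}(\tau_2)$ has a Beltrami coefficient invariant under the deck group of $f\colon \R^2\setminus f^{-1}(A)\to\R^2\setminus A$. This is false: a qc witness is an \emph{arbitrary} qc map in the right isotopy class rel.\ $f^{-1}(A)$, and nothing forces its Beltrami coefficient to be equivariant. The correct argument (in the cited references) is infinitesimal: one computes the coderivative of $\sigma_f$ as the pushforward $g_*$ on integrable holomorphic quadratic differentials and shows $\|g_* q\|_1 \le \|q\|_1$, with strict inequality under the appropriate hypothesis. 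Your deck-invariance claim would need to be replaced by this cotangent-space analysis.

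Your arguments for parts~(2) and~(3) are also too soft. The principle ``strictly contracting on compacts unless a local isometric embedding'' is not sharp enough to finish: in the polynomial case the actual Douady--Hubbard argument identifies when $\|g_* q\|_1 = \|q\|_1$ (it forces the orbifold of $f$ to be parabolic, which cannot happen for an entire Thurston map in the sense of Definition~\ref{def: thurston map}), and the passage to $\sigma_f^{\circ 2}$ is needed for a different reason than ``richer fibers''. In the transcendental case, you correctly identify the obstacle (infinite-type intermediate space) but do not resolve it; HSS bypasses this not via general infinite-type Teichm\"uller theory but via a direct estimate showing $\|g_* q\|_1 < \|q\|_1$ whenever $g$ has an asymptotic value, using that the tracts carry positive mass of $|q|$.
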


\begin{proof}
    The first item follows from the fact that every holomorphic map on $\Teich(\R^2, A)$ is 1-Lipschitz \cite[Corollary 6.10.7]{Hubbard_Book_1}.
    The second item follows from \cite[Section 3.2]{HSS}
    (see also \cite[Chapter 5.1]{Pfrang_thesis} and \cite{Shemyakov_Thesis}). The last item follows from \cite[Corollary~10.7.8]{Hubbard_Book_2}.
\end{proof}    
    

\begin{remark}\label{rem: remark on sigma maps}
    If Thurston maps $f \colon (\R^2, A) \righttoleftarrow$ and $\widehat{f} \colon (\R^2, B) \righttoleftarrow$ are Thurston equivalent, then $\sigma_{f} \colon \Teich(\R^2, A) \to \Teich(\R^2, A)$ and $\sigma_{\widehat{f}} \colon \Teich (\R^2, B) \to \Teich(\R^2, B)$ are conjugate by a biholomorphism. In the special case where $A = B$ and $f$ is isotopic rel.\ $A$ to $\widehat{f}$, we have~$\sigma_{f} = \sigma_{\widehat{f}}$. 
\end{remark}

One of the most crucial properties of Thurston pullback maps is summarized by the following proposition, and follows from Definition \ref{def: thurston equivalence} and Proposition \ref{prop: def of sigma map} (also cf. \cite[Theorem 10.6.4]{Hubbard_Book_2} and \cite[Theorem 3.1]{HSS}).

\begin{proposition}\label{prop: fixed point of sigma}
    A Thurston map $f\colon (\R^2, A) \righttoleftarrow$ is realized if and only if the  Thurston pullback map $\sigma_f\colon \Teich(\R^2, A) \to \Teich(\R^2, A)$ has a fixed point $\tau \in \Teich(\R^2, A)$.
\end{proposition}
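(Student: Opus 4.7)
The plan is to unwind Definition \ref{def: thurston equivalence} (Thurston equivalence) and Proposition \ref{prop: def of sigma map} (definition of $\sigma_f$) and show that each direction of the equivalence is essentially tautological once one is careful about which relation (genuine equality, isotopy rel.\ $A$, or difference by post-composition with an affine map) one is using. Both directions revolve around the commutative diagram $\varphi \circ f = g_\varphi \circ \psi$ supplied by Proposition \ref{prop: def of sigma map}.

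For the ``if'' direction, suppose $\sigma_f(\tau) = \tau$ with $\tau = [\varphi]$. By Proposition \ref{prop: def of sigma map}, one can choose $\psi$ so that $g_\varphi = \varphi \circ f \circ \psi^{-1}$ is entire holomorphic and $[\psi] = \sigma_f([\varphi]) = [\varphi]$ in $\Teich(\R^2, A)$. This last equality means there exists an affine map $M$ such that $\varphi$ is isotopic rel.\ $A$ to $M \circ \psi$. I would then absorb $M$ into $g_\varphi$, replacing the entire holomorphic map $g_\varphi$ by $\widetilde g := g_\varphi \circ M^{-1}$ (still entire holomorphic) and $\psi$ by $\widetilde\psi := M \circ \psi$. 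The equation becomes $\varphi \circ f = \widetilde g \circ \widetilde\psi$ with $\varphi$ and $\widetilde\psi$ isotopic rel.\ $A$, and in particular $\varphi|A = \widetilde\psi|A$, so both send $A$ bijectively to a common finite set $B \subset \C$. Since $P_{\widetilde g} = \varphi(P_f) \subseteq B$ is finite, $\widetilde g$ is postsingularly finite. This verifies the conditions of Definition \ref{def: thurston equivalence}, exhibiting a Thurston equivalence between $f$ and the psf entire map $\widetilde g$.

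For the ``only if'' direction, suppose $f$ is realized, witnessed by a psf entire map $g$ and homeomorphisms $\varphi, \psi$ with $\varphi(A) = \psi(A)$, $\varphi \sim \psi$ rel.\ $A$, and $\varphi \circ f = g \circ \psi$. Apply Proposition \ref{prop: def of sigma map} to the representative $\varphi$ of $[\varphi] \in \Teich(\R^2, A)$: since this very $\psi$ already makes $g_\varphi := \varphi \circ f \circ \psi^{-1} = g$ entire holomorphic, the uniqueness clause (that such $\psi$ is unique up to post-composition with an affine map) forces $\sigma_f([\varphi]) = [\psi]$. But $\psi \sim \varphi$ rel.\ $A$ gives $[\psi] = [\varphi]$, so $[\varphi]$ is a fixed point of $\sigma_f$.

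The only real subtlety — what I would call the main obstacle — is bookkeeping: carefully distinguishing at each step between homeomorphisms that are literally equal, those that are isotopic rel.\ $A$, and those that differ by post-composition with an affine map, since each of these induces equivalences in different objects (the subset $B \subset \C$ of marked points, the class in $\Teich(\R^2,A)$, the particular entire holomorphic model $g_\varphi$). The freedom granted by Proposition \ref{prop: def of sigma map} to absorb the ambiguous affine factor into the holomorphic model $g_\varphi$ is exactly what makes both directions go through with no further work.
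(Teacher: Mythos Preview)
Your proposal is correct and takes essentially the same approach as the paper's own proof: both directions unwind Definition \ref{def: thurston equivalence} and Proposition \ref{prop: def of sigma map} around the same commutative diagram, and the only nontrivial move---absorbing the affine ambiguity in $\psi$ to achieve $\varphi|A = \psi|A$---is handled identically (the paper phrases it as ``by post-composing $\psi$ with an affine map we can assume that $\varphi|A = \psi|A$'', while you name the map $M$ explicitly and write $\widetilde g = g_\varphi \circ M^{-1}$, $\widetilde\psi = M \circ \psi$).
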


\begin{proof}
    Suppose that $f$ is realized by a postsingularly finite entire map $g\colon (\C, B) \righttoleftarrow$. Then by Definition~\ref{def: thurston equivalence}, there exist orientation-preserving homeomorphisms $\varphi, \psi \colon \R^2 \to \C$ such that $\varphi(A) = \psi(A) = B$, $\varphi$ and $\psi$ are isotopic rel. $A$, and $\varphi \circ f = g \circ \psi$. Clearly, $\tau = [\varphi] = [\psi] \in \Teich(\R^2, A)$ is a fixed point of $\sigma_f$.

    Now suppose that $\tau = [\varphi] \in \Teich(\R^2, A)$ is a fixed point of $\sigma_f$. Let $\psi\colon \R^2 \to \C$ be an orientation-preserving homeomorphism so that $g_\varphi = \varphi \circ f \circ \psi^{-1}$ is entire. Since $[\varphi] = [\psi]$, by post-composing $\psi$ with an affine map we can assume that $\varphi|A = \psi|A$, which in turn implies that $\varphi$ and $\psi$ are isotopic rel. $A$. Thus, $g_\varphi\colon (\C, \varphi(A)) \righttoleftarrow$ is a postsingularly finite entire map Thurston equivalent to $f$.
\end{proof}

\begin{remark}\label{rem: two postsingular values}
    Since $\Teich(\R^2, A)$ consists of one point for $|A| = 2$, Proposition \ref{prop: fixed point of sigma} immediately implies that every Thurston map with two postsingular values is realized.
\end{remark}

Finally, Propositions~\ref{prop: sigma is contracting} and \ref{prop: fixed point of sigma} lead to the following result called \textit{Thurston's rigidity} (cf. \cite[Corollary 10.7.8]{Hubbard_Book_1}).

\begin{proposition}\label{prop: rigidity}
    Let $f \colon (\R^2, A) \righttoleftarrow$ be a Thurston map, then
    \begin{enumerate}
        \item $\sigma_f$ can have at most one fixed point;

        \item \label{it: rigidity} if $g_1\colon (\C, A_1) \righttoleftarrow$ and $g_2\colon (\C, A_2) \righttoleftarrow$ are postsingularly finite entire maps Thurston equivalent to $f$, then $g_1$ and $g_2$ are conjugate by an affine map.
    \end{enumerate}
\end{proposition}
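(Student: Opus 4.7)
For part (1), I would argue by contradiction using the local uniform contraction of $\sigma_f$ (or $\sigma_f^{\circ 2}$). Suppose $\tau_1, \tau_2 \in \Teich(\R^2, A)$ were two distinct fixed points of $\sigma_f$. The set $K := \{\tau_1, \tau_2\}$ is compact, so when $f$ is transcendental, Proposition~\ref{prop: sigma is contracting}(2) provides $\varepsilon_K > 0$ with
\[
    d(\tau_1, \tau_2) = d(\sigma_f(\tau_1), \sigma_f(\tau_2)) \leq (1 - \varepsilon_K) \, d(\tau_1, \tau_2),
\]
a contradiction. When $f$ is a topological polynomial, the same estimate applied to $\sigma_f^{\circ 2}$ via Proposition~\ref{prop: sigma is contracting}(3) suffices, since any fixed point of $\sigma_f$ is a fixed point of $\sigma_f^{\circ 2}$. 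The degenerate case $|A| \leq 2$ is trivial by Remark~\ref{rem: two postsingular values}.

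For part (2), my plan is to transfer the question to $\sigma_{g_1}$ and then extract an affine conjugacy by a rigidity argument. Since $g_1$ and $g_2$ are both Thurston equivalent to $f$, they are Thurston equivalent to each other, so there exist orientation-preserving homeomorphisms $\varphi, \psi\colon \R^2 \to \C$ with $\varphi \circ g_1 = g_2 \circ \psi$, $\varphi \sim \psi$ rel.\ $A_1$, and $\varphi(A_1) = \psi(A_1) = A_2$. By Remark~\ref{rem: remark on sigma maps}, $\sigma_{g_1}\colon \Teich(\R^2, A_1)\to\Teich(\R^2, A_1)$ is biholomorphically conjugate to $\sigma_f$, so by part~(1) it has at most one fixed point. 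Both the class $[\id]$ of the canonical identification $\R^2 \to \C$ (coming from $\id \circ g_1 = g_1 \circ \id$) and $[\varphi] = [\psi]$ are fixed points of $\sigma_{g_1}$, so they must agree; equivalently, $\varphi$ is isotopic rel.\ $A_1$ to some affine map $M\colon \C \to \C$.

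Next I would apply the isotopy-lifting property (Proposition~\ref{prop: isotopy lifting property}) to the relation $\varphi \circ g_1 = g_2 \circ \psi$ with $\varphi \sim M$ rel.\ $A_1 \supseteq S_{g_1}$ to obtain a homeomorphism $\psi'$ satisfying $M \circ g_1 = g_2 \circ \psi'$ and $\psi' \sim \psi$ rel.\ $g_1^{-1}(A_1)$. The key observation is that $\psi'$ is then forced to be holomorphic: since $M \circ g_1$ is entire and $g_2$ is a local biholomorphism away from its discrete critical set, near any point $q$ with $\psi'(q)$ not critical for $g_2$, we can write $\psi' = g_2^{-1} \circ (M \circ g_1)$ using a local holomorphic inverse of $g_2$; by continuity and removable singularities, $\psi'$ extends to a biholomorphism of $\C$, hence an affine map. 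Since $\psi' \sim \psi \sim \varphi \sim M$ rel.\ $A_1$, the two affine maps $\psi'$ and $M$ agree on $A_1$; when $|A_1| \geq 2$, this forces $\psi' = M$, and the relation $M \circ g_1 = g_2 \circ M$ gives the desired affine conjugacy $g_2 = M \circ g_1 \circ M^{-1}$.

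The main obstacle, and last corner to dispatch, is $|A_1| = 1$. Here I would invoke Proposition~\ref{prop: small postsingular set}: $g_1$ must be a polynomial of degree $d = \deg(g_1)$ with a unique fixed critical point $z_0$ of full local degree $d$, forcing $g_1(z) = a(z - z_0)^d + z_0$ for some $a \neq 0$, which a single affine change of coordinates normalizes to $z \mapsto z^d$; the same is true for $g_2$ with the same $d$ (preserved under Thurston equivalence), so $g_1$ and $g_2$ are affinely conjugate to a common normal form, hence to each other. The most delicate step of the whole argument is the holomorphy of $\psi'$, which should be written out carefully using the local-inverse argument combined with removable singularities.
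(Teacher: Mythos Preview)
Your proposal is correct and essentially what the paper has in mind. The paper itself gives no detailed argument, stating only that the result follows from Propositions~\ref{prop: sigma is contracting} and~\ref{prop: fixed point of sigma} (together with the reference to \cite[Corollary~10.7.8]{Hubbard_Book_2}); your write-up is precisely the natural unpacking of that hint---uniqueness via local uniform contraction for part~(1), and for part~(2) the observation that both $[\id]$ and $[\varphi]$ are fixed by $\sigma_{g_1}$, forcing $\varphi$ to be isotopic rel.\ $A_1$ to an affine map, after which isotopy lifting and a removable-singularities argument pin down the conjugacy. Your separate treatment of $|A_1|=1$ via Proposition~\ref{prop: small postsingular set} is also the right move.
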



\subsection{Admissible quadruples}\label{subsec: admissible quadruples}

This section discusses a combinatorial construction of entire topologically holomorphic maps of finite type. More precisely, we introduce combinatorial models based on planar embedded graphs called \textit{admissible quadruples} (Definition \ref{def:  admissible quadruple}) and show that every such quadruple defines an entire topologically holomorphic map of finite type and vice versa (Propositions \ref{prop: quadruple by map} and \ref{prop: map by quadruple}). We also introduce a notion of equivalence for admissible quadruples and show that equivalent quadruples correspond to equivalent maps (see Definition \ref{def: equivalence of quadruples} and Proposition \ref{prop: equivalence of quadruples and maps} for details). Although the framework introduced in this section is non-dynamical, in Section~\ref{subsec: polynomial approximations} we will showcase its utility in exploring dynamical properties of Thurston maps.


The constructions in this section are very similar to \textit{line complexes} (see \cite[Section~2]{nevanlinna} and \cite[Section XI]{Goldberg}) and \textit{rose maps} used for defining topological polynomials in \cite{portraits}. We refrain from borrowing the above objects directly for the sake of continuity. Additionally, we provide short proofs for all the necessary statements, even though they are mostly folklore in the context of the objects mentioned above. 

Let $\Rose$ be a counterclockwise directed rose graph that surrounds a finite set $A \subset \R^2$ (for a definition, see Section~\ref{subsec: planar embedded graphs}) with $m$ petals $p_1, p_2, \dots, p_m$ written in the counterclockwise order around the center $t$ of $\Rose$. Denote by $P_j$ the face of $\Rose$ surrounded by $p_j$ for each $j = 1,2, \dots, m$, and by $P_\infty$ the unique unbounded face of $\Rose$.  
Label the points $A$ as $a_1,a_2,..., a_m$, such that $a_j \in P_j$ for each $j = 1,2, \dots, m$.

Let $\Gamma$ be a $2m$-regular connected planar embedded graph, and $\Phi: \Gamma \rightarrow \Rose$ be a covering map such that $\Phi(v) = t$ for all $v \in V(\Gamma)$ and $\Phi(e) \in E(\Rose)$ for all $e \in E(\Gamma)$. We view the graph ~$\Gamma$~as a directed graph since the map $\Phi$ naturally induces an orientation on $\Gamma$ (see Section~\ref{subsec: planar embedded graphs}).


Suppose that $m \geq 2$. Consider an arbitrary face $F \in F(\Gamma)$. We label $F$ by $P_j$ if $\Phi(E(\partial F)) = \{p_j\}$, and if no such $j$ exists (i.e., $|\Phi(E(\partial F))| > 1$) we label $F$ by $P_\infty$. We denote by $\Gamma^*$ the directed planar embedded graph obtained by subdividing each edge of $\Gamma$. 

\begin{definition}\label{def: admissible quadruple}
    Consider a quadruple $\Delta = (A, \Rose, \Gamma, \Phi)$, where $A,\Rose,\Gamma$, and $\Phi$ are as above. We say that $\Delta$ is an \textit{admissible quadruple} if $m = 1$ or if the following conditions are satisfied: 
    \begin{enumerate}
        \item every face $F \in F(\Gamma)$ labeled by $P_\infty$ is unbounded, and
        \item \label{it: admissiblity} for each $v \in V(\Gamma)$, the set of edges of $\Gamma^*$ incident to $v$ can be written in the counterclockwise order as $e_1 = e_{2m+1}, e_2, e_3,..,e_{2m}$ such that the following conditions are satisfied for each $j = 1, 2, \dots, m$:
        \begin{itemize}
        \item $e_{2j - 1}$ is outgoing at $v$ and $e_{2j}$ is incoming at $v$;
        
        
        \item the edges $e_{2j-1}$ and $e_{2j}$ belong to the boundary of the same face $F \in F(\Gamma)$ labelled by $P_j$;
        
        \item the edges $e_{2j}$ and $e_{2j + 1}$ belong to the boundary of the same face $F \in F(\Gamma)$ labelled by $P_\infty$.
        \end{itemize}
    \end{enumerate}

    The set $A$ is called the \textit{marked set} of $\Delta$.

\end{definition}

\begin{remark}\label{rem: boundary}
    Suppose that $\Delta = (A, \Rose, \Gamma, \Phi)$ is an admissible quadruple and $F$ is a face of $\Gamma$. If $F$ is not labelled by $P_\infty$, then clearly, $\partial F$ is a counterclockwise directed cycle if $F$ is bounded, and an infinite unilaterally connected chain, otherwise. If $F$ is labelled by $P_\infty$, then $\partial F$ is a unilaterally connected graph.
\end{remark}

Natural examples of admissible quadruples are provided by preimages of rose graphs under entire topologically holomorphic maps of finite type. More precisely, suppose that $f\colon~\R^2~\to~\R^2$ is topologically holomorphic and rose graph $\Rose$ surrounds the set $A$, where $S_f \subset A$. Denote by $\Delta(A, \Rose, f)$ the quadruple $(A, \Rose, f^{-1}(\Rose), \Phi_{\Rose, f})$, where $\Phi_{\Rose, f}(x) = f(x)$ for each $x \in f^{-1}(\Rose)$. 

\begin{proposition}\label{prop: quadruple by map}
    Let $f$, $A$, and $\Rose$ be as above. Then $\Delta(A, \Rose, f)$ is an admissible quadruple. Moreover, if $m \geq 2$, then for each face $F$ of $f^{-1}(\Rose)$ the following properties hold:
    \begin{enumerate}
        \item if $F$ is bounded and labeled by $P_j$ for some $j \in \{1,2,...,m\}$, then $f(F) = P_j$ and $|F \cap f^{-1}(A)| = 1$. If $|V(\partial F)| = 1$, then $F$ does not contain any critical points of $f$ and $f|F$ is injective; otherwise, $F$ contains a unique critical point $x_F$ with $\deg(f, x_F) = |V(\partial F)|$;
        

        \item if $F$ is unbounded and labeled by $P_j$ for some $j \in \{1,2,...,m\}$, then $f(F) = P_j \setminus \{a_j\}$ and $F\cap f^{-1}(A) = \emptyset$. In particular, $a_j \in S_f$ and $F$ is an asymptotic tract of $f$~over~$a_j$;

        
        \item if $F$ is labeled by $P_\infty$, then $f(F) = P_\infty$ and $F \cap f^{-1}(A) = \emptyset$. In particular, $f$ restricts to a universal covering map from $F$ to $P_\infty$.
        
    \end{enumerate}
\end{proposition}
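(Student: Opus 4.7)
My plan rests on two initial observations about the graph $\Gamma := f^{-1}(\Rose)$. First, the realization of $\Rose$ is disjoint from $A$ because each $a_j$ lies in the open bounded face $P_j$; consequently $\Rose \cap S_f = \emptyset$. Second, since $t$ is a vertex of $\Rose$, also $t \notin S_f$, so $\deg(f, v) = 1$ at every vertex $v \in V(\Gamma) = f^{-1}(t)$ and $f$ is an orientation-preserving local homeomorphism near each such $v$. From the first observation I would fix a small neighborhood $U$ of $\Rose$ with $\overline{U} \cap A = \emptyset$; then $f|_{f^{-1}(U)} \to U$ is a restriction of the covering $f|_{\R^2 \setminus f^{-1}(S_f)}$, which implies that $\Phi_{\Rose,f}$ is a covering map and that the edges of $\Gamma$ are precisely the $f$-lifts of the petals $p_j$. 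From the second observation, the cyclic order of the $2m$ edges of $\Gamma^*$ incident to each vertex $v$ is the pullback under the local inverse of $f$ of the cyclic order at $t$ in the subdivision of $\Rose$; this immediately yields $2m$-regularity of $\Gamma$ and admissibility condition~(\ref{it: admissiblity}) of Definition~\ref{def:  admissible quadruple} at every vertex.

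The next step is to show that $\Gamma$ is connected, which I expect to be the most delicate part of the argument. For this I would use that $\Rose$ is a strong deformation retract of $\R^2 \setminus A$: each $P_j \setminus \{a_j\}$ is a topological punctured disk retracting onto $p_j$, and $P_\infty$ (whose closure in $\R^2$ is $P_\infty \cup \Rose$) retracts onto its boundary $\Rose$; these piecewise retractions glue to a strong deformation retraction $H$ of $\R^2 \setminus A$ onto $\Rose$. Since $f|_{\R^2 \setminus f^{-1}(A)} \to \R^2 \setminus A$ is a covering map, the covering homotopy theorem allows me to lift $H$ to a strong deformation retraction of $\R^2 \setminus f^{-1}(A)$ onto $\Gamma$. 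Because $f^{-1}(A)$ is discrete in $\R^2$, the complement $\R^2 \setminus f^{-1}(A)$ is path-connected, hence so is its retract $\Gamma$.

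Finally, admissibility condition~(1) and the three face properties will all follow from Propositions~\ref{prop: preimages without infinity} and~\ref{prop: preimages with infinity}, applied face by face. For a face $F$ of $\Gamma$ labeled by $P_j$ with $j \in \{1, \ldots, m\}$, I would apply Proposition~\ref{prop: preimages without infinity} with $U = P_j$ (a bounded simply connected domain satisfying $U \cap S_f = \{a_j\}$ and $\partial U \cap S_f = \emptyset$): the bounded alternative gives the $z \mapsto z^d$ model, in which the unique preimage of $a_j$ inside $F$ is the critical point $x_F$ (absent if $d = 1$), and lifting the boundary loop $p_j$ via this model forces $|V(\partial F)| = d$; the unbounded alternative gives the exponential model, realizing $F$ as an asymptotic tract of $f$ over $a_j$. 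For faces labeled $P_\infty$, I would set $W := \R^2 \setminus P_\infty = \bigcup_{j=1}^m \overline{P_j}$, which is compact, contains $S_f$, and is simply connected because it deformation retracts onto $\{t\}$ (each $\overline{P_j}$ is a closed disk, and the disks share only the common center $t$); applying Proposition~\ref{prop: preimages with infinity} with $U = P_\infty$ then provides the desired description and in particular the unboundedness of each such face, which is admissibility condition~(1). The asserted identities for $F \cap f^{-1}(A)$ reduce in every case to $A \cap f(F)$ being either $\{a_j\}$ or $\emptyset$.
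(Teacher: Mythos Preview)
Your proposal is correct and very close to the paper's own proof: you handle the $2m$-regularity, the local cyclic-order condition~(\ref{it: admissiblity}), and the face-by-face analysis exactly as the paper does, by invoking local injectivity of $f$ at the vertices of $\Gamma$ and Propositions~\ref{prop: preimages without infinity} and~\ref{prop: preimages with infinity}.

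The one genuine difference is in the connectivity argument for $\Gamma$. The paper argues directly: given two vertices $u,v$ of $\Gamma$, take any path $\alpha$ in $\R^2\setminus f^{-1}(A)$ joining them, observe that $f\circ\alpha$ is a loop at $t$ homotopic rel.\ $A$ to a concatenation of parametrized petals, and lift that homotopy to obtain a path inside $\Gamma$ from $u$ to $v$. Your approach is more global: you build a strong deformation retraction of $\R^2\setminus A$ onto $\Rose$, lift it via the covering $f|\R^2\setminus f^{-1}(A)$ to a retraction of $\R^2\setminus f^{-1}(A)$ onto $\Gamma$, and conclude connectivity of $\Gamma$ from connectivity of $\R^2\setminus f^{-1}(A)$. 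Both arguments are standard and essentially equivalent in difficulty; your version has the mild bonus of yielding, for free, that $\Gamma$ is a deformation retract of $\R^2\setminus f^{-1}(A)$, a fact the paper states and uses just after this proposition anyway.
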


\begin{proof}
    First, we show that $f^{-1}(\Rose)$ is connected. Consider any two distinct vertices $u$ and~$v$ of $f^{-1}(\Rose)$. There exists a path $\alpha \colon \I \to \R^2 \setminus f^{-1}(A)$ joining $u$ and $v$. Note that~$f \circ \alpha$ is a loop in $\R^2 \setminus A$ based at $t$. Assuming that each petal $p_j$ is parameterized by a loop $\alpha_j\colon \I \to~p_j$ based at $t$, it follows that $f \circ \alpha$ is homotopic rel.\ $A$ to a loop $\gamma = \gamma_1\cdot\gamma_2\cdot\dots\cdot\gamma_k$, where $\gamma_{l} \in~\{\alpha_1, \alpha_2, \dots, \alpha_m, \overline{\alpha_1}, \overline{\alpha_2}, \dots, \overline{\alpha_m}\}$ for each $l \in \{1,2,\dots,k\}$. By the homotopy lifting property, the path $\alpha$ is homotopic rel. $f^{-1}(A)$ to the $f$-lift $\widetilde{\gamma}$ of $\gamma$ starting at $u$. In particular, $\widetilde{\gamma}$ joins $u$ with $v$.  Since $\widetilde{\gamma} \subset f^{-1}(\Rose)$, the vertices $u$ and $v$ belong to the same connected component of $f^{-1}(\Rose)$.

    Proposition \ref{prop: preimages with infinity} implies that every face of $f^{-1}(\Rose)$ labeled by $P_\infty$ is unbounded. Finally, the regularity of $f^{-1}(\Rose)$ and condition (\ref{it: admissiblity}) follow from the fact that $f$ is locally injective and orientation-preserving at every $v \in V(f^{-1}(\Rose))$. Thus, the quadruple $\Delta(A, \Rose, f)$ is admissible.

    The rest of the statement easily follows from Propositions \ref{prop: preimages without infinity} and \ref{prop: preimages with infinity}.
\end{proof}

\begin{remark}\label{rem: quadruples with one marked point}
    Let $\Delta = (A, \Rose, \Gamma, \Phi)$ be an admissible quadruple with $m = |E(\Rose)| = 1$. In this case, $\Gamma$ is a counterclockwise directed cycle or an infinite unilaterally connected chain. It is easy to see that there exists a map $f$ such that $\Delta(A,\Rose,f) = \Delta$ satisfying $f = \varphi^{-1} \circ g \circ \psi$ for some orientation-preserving homeomorphisms $\varphi, \psi \colon \R^2 \to \C$, where $g(z) = z^d$ if $d = |V(\Gamma)|<\infty$, and $g(z) = \exp(z)$ if $\Gamma$ is infinite. In particular, using Proposition \ref{prop: small postsingular set}, we can formulate a statement close in spirit to Proposition \ref{prop: quadruple by map} for the case $m = 1$.
\end{remark}

Now assume that $f \colon \R^2 \to \R^2$ is an arbitrary topologically holomorphic map such that $S_f \subset A$ and $\Delta(A, \Rose, f) = \Delta$. 
Since $\Rose$ is a deformation retract of $\R^2 \setminus A$ and $\Gamma$ is a deformation retract of $\R^2 \setminus f^{-1}(A)$ as Proposition \ref{prop: quadruple by map} and Remark \ref{rem: quadruples with one marked point} suggest, for any $v \in V(\Gamma)$, we see that $f_*\pi_1(\R^2 \setminus f^{-1}(A), v) = \Phi_*\pi_1(\Gamma,v)$, and 
\begin{align*}
    \Phi_* \pi_1 (\Gamma, v) = \{[\Phi(\delta_1) \cdot \Phi(\delta_2) \cdot \dots \cdot \Phi(\delta_l)]:& \text{ each }\delta_i \text{ is a path parameterizing an edge of $\Gamma$,}\\ &\text{ and }\delta_1 \cdot \delta_2 \cdot \dots \cdot \delta_l \text{ is a loop in $\Gamma$ based at $v$}
    \}.
\end{align*}
Now suppose that $\gamma$ is a loop based at the center of the rose graph $\Rose$ such that $\gamma$ is homotopic rel.\ $A$ to $\gamma_1 \cdot \gamma_2 \cdot \dots \cdot \gamma_k$, where $\gamma_j \in \{\alpha_1, \alpha_2, \dots, \alpha_m, \overline{\alpha_1}, \overline{\alpha_2}, \dots, \overline{\alpha_m}\}$ and $\alpha_j$ is a parametrization of $p_j$ with $\alpha_j(0) = t$ for each~$j = 1, 2, \dots, m$. If we know  $\Phi$, we can reconstruct $f$-lift of $\gamma$ starting at any vertex $v \in V(\Gamma)$ up to homotopy rel.\ $f^{-1}(A)$.

The following result is the converse to Proposition \ref{prop: quadruple by map}:

\begin{proposition}\label{prop: map by quadruple}
    Let $\Delta = (A, \Rose, \Gamma, \Phi)$ be an admissible quadruple. Then there exists a topologically holomorphic map $f\colon \R^2 \to \R^2$ of finite type such that $S_f \subset A$ and $\Delta(A, \Rose, f)~=~\Delta$.

    
\end{proposition}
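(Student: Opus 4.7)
The plan is to construct $f$ locally by first setting $f := \Phi$ on the realization of $\Gamma$ and then extending face by face using standard topologically holomorphic models selected by the label of each face. The case $m = 1$ is handled by Remark \ref{rem: quadruples with one marked point}, so assume $m \geq 2$ throughout.

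For each face $F$ of $\Gamma$, Remark \ref{rem: boundary} pins down the structure of $\partial F$, and I extend $f$ accordingly. If $F$ is bounded and labeled $P_j$, then $\partial F$ is a counterclockwise directed cycle of length $d := |V(\partial F)|$ with $\Phi|_{\partial F}$ winding $d$ times around $a_j$; I pick orientation-preserving homeomorphisms $\eta_F \colon \overline F \to \overline\D$ and $\eta_j \colon \overline{P_j} \to \overline\D$ so that $\eta_j \circ \Phi|_{\partial F} \circ \eta_F^{-1}(\zeta) = \zeta^d$ on $\partial\D$, and set $f|_{\overline F} := \eta_j^{-1}\circ(\zeta\mapsto\zeta^d)\circ\eta_F$. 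If $F$ is unbounded and labeled $P_j$, then $\partial F$ is an infinite unilaterally connected chain and $\Phi|_{\partial F}$ is a universal cover of $p_j$; I choose homeomorphisms $\eta_F \colon \overline F \to \overline\H$ and $\eta_j \colon \overline{P_j} \setminus \{a_j\} \to \overline\D \setminus \{0\}$ so that $\eta_j \circ \Phi|_{\partial F} = \exp \circ \eta_F|_{\partial F}$, and set $f|_{\overline F} := \eta_j^{-1}\circ \exp \circ \eta_F$. If $F$ is labeled $P_\infty$, then $F$ is unbounded by admissibility condition (1) and $\partial F$ is unilaterally connected; I model $f|_{\overline F} \colon \overline F \to \overline{P_\infty}$ on a finite cyclic cover $\zeta\mapsto\zeta^d$ if $\partial F$ is a finite cycle (polynomial case) or on the universal cover $\exp$ if $\partial F$ is infinite (transcendental case). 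In every case the extension agrees with $\Phi$ on $\partial F$, so the pieces glue continuously into a single map $f \colon \R^2 \to \R^2$.

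Next I verify that $f$ is topologically holomorphic of finite type with $S_f \subset A$ and $\Delta(A, \Rose, f) = \Delta$. In the interior of each face, $f$ is an honest topologically holomorphic model ($\zeta\mapsto\zeta^d$ or $\exp$); on the interior of each edge, $f = \Phi$ is a local homeomorphism; and at each vertex $v \in V(\Gamma)$, admissibility condition (2) guarantees that the $2m$ sectors at $v$ alternate between $P_j$-labeled sectors and $P_\infty$-labeled sectors in the same counterclockwise cyclic order as the $2m$ sectors at $t \in \Rose$, so the face models assemble into a local orientation-preserving homeomorphism at $v$. By Proposition \ref{prop: alternative definition}, $f$ is topologically holomorphic. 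Critical values arise only from bounded $P_j$-faces with $d > 1$ (their unique critical point maps to $a_j$), and asymptotic values arise only from unbounded $P_j$-faces (with value $a_j$); hence $S_f \subset \{a_1,\dots,a_m\} = A$ is finite and $f$ is of finite type. The identities $f^{-1}(\Rose) = \Gamma$ and $f|_\Gamma = \Phi$ are immediate from the construction, giving $\Delta(A, \Rose, f) = \Delta$.

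The main obstacle is the construction of compatible uniformizations $\eta_F$ and $\eta_j$ in the unbounded cases. Once $\eta_j$ is chosen, producing $\eta_F$ reduces to a Schoenflies-type extension of a prescribed boundary homeomorphism, carried out equivariantly for the deck group of the chosen standard cover. What makes this work is that $\Gamma$ is connected, so each face $F$ is simply connected, and the unilateral connectedness of $\partial F$ together with admissibility condition (2) at each boundary vertex encodes precisely the combinatorial pattern of the boundary of the standard cover; the needed boundary homeomorphism therefore extends to the whole of $\overline{F}$.
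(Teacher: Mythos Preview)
Your proof is correct and takes essentially the same approach as the paper's: set $f := \Phi$ on $\Gamma$, extend face by face using the standard models $z \mapsto z^d$ and $\exp$, and invoke the admissibility conditions (together with Remark~\ref{rem: boundary}) to ensure the pieces glue to a topologically holomorphic map with $S_f \subset A$. The paper's own argument is explicitly only an outline and is in fact terser than yours; in particular, your separate treatment of the $P_\infty$-labeled face in the finite (polynomial) versus infinite (transcendental) boundary cases makes explicit a distinction the paper glosses over.
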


\begin{proof}

    When $|A| = 1$, the desired result follows from Remark \ref{rem: quadruples with one marked point}. Thus, we assume $|A| \geq 2$ and give an outline of the construction. 

    First, we define $f$ on $\Gamma$ simply by setting $f|\Gamma := \Phi$.  Choose orientation-preserving homeomorphisms $\varphi_{P_j}\colon \D \to P_j$ and $\varphi_{P_\infty} \colon \H \to P_\infty$. Similarly, since each face $F$ of $\Gamma$ is simply connected, choose an orientation-preserving homeomorphism $\psi_F : X \rightarrow F$ where $X = \D$ if $F$ is bounded, and $X=\H$ if $F$ is unbounded. 
    
    Given a face $F$ with label $P \in F(\Rose)$, we define $f|F$ so that

    \begin{enumerate}
        \item if $F$ is bounded, we let $f|F := \varphi_P \circ g_d \circ \psi_F^{-1}$, where $d = |V(\partial F)|$ and $g_d(z) = z^d$, 
        \item if $F$ is unbounded, let $f|F := \varphi_P \circ \exp \circ \psi_F^{-1}$.
    \end{enumerate}

    Due to admissibility conditions and Remark \ref{rem: boundary}, the sets of homeomorphisms  $\{\varphi_P\}_{P \in F(\Rose)}$ and $\{\psi_F\}_{F \in F(\Gamma)}$ can be chosen so that the map $f$ we construct above is continuous. Finally, one can show that $f$ acts locally as a power map (in the sense of Definition \ref{def: topologically holomorphic}) and $S_f \subset A$. Thus, $f$ is topologically holomorphic, has finite type, and satisfies  $\Delta(A, \Rose, f) = \Delta$.
\end{proof}

\begin{definition}\label{def: equivalence of quadruples}
    Two admissible quadruples $\Delta_1 = (A, \Rose_1, \Gamma_1, \Phi_1)$ and $\Delta_2 = (A, \Rose_2, \Gamma_2, \Phi_2)$ are said to be \textit{equivalent} if there exist $\theta \in \Homeo_0^+(\R^2, A)$ and $\varphi \in \Homeo^+(\R^2, A)$ such that 
\begin{enumerate}
    \item $\theta$ is an isomorphism between $\Rose_1$ and $\Rose_2$; 

    \item $\varphi$ is an isomorphism between $\Gamma_1$ and $\Gamma_2$; 

    \item $\theta \circ \Phi_1 = \Phi_2 \circ \varphi$.
\end{enumerate}
\end{definition}

It is easy to see that Definition \ref{def: equivalence of quadruples} provides an equivalence relation on the set of all admissible quadruples with a fixed marked set.

\begin{proposition}\label{prop: equivalence of quadruples and maps}
    Let $A \subset \R^2$ be finite, and $\Rose_1$, $\Rose_2$ be rose graphs surrounding $A$ such that $\Rose_1$ is isotopic rel.\ $A$ to $\Rose_2$.
    Let $f_1 \colon \R^2 \to \R^2$ and $f_2\colon \R^2 \to \R^2$ be topologically holomorphic maps such that $S_{f_1} \subset A$ and $S_{f_2} \subset A$. Then $\Delta(A, \Rose_1, f_1)$ and $\Delta(A, \Rose_2, f_2)$ are equivalent if and only if there exists a homeomorphism $\psi \in \Homeo^+(\R^2)$ such that $f_1 = f_2 \circ \psi$. 
    
    Moreover, if $f_1$ and $f_2$ are holomorphic, and $\Delta(A, \Rose_1, f_1)$ and $\Delta(A, \Rose_2, f_2)$ are equivalent, then the map $\psi$ is an affine transformation.
\end{proposition}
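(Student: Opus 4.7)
The plan is to prove both directions of the equivalence separately and then deduce the affineness of $\psi$ in the holomorphic case as a consequence of the construction in the reverse direction.

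For the forward direction (from $f_1 = f_2 \circ \psi$ to equivalence of the quadruples), I first pick $\theta \in \Homeo_0^+(\R^2, A)$ with $\theta(\Rose_1) = \Rose_2$, which exists because $\Rose_1$ and $\Rose_2$ are isotopic rel.\ $A$. Applying the isotopy lifting property (Proposition \ref{prop: isotopy lifting property}) to $f_2$ with $\varphi_0 = \psi_0 = \id$ and $\varphi_1 = \theta$ yields $\theta_2 \in \Homeo^+(\R^2)$ isotopic to $\id$ rel.\ $f_2^{-1}(A)$ with $\theta \circ f_2 = f_2 \circ \theta_2$. Setting $\widetilde{\varphi} := \theta_2 \circ \psi$, a direct computation using $f_1 = f_2 \circ \psi$ gives $\widetilde{\varphi}(\Gamma_1) = \Gamma_2$ and $\Phi_2 \circ \widetilde{\varphi} = \theta \circ \Phi_1$ on $\Gamma_1$. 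It then remains to modify $\widetilde{\varphi}$, face by face inside $\R^2 \setminus \Gamma_1$, into a homeomorphism $\varphi \in \Homeo^+(\R^2, A)$ agreeing with $\widetilde{\varphi}$ on $\Gamma_1$ and fixing $A$ pointwise; this is done by a standard disk-interpolation argument after observing that each face of $\Gamma_1$ containing a point of $A$ is carried by $\widetilde{\varphi}$ to the face of $\Gamma_2$ bearing the same label.

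For the reverse direction, I first construct a homeomorphism $\Psi \in \Homeo^+(\R^2)$ with $\theta \circ f_1 = f_2 \circ \Psi$ globally, upgrading the relation $\theta \circ \Phi_1 = \Phi_2 \circ \varphi$ known only on $\Gamma_1$. I set $\Psi|\Gamma_1 := \varphi|\Gamma_1$ and extend to each face $F$ of $\Gamma_1$ using the model structure of Proposition \ref{prop: map by quadruple}: if $F$ is bounded and labeled $P_j$ with $d := |V(\partial F)|$, both $f_1|F$ and $f_2|\varphi(F)$ are conjugate to the power map $z \mapsto z^d$ on $\D$ via explicit homeomorphisms, and $\Psi|F$ is obtained as the unique lift of the transported $\theta|P_j$ consistent with the boundary data $\varphi|\partial F$; if $F$ is an unbounded tract labeled $P_j$, the model becomes $z \mapsto \exp(z)$ on $\H$ and the lift is obtained analogously; if $F$ is labeled $P_\infty$, the model maps are homeomorphisms and $\Psi|F$ is determined uniquely. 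Once continuity across $\Gamma_1$ is verified, I apply Proposition \ref{prop: isotopy lifting property} again, now to $\theta^{-1}$ and $f_2$, to produce $\theta_3 \in \Homeo^+(\R^2)$ with $\theta^{-1} \circ f_2 = f_2 \circ \theta_3$, and set $\psi := \theta_3 \circ \Psi$; then $f_2 \circ \psi = \theta^{-1} \circ f_2 \circ \Psi = \theta^{-1} \circ \theta \circ f_1 = f_1$, as required.

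For the moreover part, assume $f_1$ and $f_2$ are holomorphic and let $\psi$ be the homeomorphism produced above. Away from the discrete critical locus of $f_2$, local holomorphic inverses of $f_2$ exist, so $\psi = f_2^{-1} \circ f_1$ is locally holomorphic there. Since $\psi$ is a continuous bijection (hence locally bounded), Riemann's removable singularities theorem extends $\psi$ to an entire map $\C \to \C$; as an entire self-homeomorphism of $\C$, $\psi$ must be affine.

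The main obstacle I anticipate is the face-by-face extension in the reverse direction, especially choosing the correct branch of the $d$-th root in the power-map case and of $\log$ in the exponential case. In both situations the branch is pinned down by $\varphi|\partial F$, and the cyclic-order admissibility condition \ref{it: admissiblity} in Definition \ref{def: admissible quadruple} is what guarantees that these face-by-face extensions glue continuously along $\Gamma_1$ into a global homeomorphism of $\R^2$.
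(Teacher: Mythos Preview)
Your proof is correct, but for the direction from equivalence of quadruples to the existence of $\psi$ you take a genuinely different route from the paper. The paper first reduces (via Proposition~\ref{prop: isotopy lifting property} and pre-composition of $f_2$ with $\varphi$) to the case where the two quadruples are literally identical, and then invokes the observation preceding the proposition that $(f_1)_*\pi_1(\R^2\setminus f_1^{-1}(A),v) = \Phi_*\pi_1(\Gamma,v) = (f_2)_*\pi_1(\R^2\setminus f_2^{-1}(A),v)$; classical covering-space theory then immediately supplies $\psi$ on $\R^2\setminus f_1^{-1}(A)$, which extends over the discrete set $f_1^{-1}(A)$. This is considerably shorter than your face-by-face extension via the $z^d/\exp$ models and sidesteps the gluing verification you rightly flag as the main obstacle. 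Conversely, your construction is more explicit and self-contained, not relying on the deformation-retract discussion before the proposition. For the other direction your argument essentially matches the paper's (your two-step use of Proposition~\ref{prop: isotopy lifting property} collapses to the paper's single application with $\varphi_0=\id$, $\psi_0=\psi$, $\varphi_1=\theta$); you are in fact more careful than the paper in arranging that the resulting $\varphi$ fix $A$ pointwise, which Definition~\ref{def: equivalence of quadruples} requires but the paper's ``the rest easily follows'' leaves implicit. The holomorphic addendum is handled the same way in both.
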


\begin{proof}

	Suppose that the admissible quadruples $\Delta(A, \Rose_1, f_1)$ and $\Delta(A, \Rose_2, f_2)$ are equivalent. Due to Proposition \ref{prop: isotopy lifting property} we can assume that $\Rose_1 = \Rose_2 = \Rose$ and that
    the equivalence between $\Delta(A, \Rose_1, f_1)$ and $\Delta(A, \Rose_2, f_2)$ is provided by $\theta = \id_{\R^2}$ and an orientation-preserving homeomorphism $\varphi$. By pre-composing $f_2$ with $\varphi$ we can further assume $\theta = \varphi = \id_{\R^2}$, i.e., $\Delta(A, f_1, \Rose_1) = \Delta(A, f_2, \Rose_2) = (A, \Rose, \Gamma, \Phi)$. Then by the previous discussions, we have that 
    $$
        (f_1)_* \pi_1(\R^2 \setminus f_1^{-1}(A), v) = \Phi_* \pi_1(\Gamma, v) = (f_2)_* \pi_1(\R^2 \setminus f_2^{-1}(A), v)
    $$
    for every $v \in V(\Gamma)$. By the classical theory of covering maps, there exists an orientation-preserving homeomorphism $\psi\colon \R^2 \setminus f_1^{-1}(A) \to \R^2 \setminus f_2^{-1}(A)$ such that $f_1 = f_2 \circ \psi$ on $\R^2 \setminus f_1^{-1}(A)$. Since $f_1^{-1}(A)$ is a discrete subset of $\R^2$, we can extend $\psi$ to $\R^2$, still satisfying $f_1 = f_2 \circ \psi$.

Conversely, let us suppose there exists $\psi \in \Homeo^+(\R^2)$ such that $f_1 = f_2 \circ \psi$. By our assumptions on $\Rose_1$ and $\Rose_2$, we can find  $\theta \in \Homeo_0^+(\R^2, A)$ such that $\theta(\Rose_1) = \Rose_2$. Then by Proposition \ref{prop: isotopy lifting property}, there exists $\varphi \in \Homeo^+(\R^2)$ such that $\theta \circ f_1 = f_2 \circ \varphi$, and the rest easily follows.
    

  If $f_1$ and $f_2$ are holomorphic, any homeomorphism $\psi$ satisfying $f_1 = f_2 \circ \psi$ is holomorphic, and  therefore affine.
\end{proof}   



We say that a topologically holomorphic map $f \colon \R^2 \to \R^2$ \textit{realizes} an admissible quadruple $\Delta = (A, \Rose, \Gamma, \Phi)$ or, equivalently, $\Delta$ \textit{defines} $f$ if $S_f \subset A$ and $\Delta(A, \Rose, f)$ is equivalent to $\Delta$. In particular, Propositions \ref{prop: map by quadruple} and \ref{prop: equivalence of quadruples and maps} imply that every admissible quadruple $\Delta$ defines an entire topologically holomorphic map $f$ of finite type, which is unique up to pre-composition by an orientation-preserving homeomorphism.

Note that an admissible quadruple $\Delta = (A, \Rose, \Gamma, \Phi)$ is a combinatorial object even though~$\Phi$ is a continuous map. In fact, to define the map $\Phi$ uniquely (up to a certain notion of equivalence introduced below), it is sufficient to know the images $\Phi(e), e \in E(\Gamma)$ and the orientation of the graph $\Gamma$ induced by $\Phi$. Indeed, suppose that $\Psi \colon \Gamma \to \Rose$ is a covering map such that $\Psi(v) = t$ for each vertex $v \in V(\Gamma)$, $\Phi(e) = \Psi(e)$ for each edge $e \in E(\Gamma)$, and the orientations of $\Gamma$ induced by the maps $\Phi$ and $\Psi$ coincide. In this case, it is clear that there exists a homeomorphism $\varphi \colon \Gamma \to \Gamma$ isotopic rel.\ $V(\Gamma)$ to $\id_\Gamma$ such that $\Psi = \Phi \circ \varphi$. In particular, the orientation of the graph $\Gamma$ and the images of its edges under the map $\Phi$ uniquely define the equivalence class of the admissible quadruple $\Delta$.

The language of admissible quadruples provides a convenient way of thinking about entire topologically holomorphic maps of finite type, which we demonstrate in the following example.

\begin{example}\label{ex: non dyn quadruples}
    
	Let $A = \{-1, 1\}$ be the set represented by solid black squares at the top of Figure \ref{fig: nondyn_quadruple for cosine}. Denote by $\Rose_1$ and $\Gamma_1$ the planar embedded graphs shown at the top and bottom of Figure \ref{fig: nondyn_quadruple for cosine}, respectively. The map $\Phi_1\colon \Gamma_1 \to \Rose_1$ is a covering that maps each edge of $\Gamma_1$ to the unique edge of $\Rose_1$ of the same color. Arrows on the edges of the graphs $\Rose_1$ and~$\Gamma_1$ indicate the orientations of the corresponding graphs. It is straightforward to check that $\Delta_1 = (A, \Rose_1, \Gamma_1, \Phi_1)$ is an admissible quadruple, which is realized by the map $g_1(z) = \cos(z)$. Thus, any entire (holomorphic) map realizing $\Delta_1$ has the form $\cos(a z + b)$ for some constants $a, b \in \C$ with $a \neq 0$.


	
	Figure \ref{fig: nondyn_quadruple or eg2} is analogous to Figure \ref{fig: nondyn_quadruple for cosine}, and provides another example $\Delta_2 = (A, \Rose_2, \Gamma_2, \Phi_2)$ of an admissible quadruple. This quadruple can be shown to be realized by the map $g_2(z) = 2 \exp(z^2) - 1$. In particular, the planar embedded graph $\Gamma_2$ has two unbounded faces that correspond to the asymptotic tracts of $g_2$ over $w_0 = 1$, two unbounded faces that correspond to the asymptotic tracts of $g_2$ over $\infty$, and the only face with the boundary consisting of two edges, corresponding to the unique critical point of $g_2$ (see Proposition \ref{prop: quadruple by map}).
	
	\end{example}

\begin{figure}[h]
    \centering
    \includegraphics[scale=0.5]{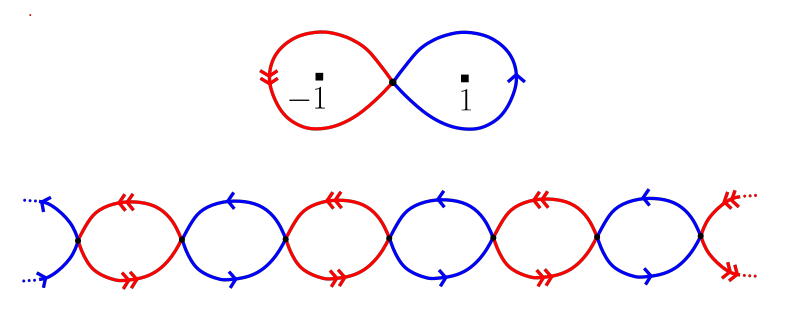}
\caption{Admissible quadruple realized by $g_1(z) = \cos(z)$.}
\label{fig: nondyn_quadruple for cosine}
\end{figure}



\begin{figure}[h]
    \centering
    \includegraphics[scale=0.5]{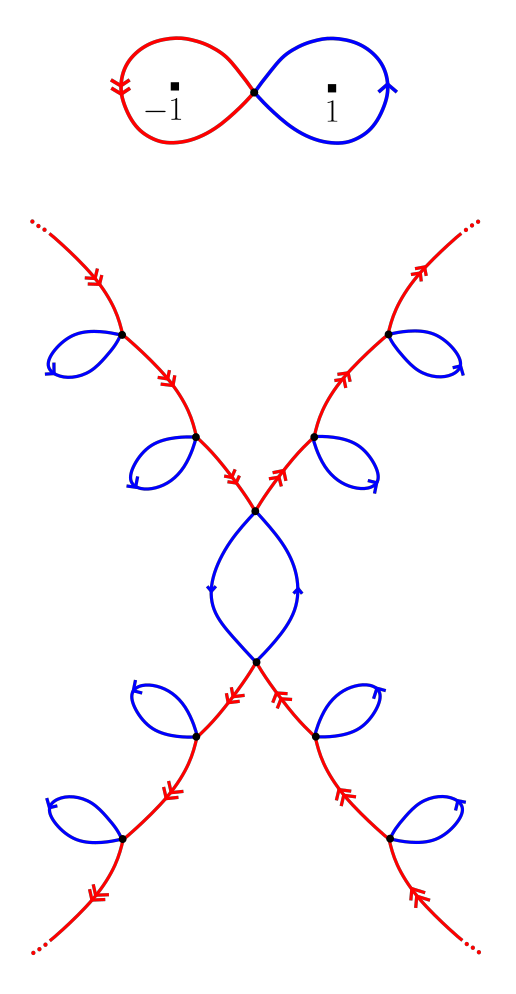}
\caption{Admissible quadruple realized by $g_2(z) = 2\exp(z^2) - 1$.}
\label{fig: nondyn_quadruple or eg2}
\end{figure}

\begin{definition}
    Let $\Delta$ be an admissible quadruple. If every topologically holomorphic map $f\colon \R^2 \to \R^2$ realizing $\Delta$ is parabolic, then we say that $\Delta$ is \textit{parabolic}. If every such map $f$ is hyperbolic, then $\Delta$ is called \textit{hyperbolic}.
\end{definition}

Proposition \ref{prop: equivalence of quadruples and maps} implies that every admissible quadruple is either parabolic or hyperbolic. Moreover, from Propositions \ref{prop: pulling back complex structures} and \ref{prop: equivalence of quadruples and maps}, it follows that every parabolic admissible quadruple $\Delta$ with a marked set $A$ defines an entire holomorphic map of finite type with $S_f \subset A$, which is unique up to pre-composition with an affine transformation.

\section{Approximations of entire Thurston maps}\label{sec: approximations of entire thurston maps}
 
In this section, we introduce and study different notions of convergence for sequences of entire Thurston maps having the same marked set $A$. In fact, we provide two different points of view (one  combinatorial, and one topological, see Definitions \ref{def: combinatorial convergence}~and~\ref{def: topological convergence}), and then show that these notions are equivalent (see Proposition \ref{prop: equivalence of convergences}). Afterward, we provide a construction that allows us to approximate, in an appropriate sense, an arbitrary transcendental Thurston map by a sequence of polynomial Thurston maps (see Proposition \ref{prop: polynomial approximations}). Finally, we study how this impacts the convergence of the corresponding Thurston pullback maps and establish Main Theorem \ref{mainthm: main theorem B}.

\subsection{Combinatorial convergence}\label{subsec: combinatorial convergence}

Suppose that $f\colon (\R^2, A)\righttoleftarrow$ is a Thurston map with $f(b) = t$ for some $b, t \in \R^2 \setminus A$. If $\gamma$ is a loop in $\R^2 \setminus A$ based at $t$, then we use the notation $\gammal(f, b)$ as a shorthand for the $f$-lift of $\gamma$ starting at $b$. 

Then for every loop $\gamma \subset \R^2 \setminus A$ based at $t$ we can record
\begin{enumerate}
    \item whether $\gamma$ lifts under $f$ to a loop based at $b$, i.e., whether $[\gamma] \in f_*\pi_1(\R^2 \setminus f^{-1}(A), b)$;

    \item if $[\gamma] \in f_*\pi_1(\R^2 \setminus f^{-1}(A), b)$, homotopy class rel.\ $A$ of $\gammal(f, b)$, i.e., $[\gammal(f, b)] \in \pi_1(\R^2\setminus A, b)$. 
\end{enumerate}

We now establish that this information is enough to define the Thurston map $f$ uniquely up to isotopy rel.\ $A$ (cf. \cite{Kameyama} and \cite[Theorem II.3]{Bartholid_Dudko} for rational Thurston maps).

\begin{proposition} \label{prop: combinatorial data defines map}
    Let $f_1\colon (\R^2, A)\righttoleftarrow$ and $f_2\colon (\R^2, A)\righttoleftarrow$ be Thurston maps such that $f_1(b_1) = t$ for some $b_1 \in \R^2$ and $t \in \R^2\setminus A$.
    Then $f_1$ and $f_2$ are isotopic rel.\ $A$ if and only if there exists $b_2 \in \R^2$ and a path $p \subset \R^2 \setminus A$ joining $b_1$ with $b_2$ such that $f_2(b_2) = t$ and for every loop $\gamma \subset \R^2 \setminus A$ based at $t$ the following holds:
    \begin{enumerate}
        \item \label{it: equal groups} $\gamma$ lifts under $f_1$ to a loop based at $b_1$ if and only if it lifts under $f_2$ to a loop based at $b_2$, in other words,    $(f_1)_*\pi_1(\R^2 \setminus f_1^{-1}(A), b_1) = (f_2)_*\pi_1(\R^2 \setminus f_2^{-1}(A), b_2)$;
        \item \label{it: equal lifts} if $[\gamma] \in (f_1)_*\pi_1(\R^2 \setminus f_1^{-1}(A), b_1)$, then loops $\gammal(f_1, b_1)$ and $p\cdot\gammal(f_2, b_2)\cdot \overline{p}$ are homotopic rel.\ $A$.
    \end{enumerate}
\end{proposition}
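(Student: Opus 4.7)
Given $\varphi \in \Homeo^+_0(\R^2, A)$ with $f_1 = f_2 \circ \varphi$, I would fix an isotopy $H\colon \R^2 \times \I \to \R^2$ from $\id_{\R^2}$ to $\varphi$ rel $A$, and set $b_2 := \varphi(b_1)$ and $p(s) := H(b_1, s)$. Since $H(\cdot, s)$ is a homeomorphism fixing $A$ pointwise, it maps $\R^2 \setminus A$ into itself, so $p \subset \R^2 \setminus A$. The identity $f_1 = f_2 \circ \varphi$ combined with uniqueness of path lifts yields $\gammal(f_2, b_2) = \varphi \circ \gammal(f_1, b_1)$ for every loop $\gamma \subset \R^2 \setminus A$ based at $t$, and since $\varphi$ is a homeomorphism this preserves closedness of lifts, giving condition (1). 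For condition (2), the map $(s, \tau) \mapsto H(\gammal(f_1, b_1)(s), \tau)$ takes values in $\R^2 \setminus A$ and is a free homotopy from $\gammal(f_1, b_1)$ to $\gammal(f_2, b_2)$ whose lateral boundaries both trace $p$; this translates directly to $\gammal(f_1, b_1) \sim p \cdot \gammal(f_2, b_2) \cdot \overline{p}$ rel $A$.

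\textbf{Reverse direction, construction of $\varphi$.} For the reverse direction, I would first build a candidate homeomorphism via covering space theory. Condition (1), read as the equality of subgroups $(f_1)_*\pi_1(\R^2 \setminus f_1^{-1}(A), b_1) = (f_2)_*\pi_1(\R^2 \setminus f_2^{-1}(A), b_2)$ inside $\pi_1(\R^2 \setminus A, t)$, together with the classical lifting criterion applied to the covering maps $f_i\colon \R^2 \setminus f_i^{-1}(A) \to \R^2 \setminus A$, produces a unique orientation-preserving homeomorphism $\varphi_0 \colon \R^2 \setminus f_1^{-1}(A) \to \R^2 \setminus f_2^{-1}(A)$ with $\varphi_0(b_1) = b_2$ and $f_2 \circ \varphi_0 = f_1$. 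Since $f_1$ and $f_2$ are locally conjugate to $z \mapsto z^d$ near each point of their respective preimages of $A$ (Definition \ref{def: topologically holomorphic}), the map $\varphi_0$ extends uniquely across these isolated singularities to a homeomorphism $\varphi \in \Homeo^+(\R^2)$ with $\varphi(f_1^{-1}(A)) = f_2^{-1}(A)$ and $f_1 = f_2 \circ \varphi$ on all of $\R^2$.

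\textbf{Reverse direction, verification.} The main obstacle is to show $\varphi \in \Homeo^+_0(\R^2, A)$, i.e., that $\varphi$ fixes $A$ pointwise \emph{and} is isotopic to $\id_{\R^2}$ rel $A$. The key tool is the surjectivity of the inclusion-induced map $i_* \colon \pi_1(\R^2 \setminus f_1^{-1}(A), b_1) \to \pi_1(\R^2 \setminus A, b_1)$, which holds because loops around points of $f_1^{-1}(A) \setminus A$ become null-homotopic in $\R^2 \setminus A$. Rewriting condition (2) using $\gammal(f_2, b_2) = \varphi \circ \gammal(f_1, b_1)$ gives the identity $[\gammal(f_1, b_1)] = [p \cdot (\varphi \circ \gammal(f_1, b_1)) \cdot \overline{p}]$ in $\pi_1(\R^2 \setminus A, b_1)$; as $\gamma$ ranges over loops whose $f_1$-lift based at $b_1$ is closed, the classes $[\gammal(f_1, b_1)]$ generate all of $\pi_1(\R^2 \setminus A, b_1)$ via $i_*$. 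To extract $\varphi|_A = \id_A$, I would specialize to a small loop $\gamma$ around $f_1(a)$ for each $a \in A$ (conjugated by a path from $t$): if $\varphi(a) \notin A$ the right-hand side is null-homotopic while the left-hand side is a free generator around $a$, and if $\varphi(a) = a' \in A \setminus \{a\}$ the identity equates two distinct free generators of $\pi_1(\R^2 \setminus A, b_1)$, both contradictions. Finally, to obtain the isotopy, I would use an ambient isotopy supported in a small neighborhood of $p$ disjoint from $A$ to replace $\varphi$ by $\psi \in \Homeo^+(\R^2, A)$ with $\psi(b_1) = b_1$; after this adjustment condition (2) forces $\psi_* = \id$ on $\pi_1(\R^2 \setminus A, b_1)$, and the classical faithfulness of the action of $\pi_0(\Homeo^+(\R^2, A))$ on $\pi_1(\R^2 \setminus A, b_1)$ (a Dehn--Nielsen--Baer-type result for the planar pure mapping class group) yields $\psi \sim \id_{\R^2}$ rel $A$. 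The delicate point throughout is the interplay between the covering-space extension (which determines $\varphi$ only up to its covering automorphism and basepoint choice) and the rigidity forced by condition (2); the surjectivity of $i_*$ is precisely what bridges these two pieces of data.
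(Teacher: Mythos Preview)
Your proposal is correct and follows essentially the same approach as the paper. The paper packages the verification step into a separate Lemma~\ref{lemm: useful prop} (characterizing when $\varphi \in \Homeo^+_0(\R^2, A)$ in terms of the path $p$ and the conjugation relation on loops), and then applies it directly; you inline that lemma, making explicit the surjectivity of $i_*\colon \pi_1(\R^2 \setminus f_1^{-1}(A), b_1) \to \pi_1(\R^2 \setminus A, b_1)$ that the paper uses tacitly when it passes from loops in $\R^2 \setminus f_1^{-1}(A)$ to Lemma~\ref{lemm: useful prop}. One small expository slip: in your specialization to extract $\varphi|_A = \id_A$, the object you should specialize is the \emph{lift} $\delta = \gammal(f_1, b_1)$ (chosen via surjectivity of $i_*$ to represent the free generator around $a$), not $\gamma$ itself---a ``small loop $\gamma$ around $f_1(a)$'' need not lift at $b_1$ to a loop around $a$. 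Since you invoke the surjectivity of $i_*$ in the preceding sentence, this is clearly what you intend, and the argument goes through.
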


\noindent Before proving the above, we formulate a simple observation.


\begin{lemma}\label{lemm: useful prop}
    Suppose $A \subset \R^2$ is finite, and $\varphi$ is an orientation-preserving homeomorphism such that $\varphi(b) = t$ for some $b, t \in \R^2 \setminus A$. Then $\varphi$ is isotopic rel.\ $A$ to $\id_{\R^2}$ if and only if there exists a path $p \subset \R^2\setminus A$ joining $b$ with $t$ such that every loop $\gamma \subset \R^2 \setminus A$ based at $b$ is homotopic rel.\ $A$ to $p \cdot (\varphi \circ \gamma) \cdot \overline{p}$.
\end{lemma}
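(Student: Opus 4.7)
My plan is to handle each implication separately. For the forward direction, I would start from an isotopy $H : \R^2 \times \I \to \R^2$ with $H_0 = \id_{\R^2}$, $H_1 = \varphi$, and $H_s|A = \id_A$ for every $s \in \I$. Setting $p(s) := H_s(b)$ yields a path in $\R^2 \setminus A$ from $b$ to $\varphi(b) = t$, since each $H_s$ is a homeomorphism fixing $A$ pointwise (hence setwise) and $b \notin A$. For any loop $\gamma \subset \R^2 \setminus A$ based at $b$, the continuous map $K(u, s) := H_s(\gamma(u))$ is a homotopy in $\R^2 \setminus A$ from $\gamma$ to $\varphi \circ \gamma$ whose left and right edges both trace $p$; a standard reparametrization then converts $K$ into a homotopy rel.\ $A$ between $\gamma$ and $p \cdot (\varphi \circ \gamma) \cdot \overline{p}$, finishing this direction.

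For the reverse direction, suppose a path $p$ satisfying the condition exists. My first step would be to show $\varphi(A) = A$ and $\varphi|A = \id$. For each $a \in A$, pick a loop $\gamma_a \subset \R^2 \setminus A$ based at $b$ with winding number $+1$ around $a$ and $0$ around every other element of $A$. The hypothesis gives $\gamma_a \simeq p \cdot (\varphi \circ \gamma_a) \cdot \overline{p}$ rel.\ $A$. Concatenation with $p, \overline{p}$ does not affect winding numbers around points of $A$, while $\mathrm{wn}(\varphi \circ \gamma_a, a') = \mathrm{wn}(\gamma_a, \varphi^{-1}(a'))$ by orientation-preservation. Matching winding numbers across all $a' \in A$ forces $\varphi^{-1}(a') \in A$ with $\varphi^{-1}(a') = a$ iff $a' = a$; applying the same reasoning to $\varphi^{-1}$ then yields $\varphi|A = \id$.

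With $\varphi \in \Homeo^+(\R^2, A)$, the hypothesis translates exactly to $\beta_p \circ \varphi_* = \id$ on $\pi_1(\R^2 \setminus A, b)$, where $\beta_p : \pi_1(\R^2 \setminus A, t) \to \pi_1(\R^2 \setminus A, b)$ is change-of-basepoint along $p$; equivalently, $\varphi$ represents the trivial class in $\mathrm{Out}(\pi_1(\R^2 \setminus A))$. To convert this algebraic triviality into an isotopy rel.\ $A$, I would first isotope $\varphi$ to the identity on small open disks around each $a \in A$ (straightforward since $\varphi$ is orientation-preserving and fixes $A$ pointwise), and then appeal to the classical injectivity of $\pi_0(\Homeo^+(\R^2, A))$ into $\mathrm{Out}(\pi_1(\R^2 \setminus A))$ --- a consequence of the Dehn--Nielsen--Baer theorem together with Epstein's homotopy-to-isotopy result, equivalent to the faithfulness of the Artin representation of the pure braid group. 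The main obstacle is this last step: it invokes a nontrivial classical injectivity statement, and one must verify that it applies to the open setting of $\R^2$ with a finite fixed set pointwise rather than to a compact-surface model, which can be arranged via a preliminary compact-support reduction.
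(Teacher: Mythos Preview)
Your proof follows essentially the same route as the paper's: trace $b$ along the isotopy to define $p$ in the forward direction, and in the reverse direction first use small encircling loops to show $\varphi|A = \id_A$ and then invoke Dehn--Nielsen--Baer. One small caveat: a \emph{single} $\gamma_a$ with the stated winding numbers does not by itself pin down $\varphi^{-1}(a)$ (the interior of $\gamma_a$ may contain points of $\R^2 \setminus A$), so you need the freedom to let $\gamma_a$ shrink toward $a$---the paper encodes this step simply as ``using continuity of $\varphi$.''
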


\begin{proof} 
    Suppose that $\varphi$ is isotopic rel.\ $A$ to $\id_{\R^2}$ via an isotopy $(\varphi_s)_{s \in \I}$, where $\varphi_0 = \id_{\R^2}$ and $\varphi_1 = \varphi$. Define $p: \I \to \R^2 \setminus A$ as $p(s) = \varphi_s(b)$ for each $s \in \I$. Then it is evident that any loop $\gamma \subset \R^2 \setminus A$ based at $b$ and $p \cdot (\varphi \circ \gamma) \cdot \overline{p}$ are homotopic rel.\ $A$ via a homotopy $H_{s} := p_s \cdot (\varphi_s \circ \gamma) \cdot \overline{p_s}$, where $p_s$ is a subpath of $p$ joining $b$ with $\varphi_s(b)$ for every $s \in \I$.

    Now suppose that there exists a path $p \subset \R^2\setminus A$ joining $b$ with $t$ such that every loop $\gamma \subset \R^2 \setminus A$ based at $b$ is homotopic rel.\ $A$ to $p \cdot (\varphi \circ \gamma) \cdot \overline{p} \subset \R^2 \setminus A$. Taking $\gamma$ to be a loop separating a unique point $a \in A$ from the other points of $A$ and using continuity of $\varphi$, we can show that $\varphi(a) = a$ for each $a \in A$. The rest easily follows: for instance, it is a particular case of the much stronger Dehn-Neilsen-Baer theorem \cite[Theorem 8.1, 8.8]{farb}.
\end{proof}

\begin{proof}[Proof of Proposition \ref{prop: combinatorial data defines map}]

    
    Suppose that conditions (\ref{it: equal groups}) and (\ref{it: equal lifts}) are satisfied under some choice of $b_2$ and $p$. Since the restrictions $$f_1|\R^2 \setminus f_1^{-1}(A)\colon \R^2 \setminus f_1^{-1}(A) \to \R^2 \setminus A \text{ and } f_2|\R^2 \setminus f_2^{-1}(A)\colon \R^2 \setminus f_2^{-1}(A) \to \R^2 \setminus A$$ are both covering maps, condition (\ref{it: equal groups}) implies that there exists an orientation-preserving homeomorphism $\varphi\colon \R^2 \setminus f_1^{-1}(A)\to \R^2 \setminus f_2^{-1}(A)$ such that $f_1 = f_2 \circ \varphi$ on $\R^2 \setminus f_1^{-1}(A)$ and $\varphi(b_1) = b_2$. Since $f_1^{-1}(A)$ is discrete, we can extend $\varphi$ uniquely to a homeomorphism of $\R^2$ such that $f_1 = f_2 \circ \varphi$.
    
    Let $\gamma$ be an arbitrary loop in $\R^2 \setminus f_1^{-1}(A)$ based at $b_1$. Then $\varphi \circ \gamma$ is a $f_2$-lift (based at~$b_2$) of the loop $f_1 \circ \gamma$. In particular, due to condition (\ref{it: equal lifts}), the loops $\gamma$ and $p \cdot (\varphi \circ \gamma) \cdot \overline{p}$ are homotopic rel.\ ~$A$. Now Lemma~\ref{lemm: useful prop} implies that $\varphi \in \Homeo_0^+(\R^2, A)$.




    Now suppose that $f_1$ and $f_2$ are isotopic rel.\ $A$, i.e., there exists $\varphi \in \Homeo_0^+(\R^2, A)$ such that $f_1 = f_2 \circ \varphi$. Let us choose $b_2$ to be equal $\varphi(b_1)$. Condition (\ref{it: equal groups}) is then obviously satisfied, and condition (\ref{it: equal lifts}) follows from Lemma~\ref{lemm: useful prop}.
\end{proof}

Now we are ready to formulate one of the key definitions of this section.

\begin{definition}\label{def: combinatorial convergence}
    Let $f_n \colon (\R^2, A) \righttoleftarrow, n \in \N$ be a sequence of Thurston maps. We say that~$(f_n)$ \textit{converges combinatorially} to a Thurston map $f\colon (\R^2, A) \righttoleftarrow$ if there exist $b_n, b, t \in \R^2 \setminus A$, where $f_n(b_n) = f(b) = t$ for every $n \in \N$, and a sequence $(p_n)$ of paths in $\R^2 \setminus A$ joining $b$ with~$b_n$ such that for every loop $\gamma \subset \R^2\setminus A$ based at $t$, there exists $N = N(\gamma)$ so that for all $n \geq N$, the following conditions hold:
    \begin{enumerate}
        \item \label{it: converging groups} $\gamma$ lifts under $f_n$ to a loop starting at $b_n$ if and only if $\gamma$ lifts under $f$ to a loop starting at $b$. In other words, $$\lim\limits_{n \to \infty} (f_n)_*\pi_1(\R^2\setminus f_n^{-1}(A), b_n) = f_*\pi_1(\R^2 \setminus f^{-1}(A), b);$$

        \item \label{it: converging lifts} if $[\gamma] \in f_*\pi_1(\R^2 \setminus f^{-1}(A), b)$, then the loops $\gammal(f, b)$ and $p_n \cdot \gammal(f_n, b_n) \cdot \overline{p_n}$ are homotopic rel.\ $A$.
    \end{enumerate}
\end{definition}



\begin{proposition}\label{prop: comb convergence to poly}
    Let $f_n \colon (\R^2, A) \righttoleftarrow, n \in \N$ be a sequence of Thurston maps converging combinatorially to a polynomial Thurston map $f\colon (\R^2, A) \righttoleftarrow$. Then $f_n$ is isotopic rel.\ $A$ to~$f$ for all sufficiently large $n$.
\end{proposition}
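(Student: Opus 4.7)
The plan is to verify the two hypotheses of Proposition~\ref{prop: combinatorial data defines map} for the pair $(f, f_n)$ once $n$ is sufficiently large. The polynomial hypothesis is used in an essential way: if $d := \deg f$, then the subgroup $H := f_*\pi_1(\R^2 \setminus f^{-1}(A), b)$ has finite index $d$ in the free group $\pi_1(\R^2 \setminus A, t)$. Moreover, since $f^{-1}(A)$ is finite, $\pi_1(\R^2 \setminus f^{-1}(A), b)$ is finitely generated, and so is its image $H$. I would therefore fix a finite family of based loops $\gamma_1, \dots, \gamma_k$ whose classes generate $H$, together with representatives $h_1 = e, h_2, \dots, h_d$ of the cosets of $H$ in $\pi_1(\R^2 \setminus A, t)$.

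\textbf{Step 1: matching images and generator lifts.} Applying Definition~\ref{def: combinatorial convergence} to each of the finitely many loops $\gamma_1, \dots, \gamma_k, h_2, \dots, h_d$ and taking the maximum of the resulting thresholds, I obtain $N$ such that for all $n \geq N$ every $\gamma_j$ lifts under $f_n$ to a loop based at $b_n$, no $h_i$ with $i \neq 1$ does, and for each $j$ the lift $\gamma_j^\uparrow(f, b)$ is homotopic rel.\ $A$ to $p_n \cdot \gamma_j^\uparrow(f_n, b_n) \cdot \overline{p_n}$. The first two consequences yield $H \subset (f_n)_*\pi_1(\R^2 \setminus f_n^{-1}(A), b_n)$; since any subgroup of $\pi_1(\R^2 \setminus A, t)$ containing $H$ is a union of $H$-cosets, equality $(f_n)_*\pi_1 = H$ follows, verifying condition~(\ref{it: equal groups}) of Proposition~\ref{prop: combinatorial data defines map}.

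\textbf{Step 2: bootstrapping to arbitrary loops.} To verify condition~(\ref{it: equal lifts}) for a general loop $\gamma$ with $[\gamma] \in H$, I would write $[\gamma] = [\gamma_{i_1}]^{\epsilon_1} \cdots [\gamma_{i_m}]^{\epsilon_m}$ and lift the corresponding homotopy $\gamma \sim \gamma_{i_1}^{\epsilon_1} \cdots \gamma_{i_m}^{\epsilon_m}$ in $\R^2 \setminus A$ (hence in $\R^2 \setminus f^{-1}(A)$, since $A \subset f^{-1}(A)$) to conclude that $\gammal(f, b)$ is homotopic rel.\ $A$ to the concatenation of the $f$-lifts of the generator factors---each of which is a loop at $b$ because each factor lies in $H$---and similarly for $f_n$. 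Inserting canceling $\overline{p_n} \cdot p_n$ pairs and applying the generator-level homotopies of Step~1 then gives $\gammal(f, b) \sim p_n \cdot \gammal(f_n, b_n) \cdot \overline{p_n}$ rel.\ $A$. Proposition~\ref{prop: combinatorial data defines map} applied with $b_1 = b$, $b_2 = b_n$, $p = p_n$ then produces the desired isotopy for every $n \geq N$.

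\textbf{Main obstacle.} The hard part is promoting the ``one loop at a time'' convergence afforded by Definition~\ref{def: combinatorial convergence} into the uniform statement over \emph{all} loops required by Proposition~\ref{prop: combinatorial data defines map}. Both the finite index of $H$ (used to cut off supergroups of $H$ with finitely many coset representatives) and the finite generation of $H$ (used to reduce condition~(\ref{it: equal lifts}) to the generators) are indispensable, and both fail in the transcendental setting. This is precisely why the transcendental analogue cannot yield eventual isotopy, and why Main Theorem~\ref{mainthm: main theorem B} must instead be phrased as locally uniform convergence of the associated $\sigma$-maps.
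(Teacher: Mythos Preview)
Your argument is correct and reaches the conclusion via a slightly different route than the paper. The paper, after using finite generation of $H := f_*\pi_1(\R^2\setminus f^{-1}(A),b)$ to obtain $H \subset (f_n)_*\pi_1(\R^2\setminus f_n^{-1}(A),b_n)$, does \emph{not} immediately argue equality of the two subgroups; instead it invokes covering-space theory to produce a covering map $\varphi_n$ with $f = f_n\circ\varphi_n$, then runs a separate degree computation (lifting powers of a single simple loop encircling $A$) to force $\deg f_n = \deg f$ and hence $\varphi_n$ to be a homeomorphism, and finally applies Lemma~\ref{lemm: useful prop}. Your approach bypasses the explicit construction of $\varphi_n$ and the degree argument: you pin down the subgroup $(f_n)_*\pi_1$ exactly using finitely many coset representatives of $H$, and then feed both conditions directly into Proposition~\ref{prop: combinatorial data defines map}. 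This is a clean and more group-theoretic route; the paper's version has the mild advantage of making the isotopy $\varphi_n$ visible as an honest lift, which is closer in spirit to the later argument in Proposition~\ref{prop: partial lifting}.
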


\begin{proof}

    Suppose that $(f_n)$ converges combinatorially to $f$ with respect to some choice of $(b_n), b, t$ and $(p_n)$.
    Since $f$ is a topological polynomial, then $f^{-1}(A)$ is finite and, in particular, the group $f_* \pi_1(\R^2 \setminus f^{-1}(A), b)$ is finitely generated. Thus, if $n$ is large enough, $(f_n)_*\pi_1(\R^2 \setminus f_n^{-1}(A), b_n)$ contains $f_* \pi_1(\R^2 \setminus f^{-1}(A), b)$ as a subgroup. From the classical theory of covering maps, it follows that $f = f_n \circ \varphi_n$ on $\R^2 \setminus f^{-1}(A)$, where $\varphi_n \colon \R^2 \setminus f^{-1}(A) \to \R^2 \setminus f_n^{-1}(A)$ is a covering map sending $b$ to $b_n$. In particular, the topological degrees of the maps $f_n$ and $\varphi_n$ are finite and do not exceed $\deg(f)$ for all sufficiently large $n$.

    Let $\alpha$ be a simple loop in $\R^2$ based at $t$ such that the unique bounded component of $\R^2 \setminus \alpha$ contains the set $A$. Proposition \ref{prop: preimages with infinity} implies that $\alpha^k, k \in \mathbb{Z}$ lifts to a loop based at $b$ under the topological polynomial $f$ if and only if $k$ is divisible by $\deg(f)$; a similar statement holds for the topological polynomials $f_n$, as well. Thus, condition (\ref{it: converging groups}) implies that $\deg(f) = \deg(f_n)$ for all sufficiently large $n$. Hence, the covering map $\varphi_n \colon \R^2 \setminus f^{-1}(A) \to \R^2 \setminus f_n^{-1}(A)$ is an orientation-preserving homeomorphism and, therefore, can be extended to  $\R^2$ so that $f = f_n \circ \varphi_n$. 
    Finally, similarly to the proof of Proposition \ref{prop: combinatorial data defines map}, one can show that that $\varphi_n$ is isotopic rel.\ ~$A$ to $\id_{\R^2}$ using condition(\ref{it: converging lifts}) and Lemma~\ref{lemm: useful prop}.
\end{proof}


The next proposition shows that the notion of combinatorial convergence is independent of choices of representatives within the isotopy classes of considered Thurston maps.

\begin{proposition}\label{prop: independence from isotopy}
    Let $f_n \colon (\R^2, A) \righttoleftarrow, n \in \N$ be a sequence of Thurston maps converging combinatorially to a Thurston map $f\colon (\R^2, A) \righttoleftarrow$. Suppose that Thurston maps $\widehat{f}_n \colon (\R^2, A) \righttoleftarrow$ and $\widehat{f} \colon (\R^2, A) \righttoleftarrow$ are isotopic rel.\ $A$ to $f_n$ and $f$, respectively, for every $n \in \N$. Then the sequence $(\widehat{f}_n)$ converges combinatorially to $\widehat{f}$. 
\end{proposition}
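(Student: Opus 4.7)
The plan is to transfer the combinatorial convergence data for $(f_n, f)$ directly through the isotopies $\varphi_n, \varphi \in \Homeo^+_0(\R^2, A)$ satisfying $\widehat{f}_n = f_n \circ \varphi_n$ and $\widehat{f} = f \circ \varphi$. First I would fix isotopies $(\varphi_s)_{s \in \I}$ and $(\varphi_{n,s})_{s \in \I}$ realizing these relations (rel.\ $A$, with $\varphi_0 = \varphi_{n,0} = \id_{\R^2}$), and use the given data $(b_n, b, t, p_n)$ for $(f_n, f)$ to produce the new data $(\widehat{b}_n, \widehat{b}, t, \widehat{p}_n)$ for $(\widehat{f}_n, \widehat{f})$ by setting $\widehat{b} := \varphi^{-1}(b)$, $\widehat{b}_n := \varphi_n^{-1}(b_n)$, tracing out paths $q(s) := \varphi_s^{-1}(b)$ and $q_n(s) := \varphi_{n,s}^{-1}(b_n)$, and defining $\widehat{p}_n := \overline{q} \cdot p_n \cdot q_n$. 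These are all paths in $\R^2 \setminus A$ because the isotopies fix $A$ pointwise, and clearly $\widehat{f}(\widehat{b}) = t = \widehat{f}_n(\widehat{b}_n)$.

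Next I would verify condition (\ref{it: converging groups}) from Definition \ref{def: combinatorial convergence} for $(\widehat{f}_n, \widehat{f})$. For any loop $\gamma \subset \R^2 \setminus A$ based at $t$, the $\widehat{f}$-lift of $\gamma$ starting at $\widehat{b}$ is $\varphi^{-1} \circ \gammal(f, b)$, and analogously for $\widehat{f}_n$. Since $\varphi^{-1}$ and $\varphi_n^{-1}$ are homeomorphisms sending $b \mapsto \widehat{b}$ and $b_n \mapsto \widehat{b}_n$ respectively, the closing behavior of these lifts matches that of $\gammal(f,b)$ and $\gammal(f_n, b_n)$. Thus condition (\ref{it: converging groups}) for $(\widehat{f}_n, \widehat{f})$ is immediate from the same condition for $(f_n, f)$.

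For condition (\ref{it: converging lifts}), when $\gammal(f, b)$ and $\gammal(f_n, b_n)$ are loops (for $n$ large), I would apply Lemma \ref{lemm: useful prop} to $\varphi$ and $\varphi_n$. The lemma gives
$$\varphi^{-1} \circ \gammal(f, b) \;\sim\; \overline{q} \cdot \gammal(f, b) \cdot q \quad \text{and} \quad \varphi_n^{-1} \circ \gammal(f_n, b_n) \;\sim\; \overline{q_n} \cdot \gammal(f_n, b_n) \cdot q_n \quad \text{rel.\ } A.$$
(The correct orientation of $q, q_n$ is supplied by viewing $\varphi$ as sending $\widehat{b} \mapsto b$.) Substituting into $\widehat{p}_n \cdot \gammal(\widehat{f}_n, \widehat{b}_n) \cdot \overline{\widehat{p}_n}$ and cancelling the $q_n \cdot \overline{q_n}$ factors, the expression becomes $\overline{q} \cdot p_n \cdot \gammal(f_n, b_n) \cdot \overline{p_n} \cdot q$, which by condition (\ref{it: converging lifts}) for $(f_n, f)$ is homotopic rel.\ $A$ to $\overline{q} \cdot \gammal(f, b) \cdot q \sim \varphi^{-1} \circ \gammal(f, b) = \gammal(\widehat{f}, \widehat{b})$.

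The proof is really bookkeeping; the only substantive input is Lemma \ref{lemm: useful prop}, which converts postcomposition by an isotopically trivial homeomorphism into conjugation by the path traced by a base point. The main point to be careful about is the orientation conventions for the paths $q, q_n$ and ensuring they lie in $\R^2 \setminus A$ (which follows automatically because the isotopies fix $A$ and the base points are off $A$).
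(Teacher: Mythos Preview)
Your proposal is correct and uses essentially the same approach as the paper: both arguments reduce to Lemma~\ref{lemm: useful prop} to convert composition with an isotopically trivial homeomorphism into conjugation by the isotopy-traced path, then concatenate these paths with the original $p_n$ to get $\widehat{p}_n$. The only organizational difference is that the paper first observes the case where $\widehat{f}_n = f_n \circ \varphi^{-1}$ and $\widehat{f} = f \circ \varphi^{-1}$ for the \emph{same} $\varphi$ is trivial (take $\widehat{b} = \varphi(b)$, $\widehat{b}_n = \varphi(b_n)$, $\widehat{p}_n = \varphi \circ p_n$), and then reduces to the case $\widehat{f} = f$, handling only the $\varphi_n$ side; you instead handle both isotopies simultaneously, which is equally valid and arguably more direct.
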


\begin{proof}
    
    First of all, we note that if $\widehat{f}_n = f_n \circ \varphi^{-1}$ and $\widehat{f} = f \circ \varphi^{-1}$, where $\varphi \in \Homeo_0^+(\R^2, A)$, then the statement is obviously true with respect to $\widehat{t} := t, \widehat{b} := \varphi(b), \widehat{b}_n := \varphi(b_n)$, and $\widehat{p}_n :=~\varphi \circ p_n$. This observation allows us to reduce the verification to the case when $\widehat{f}_n = f_n \circ \varphi_n^{-1}$, where $\varphi_n \in \Homeo_0^+(\R^2, A)$, and $\widehat{f} = f$.

    One can easily see that condition (\ref{it: converging groups}) is satisfied for the sequence $(\widehat{f}_n)$ and the map $\widehat{f}$ if we take $\widehat{b} := b$, $\widehat{t} = t$, and $\widehat{b}_n := \varphi_n(b_n)$ for every $n \in \N$.

    Lemma~\ref{lemm: useful prop} implies that for every $n \in \N$ there exists a path $\ell_n \subset \R^2 \setminus A$ joining $b_n$ with $\widehat{b}_n = \varphi_n(b_n)$ such that any loop $\gamma \subset \R^2 \setminus A$ based at $b_n$ is homotopic rel.\ ~$A$ to $\ell_n \cdot (\varphi_n \circ \gamma) \cdot \overline{\ell_n}$. Now let us take $\widehat{p}_n := p_n \cdot \ell_n$ and show that $(\widehat{f}_n)$ converges combinatorially to $\widehat{f}$. Indeed, let $\delta \subset \R^2 \setminus A$ be a loop based at $\widehat{t}$ such that $[\delta]~\in~\widehat{f}_*\pi_1(\R^2 \setminus \widehat{f}^{-1}(A), b)$, then for sufficiently large $n$ we have 
    \begin{align*}
        \deltal(\widehat{f}, \widehat{b}) \sim p_n \cdot \deltal(f_n, b_n) \cdot \overline{p_n} \text{ rel.\ } A,\\
        \deltal(f_n, b_n) \sim \ell_n \cdot \deltal(\widehat{f}_n, \widehat{b}_n) \cdot \overline{\ell_n} \text{ rel.\ } A.
    \end{align*}
    In other words, $\deltal(\widehat{f}, \widehat{b})$ is homotopic rel.\ $A$ to $\widehat{p}_n \cdot \deltal(f_n, b_n) \cdot \overline{\widehat{p}_n}$ and this completes the proof. 
\end{proof}


\subsection{Topological convergence}\label{subsec: topological convergence}

The following definition provides another way to think of convergence of Thurston maps.
\begin{definition}\label{def: topological convergence}
    Let $f_n\colon (\R^2, A) \righttoleftarrow, n \in \N$ be a sequence of Thurston maps. We say that $(f_n)$ \textit{converges topologically} to a Thurston map $f\colon (\R^2, A) \righttoleftarrow$ if for every bounded set $D \subset \R^2$ and for all sufficiently large $n$, the inequality $f_n|D = f|D$ holds.
\end{definition}
Unlike for  combinatorial convergence, note that the limit in Definition \ref{def: topological convergence} is uniquely defined. 
The following proposition shows the relationship between Definitions \ref{def: combinatorial convergence} and~\ref{def: topological convergence}.

\begin{proposition}\label{prop: equivalence of convergences}
    Let $f_n\colon (\R^2, A) \righttoleftarrow, n \in \N$ be a sequence of Thurston maps. Then $(f_n)$ converges combinatorially to a Thurston map $f\colon (\R^2, A) \righttoleftarrow$ if and only if there exists a sequence of Thurston maps $\widehat{f}_n \colon (\R^2, A)\righttoleftarrow$ converging topologically to $f$, where $\widehat{f}_n$ is isotopic rel.\ $A$ to $f_n$.
\end{proposition}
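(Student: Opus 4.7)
The plan is to prove the two implications separately. The direction from topological convergence (after isotopy) to combinatorial convergence is essentially a covering-space argument, while the converse requires producing explicit isotopies, for which I will work in the language of admissible quadruples from Section~\ref{subsec: admissible quadruples}.

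For the forward direction, I will first show that the topologically convergent sequence $\widehat{f}_n$ itself converges combinatorially to $f$; Proposition~\ref{prop: independence from isotopy} will then promote the conclusion to $f_n$. Pick $b, t \in \R^2 \setminus A$ with $f(b) = t$. By topological convergence applied to a bounded neighborhood of $b$, we have $\widehat{f}_n(b) = t$ for all large $n$, so I take $b_n := b$ and $p_n$ the constant path at $b$. Given any loop $\gamma \subset \R^2 \setminus A$ based at $t$, its $f$-lift $\gammal(f, b)$ is the continuous image of $\I$, hence a compact subset of $\R^2 \setminus f^{-1}(A)$. Applying topological convergence on a bounded open set containing it, $\gammal(f, b)$ is simultaneously a valid $\widehat{f}_n$-lift of $\gamma$ starting at $b$; by uniqueness of lifts, $\gammal(\widehat{f}_n, b) = \gammal(f, b)$, and both conditions of Definition~\ref{def: combinatorial convergence} follow immediately.

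The converse is the substantial direction. Assume $(f_n)$ converges combinatorially to $f$ with data $(b_n, b, t, p_n)$. I will choose a rose graph $\Rose$ surrounding $A$ with center $t$, and associate to $f$ and $f_n$ the admissible quadruples $\Delta := \Delta(A, \Rose, f) = (A, \Rose, \Gamma, \Phi)$ and $\Delta_n := \Delta(A, \Rose, f_n) = (A, \Rose, \Gamma_n, \Phi_n)$ via Proposition~\ref{prop: quadruple by map}. The goal is to construct, for each compact $K \subset \R^2$ and every sufficiently large $n$, a homeomorphism $\varphi_n \in \Homeo_0^+(\R^2, A)$ with $f_n \circ \varphi_n = f$ on $K$; then $\widehat{f}_n := f_n \circ \varphi_n$ will be isotopic to $f_n$ rel.\ $A$ and converge topologically to $f$. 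Since each bounded set meets only finitely many edges and faces of $\Gamma$, it is enough to build $\varphi_n$ modelled on larger and larger finite subgraphs. Concretely, I fix an exhaustion $H_1 \subset H_2 \subset \cdots$ of $\Gamma$ by finite connected subgraphs containing a base vertex $b \in V(\Gamma)$. Concatenations of petal parametrizations and their reverses produce loops in $\Rose$ whose $f$-lifts trace out walks in $\Gamma$; combinatorial convergence says that, for each fixed such walk and all large $n$, the $f_n$-lift of the corresponding loop is homotopic rel.\ $A$ (after conjugation by $p_n$) to its $f$-counterpart. Iterating over a finite collection of walks whose edges cover $H_j$, I obtain, for each $j$ and all large $n$, a subgraph $H_{j,n} \subset \Gamma_n$ together with an isomorphism $\iota_{n,j} \colon H_j \to H_{j,n}$ intertwining $\Phi$ and $\Phi_n$. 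The admissibility condition from Definition~\ref{def: admissible quadruple} pins down the cyclic order of edges at each vertex, so $\iota_{n,j}$ is automatically a planar graph isomorphism. A planar isotopy-extension argument in the spirit of Proposition~\ref{prop: equivalence of quadruples and maps} then promotes $\iota_{n,j}$ to $\varphi_{n,j} \in \Homeo_0^+(\R^2, A)$ with $f_n \circ \varphi_{n,j} = f$ on an open neighborhood of $H_j$, and choosing $j = j(n) \to \infty$ suitably slowly delivers the required $\varphi_n$.

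The main obstacle, as is typical when upgrading combinatorial information to topological convergence, is this last step: converting the abstract equivalence of $\Delta$ and $\Delta_n$ on $H_j$ into a genuine homeomorphism of $\R^2$ isotopic to $\id_{\R^2}$ rel.\ $A$ that intertwines $f$ and $f_n$ on a full neighborhood of $H_j$, including across bounded and unbounded faces (the latter corresponding to asymptotic tracts when $f$ or $f_n$ is transcendental). The control of cyclic orders at vertices, the face labels inherited from $F(\Rose)$, and the uniqueness clauses in Propositions~\ref{prop: map by quadruple} and~\ref{prop: equivalence of quadruples and maps} should make the face-by-face gluing canonical enough for the resulting $\varphi_n$ to lie in $\Homeo_0^+(\R^2, A)$.
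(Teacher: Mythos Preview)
Your forward direction is correct and matches the paper's argument essentially verbatim.

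For the converse, you take a genuinely different route from the paper. The paper does not use admissible quadruples here at all; instead it proves a separate technical result (Proposition~\ref{prop: partial lifting}) that says: given only condition~(\ref{it: converging groups}) of combinatorial convergence, for any bounded $D \ni b$ and all large $n$ there is a continuous \emph{injective} map $\varphi_n \colon D \to \R^2$ with $f|D = f_n \circ \varphi_n$ and $\varphi_n(b) = b_n$. The proof of injectivity uses a spider $S$ with legs to infinity from each point of $A$, together with a finiteness argument: any failure of injectivity on $D$ produces a loop $\gamma_n$ of bounded ``spider-crossing complexity'' lying in $(f_n)_*\pi_1 \setminus f_*\pi_1$, and there are only finitely many homotopy classes of such loops, contradicting the group convergence. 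Once $\varphi_n$ is in hand, the paper extends it to $\widehat{\varphi}_n \in \Homeo^+(\R^2)$ by Alexander's trick, and then invokes condition~(\ref{it: converging lifts}) together with Lemma~\ref{lemm: useful prop} to conclude $\widehat{\varphi}_n \in \Homeo_0^+(\R^2, A)$. No face-by-face gluing is needed, because the covering-space lift $\varphi_n$ already satisfies $f = f_n \circ \varphi_n$ on an \emph{open} set.

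Your approach via graph isomorphisms $\iota_{n,j} \colon H_j \to H_{j,n}$ is morally equivalent, and your argument for well-definedness and injectivity of $\iota_{n,j}$ on vertices (via condition~(\ref{it: converging groups}) applied to concatenations of walks) is essentially the spider-crossing argument in disguise. The genuine gap is the step you yourself flag as the main obstacle: a graph isomorphism intertwining $\Phi$ and $\Phi_n$ only gives $f_n \circ \varphi_{n,j} = f$ on the one-dimensional set $H_j$, not on an open neighborhood. Promoting this to agreement on an open set requires matching up faces, and the faces of the finite subgraph $H_j$ need not coincide with faces of $\Gamma$ (in particular, unbounded faces and faces whose boundary is only partially in $H_j$ are problematic). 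One can push this through, but it amounts to reproving Proposition~\ref{prop: partial lifting} in graph-theoretic language. Note also that your sentence ``the $f_n$-lift of the corresponding loop is homotopic rel.\ $A$ \dots\ to its $f$-counterpart'' is only licensed by condition~(\ref{it: converging lifts}) when the $f$-lift is itself a loop; for non-closed walks you only have condition~(\ref{it: converging groups}), which is what you actually use. The paper's organization (covering-space lift first, then Alexander extension, then Lemma~\ref{lemm: useful prop}) sidesteps all of this and is the cleaner path.
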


In other words, the two notions of convergence - combinatorial and topological, are, in a certain sense, equivalent.
Before we prove Proposition \ref{prop: equivalence of convergences}, we obtain the following result of a non-dynamical nature.

\begin{proposition}\label{prop: partial lifting}
    Suppose that $f_n\colon \R^2 \to \R^2$, $n \in \N$ and $f\colon \R^2 \to \R^2$ are topologically holomorphic maps of finite type such that $S_{f_n} \subset A$ and $S_f \subset A$ for every $n \in \N$ and some finite set $A$. Let $b, b_n \in \R^2$ and $t \in \R^2 \setminus A$ be points such that $f_n(b_n) = f(b) = t$ for every~$n~\in~\N$. 
    
    Further assume that for every loop $\gamma \subset \R^2 \setminus A$ based at $t$, for all sufficiently large $n$, $\gamma$ lifts under $f_n$ to a loop based at $b_n$ if and only if it lifts under $f$ to a loop based at $b$. In other words, 
    $$
        \lim_{n \to \infty}(f_n)_* \pi_1(\R^2 \setminus f_n^{-1}(A), b_n) = f_*\pi_1(\R^2\setminus f^{-1}(A), b).
    $$   
    Then, given any bounded set $D \subset \R^2$ containing $b$, for all sufficiently large $n$, there exists a continuous injective map $\varphi_n \colon D \to \R^2$ such that $f|D = f_n \circ \varphi_n$ and $\varphi_n(b) = b_n$.   
\end{proposition}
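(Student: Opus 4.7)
\emph{Overall approach.} The plan is to construct $\varphi_n$ by first lifting $f$ via $f_n$ on $U^* := U \setminus f^{-1}(A)$, where $U$ is an open disk containing $\overline D \cup \{b\}$, then continuously extending the lift across $U \cap f^{-1}(A)$, and finally verifying injectivity on $D$. Since $\overline U$ is compact and $f^{-1}(A)$ is discrete, $U \cap f^{-1}(A) = \{x_1, \dots, x_m\}$ is finite and $\pi_1(U^*, b)$ is a finitely generated free group. Applying the hypothesis to a finite generating set of $f_*\pi_1(U^*, b)$ yields an integer $N_1$ with $f_*\pi_1(U^*, b) \subset (f_n)_*\pi_1(\R^2 \setminus f_n^{-1}(A), b_n)$ for all $n \geq N_1$; the classical lifting criterion then produces a unique continuous local homeomorphism $\widetilde{\varphi}_n\colon U^* \to \R^2 \setminus f_n^{-1}(A)$ with $f_n \circ \widetilde{\varphi}_n = f|U^*$ and $\widetilde{\varphi}_n(b) = b_n$.

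\emph{Extension over the punctures.} As $z \to x_i$, $f_n(\widetilde{\varphi}_n(z)) = f(z) \to a_i := f(x_i)$, so $\widetilde{\varphi}_n(z)$ stays in $f_n^{-1}$ of a small disk around $a_i$. Landing in an asymptotic tract of $f_n$ over $a_i$ is impossible because such a tract is simply connected, whereas the $f$-image of a small loop around $x_i$ winds $\deg(f, x_i)$ times around $a_i$. Hence $\widetilde{\varphi}_n$ accumulates at a unique $y_i^{(n)} \in f_n^{-1}(a_i)$, and I set $\varphi_n(x_i) := y_i^{(n)}$. To ensure $\varphi_n$ is a local homeomorphism at $x_i$, I apply the hypothesis to loops of the form $\sigma \cdot \nu^k \cdot \overline\sigma$, where $\nu$ is a small loop around $a_i$ and $\sigma$ transports $t$ to a point near $a_i$: such a loop lifts to a loop under $f$ precisely when $\deg(f, x_i) \mid k$ and under $f_n$ precisely when $\deg(f_n, y_i^{(n)}) \mid k$, so the hypothesis forces $\deg(f_n, y_i^{(n)}) = \deg(f, x_i)$.

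\emph{Injectivity, the main obstacle.} The core step is proving injectivity of $\varphi_n$ on $D$, uniformly in $n$. Suppose for contradiction that $\varphi_n(z_1) = \varphi_n(z_2) =: w$ with $z_1 \neq z_2$ in $D$. Pick paths $\alpha$ from $z_1$ to $z_2$ and $\beta_1$ from $b$ to $z_1$ in $U^*$ (with minor adjustments if some $z_i \in f^{-1}(A)$), and form the loop $\gamma := \sigma \cdot \ell \cdot \overline\sigma$ based at $t$, where $\sigma := f \circ \beta_1$ and $\ell := f \circ \alpha$. A direct check shows that the $f$-lift of $\gamma$ at $b$ is $\beta_1 \cdot \alpha \cdot \delta$, where $\delta$ is the $f$-lift of $\overline\sigma$ at $z_2$; this terminates at $b$ if and only if the $f$-lift of $\sigma$ from $b$ ends at $z_2$, which fails since it ends at $z_1 \neq z_2$. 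Hence $[\gamma] \notin f_*\pi_1(\R^2 \setminus f^{-1}(A), b)$. On the other hand, the collision $\widetilde{\varphi}_n(z_1) = \widetilde{\varphi}_n(z_2) = w$ makes the $f_n$-lift of $\gamma$ at $b_n$ equal to $(\widetilde{\varphi}_n \circ \beta_1) \cdot (\widetilde{\varphi}_n \circ \alpha) \cdot \overline{\widetilde{\varphi}_n \circ \beta_1}$, which is a loop, giving $[\gamma] \in (f_n)_*\pi_1(\R^2 \setminus f_n^{-1}(A), b_n)$ and contradicting the hypothesis for this $\gamma$. To upgrade this per-pair contradiction into a uniform bound on $n$, I cover $\overline D$ by finitely many open sets on each of which $f$ is injective; the homotopy class $[\gamma] \in \pi_1(\R^2 \setminus A, t)$ is locally constant in $(z_1, z_2)$, so only finitely many classes arise, and the hypothesis applies to all of them simultaneously. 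I expect this uniformity step to be the main technical challenge of the proof.
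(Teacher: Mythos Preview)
Your overall strategy coincides with the paper's: lift $f$ through $f_n$ on a punctured region using finite generation of the relevant fundamental group, extend across the finitely many punctures, and for injectivity produce a loop $\gamma$ that witnesses a collision by lying in $(f_n)_*\pi_1$ but not in $f_*\pi_1$, then argue that only finitely many such witness classes arise so the hypothesis handles them all at once. The paper even organizes the contradiction loop the same way ($\gamma_n = f\circ\alpha_n \cdot \overline{f\circ\beta_n}$ for paths from $b$ to the two colliding points).

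The substantive difference is in the uniformity step you correctly flag as the crux. Your sketch (``cover $\overline D$ by sets on which $f$ is injective; $[\gamma]$ is locally constant in $(z_1,z_2)$'') is not yet a proof: $[\gamma]$ depends on the \emph{choices} of paths $\alpha,\beta_1$ in $U^*$, not just on the endpoints, and even with canonical choices ``locally constant'' on the non-compact configuration space $\{z_1\neq z_2\}$ does not immediately yield finiteness; one also has to control what happens when $f(V_i)\cup f(V_j)$ fails to be simply connected in $\R^2\setminus A$. The paper resolves this cleanly with a different device: it fixes a \emph{spider} $S$ (disjoint rays from the points of $A$ to $\infty$), enlarges $D$ to a bounded domain $D'$ meeting each complementary component of $f^{-1}(S)$ in a connected set, and thereby obtains a uniform bound $m$ on the number of crossings with $f^{-1}(S)$ for paths in $D'\setminus f^{-1}(A)$ joining $b$ to any point. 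Since loops in $\R^2\setminus A$ with at most $2m$ crossings with $S$ represent only finitely many classes in $\pi_1(\R^2\setminus A,t)$, the witness loop $\gamma_n$ lies in a fixed finite set $P_{S,2m}$, and the convergence hypothesis disposes of them simultaneously. Your finite-cover idea can be pushed through, but it requires fixing a finite skeleton of reference paths and controlling the homotopy of the short connecting segments---work that the spider's crossing-number bound does in one stroke. Your extension argument across the punctures (ruling out tracts, matching local degrees via the loops $\sigma\cdot\nu^k\cdot\overline\sigma$) is correct and in fact more detailed than the paper's, which simply invokes discreteness to extend after injectivity is already established on $D'\setminus f^{-1}(A)$.
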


\begin{proof}
   Without loss of generality we can assume that $D$ is an open Jordan region such that $b \in D$ and $\partial D \cap f^{-1}(A) = \emptyset$. Choose a \textit{spider} $S$ that consists of $|A|$ disjoint continuous simple curves $L_a \colon [0, + \infty) \to \R^2$, $a \in A$ such that $L_a$ joins $a$ with $\infty$, i.e., $L_a(0) = a$ and $\lim_{t \to +\infty} L_a(t) = \infty$ for every $a \in A$.
   



    \begin{claim1}
        Let $K \subset \R^2$ be a bounded set, then there are only finitely many connected components of $\R^2 \setminus f^{-1}(S)$ intersecting $K$.
    \end{claim1}

    \begin{subproof}[Proof of Claim 1]
        Suppose that there exist distinct connected components $E_1, E_2, \dots, E_n, \dots$ of $\R^2 \setminus f^{-1}(S)$ intersecting $K$. Pick an arbitrary point $x_n \in E_n \cap K$ for each $n \in \N$. We may assume without loss of generality that $(x_n)$ converges to $x \in \overline{K}$. Thus, any neighbourhood of $x$ intersects infinitely many connected components of $\R^2 \setminus f^{-1}(S)$, which leads to a contradiction since $f$ is a topologically holomorphic map.
    \end{subproof}



    \begin{claim2}
        There exists a bounded domain $D' \subset \R^2$ such that $D \subset D'$ and for every connected component $F$ of $\R^2 \setminus f^{-1}(S)$, the set $D' \cap F$ is connected (possibly empty).
    \end{claim2}

    \begin{subproof}[Proof of Claim 2]
        We will only give an outline of the proof, leaving some straightforward details to the reader. For each $a \in A^* := A \cup \{\infty\}$ we choose a set $U_a$ such that
        \begin{enumerate}
            \item \label{it: closed balls} $U_a$ is a closed Jordan region containing $a$ for each $a \in A$ and $U_\infty = \R^2 \setminus V_\infty$, where $V_\infty$ is an open Jordan region containing $A$; 


            \item \label{it: disjoint} $U_{a_1} \cap U_{a_2} = \emptyset$ for distinct $a_1, a_2 \in A^*$;

            \item \label{it: connected intersection} $U_a \cap S$ is connected for each $a \in A$ and $S \cup \bigcup_{a \in A^*} U_a$ is simply connected;



            \item \label{it: well placed} for each $a \in A^*$, every connected component of $f^{-1}(U_a)$ is either contained in $D$ or disjoint from it.


        \end{enumerate}

        The last condition can be satisfied if we take $U_a, a \in A^*$ disjoint from $f(\partial D) \subset \R^2 \setminus A$.

        



        Let $U := \bigcup_{a \in A^*}U_a$ and $W_F := F \setminus f^{-1}(U)$ for each connected component $F$ of $\R^2 \setminus f^{-1}(S)$. It is easy to see that $F$ can be written as the following disjoint union
        \begin{align*}
             F = F\setminus f^{-1}(U) \sqcup (F \cap f^{-1}(U)) = W_F \sqcup \bigsqcup_{a \in A^*} (F \cap f^{-1}(U_a)).
        \end{align*}
       By Proposition \ref{prop: preimages with infinity}, the map $f|F\colon F \to \R^2 \setminus S$ is a homeomorphism. Therefore, conditions~(\ref{it: closed balls}) and (\ref{it: connected intersection}) imply that the set $F \cap f^{-1}(U_a) = (f|F)^{-1}(U_a \setminus S)$ is connected for each $a \in A$, so it is either contained in $D$ or disjoint from it by condition (\ref{it: well placed}). Thus,
\begin{align*}
            (D \cup W_F) \cap F = W_F \sqcup \bigsqcup_{a \in A_F} (F \cap f^{-1}(U_a)) = (f|F)^{-1}\left(\R^2 \setminus \left(S \cup \bigcup_{a \in A^* \setminus A_F}U_a\right)\right),
\end{align*}       
where $A_F := \{a \in A^*: F \cap f^{-1}(U_a) \subset D\}$. Due to condition (\ref{it: connected intersection}), the set $\R^2 \setminus (S \cup \bigcup_{a \in A^* \setminus A_F}U_a)$ is a domain. Therefore, from Proposition \ref{prop: preimages with infinity}, it follows that the set $(D \cup W_F) \cap F$ is connected.




       Thus, $D' := D \cup \bigcup_{F: F \cap D \neq \emptyset} W_F$ is an open set such that $F \cap D'$ is connected for each connected component $F$ of $\R^2 \setminus f^{-1}(S)$. The set $D'$ is connected since $W_F \cap D \neq \emptyset$ if and only if $F \cap D \neq \emptyset$. At the same time, $D'$ is bounded since each $W_F$ is bounded and there are finitely many connected components of $\R^2 \setminus f^{-1}(S)$ intersecting $D$ due to Claim 1.
        
        
    \end{subproof}

    Due to the claims above, there exists a positive integer $m$ such that any $x \in D' \setminus f^{-1}(A)$ can be joined with the point $b$ by a simple path contained in $D' \setminus f^{-1}(A)$ and intersecting $f^{-1}(S)$ at most $m$ times. Denote by $P_{S, k}$ the set of elements of $\pi_1(\R^2 \setminus A, t)$ that can be represented by a loop $\alpha\colon \I \to  \R^2 \setminus A$ based at $t$ such that $|\alpha^{-1}(S)| \leq k$.  It is clear that $P_{S, k}$ is a finite subset of $\pi_1(\R^2 \setminus A, t)$.
    

    Now we construct a continuous map $\varphi_n\colon D' \to \R^2$ and prove its injectivity as required.
    Let~$D''$ be an open Jordan region containing $D'$. Since the group $f_*\pi_1(D'' \setminus f^{-1}(A), b)$ is finitely generated, by our initial assumptions, it is a subgroup of $(f_n)_* \pi_1(\R^2 \setminus f_n^{-1}(A), b_n)$ for all $n$ sufficiently large. Therefore, there exists a continuous map $\varphi_n \colon D'' \setminus f^{-1}(A) \to \R^2 \setminus f_n^{-1}(A)$ such that $f = f_n \circ \varphi_n$ on $D'' \setminus f^{-1}(A)$ and $\varphi_n(b) = b_n$. 

    We prove that the map $\varphi_n$ is injective on $D'\setminus f^{-1}(A)$.
    Suppose that it is not true for some~$n$, i.e., there exist two distinct points $x_n, y_n \in D' \setminus f^{-1}(A)$ such that $\varphi_n(x_n) = \varphi_n(y_n)$. We join~$b$ with $x_n$ and $y_n$ by two simple paths $\alpha_n$ and $\beta_n$ in $D' \setminus f^{-1}(A)$, respectively. Additionally, assume that both paths intersect $f^{-1}(S)$ at most $m$ times.
    It is clear that the loop $\gamma_n := f \circ \alpha_n\cdot \overline{f \circ \beta_n}$ based at $t$ does not lift under $f$ to a loop based at $b$ but it lifts under $f_n$ to the loop $\varphi_n \circ \alpha_n \cdot \overline{\varphi_n \circ \beta_n}$ based at $b_n$. 
Thus,
    \begin{align*}
        [\gamma_n] \in G_n:= P_{S, 2m} \cap \Big((f_n)_*\pi_1(\R^2 \setminus f_n^{-1}(A), b_n) \hspace{5pt}\setminus \hspace{5pt} f_* \pi(\R^2 \setminus f^{-1}(A), b)\Big).
    \end{align*}
    
     Since  $P_{S, 2m}$  is a finite set, if $\varphi_n$ fails to be injective for infinitely many $n$, then there exists $[\gamma] \in \pi_1(\R^2 \setminus A, t)$ such that $[\gamma] \in G_{n}$ for infinitely many $n$. This, however, is not possible: since $[\gamma] \not \in f_*\pi_1(\R^2\setminus f^{-1}(A), b)$, then for all $n$ sufficiently large,  $[\gamma]\not\in (f_n)_*\pi_1(\R^2 \setminus f_n^{-1}(A),t)$.
    
    Finally, since $f^{-1}(A)$ and $f_n^{-1}(A)$ are discrete subsets of $\R^2$, we can extend $\varphi_n$ to a continuous injective map defined on $D'$ satisfying $f|D' = f_n \circ \varphi_n$ and $\varphi_n(b) = b_n$ for all $n$ large enough.
    \end{proof}
\begin{proof}[Proof of Proposition \ref{prop: equivalence of convergences}]

If the sequence $(\widehat{f}_n)$ converges topologically to $f$, then it does so combinatorially with respect to $t, b, (b_n), p, (p_n)$, where $t \in \R^2 \setminus A$ and $b \in f^{-1}(t)$ are arbitrary, $b_n := b$, and $p$, $p_n$ are the constant loops based at $b$ for every $n \in \N$. Hence, Proposition~\ref{prop: independence from isotopy} implies that the sequence $(f_n)$ converges combinatorially to $f$.

Now suppose that the sequence $(f_n)$ converges combinatorially to $f$ with respect to some choice of $t, b, (b_n), p$ and $(p_n)$. Let $(D_m)$ be an exhaustion of $\R^2$ by closed Jordan regions containing $A$ and $b$ in their interiors for each $m \in \N$. Then for $n$ sufficiently large, there exists a maximal index $m = m(n)$ such that
\begin{enumerate}
    \item $f|D_m = f_n \circ \varphi_n$ for some continuous injective map $\varphi_n\colon D_m \to \R^2$ that satisfies $\varphi_n(b) = b_n$;

    \item for every loop $\gamma \subset \R^2 \setminus A$ based at $t$ such that $[\gamma] \in f_*\pi_1(D_m \setminus f^{-1}(A), b)$, loops $\gammal(f, b)$ and $p_n \cdot \gammal(f_n, b_n) \cdot \overline{p_n}$ are homotopic rel.\ $A$.
\end{enumerate}

These conditions hold for sufficiently large $n$ because of Definition \ref{def: combinatorial convergence}, Proposition \ref{prop: partial lifting} and the fact that $f_*\pi_1(D_m \setminus f^{-1}(A), b)$ is finitely generated. Moreover, it is easy to show that $m(n)$ converges to infinity as $n$ tends to infinity.

Now we can extend $\varphi_n$ to an orientation-preserving homeomorphism $\widehat{\varphi}_n$ of $\R^2$. In fact, by Alexander's trick, all such extensions of $\varphi_n$ are isotopic rel.\ $D_m$ to each other. Clearly, if $\gamma$ is a loop in $D_m \setminus f^{-1}(A)$ based at $b$, then $\varphi_n \circ \gamma$ is a $f_n$-lift (based at $b_n$) of $f \circ \gamma$.  In particular, $\gamma$ is homotopic rel.\ $A$ to $p_n \cdot (\widehat{\varphi}_n\circ \gamma) \cdot \overline{p_n}$ for every loop $\gamma \subset \R^2 \setminus A$ based at $b$. Thus, the homeomorphism $\widehat{\varphi}_n$ is isotopic rel.\ $A$ to $\id_{\R^2}$ by Lemma~\ref{lemm: useful prop}.



Now consider the sequence $(\widehat{f}_n)$ of Thurston maps, where $\widehat{f}_n := f_n \circ \widehat{\varphi}_n$ for all $n \in \N$. Clearly, $(\widehat{f}_n)$ converges topologically to $f$.
\end{proof}

\subsection{Polynomial approximations}\label{subsec: polynomial approximations}

In Section \ref{subsec: admissible quadruples}, we show how to define topologically holomorphic maps of finite type using the language of admissible quadruples. Now we apply this framework to study Thurston maps, as this point of view is helpful while working with the notion of combinatorial convergence. In particular, we establish criteria for combinatorial convergence of a given sequence of Thurston maps (Proposition \ref{prop: quadruples and convergence}) and show that every Thurston map can be approximated combinatorially by a sequence of polynomial Thurston maps (Proposition \ref{prop: polynomial approximations}).

\begin{definition} \label{def: dynamically admissible quadruple}
    An admissible quadruple $\Delta = (A, \Rose, \Gamma, \Phi)$ is called \textit{dynamically admissible} if $\Delta$ is parabolic, $\Gamma \cap A = \emptyset$, and every face $F$ of the graph $\Gamma$ contains at most one point of~$A$ if $F$ is bounded and no points of $A$, otherwise.
    
    
\end{definition}

Proposition \ref{prop: quadruple by map} implies that $\Delta(A, \Rose, f)$ is dynamically admissible if $f\colon (\R^2, A)\righttoleftarrow$ is a Thurston map. Also, analogous to Proposition \ref{prop: map by quadruple}, we can establish the following statement.

\begin{proposition}\label{prop: Thurston map by quadruple}
    Let $\Delta = (A, \Rose, \Gamma, \Phi)$ be a dynamically admissible quadruple. Then there exists a Thurston map $f\colon (\R^2, A) \righttoleftarrow$ such that $\Delta(A, \Rose, f) = \Delta$.
\end{proposition}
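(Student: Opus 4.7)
The plan is to bootstrap from Proposition \ref{prop: map by quadruple} and then adjust the resulting map by pre-composition with a self-homeomorphism of $\R^2$ so that the extra dynamical condition $f(A) \subset A$ built into the notion of a Thurston map is satisfied. Applying Proposition \ref{prop: map by quadruple} first produces a topologically holomorphic map $f_0\colon \R^2 \to \R^2$ of finite type with $S_{f_0} \subset A$ and $\Delta(A, \Rose, f_0) = \Delta$; since $\Delta$ is parabolic by hypothesis, so is $f_0$. The only feature still missing is that $f_0$ may send points of $A$ to the ``wrong'' places inside the correct faces of $\Rose$; everything else a Thurston map needs will then follow for free.

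Dynamical admissibility gives sharp control over where points of $A$ sit. Since $\Gamma \cap A = \emptyset$ and no unbounded face of $\Gamma$ contains a point of $A$, each $a \in A$ lies in the interior of a uniquely determined bounded face $F_a$ of $\Gamma$. Because every face labelled by $P_\infty$ is required to be unbounded, $F_a$ is labelled by some $P_{j(a)}$ with $j(a) \in \{1, \ldots, m\}$. Proposition \ref{prop: quadruple by map} then supplies a unique preimage $\widetilde{a} \in \inter(F_a)$ of $a_{j(a)}$ under $f_0|F_a$. I now want to produce a $\psi \in \Homeo^+(\R^2)$ that is the identity on $\Gamma$ and sends each $a \in A$ to $\widetilde{a}$.

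Constructing such a $\psi$ is elementary: on each bounded face $F$ containing a (necessarily unique) point $a \in A$, both $a$ and $\widetilde{a}$ belong to the open topological disk $\inter(F)$, so there is a homeomorphism of $\overline{F}$ fixing $\partial F$ pointwise and mapping $a$ to $\widetilde{a}$; on all other closed faces and on $\Gamma$ itself, take the identity. These pieces glue continuously since $\psi$ is the identity on the common boundary $\Gamma$. Setting $f := f_0 \circ \psi$, one checks $f|\Gamma = f_0|\Gamma = \Phi$, so $\Delta(A, \Rose, f) = \Delta$; moreover $S_f = S_{f_0} \subset A$ and $f(a) = f_0(\widetilde{a}) = a_{j(a)} \in A$ for every $a \in A$, whence $f(A) \subset A$ and consequently $P_f \subset A$. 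Parabolicity passes from $f_0$ to $f$, and $f$ is non-injective because $f_0$ is (as produced by Proposition \ref{prop: map by quadruple} together with Remark \ref{rem: quadruples with one marked point}). Hence $f\colon (\R^2, A) \righttoleftarrow$ is a Thurston map realizing $\Delta$, as desired. No step here poses a genuine obstacle; the only point requiring care is the ruling out of the label $P_\infty$ for $F_a$, which is precisely what the admissibility axiom that $P_\infty$-faces be unbounded provides.
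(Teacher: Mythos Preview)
Your argument is correct. The paper does not actually supply a proof of this proposition; it only remarks that it can be established ``analogous to Proposition \ref{prop: map by quadruple}.'' Your approach---first invoking Proposition \ref{prop: map by quadruple} to obtain $f_0$ and then pre-composing with a homeomorphism supported on the bounded faces so that each $a \in A$ lands on the unique point of $f_0^{-1}(A)$ in its face---is a clean and complete way to fill in the details. The paper's implicit route would presumably be to rerun the face-by-face construction of Proposition \ref{prop: map by quadruple} while directly choosing the uniformizing homeomorphisms $\psi_F$ so that the distinguished preimage in each bounded face $F$ coincides with the point of $A$ lying in $F$; the two approaches are equivalent.

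One minor quibble: your parenthetical justification of non-injectivity via Proposition \ref{prop: map by quadruple} and Remark \ref{rem: quadruples with one marked point} is not quite apt, since neither result asserts that the map produced is non-injective. What you actually need is $|V(\Gamma)| \geq 2$ (equivalently, the associated covering over $\R^2 \setminus A$ has degree at least two), which is implicit in any non-trivial use of the proposition; the degenerate case $|V(\Gamma)| = 1$ with $m = 1$ yields the identity and is tacitly excluded.
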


In a similar way we define a \textit{dynamical equivalence} relation on the set of all dynamically admissible quadruples with a fixed marked set. We say that two dynamically admissible quadruples $\Delta_1 = (A, \Rose_1, \Gamma_1, \Phi_1)$ and $\Delta_2 = (A, \Rose_2, \Gamma_2, \Phi_2)$ are dynamically equivalent if there exist $\theta, \varphi \in \Homeo_0^+(\R^2, A)$ such that 
\begin{enumerate}
    \item $\theta$ is an isomorphism between $\Rose_1$ and $\Rose_2$;

    \item $\varphi$ is an isomorphism between $\Gamma_1$ and $\Gamma_2$;

    \item $\theta \circ \Phi_1 = \Phi_2 \circ \varphi$.
\end{enumerate}

\noindent Finally, the following observation provides a dynamical analog of Proposition \ref{prop: equivalence of quadruples and maps}.

\begin{proposition}\label{prop: equivalent th maps and quadruples}
    Let $f_1 \colon (\R^2, A) \righttoleftarrow$ and $f_2 \colon (\R^2, A) \righttoleftarrow$ be Thurston maps, and $\Rose_1$, $\Rose_2$ be rose graphs surrounding $A$ such that $\Rose_1$ is isotopic rel.\ $A$ to $\Rose_2$.  Then $\Delta(A, \Rose_1, f_1)$ and $\Delta(A, \Rose_2, f_2)$ are dynamically equivalent if and only if $f_1$ and $f_2$ are isotopic rel.\ $A$.
\end{proposition}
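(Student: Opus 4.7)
The approach is to adapt the proof of Proposition \ref{prop: equivalence of quadruples and maps} while tracking isotopy classes of the homeomorphisms involved, using the isotopy lifting property (Proposition \ref{prop: isotopy lifting property}) in both directions and exploiting dynamical admissibility to pin down the behavior on the marked set $A$.

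For the backward direction, I assume $f_1$ and $f_2$ are isotopic rel.\ $A$, so $f_1 = f_2 \circ \varphi_0$ for some $\varphi_0 \in \Homeo_0^+(\R^2, A)$. Since $\Rose_1$ is isotopic rel.\ $A$ to $\Rose_2$, pick $\theta \in \Homeo_0^+(\R^2, A)$ with $\theta(\Rose_1) = \Rose_2$. Applying Proposition \ref{prop: isotopy lifting property} to the relation $f_1 = f_2 \circ \varphi_0$ with $\theta$ replacing $\id$, I obtain $\varphi \in \Homeo^+(\R^2)$ isotopic rel.\ $f_1^{-1}(A)$ to $\varphi_0$ (and in particular isotopic rel.\ $A$ to $\id_{\R^2}$) satisfying $\theta \circ f_1 = f_2 \circ \varphi$. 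A direct check shows $\varphi(\Gamma_1) = \Gamma_2$ and $\theta \circ \Phi_1 = \Phi_2 \circ \varphi$, witnessing the dynamical equivalence of the two quadruples.

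For the forward direction, I assume $\Delta(A,\Rose_1,f_1)$ and $\Delta(A,\Rose_2,f_2)$ are dynamically equivalent via $\theta, \varphi \in \Homeo_0^+(\R^2, A)$. Using the isotopy lifting property together with $\theta(\Rose_1) = \Rose_2$, I can reduce to $\Rose_1 = \Rose_2 = \Rose$ and $\theta = \id_{\R^2}$; pre-composing $f_2$ with $\varphi$ (an operation that does not alter the isotopy class rel.\ $A$) lets me further assume $\varphi = \id_{\R^2}$, so that $\Delta(A,\Rose,f_1) = \Delta(A,\Rose,f_2) = (A,\Rose,\Gamma,\Phi)$. By Proposition \ref{prop: equivalence of quadruples and maps}, there exists $\psi \in \Homeo^+(\R^2)$ with $f_1 = f_2 \circ \psi$, and the whole point is now to upgrade this to $\psi \in \Homeo_0^+(\R^2, A)$.

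The plan for the upgrade is the following. Since both $f_1$ and $f_2$ restrict to the same covering map $\Phi$ on $\Gamma$, the lift $\psi$ is determined on the deformation retract $\Gamma$ up to a deck transformation; I choose the lift so that $\psi|\Gamma = \id_\Gamma$. Being an orientation-preserving self-homeomorphism of $\R^2$ that fixes $\Gamma$ pointwise, $\psi$ preserves each face of $\Gamma$ setwise. By dynamical admissibility, each bounded face of $\Gamma$ contains at most one point of $A$ and each unbounded face contains no points of $A$; combined with $\psi(A) = A$, this forces $\psi|A = \id_A$. Finally, I apply Alexander's trick face-by-face: on each bounded face, viewed as a closed Jordan region with at most one interior marked (and $\psi$-fixed) point, the restriction of $\psi$ is isotopic to the identity rel.\ the boundary and the marked point; on each unbounded face, the same type of argument applies on an exhaustion by Jordan subregions. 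Gluing these isotopies along $\Gamma$ yields an isotopy of $\psi$ to $\id_{\R^2}$ rel.\ $A$.

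The main obstacle I anticipate is the face-by-face isotopy argument on unbounded faces: one must argue that $\psi$ can be isotoped to the identity on an unbounded simply connected region while respecting the (possibly infinite) combinatorial structure of the boundary $\partial F \subset \Gamma$, and that all these isotopies on different faces glue into a continuous isotopy of $\R^2$. The simple connectedness of each face (from the structure theorems in Propositions \ref{prop: preimages without infinity} and \ref{prop: preimages with infinity}) together with the fact that $\psi$ is already the identity on $\Gamma$ is what makes the gluing work; the absence of marked points in unbounded faces is what lets Alexander-type extensions be carried out unobstructed.
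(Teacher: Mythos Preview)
Your backward direction matches the paper's proof. For the forward direction, however, the paper takes a different and considerably shorter route. After the same reduction to $\Rose_1=\Rose_2=\Rose$ and $\theta=\varphi=\id_{\R^2}$, so that $\Delta(A,\Rose,f_1)=\Delta(A,\Rose,f_2)=(A,\Rose,\Gamma,\Phi)$, the paper does \emph{not} invoke Proposition~\ref{prop: equivalence of quadruples and maps} and then try to upgrade the resulting $\psi$ to $\Homeo_0^+(\R^2,A)$. Instead, it appeals to Proposition~\ref{prop: combinatorial data defines map} together with the discussion after Proposition~\ref{prop: quadruple by map}: the quadruple determines both $f_*\pi_1(\R^2\setminus f^{-1}(A),v)=\Phi_*\pi_1(\Gamma,v)$ and the homotopy classes rel.\ $A$ of all lifts, and those data characterize the isotopy class of a Thurston map rel.\ $A$. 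This bypasses your face-by-face Alexander argument entirely.

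Your approach is workable but carries two soft spots. First, the assertion $\psi(A)=A$ is not automatic from $f_1=f_2\circ\psi$; you need the extra step that for $a\in A$ the face $F_a$ satisfies $F_a\cap f_2^{-1}(A)=\{a\}$ (Proposition~\ref{prop: quadruple by map}, using $a\in A\subset f_2^{-1}(A)$), which gives $\psi(a)=a$ directly. Second, for faces labeled $P_\infty$ the boundary $\partial F$ need not be a simple curve (Remark~\ref{rem: boundary} only guarantees it is unilaterally connected), so the Alexander-type isotopy and the global gluing over infinitely many unbounded faces require more care than for bounded faces whose closures are genuine Jordan regions. None of this is fatal, but it is exactly the kind of detail the paper's fundamental-group argument sidesteps. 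The trade-off is that your argument is self-contained and geometric, while the paper's is shorter because it cashes in the earlier investment in Proposition~\ref{prop: combinatorial data defines map}.
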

\begin{figure}
    \centering
    \includegraphics[scale=0.5]{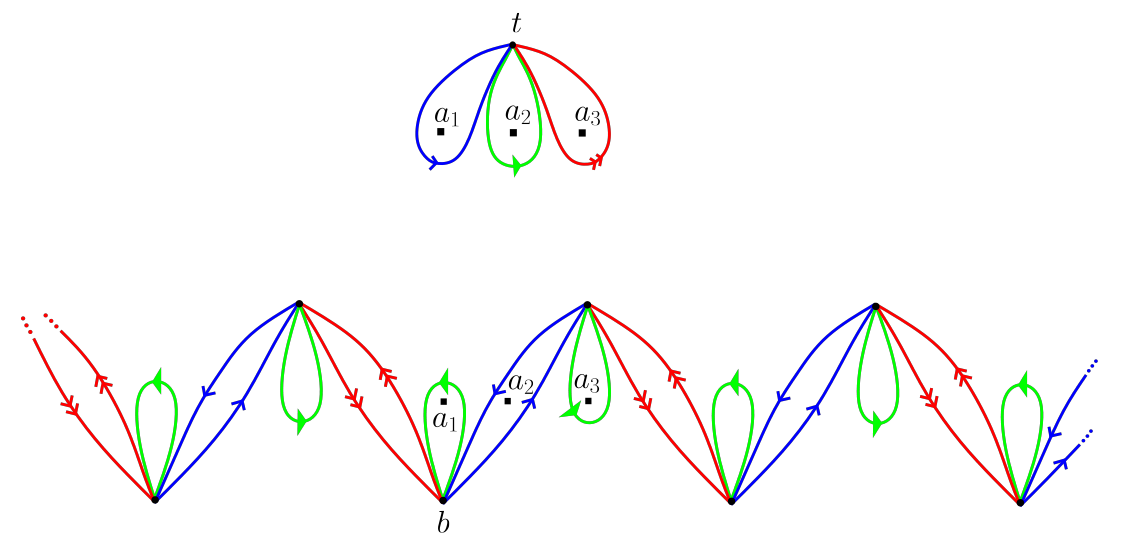}
    \label{fig:cosin_dyn}
\caption{Dynamically admissible quadruple realized by the psf entire map $G_1(z) = \pi \cos (z) / 2$, where $P_{G_1} = \{a_1, a_2, a_3\} = \{-\pi/2, 0, \pi/2\}$.}
\label{fig: dyn_pair for cosine}
\end{figure}
\begin{figure}
    \centering
    \includegraphics[scale=0.5]{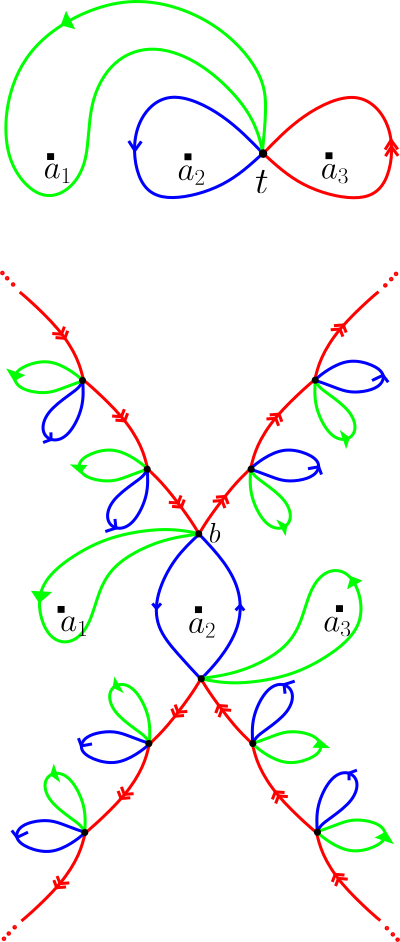}
\caption{Dynamically admissible quadruple realized by the psf entire map $G_2(z) = \sqrt{\ln 2}\big(1-\exp(z^2)\big)$, where\ $P_{G_2} = \{a_1, a_2, a_3\} = \{-\sqrt{\ln 2}, 0, \sqrt{\ln 2}\}$.}
\label{fig: dyn_pair for multi_error}
\end{figure}

\begin{proof}

  First suppose that $\Delta(A, \Rose_1, f_1)$ and $\Delta(A, \Rose_2, f_2)$ are dynamically equivalent via $\theta, \varphi \in \Homeo^+_0(\R^2, A)$. As in the proof of Proposition \ref{prop: equivalence of quadruples and maps}, we can assume that $\Rose_1 = \Rose_2 = \Rose$ and $\theta = \varphi = \id_{\R^2}$. The rest easily follows from Proposition \ref{prop: combinatorial data defines map} and the discussion of Section \ref{subsec: admissible quadruples}.

  Conversely, suppose that there exists $\psi \in \Homeo^+_0(\R^2)$ such that $f_1 = f_2 \circ \psi$. Choose $\theta \in \Homeo_0^+(\R^2, A)$ with $\theta(\Rose_1) = \Rose_2$. By Proposition \ref{prop: isotopy lifting property}, there exists $\varphi \in \Homeo^+_0(\R^2, A)$ such that $\theta \circ f_1 = f_2 \circ \varphi$. It directly implies that $\Delta(A,\Rose_1,f_1)$ and $\Delta(A,\Rose_2,f_2)$ are dynamically equivalent.
\end{proof} 


Given dynamically admissible quadruple $\Delta = (A, \Rose, \Gamma, \Phi)$, we say that a Thurston map $f \colon (\R^2, A) \righttoleftarrow$ \textit{realizes} $\Delta$ (or  equivalently, $\Delta$ \textit{defines} $f$), if $\Delta(A, \Rose, f)$ is dynamically equivalent to $\Delta$. Propositions \ref{prop: Thurston map by quadruple} and \ref{prop: equivalent th maps and quadruples} imply that every dynamically admissible $\Delta$ with a marked set $A$ defines a Thurston map $f\colon (\R^2, A) \righttoleftarrow$, unique up to isotopy rel.\ $A$.
\begin{example}\label{ex: dynamically admissible quadruples}
       Let $G_1 \colon (\C,P_{G_1}) \righttoleftarrow$ and $G_2 \colon (\C,P_{G_2}) \righttoleftarrow$ be the postsingularly finite entire maps defined in Example \ref{ex: portraits}. The graphs in Figures \ref{fig: dyn_pair for cosine} and \ref{fig: dyn_pair for multi_error} describe dynamically admissible quadruples realized by $G_1$ and $G_2$ respectively (compare with Figures \ref{fig: nondyn_quadruple for cosine} and \ref{fig: nondyn_quadruple or eg2} referenced in Example  \ref{ex: non dyn quadruples}). The solid black squares in these figures represent the postsingular values of $G_1$ and $G_2$, respectively. 
\end{example}

We shall use the framework of dynamically admissible quadruples to construct Thurston maps with required combinatorial convergence properties.


\begin{proposition}\label{prop: quadruples and convergence}
    Let $f_n \colon (\R^2, A) \righttoleftarrow, n \in \N$ and $f \colon (\R^2, A) \righttoleftarrow$ be Thurston maps, and~$\Rose$ be a rose graph that surrounds $A$.  
    Then the sequence $(f_n)$ converges combinatorially to~$f$ if and only if for every finite subgraph $K$ of~$f^{-1}(\Rose)$ and all sufficiently large~$n$, there exists
    a homeomorphism $\varphi_{K, n} \in \Homeo_0^+(\R^2, A)$ such that $\varphi_{K, n}(K)$ is a subgraph of ~$f_n^{-1}(\Rose)$ and $\Phi_{f,\Rose}|K = \Phi_{f_n,\Rose} \circ~\varphi_{K, n}|K$.
\end{proposition}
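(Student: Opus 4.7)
The plan is to prove both implications by leveraging Proposition~\ref{prop: equivalence of convergences}, which identifies combinatorial convergence with topological convergence up to isotopy rel.\ $A$.

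For the forward direction, I would pass to topological convergents: given $(f_n) \to f$ combinatorially, Proposition~\ref{prop: equivalence of convergences} yields $\psi_n \in \Homeo_0^+(\R^2, A)$ such that $\widehat{f}_n := f_n \circ \psi_n$ converges topologically to $f$. Since any finite subgraph $K$ of $f^{-1}(\Rose)$ has bounded realization, for $n$ large I get $\widehat{f}_n|K = f|K$, i.e., $f_n \circ \psi_n|K = f|K$. Choosing $\varphi_{K, n} := \psi_n$ gives $\Phi_{f_n, \Rose} \circ \varphi_{K, n}|K = \Phi_{f, \Rose}|K$, and I would check directly that $\psi_n(K)$ is a subgraph of $f_n^{-1}(\Rose)$: vertices of $K$ lie in $f^{-1}(t)$ and are sent to $f_n^{-1}(t) = V(f_n^{-1}(\Rose))$, while each edge $e \in E(K)$ satisfies $f_n(\psi_n(e)) = f(e) \in E(\Rose)$, making $\psi_n(e)$ an $f_n$-lift of an edge of $\Rose$, hence an edge of $f_n^{-1}(\Rose)$.

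For the backward direction, I verify Definition~\ref{def: combinatorial convergence} directly. Fix $b \in V(f^{-1}(\Rose))$ and let $t := f(b)$. Using that $f^{-1}(\Rose)$ is locally finite and connected, I exhaust it by a nested sequence of finite connected subgraphs $L_1 \subset L_2 \subset \dots$ each containing $b$. By hypothesis, for each $m$ there is $N_m$ such that $\varphi_{L_m, n}$ exists for all $n \geq N_m$; after a diagonal argument I obtain, for each large $n$, an index $m(n) \to \infty$ and a homeomorphism $\varphi_n := \varphi_{L_{m(n)}, n} \in \Homeo_0^+(\R^2, A)$ with the compatibility property on $K_n := L_{m(n)}$. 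Picking an isotopy $H_n$ from $\id_{\R^2}$ to $\varphi_n$ rel.\ $A$, I set $b_n := \varphi_n(b)$ and $p_n(s) := H_n(b, s)$. For any loop $\gamma \subset \R^2 \setminus A$ based at $t$, I first homotope $\gamma$ rel.\ $A$ (using that $\Rose$ is a deformation retract of $\R^2 \setminus A$) to a concatenation $\gamma'$ of petal parametrizations; the $f$-lift $\widetilde{\gamma}$ of $\gamma'$ at $b$ is a finite walk in $f^{-1}(\Rose)$, so for $n$ large $\widetilde{\gamma} \subset K_n$, whence $\varphi_n \circ \widetilde{\gamma}$ is precisely the $f_n$-lift of $\gamma'$ at $b_n$. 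Injectivity of $\varphi_n$ combined with homotopy lifting immediately yields condition~(\ref{it: converging groups}) of Definition~\ref{def: combinatorial convergence}. For condition~(\ref{it: converging lifts}), when $\widetilde{\gamma}$ is a loop, the map $F(t, s) := H_n(\widetilde{\gamma}(t), s)$ is a free homotopy in $\R^2 \setminus A$ between $\widetilde{\gamma}$ and $\varphi_n \circ \widetilde{\gamma}$ whose basepoint traces $p_n$; a standard argument gives $\widetilde{\gamma} \sim p_n \cdot (\varphi_n \circ \widetilde{\gamma}) \cdot \overline{p_n}$ rel.\ $A$.

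The main obstacle is the backward direction: the hypothesis supplies $\varphi_{K, n}$ with no coherence across different subgraphs $K$, while Definition~\ref{def: combinatorial convergence} demands fixed $b_n$ and $p_n$ serving \emph{all} loops simultaneously. The diagonal construction circumvents this by committing to a single $\varphi_n$ per $n$ drawn from an exhaustion of $f^{-1}(\Rose)$, trading coherence across subgraphs for the statement that every prescribed finite walk eventually sits inside $K_n$. A secondary concern is carefully realizing the path-homotopy from $\gamma$ to its avatar $\gamma'$ in $\Rose$ so that both lifts live in the graphs $f^{-1}(\Rose)$ and $f_n^{-1}(\Rose)$ to which the hypothesis applies.
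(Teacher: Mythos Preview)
Your proposal is correct and follows essentially the same route as the paper. The forward direction matches exactly: the paper invokes Proposition~\ref{prop: equivalence of convergences} to pass to topological convergents, and you spell this out. For the backward direction, the paper appeals to Definition~\ref{def: combinatorial convergence}, Proposition~\ref{prop: independence from isotopy}, and the deformation-retract discussion of Section~\ref{subsec: admissible quadruples}; your diagonal construction of $\varphi_n$ and direct verification of conditions (\ref{it: converging groups})--(\ref{it: converging lifts}) via the isotopy $H_n$ and Lemma~\ref{lemm: useful prop} is exactly the argument those references encode, so the only difference is that you fold the content of Proposition~\ref{prop: independence from isotopy} directly into the computation rather than citing it.
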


\begin{proof}
    Sufficiency easily follows from Definition \ref{def: combinatorial convergence}, Proposition \ref{prop: independence from isotopy}, and the discussion of Section \ref{subsec: admissible quadruples}. Necessity can be obtained by applying Proposition \ref{prop: equivalence of convergences}. 
\end{proof}

\begin{proposition} \label{prop: polynomial approximations}
    Let $f\colon (\R^2, A) \righttoleftarrow$ be an arbitrary Thurston map. Then there exists a sequence $f_n \colon (\R^2, A) \righttoleftarrow, n \in \N$ of polynomial Thurston maps converging combinatorially~to~$f$. 
    
\end{proposition}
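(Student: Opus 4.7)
My plan is to work entirely in the language of dynamically admissible quadruples from Section \ref{subsec: polynomial approximations}. If $f$ is already a polynomial Thurston map I would simply set $f_n = f$ for every $n$, so I assume $f$ is transcendental. I would fix a counterclockwise directed rose graph $\Rose$ surrounding $A$ and consider the dynamically admissible quadruple $\Delta = \Delta(A, \Rose, f) = (A, \Rose, \Gamma, \Phi)$ provided by Proposition \ref{prop: quadruple by map}. Because $f$ is transcendental of finite type, $\Gamma$ is infinite, but it has only finitely many unbounded faces, namely the asymptotic tracts of $f$; each such tract is either over some $a_j \in A$ (and so labelled $P_j$) or over $\infty$ (labelled $P_\infty$).

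The heart of the argument will be a construction of finite dynamically admissible quadruples $\Delta_n = (A, \Rose, \Gamma_n, \Phi_n)$ which exhaust $\Delta$ in the sense required by Proposition \ref{prop: quadruples and convergence}: for every finite subgraph $K \subset \Gamma$ and every sufficiently large $n$, there should be a homeomorphism $\varphi \in \Homeo_0^+(\R^2, A)$ with $\varphi(K) \subset \Gamma_n$ and $\Phi|K = \Phi_n \circ \varphi|K$. Once this is achieved, each $\Delta_n$ is automatically parabolic (since a finite admissible quadruple cannot be hyperbolic), so Proposition \ref{prop: Thurston map by quadruple} yields a polynomial Thurston map $f_n$ realizing $\Delta_n$, and Proposition \ref{prop: quadruples and convergence} then gives combinatorial convergence $f_n \to f$.

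To build $\Gamma_n$, I would choose an exhaustion $(D_n)$ of $\R^2$ by bounded open Jordan domains whose boundaries are transverse to $\Gamma$, avoid $V(\Gamma)$, and are large enough that every edge of $\Gamma$ meeting $D_n$ lies inside $D_n$, and each tract $T_i$ intersects $D_n$ in a simply connected region bounded by a long finite subchain of $\partial T_i$. Let $K_n$ be the full subgraph of $\Gamma$ spanned by $V(\Gamma) \cap D_n$. For each tract $T_i$ over a point $a_{j(i)} \in A$ I would close the finite subchain $\partial T_i \cap K_n$ into a cycle by adjoining a single new edge inside $T_i \cap D_n$ mapping to the petal $p_{j(i)}$, occupying the cyclic slots at its endpoints previously held by the two chain-edges of $\partial T_i$ exiting $D_n$. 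By Proposition \ref{prop: quadruple by map}, the resulting bounded face will correspond to a critical point of $f_n$ whose local degree equals the length of the new cycle. The tracts over $\infty$, together with all of $\R^2 \setminus D_n$, must be absorbed into the unique unbounded $P_\infty$-face of $\Gamma_n$: I would append in $\R^2 \setminus D_n$ a bounded collection of new vertices and edges modelled on the admissible quadruple of $z \mapsto z^d$ for some $d = d(n)$, glued to the dangling half-edges of $K_n$ so that every boundary vertex becomes $2m$-regular with the correct cyclic ordering of outgoing and incoming edges.

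The main obstacle will be verifying that this surgery actually yields a valid dynamically admissible quadruple. Checking the local cyclic admissibility at each new vertex should be routine, but the delicate point is the global merging of several $P_\infty$-tracts of $\Gamma$ into the single unbounded face of $\Gamma_n$; for instance the postsingularly finite cosine-type example $G_1$ from Example \ref{ex: portraits} already has two tracts over $\infty$. Handling this requires choosing the polynomial template at infinity carefully so that the cyclic pattern in which the dangling half-edges of $K_n$ emanate into $\R^2 \setminus D_n$ matches the pattern of boundary edges around the $P_\infty$-face of that template, thereby producing face labels consistent with the admissibility conditions of Definition \ref{def: admissible quadruple}.
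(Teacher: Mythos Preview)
Your overall strategy---build finite dynamically admissible quadruples $\Delta_n$ that exhaust $\Delta(A,\Rose,f)$ in the sense of Proposition~\ref{prop: quadruples and convergence}---is exactly the paper's. But you are making the surgery harder than it needs to be, and the ``main obstacle'' you identify is self-imposed.

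The key observation you are missing is that every edge of $\Gamma$ bounds exactly one $P_j$-labelled face and one $P_\infty$-labelled face. So to restore the $2m$-regularity and cyclic admissibility at each vertex of $K_n$, it suffices to close up \emph{only} the $P_j$-labelled faces whose boundary is partially but not fully contained in $K_n$. Concretely: for each such face $F$, the paper shows (via a separation argument using the connectedness of $K_n$) that $\partial F \cap K_n$ is a single finite chain with endpoints $u,v$; one then adds a single new edge $e_F$ inside $F$ joining $u$ to $v$, coinciding near $u$ and $v$ with the germs of the two edges of $\partial F$ that exit $K_n$. No new vertices are introduced, and nothing is done to the $P_\infty$-faces---they simply merge into unbounded faces of $\Gamma_n$, which is precisely what admissibility requires of them. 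The resulting $\Gamma_n$ is automatically finite, connected, and admissible, hence parabolic.

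In particular, there is no need to append a ``polynomial template modelled on $z\mapsto z^d$'' in $\R^2\setminus D_n$, and no matching problem for dangling half-edges along $\partial D_n$. Your proposed surgery on the tracts over $\infty$ is not wrong in spirit, but carrying it out correctly would require exactly the kind of delicate cyclic-order bookkeeping you flag as unresolved; the paper's approach bypasses all of that. One small additional point: you should close up \emph{every} $P_j$-labelled face whose boundary meets $K_n$ but is not contained in it---including bounded ones, not only asymptotic tracts over $a_j$---and the argument that $\partial F \cap K_n$ is a single chain is the genuine technical lemma in the proof.
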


\begin{proof}

    Due to Proposition \ref{prop: small postsingular set} and Remark \ref{rem: quadruples with one marked point}, the case when $|A| = 1$ is trivial and, therefore, we can assume that $|A| \geq 2$. Let us choose a rose graph $\Rose$ surrounding the set~$A$ and consider the dynamically admissible quadruple $\Delta(A, \Rose, f)  = (A, \Rose, \Gamma, \Phi)$.
    Next, we choose an arbitrary exhaustion of $\Gamma$ by finite connected subgraphs $K_n = (V_n, E_n)$. We shall construct a sequence of eventually dynamically admissible quadruples $\Delta_n = (A, \Rose, \Gamma_n, \Phi_n)$, where $\Gamma_n$ is obtained from $K_n$ by adding several new edges, and the maps $\Phi_n$ and $\Phi$ coincide on $K_n$. We describe this more precisely by defining $\Gamma_n$ and $\Phi_n$ algorithmically. For each $n \in \N$, initialize $\Gamma_n$ as  $K_n$, and $\Phi_n$ as $\Phi|K_n$. Let $F$ be an arbitrary face of $\Gamma$ labelled by a bounded face $P$ of $\Rose$ with the property that $\partial F$ intersects $K_n$ but it is not a proper subset of $K_n$.
\begin{figure}[h]
    \centering
    \includegraphics[scale=0.5]{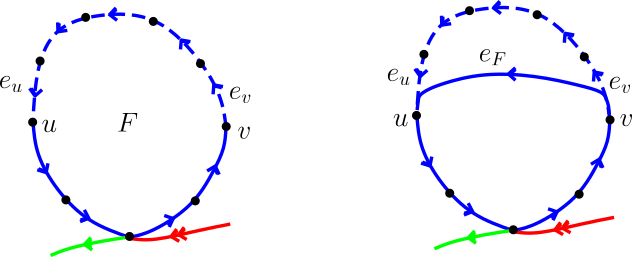}
    \caption{Constructing the directed edge $e_F$. The diagram on the left represents the face $F$ and the graph $K_n$, and the diagram on the right demonstrates the newly added edge $e_F$. Black dots and colored (solid and dashed) arcs represent the vertices and directed edges of $\Gamma$,  respectively. The solid blue arcs are edges of $K_n$, while the dashed ones are in $\Gamma \setminus K_n$. }
    \label{fig:new_edge}
\end{figure}

    \begin{claim}
        $C:=\partial F \cap K_n$ is a directed finite chain.
    \end{claim}
    \begin{subproof}[Proof of Claim]
        First we consider the case when $F$ is bounded: here, $\partial F$ is a counterclockwise directed cycle by Remark \ref{rem: boundary}. It suffices to show that $C$ (or equivalently $\partial F \setminus C$) is connected. Supposing the contrary, let $e_1$ and $e_2$ be two edges in disjoint components of~$\partial F \setminus C$. 
        It is easy to see that each edge $e \in E(\Gamma)$ is a boundary of exactly two faces and one of them is always unbounded. Therefore, for $i=1,2$, there exists a continuous curve $L_i\colon [0, +\infty) \to~\R^2$ that joins an interior point $x_i := L_i(0)$ of $e_i$ with $\infty$ (i.e., $\lim_{t \to +\infty} L_i(t) = \infty$) and intersects~$\Gamma$ only at the point $x_i$. Let $y_1$ and $y_2$ be vertices of $C$ lying in disjoint components of $\partial F \setminus \{\inter(e_1), \inter(e_2)\}$. Any path in~$\R^2$ joining $y_1$ to $y_2$ has to intersect $F \cup \{\inter(e_1), \inter(e_2)\}$ or $L_1 \cup L_2$. In particular, there is no path in $K_n$ joining $y_1$ to $y_2$. This forms a contradiction since $K_n$ is connected. The case when $F$ is unbounded is analogous.
    \end{subproof}
    

Let $u$ and $v$ be the endpoints of $C$ (it is possible that~$u = v$). Then there exist unique edges $e_u, e_v \in E(\partial F \setminus K_n)$ such that $e_u$ is incident to $u$, and $e_v$ is incident to $v$ (again, $e_u$ and $e_v$ might coincide). 
We shall construct an edge $e_F$ with endpoints at $v$ and $u$ (see Figure~\ref{fig:new_edge}) with $e_F \subset e_u \cup e_v \cup F$, so that $e_F$ coincides with the edges $e_u$ and $e_v$ in small neighborhoods of $u$ and $v$, respectively. In particular, $\inter(e_F)$ does not intersect $K_n$. We add $e_F$ to $\Gamma_n$ and define $\Phi_n|e_F$ so that $\Phi_n$ and $\Phi$ coincide on $e_F \cap e_u$ and $e_F \cap e_v$. Then we repeat the above procedure for all faces $F \in F(\Gamma)$ satisfying previously listed properties.



Now consider the sequence of constructed quadruples $\Delta_n = (A, \Rose, \Gamma_n, \Phi_n)$. Since $(K_n)$ is an exhaustion of $\Gamma$, there exists $N \in \N$ such that for each $n\geq N$, if $F \in F(\Gamma)$ is bounded and contains a point of $A$, then $\partial F \subset K_n$. For each $n\geq N$, it is straightforward to check that the conditions of   Definitions \ref{def: admissible quadruple} and \ref{def: dynamically admissible quadruple} are satisfied. In particular, $\Delta_n$ is parabolic since~$\Gamma_n$ is a finite graph. By Proposition \ref{prop: Thurston map by quadruple}, we can construct a polynomial Thurston map $f_n \colon (\R^2, A) \righttoleftarrow$ such that $\Delta(A, \Rose, f_n) = \Delta_n$ for each $n \geq N$. Thus, the sequence $(f_n)$ converges combinatorially to $f$ due to Proposition \ref{prop: quadruples and convergence}.
\end{proof}



\begin{remark}\label{rem: conv below}
Suppose that we are in the setting of the proof of Proposition \ref{prop: polynomial approximations}. Let $F$ be a face of $\Gamma_n$ labelled by a bounded face $P$ of $\mathcal{R}$ (with respect to the quadruple $\Delta_n$) for some $n \geq N$. Then there exists a unique face $F' \in F(\Gamma)$ having the same label $P$ (with respect to the quadruple $\Delta$) such that $F\subset F'$. Moreover, exactly one of the following is true: 

\begin{enumerate}
    \item $\deg(f_n|F) = \deg(f_n|F')$. Then Proposition \ref{prop: quadruple by map} implies that $F$ contains a (unique) critical point $z_n$ of $f_n$ if and only if $F'$ contains a (unique) critical point $z$ of $f$ and, moreover, $\deg(f, z) = \deg(f_n, z_n)$.

    \item $\deg(f_n|F) < \deg(f_n|F')$. If $F$ contains a critical point of $f_n$, then it is unique in $F$. In this case $F'$ is either an asymptotic tract of $f$ or it contains a unique critical point~$z$ of $f$ and, moreover, $\deg(f, z) > \deg(f_n, z_n)$.
\end{enumerate}


    


\end{remark}

With this discussion and the following example, we show that dynamically admissible quadruples provide a convenient way for constructing ``combinatorial" approximations and thinking about combinatorial convergence.
\begin{example}\label{ex: pcf approx}

Consider the postsingularly finite entire map $G_2$ from Example~\ref{ex: portraits}, realizing the dynamically admissible quadruple $\Delta_{G_2} = \Delta(P_{G_2}, \Rose, G_2) = (P_{G_2}, \Rose, \Gamma, \Phi)$ depicted in Figure~\ref{fig: dyn_pair for multi_error} along with a chosen point $b \in V(\Gamma)$. Define the graph $K_n \subset \Gamma$ to be a collection of all vertices and edges of $\Gamma$ accessible from $b$ via a path in $\Gamma$ intersecting interiors of at most $n$ edges. It is clear that the sequence $(K_n)$ is an exhaustion of $\Gamma$ by finite connected graphs. Starting with $\Delta_{G_2}$ and $(K_n)$ and applying the construction from the proof of Proposition~\ref{prop: polynomial approximations}, we obtain a sequence of polynomial Thurston maps $f_n \colon (\R^2, P_{G_2}) \righttoleftarrow, n \in \N$ converging combinatorially to $G_2\colon (\C, P_{G_2})\righttoleftarrow$, with each $f_n$ defined by a dynamically admissible quadruple $\Delta_n~=~(P_{G_2}, \Rose, \Gamma_n, \Phi_n)$. Figure \ref{fig: poly approx for multierror} illustrates 
the graphs $\Gamma_n$ and the maps~$\Phi_n$ for $n = 1, 2, 3$ (from left to right). As usual, $\Phi_n\colon \Gamma_n \to \Rose$ maps each edge of $\Gamma_n$ to the unique edge of $\Rose$ of the same color, and the set $P_{G_2}$ is represented by solid black squares.

However, combinatorial  approximations need not to be polynomial. Consider the sequence of infinite graphs $(\widehat{\Gamma}_n)$ depicted in Figure~\ref{fig: non poly approx}. For each $n$, we can similarly define a map $\widehat{\Phi}_n: \widehat{\Gamma}_n \to~\mathcal{R}$ such that $\widehat{\Delta}_n = (P_{G_2}, \mathcal{R}, \widehat{\Gamma}_n, \widehat{\Phi}_n)$ is a dynamically admissible quadruple (parabolicity of $\widehat{\Delta}_n$ follows from \cite[Theorem 4.1]{weiwei}). Then $\widehat{\Delta}_n$ determines a transcendental Thurston map $\widehat{f}_n$ such that the sequence $(\widehat{f}_n)$ converges combinatorially to $G_2: (\C, P_{G_2}) \righttoleftarrow$ as $n$ tends to $\infty$.

There is also no canonical choice for a sequence of polynomial Thurston maps converging combinatorially to a given transcendental Thurston map. For instance, the sequence of polynomial Thurston maps illustrated in Figure \ref{fig: poly approx for cosine} (with respect to the rose graph showed at the top of Figure \ref{fig: dyn_pair for cosine}) converges combinatorially the map $G_1\colon(\C, P_{G_1})\righttoleftarrow$ from Example~\ref{ex: portraits}, which realizes dynamically admissible quadruple $\Delta_{G_1}$ as in Figure~\ref{fig: dyn_pair for cosine}. However, the property of Remark \ref{rem: conv below} cannot be satisfied for these combinatorial approximations.



\end{example}

\begin{figure}[t]

\begin{subfigure}[b]{0.25\textwidth}
    \centering

    \includegraphics[scale=0.4]{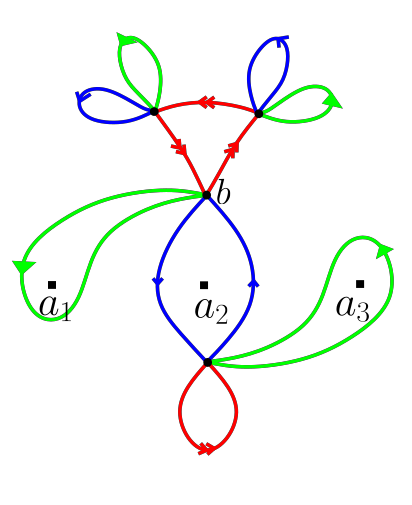}
\vspace{1.2cm}
\end{subfigure} \hspace{20pt}
\begin{subfigure}[b]{0.25\textwidth}
    \centering

    \includegraphics[scale=0.4]{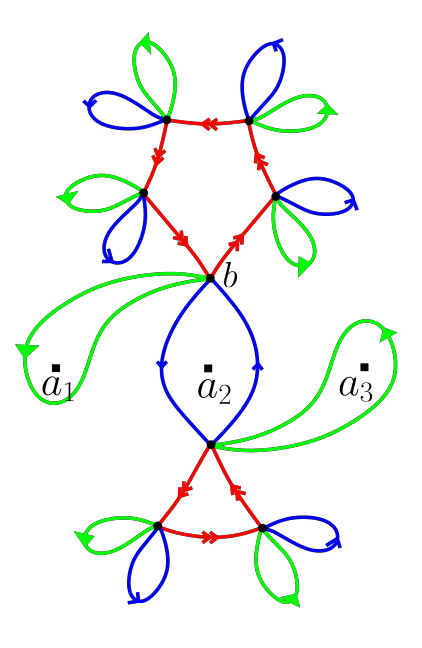}
\vspace{0.5cm}
\end{subfigure} \hspace{20pt}
\begin{subfigure}[b]{0.25\textwidth}
    \centering

    \includegraphics[scale=0.4]{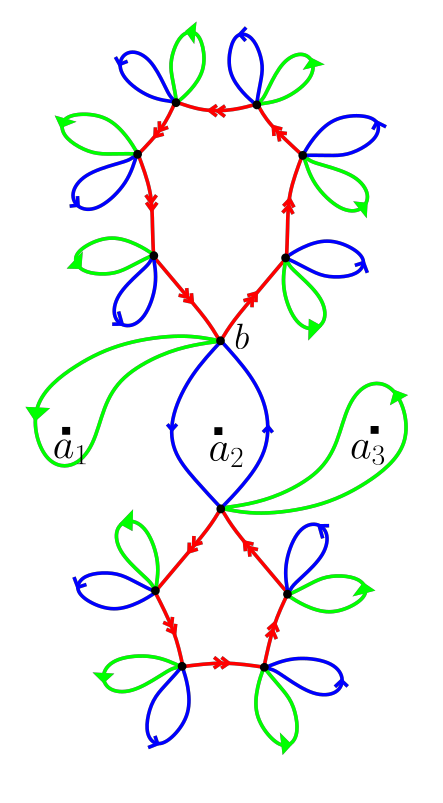}

\end{subfigure} 

\caption{Sequence of graphs that define polynomial Thurston maps converging combinatorially to $G_2\colon (\C, P_{G_2}) \righttoleftarrow$, where $G_2(z) = \sqrt{\ln 2}(1-\exp(z^2))$ and $P_{G_2} = \{a_1, a_2, a_3\} = \{-\sqrt{\ln 2}, 0, \sqrt{\ln 2}\}$.}
\label{fig: poly approx for multierror}  
\end{figure}

\begin{figure}

\begin{subfigure}[b]{0.25\textwidth}
    \includegraphics[scale=0.35]{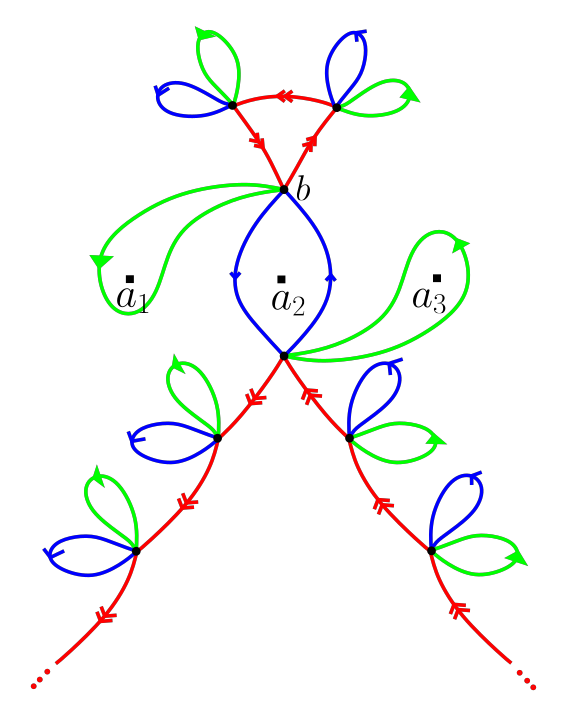} 
\end{subfigure}

\begin{subfigure}[b]{0.25\textwidth}
    \includegraphics[scale=0.35]{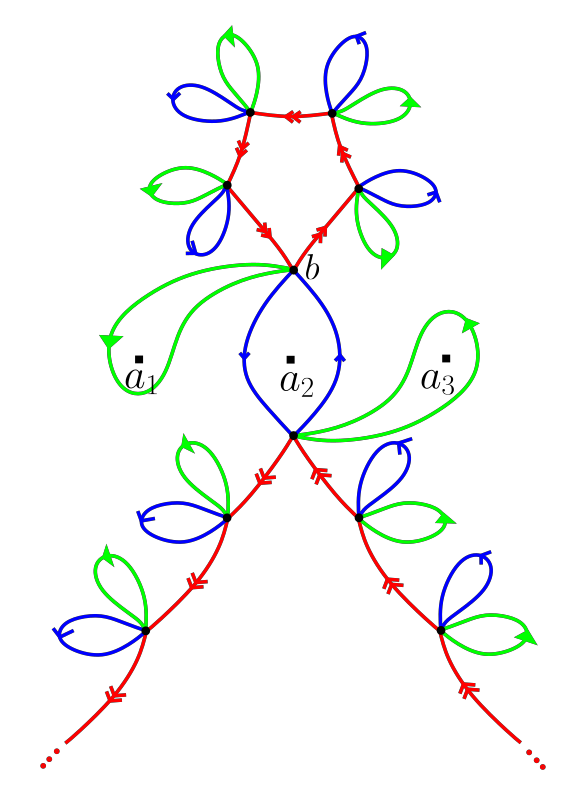} 
\end{subfigure}

\begin{subfigure}[b]{0.25\textwidth}
    \includegraphics[scale=0.35]{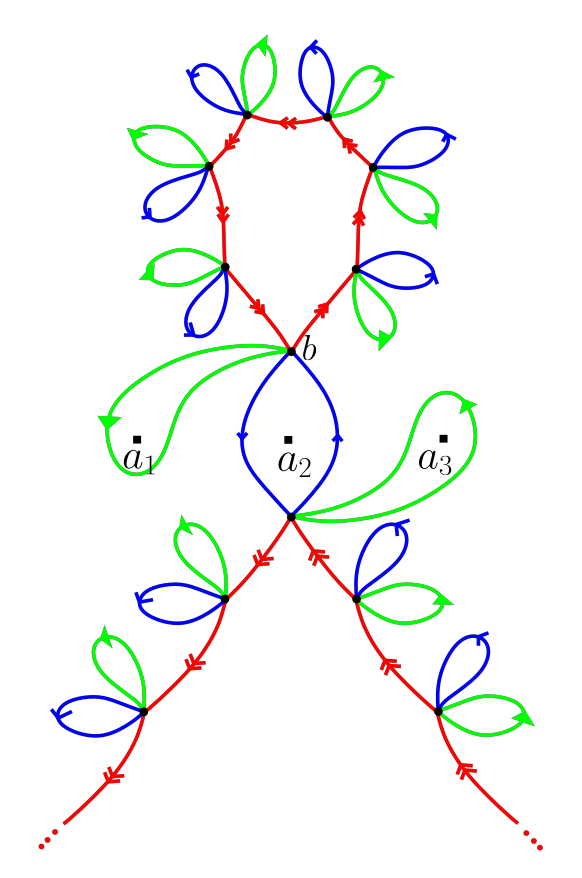} 
\end{subfigure}

\caption{Sequence of graphs that define transcendental Thurston maps converging combinatorially to $G_2\colon (\C, P_{G_2}) \righttoleftarrow$, where $G_2(z) = \sqrt{\ln 2}(1-\exp(z^2))$ and $P_{G_2} = \{a_1, a_2, a_3\} = \{-\sqrt{\ln 2}, 0, \sqrt{\ln 2}\}$.}
    \label{fig: non poly approx}    
\end{figure}

\begin{figure}[t]
    \centering
    \includegraphics[scale=0.5]{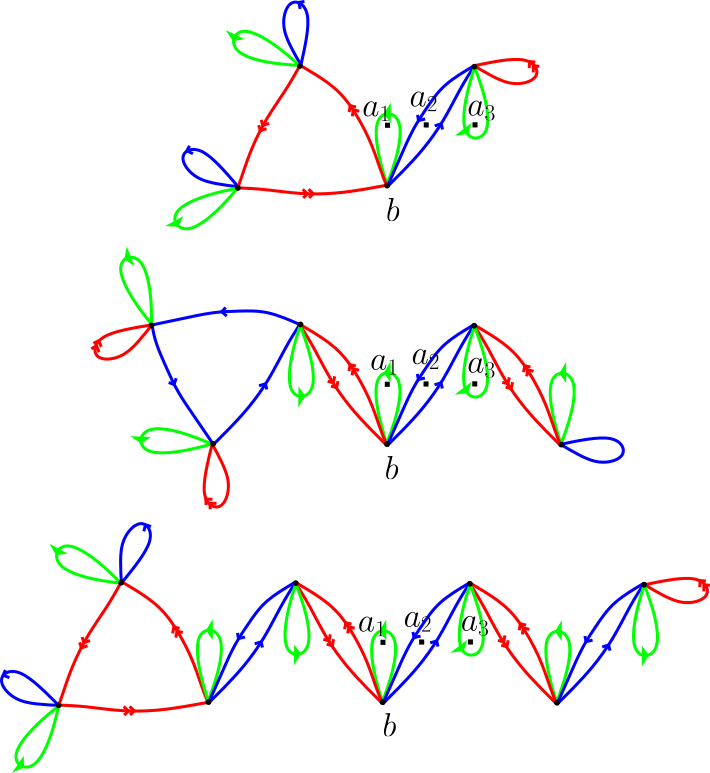}
    \caption{Sequence of graphs that define polynomial Thurston maps converging combinatorially to $G_1\colon (\C, P_{G_1}) \righttoleftarrow$, where $G_1(z) = \pi \cos(z) / 2$ and $P_{G_1} = \{a_1, a_2, a_3\} = \{0, -\pi/2, \pi/2\}$.}
    \label{fig: poly approx for cosine}
\end{figure}

\subsection{Convergence of \texorpdfstring{$\sigma$}{σ}-maps} \label{subsec: convergence of sigma-maps} 

We start this section by introducing the following result on quasiconformal maps \cite[Theorem 1]{krushkal_paper}.

\begin{theorem}\label{thm: qc extension}
    Let $\varphi \colon \D \to \C$ be an injective holomorphic map. Then for every $t \in \D^*$ the map $\varphi_t \colon \D \to \C$, where $\varphi_t(z) := \varphi(tz) / t$ for all $z \in \D$, admits a $K_t$-quasiconformal extension $\widehat{\varphi}_t\colon \C \to \C$, where $K_t = (1 + |t|) / (1 - |t|)$.
\end{theorem}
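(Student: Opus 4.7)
The key observation is that, thanks to the normalization by $1/t$, the map $\varphi_t$ is holomorphic and injective not merely on $\D$ but on the strictly larger disk $\D_{1/|t|}$: since $\varphi$ is univalent on $\D$ and $|t|<1$, the map $z \mapsto tz$ sends $\D_{1/|t|}$ univalently into $\D$, and hence $\varphi_t(z) = \varphi(tz)/t$ is univalent on $\D_{1/|t|}$. In particular, $\varphi_t$ is real-analytic across $\partial\D$, and we have an annular collar of ``extra room'' between $\partial\D$ and the boundary of the univalence disk. The statement then becomes an instance of a classical theme: a univalent holomorphic function defined on a disk of radius $R>1$ admits a quasiconformal extension of its restriction to $\D$ to a self-map of $\C$, with dilatation controlled by $R$. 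Substituting $R = 1/|t|$, the formula $(R+1)/(R-1)$ coincides with $K_t = (1+|t|)/(1-|t|)$, which indicates the correct scaling.

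The plan is to produce an explicit extension $\widehat{\varphi}_t$ by a reflection-type construction across $\partial\D$ and to verify the dilatation bound through its Beltrami coefficient. Set $\widehat{\varphi}_t = \varphi_t$ on $\overline{\D}$. For $|z|>1$, define $\widehat{\varphi}_t(z)$ as a function of $\varphi_t$ and its derivatives evaluated at the reflected point $w = 1/\bar z \in \D$, for example by an Ahlfors-Weill-type formula
\[
\widehat{\varphi}_t(z) \;=\; \varphi_t(w) \,+\, (z - w)\,\varphi_t'(w),
\]
possibly corrected by a Möbius factor depending on $\varphi_t''(w)/\varphi_t'(w)$. Any such extension is continuous across $\partial\D$ (since $w = z$ when $|z|=1$), and the task reduces to choosing the correction so that the Beltrami coefficient $\mu = \bar\partial\widehat{\varphi}_t/\partial\widehat{\varphi}_t$ satisfies $\|\mu\|_\infty \leq |t|$; the claimed dilatation then follows from the standard identity $K = (1+\|\mu\|_\infty)/(1-\|\mu\|_\infty)$, together with the measurable Riemann mapping theorem to confirm $\widehat{\varphi}_t$ is a genuine homeomorphism of $\C$.

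The main technical step is the bound $|\mu(z)|\leq|t|$ for $|z|>1$. A direct calculation, using $\partial_{\bar z}(1/\bar z) = -1/\bar z^2$, expresses $\mu(z)$ in terms of the pre-Schwarzian $\varphi_t''(w)/\varphi_t'(w)$ (or the Schwarzian $S_{\varphi_t}(w)$) at $w = 1/\bar z$, multiplied by geometric factors involving $|z|^2-1$ and powers of $|z|$. The crucial input is the univalence of $\varphi_t$ on the \emph{enlarged} disk $\D_{1/|t|}$: a rescaled Kraus-Nehari or Koebe distortion estimate, applied at radius $1/|t|$ instead of $1$, produces an extra factor of $|t|$ in the bound on $|\varphi_t''/\varphi_t'|$ compared with the generic bound for functions univalent only on $\D$. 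This extra factor is precisely what promotes a finite dilatation bound into one that tends to $1$ as $|t|\to 0$, as the statement requires.

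The hardest part will be designing the extension so that this factor of $|t|$ translates cleanly into the sharp inequality $\|\mu\|_\infty \leq |t|$, without losing a multiplicative constant arising from the geometric factors or from the generic Koebe bounds. A naive Taylor-type extension yields only a weaker estimate of the form $C|t|$, so one needs either the Ahlfors-Weill correction involving the Schwarzian (adapted to the enlarged univalence disk), or an alternative construction via the measurable Riemann mapping theorem applied to a carefully tuned Beltrami coefficient that is supported on $\C\setminus\overline{\D}$ and explicitly bounded by $|t|$. The sharpness of the resulting dilatation $(1+|t|)/(1-|t|)$ — attained, for instance, when $\varphi$ is a M\"obius transformation — confirms that the strategy is optimal and no further improvement is possible.
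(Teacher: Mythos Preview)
The paper does not prove this theorem; it quotes it from Krushkal's paper \cite[Theorem 1]{krushkal_paper} and uses it as a black box. So there is no ``paper's own proof'' to compare against.

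Your sketch identifies the right starting point: $\varphi_t$ is univalent on the enlarged disk $\D_{1/|t|}$, and this extra room is what drives the dilatation bound. The reflection/Ahlfors--Weill framework you propose is also the natural one. However, as you yourself flag, the decisive step --- obtaining the \emph{sharp} bound $\|\mu\|_\infty \leq |t|$ rather than $C|t|$ --- is not carried out. A direct application of Nehari's bound on the enlarged disk gives $|S_{\varphi_t}(z)|(1-|z|^2)^2 \leq 6|t|^2$ for $|z|<1$, which via Ahlfors--Weill yields only $k = 3|t|^2$ (and only when $|t| < 1/\sqrt{3}$), far from the claimed $k = |t|$. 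Closing this gap is precisely the content of Krushkal's argument, which uses variational/Grunsky techniques rather than a straightforward distortion estimate; your proposal does not supply a substitute for this, so as written it is a plausible outline but not a proof.

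One concrete error: your final remark that sharpness is attained by M\"obius transformations is wrong. If $\varphi$ is M\"obius then so is $\varphi_t$, and it extends conformally to $\widehat{\C}$ with dilatation $1$, not $(1+|t|)/(1-|t|)$. Extremality, when it holds, comes from Koebe-type functions (e.g.\ $\varphi(z) = z/(1-z)^2$), not from M\"obius maps.
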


The following corollary provides a slight reformulation of Theorem \ref{thm: qc extension}.

\begin{corollary}\label{corr: qc extension}
    Let $\varphi \colon \D_R \to \C$ be an injective holomorphic map. Then for every $r, 0 < r < R$, the map $\psi_{r} := \varphi|\D_r  \colon \D_r \to \C$ admits a $K_{r/R}$-quasiconformal extension $\widehat{\psi}_{r}\colon \C \to \C$, where $K_{r / R} = (1 + r / R) / (1 - r / R)$.
\end{corollary}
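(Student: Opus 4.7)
The plan is to reduce Corollary \ref{corr: qc extension} directly to Theorem \ref{thm: qc extension} by means of an affine rescaling, using that affine maps are conformal and hence preserve the quasiconformal dilatation of any homeomorphism they are composed with.

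First I would normalize the domain: define $\Phi \colon \D \to \C$ by $\Phi(z) := \varphi(Rz)$. The map $z \mapsto Rz$ is a biholomorphism from $\D$ onto $\D_R$, so $\Phi$ inherits injectivity and holomorphicity from $\varphi$. Next, set $t := r/R \in \D^*$ and apply Theorem \ref{thm: qc extension} to $\Phi$ with this choice of $t$. This produces a $K_t$-quasiconformal extension $\widehat{\Phi}_t \colon \C \to \C$ of $\Phi_t(z) = \Phi(tz)/t = (R/r)\,\varphi(rz)$ with $K_t = (1+r/R)/(1-r/R) = K_{r/R}$.

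Finally, I would unwind the normalization. Define
\[
\widehat{\psi}_r(w) := \frac{r}{R}\, \widehat{\Phi}_t\!\left(\frac{w}{r}\right), \qquad w \in \C.
\]
For $w \in \D_r$ one has $w/r \in \D$, so $\widehat{\Phi}_t(w/r) = \Phi_t(w/r) = (R/r)\,\varphi(w)$, giving $\widehat{\psi}_r(w) = \varphi(w) = \psi_r(w)$; thus $\widehat{\psi}_r$ extends $\psi_r$. Pre-composition with $w \mapsto w/r$ and post-composition with multiplication by $r/R$ are affine, hence conformal, and therefore do not change the dilatation of the underlying homeomorphism; consequently $\widehat{\psi}_r$ is $K_{r/R}$-quasiconformal, as required.

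There is no substantive obstacle here: once one spots that the map $\varphi$ on $\D_R$ can be rescaled to a map on $\D$ and that the rescaling parameter should be precisely $t = r/R$ in order to land on $\psi_r$, everything is forced. The only point worth stating clearly in the write-up is the (standard) fact that quasiconformal dilatation is invariant under pre- and post-composition with conformal (in particular affine) maps, so that the extension produced by Theorem \ref{thm: qc extension} transfers back without loss in the constant.
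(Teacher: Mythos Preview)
Your proof is correct and follows essentially the same approach as the paper: rescale $\varphi$ to a map on $\D$, apply Theorem \ref{thm: qc extension} with $t = r/R$, and then undo the rescaling by an affine change of variables, noting that affine conjugation preserves dilatation. The formulas you write for $\Phi$, $\Phi_t$, and $\widehat{\psi}_r$ match the paper's line by line.
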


\begin{proof}
    Applying Theorem \ref{thm: qc extension} with $t = r/R$ to the map $z \mapsto \varphi(Rz)$, where $z \in \D$, we see that the map $z \mapsto \frac{R}{r} \varphi(r z)$ defined on $\D$ admits a $K_{r/R}$-quasiconformal extension $\widehat{\varphi}_{r/R}\colon \C \to \C$. Define  $\widehat{\psi}_{r} \colon \C \to \C$ as follows:
    $$
        \widehat{\psi}_{r} (z) := \frac{r}{R}\widehat{\varphi}_{r/R}\left(\frac{z}{r}\right)
    $$

    Then one can see that $\widehat{\psi}_r|\D_r = \varphi|\D_r = \psi_r$, and $\widehat{\psi}_r$ is $K_{r/R}$-quasiconformal.    
\end{proof}

Theorem \ref{thm: qc extension} and Corollary \ref{corr: qc extension} allow us to establish the following upper bound on the distance between two points of $\Teich(\R^2, A)$ that satisfy certain properties.

\begin{proposition}\label{prop: bound}
   Let $\tau_1 = [\varphi_1], \tau_2 = [\varphi_2]$ be points in $\Teich(\R^2, A)$. Suppose that there exist positive real numbers $r$ and $R$ such that $0 < r < R$, $\varphi_1(A) \subset \D_r$, and $\varphi_2 \circ \varphi_1^{-1}$ is holomorphic on $\D_R$. Then 
   $$
        d(\tau_1, \tau_2) \leq \log \left(\frac{1 + r/R}{1 - r/R}\right).
   $$
\end{proposition}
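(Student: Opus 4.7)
The plan is to produce a quasiconformal homeomorphism $\psi\colon \C \to \C$ with dilatation at most $(1+r/R)/(1-r/R)$ such that $\psi \circ \varphi_2$ is isotopic rel.\ $A$ to $\varphi_1$; the desired bound on $d(\tau_1, \tau_2)$ will then follow immediately from the definition of the Teichm\"{u}ller metric.

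The natural candidate comes from Corollary~\ref{corr: qc extension}. Consider $h := \varphi_2 \circ \varphi_1^{-1}\colon \C \to \C$, whose restriction to $\D_R$ is injective and holomorphic by hypothesis. Applying Corollary~\ref{corr: qc extension} to $h|\D_R$ yields a $K_{r/R}$-quasiconformal extension $\widehat{h}_r\colon \C \to \C$ of $h|\D_r$. Setting $\psi := \widehat{h}_r^{-1}$, we obtain a $K_{r/R}$-quasiconformal self-homeomorphism of $\C$ (dilatation is preserved under inversion) that coincides with $h^{-1} = \varphi_1 \circ \varphi_2^{-1}$ on the open set $h(\D_r)$. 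Since $\varphi_1(A) \subset \D_r$, this gives $\psi \circ \varphi_2 = \varphi_1$ on the open neighborhood $U := \varphi_1^{-1}(\D_r)$ of $A$. Thus the whole proof reduces to verifying that $\psi \circ \varphi_2$ is isotopic rel.\ $A$ to $\varphi_1$.

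The isotopy step is the main subtlety: mere agreement of two orientation-preserving homeomorphisms on the finite set $A$ would not suffice, since the mapping class group of $\C$ rel.\ $A$ is generally non-trivial. However, in our situation we have agreement on an entire disk containing $\varphi_1(A)$, and this permits the use of Alexander's trick. Concretely, consider the orientation-preserving self-homeomorphism $g := \varphi_1 \circ (\psi \circ \varphi_2)^{-1}$ of $\C$, which is the identity on $\D_r \supset \varphi_1(A)$; extend it to $\widehat{g}\colon S^2 \to S^2$ by setting $\widehat{g}(\infty) = \infty$. The complement $S^2 \setminus \D_r$ is a closed topological disk, and $\widehat{g}$ restricts to a self-homeomorphism of it that fixes the boundary $\partial \D_r$ pointwise. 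Alexander's trick provides an isotopy from this restriction to the identity, rel.\ $\partial \D_r$; extending by the identity on $\overline{\D}_r$ and then restricting to $\C$ yields an isotopy of $g$ to $\id_\C$ that pointwise fixes $\D_r$ (and hence $\varphi_1(A)$) throughout. This completes the proof, giving the bound $d(\tau_1, \tau_2) \leq \log K(\psi) \leq \log((1+r/R)/(1-r/R))$.
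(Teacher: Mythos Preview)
Your proof is correct and follows essentially the same route as the paper: apply Corollary~\ref{corr: qc extension} to $\varphi_2 \circ \varphi_1^{-1}|\D_R$ to get a $K_{r/R}$-quasiconformal extension, observe that the resulting comparison map agrees with the identity on a disk containing $\varphi_1(A)$, and invoke Alexander's trick to get the required isotopy rel.\ $A$. The only cosmetic difference is that you work with the inverse $\psi = \widehat{h}_r^{-1}$ to match the definition of the Teichm\"uller metric literally, whereas the paper uses $\widehat{\psi}_r$ directly and implicitly appeals to the symmetry $d(\tau_1,\tau_2)=d(\tau_2,\tau_1)$; you also spell out the Alexander's trick step in more detail than the paper does.
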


\begin{proof}
    Let $\varphi := (\varphi_2 \circ \varphi_1^{-1})|\D_R \colon \D_R \to \C$ be a holomorphic injective map. By Corollary~\ref{corr: qc extension}, $\psi_r := \varphi|\D_r$ extends $K_{r/R}$-quasiconformally to a map $\widehat{\psi}_{r}$ on $\C$ \big(with $K_{r/R}  = \frac{1+r/R}{1-r/R}$\big). One can see that $\widehat{\psi}_{r} \circ \varphi_1$ and $\varphi_2$ coincide on $\varphi_1^{-1}(\D_r)$. In particular, these homeomorphisms are isotopic rel.\ $A$ by Alexander's trick.
    Therefore, the inequality
    $d(\tau_1, \tau_2) \leq \log K(\widehat{\psi}_{r})$ is satisfied. Since $\widehat{\psi}_{r}$ is $K_{r/R}$-quasiconformal, we are done.
\end{proof}

Now we are ready to establish Main Theorem \ref{mainthm: main theorem B}.

\begin{proof}[Proof of Main Theorem~\ref{mainthm: main theorem B}]\label{prf: Main Theorem B}
    


    Since $(f_n)$ converges combinatorially to $f$, by Proposition \ref{prop: equivalence of convergences} and Remark \ref{rem: remark on sigma maps}, we can assume without loss of generality that $(f_n)$ converges topologically~to~$f$.

    Given $\tau \in \Teich(\R^2, A)$, we first show that $\sigma_{f_n}(\tau) \to \sigma_f(\tau)$ as $n \to \infty$. Let $\tau = [\varphi]$, $\sigma_f(\tau) = [\psi]$, and $\sigma_{f_n}(\tau) = [\psi_n]$, where $\varphi, \psi, \psi_n \colon \R^2 \to \C$ are orientation-preserving homeomorphisms for every $n \in \N$. By Definition \ref{def: topological convergence}, given any bounded open set $D \subset \R^2$, the equality $f_n|D = f|D$ holds for sufficiently large $n$. In particular, $\psi_n \circ \psi^{-1}$ is holomorphic on $\psi(D)$. Then Proposition~\ref{prop: bound} implies that $d(\sigma_f(\tau), \sigma_{f_n}(\tau)) \to 0$ as $n \to \infty$.

    Now we show that the convergence $\sigma_{f_n} \to \sigma_f$ is in fact locally uniform on $\Teich(\R^2, A)$. Let $K \subset \Teich(\R^2, A)$ be a compact set. Choose arbitrary $\varepsilon > 0$ and cover $K$ by open balls $B_1, B_2, \dots, B_k$ each of radius $\varepsilon / 3$, with $B_j$ centered at $\mu_j \in \Teich(\R^2, A)$ for each $j = 1,2 \dots, k$. As we proved earlier, for sufficiently large $n$, $d(\sigma_{f_n}(\mu_j), \sigma_f(\mu_j)) \leq \varepsilon / 3$ for each $j  = 1,2,\dots, k$. 
   Hence, for all $n$ large enough and any $\mu \in K$, we have    
   $$
        d(\sigma_{f_n}(\mu), \sigma_f(\mu)) \leq d(\sigma_{f_n}(\mu), \sigma_{f_n}(\mu_j)) +  d(\sigma_{f_n}(\mu_j), \sigma_f(\mu_j)) + d(\sigma_f(\mu_j), \sigma_f(\mu))  \leq \varepsilon,
    $$
    where $j$ is chosen so that $d(\mu, \mu_j) \leq \varepsilon / 3$. We also use the fact that all $\sigma$-maps above are 1-Lipschitz (see Proposition \ref{prop: sigma is contracting}).    
\end{proof}

\begin{corollary}
    Let $f\colon (\R^2, A) \righttoleftarrow$ be a transcendental Thurston map. Then there exists a sequence of polynomial Thurston maps $f_n \colon (\R^2, A) \righttoleftarrow, n \in \N$ such that the sequence $(\sigma_{f_n})$ converges locally uniformly on $\Teich(\R^2, A)$ to $\sigma_f$.
\end{corollary}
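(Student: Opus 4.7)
The plan is to combine the two main results of this section in the most direct way. By Proposition~\ref{prop: polynomial approximations}, every Thurston map with marked set $A$ (in particular, our transcendental $f$) admits a sequence of polynomial Thurston maps $f_n \colon (\R^2, A) \righttoleftarrow$ that converges combinatorially to $f$. This is the first step and requires no additional work beyond invoking the proposition.

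Next, I would apply Main Theorem~\ref{mainthm: main theorem B} directly to the sequence $(f_n)$ produced above. Since the maps $f_n$ and $f$ share the marked set $A$ and $(f_n)$ converges combinatorially to $f$, the theorem yields that $(\sigma_{f_n})$ converges to $\sigma_f$ locally uniformly on $\Teich(\R^2, A)$.

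There is essentially no obstacle here beyond citing the two results in the correct order, so the proof is a one-line chaining of Proposition~\ref{prop: polynomial approximations} and Main Theorem~\ref{mainthm: main theorem B}. The only minor point worth noting is that the hypothesis that $f$ is transcendental is not used in either cited statement (both work for arbitrary Thurston maps), so the corollary is in fact a specialization of a more general fact; the transcendental hypothesis is emphasized here simply because this is the interesting case, as the polynomial case would be trivial (one could take $f_n = f$ for all $n$).
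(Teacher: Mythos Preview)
Your proposal is correct and matches the paper's own proof essentially verbatim: the paper simply says the corollary follows directly from Proposition~\ref{prop: polynomial approximations} and Main Theorem~\ref{mainthm: main theorem B}. Your observation that the transcendental hypothesis is not actually needed for the argument is also accurate.
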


\begin{proof}
    This follows directly from Proposition \ref{prop: polynomial approximations} and Main Theorem~\ref{mainthm: main theorem B}.
\end{proof}

\begin{corollary} \label{corr: fixed points of approximating maps}
Let $f\colon (\R^2, A) \righttoleftarrow$ be a realized Thurston map and $f_n \colon (\R^2, A) \righttoleftarrow, n \in \N$ be a sequence of Thurston maps converging combinatorially to $f$. Then $f_n$ is realized for sufficiently large $n$. Moreover, letting $\tau_n \in \Teich(\R^2, A)$ be a unique fixed point of $\sigma_{f_n}$, we have $\tau_n \to \tau$, where $\tau$ is the fixed point of $\sigma_f$.
\end{corollary}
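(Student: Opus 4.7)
The plan is to combine the fixed point characterization of realizability (Proposition \ref{prop: fixed point of sigma}) with the weak contraction properties of $\sigma$-maps (Proposition \ref{prop: sigma is contracting}) and the convergence given by Main Theorem \ref{mainthm: main theorem B}. Since $f$ is realized, Proposition \ref{prop: fixed point of sigma} provides a unique fixed point $\tau \in \Teich(\R^2, A)$ of $\sigma_f$. If $f$ happens to be polynomial, Proposition \ref{prop: comb convergence to poly} implies that $f_n$ is isotopic rel.\ $A$ to $f$ for all sufficiently large $n$, hence by Remark \ref{rem: remark on sigma maps} we have $\sigma_{f_n} = \sigma_f$ and $\tau_n = \tau$ eventually, and there is nothing to prove. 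We may therefore assume from now on that $f$ is transcendental.

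Fix $r > 0$ and let $B := \overline{B}(\tau, r) \subset \Teich(\R^2, A)$; since $\Teich(\R^2, A)$ is complete in the Teichm\"{u}ller metric, $B$ is a complete metric space. By Proposition \ref{prop: sigma is contracting}(2), which applies because $f$ is transcendental, there exists $\varepsilon > 0$ such that $d(\sigma_f(x), \sigma_f(y)) \leq (1-\varepsilon)\, d(x,y)$ for all $x, y \in B$; in particular, $\sigma_f(B) \subset \overline{B}(\tau, (1-\varepsilon)r)$. By Main Theorem \ref{mainthm: main theorem B}, $\sigma_{f_n} \to \sigma_f$ uniformly on the compact set $B$, and therefore $\sigma_{f_n}(B) \subset B$ for all $n$ large enough. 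For such $n$, since $\sigma_{f_n}$ is $1$-Lipschitz by Proposition \ref{prop: sigma is contracting}(1), we also get $\sigma_{f_n}^{\circ 2}(B) \subset \sigma_{f_n}(B) \subset B$; moreover, Proposition \ref{prop: sigma is contracting}(2)--(3) ensures that $\sigma_{f_n}^{\circ 2}$ is a strict contraction on $B$ regardless of whether $f_n$ is polynomial or transcendental. The Banach fixed point theorem thus yields a unique fixed point $\widetilde{\tau}_n \in B$ of $\sigma_{f_n}^{\circ 2}$.

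To promote $\widetilde{\tau}_n$ to a fixed point of $\sigma_{f_n}$ itself, note that $\sigma_{f_n}(\widetilde{\tau}_n) \in \sigma_{f_n}(B) \subset B$ and satisfies $\sigma_{f_n}^{\circ 2}\bigl(\sigma_{f_n}(\widetilde{\tau}_n)\bigr) = \sigma_{f_n}(\sigma_{f_n}^{\circ 2}(\widetilde{\tau}_n)) = \sigma_{f_n}(\widetilde{\tau}_n)$, so uniqueness of the fixed point in $B$ forces $\sigma_{f_n}(\widetilde{\tau}_n) = \widetilde{\tau}_n$. Proposition \ref{prop: fixed point of sigma} then tells us $f_n$ is realized, and by Proposition \ref{prop: rigidity}(1) the point $\widetilde{\tau}_n$ is the unique fixed point $\tau_n$ of $\sigma_{f_n}$. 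Hence $\tau_n \in \overline{B}(\tau, r)$ for all $n$ sufficiently large, and since $r > 0$ was arbitrary, $\tau_n \to \tau$ as $n \to \infty$. The main technical subtlety is arranging that the ball $B$ is forward-invariant under $\sigma_{f_n}$ itself, not merely under $\sigma_{f_n}^{\circ 2}$; this is what allows us to convert a fixed point of the square into a fixed point of the map, and it is secured by using the strict contraction of $\sigma_f$ on $B$ to absorb the uniform error $\sigma_{f_n} - \sigma_f$ provided by Main Theorem \ref{mainthm: main theorem B}.
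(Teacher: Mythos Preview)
Your argument follows the same outline as the paper's proof, and your explicit passage from a fixed point of $\sigma_{f_n}^{\circ 2}$ to one of $\sigma_{f_n}$ (via $\sigma_{f_n}(B)\subset B$ and uniqueness) is cleaner than the paper's one-line jump. The only genuine gap is that you never justify why the closed ball $B=\overline{B}(\tau,r)$ is \emph{compact}: completeness alone does not give this, yet compactness is exactly what is required both to invoke Proposition~\ref{prop: sigma is contracting}(2)--(3) (the uniform contraction constant is only asserted on compact sets) and to upgrade the locally uniform convergence of Main Theorem~\ref{mainthm: main theorem B} to uniform convergence on $B$. The paper fills this in by observing that $\Teich(\R^2,A)$ is a complete path metric space and appealing to the metric Hopf--Rinow theorem, so closed balls of every radius are compact; alternatively, since you only need the conclusion for arbitrarily small $r$, it would suffice to note that $\Teich(\R^2,A)$ is a finite-dimensional manifold, hence locally compact. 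Once you insert either of these justifications, your proof is complete and essentially identical to the paper's.
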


\begin{proof}
    If $f$ is a polynomial Thurston map, by Remark~\ref{rem: remark on sigma maps}  and Proposition \ref{prop: comb convergence to poly}, the equality $\sigma_{f_n} = \sigma_f$ holds for  sufficiently large $n$. 
    
    Now suppose that $f$ is transcendental.
    Let $B \subset \Teich(\R^2, A)$ be a closed ball of radius $r > 0$ centered at $\tau$, the fixed point of $\sigma_f$. Since $\Teich(\R^2, A)$ is a manifold, $B$ is compact for sufficiently small $r$. In fact, it is compact for any $r > 0$ since $\Teich(\R^2, A)$ is a \textit{path metric space} in the sense of \cite[Definition 1.7]{gromov} (for the proof see \cite[Section 11.8]{farb} and \cite[Sections 1.8bis and 1.8bis+]{gromov}). By the metric version of Hopf-Rinow theorem \cite[page 8]{gromov}, every bounded closed set of $\Teich(\R^2, A)$ is compact.

    Proposition \ref{prop: sigma is contracting} implies existence of $\varepsilon_B > 0$ such that the inequality $d(\sigma_f(\mu_1), \sigma_f(\mu_2)) \leq (1 - \varepsilon_B)d(\mu_1, \mu_2)$ is satisfied for all $\mu_1, \mu_2 \in B$. Thus, for any $\mu \in B$ we have the following:
    \begin{equation}\label{eq: trinagle ineq}
        d(\sigma_{f_n}(\mu), \tau) \leq d(\sigma_{f_n}(\mu), \sigma_f(\mu)) + d(\sigma_f(\mu), \tau) \leq \varepsilon_n + (1 - \varepsilon_B)r,
    \end{equation}
    where we know by Main Theorem~\ref{mainthm: main theorem B} that $\varepsilon_n \to 0$ as $n\to \infty$. In other words, $\sigma_{f_n}(B) \subset B$ for all sufficiently large $n$. By Proposition \ref{prop: sigma is contracting}, $\sigma^{\circ 2}_{f_n}$ is uniformly contracting on $B$ and, therefore, by the Banach fixed point theorem, when $n$ is large enough, the map $\sigma_{f_n}$ has a fixed point~$\tau_n \in B$.

    Lastly, similar to inequality (\ref{eq: trinagle ineq}), we have
    $$
        d(\tau_n, \tau) \leq  d(\sigma_{f_n}(\tau_n), \sigma_f(\tau_n)) + d(\sigma_f(\tau_n), \sigma_f(\tau)) \leq \varepsilon_n + (1 - \varepsilon_B) d(\tau_n, \tau).
    $$
    This shows that the sequence $(\tau_n)$ converges to $\tau$. 
\end{proof}

\section{Approximations of postsingularly finite entire maps}\label{sec: approximations of postsingularly finite entire maps}

The main goal of this section is to prove Theorem \ref{thm: dynamical approximations}, which is a stronger version of Main Theorem \ref{mainthm: main theorem A}. To do this, we first investigate combinatorial and topological properties of locally uniform convergence of sequences of finite type maps. In particular, we develop techniques of combinatorial nature for finding the limit of such a sequence, where all maps have the same number of singular values (see Theorem \ref{thm: nevanlinna}). This result and the methods developed in Section \ref{sec: approximations of entire thurston maps} are the key ingredients for proving the desired theorem.




\subsection{Topological properties of locally uniform convergence}\label{subsec: topological properties of locally uniform convergence}


Let $f \colon X\to Y$ be a map, then we denote its domain of definition $X$ by $\Dom(f)$ and its image $f(X)$ by $\Rg(f)$. Also, we define the \textit{kernel} of a family $\{U_j\}_{j \in J}$ of subsets of $\C$ as the set of all points $z \in \C$ having an open neighborhood $V$ such that $V \subset U_j$ for all but finitely many $j \in J$. 

Suppose that $X \subset \C$ and $(g_n)$ is a sequence of maps, where $\Dom(g_n)$ and $\Rg(g_n)$ are subsets of $\C$ for all $n \in \N$. The sequence $(g_n)$ is said to converge locally uniformly on $X$ if for every $z \in X$, there exists a neighbourhood $U$ of $z$ such that $U \subset \Dom(g_n)$ for all $n \geq N = N(U)$ and $(g_n)_{n \geq N}$ converges uniformly on $U$. In particular, we require that $X$ is a subset of the kernel~of~$(\Dom(g_n))$.

For the rest of the paper, when we refer to a holomorphic covering map $g$, we always assume that $\Dom(g)$ and $\Rg(g)$ are open subsets of $\C$. The following proposition is based on \cite[Theorem 1]{Bargmann} and provides a convenient way of analyzing the convergence of holomorphic coverings.

\begin{proposition}\label{prop: bargmann}
    Let $(g_n)$ be a sequence of holomorphic covering maps. Suppose that $(g_n)$ is uniformly convergent on a neighbourhood of a point $z \in \C$ such that the limit function is not locally constant at $z$. Denote the connected component of $z$ in the kernel of $(\Dom(g_n))$ by $X$ and the connected component of $w := \lim_{n \to \infty} g_n(z)$ in the kernel of $(\Rg(g_n))$ by $Y$. Then $(g_n)$ converges locally uniformly on $X$ to a holomorphic covering map $g\colon X \to Y$.
\end{proposition}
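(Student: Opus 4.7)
The plan is to use an analytic continuation argument built on the cited theorem of Bargmann. Define
\[
E := \{z' \in X : (g_n) \text{ converges uniformly on some neighborhood of } z'\}.
\]
By hypothesis $z \in E$, so $E$ is non-empty, and $E$ is manifestly open in $X$. The goal is to show $E$ is closed in $X$; once this is done, connectedness of $X$ forces $E = X$, and the locally uniform limit $g := \lim g_n$ is automatically holomorphic on $X$ by Weierstrass's theorem.

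The main obstacle, and the heart of the argument, is closedness of $E$ in $X$. Let $z_0 \in \overline{E} \cap X$. Since $z_0$ lies in the kernel component $X$, there is a disk $D$ around $z_0$ contained in $\Dom(g_n)$ for all sufficiently large $n$. One can choose a point $z_1 \in D \cap E$ at which the already-defined local limit has non-vanishing derivative, since this limit is non-constant and holomorphic on the current domain of convergence (by the identity principle), so the zeros of its derivative are isolated. Near $z_1$ we then have $g_n(z_1) \to g(z_1) =: w_1$ and $g_n'(z_1) \to g'(z_1) \neq 0$. Since each $g_n$ is a holomorphic covering and hence a local biholomorphism, one obtains local inverses $h_n$ of $g_n$ with $h_n(w_1) = z_1$. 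A Koebe-type distortion estimate applied to the $h_n$ — which is essentially the content of \cite[Theorem 1]{Bargmann} — yields a fixed disk around $w_1$ on which all $h_n$ are defined for large $n$ and form a normal family. Continuing the $h_n$ analytically along paths in $Y$ produces enough $g_n$-preimages of a neighborhood of $g(z_0)$ to force the convergence of $(g_n)$ to extend uniformly over a neighborhood of $z_0$, so $z_0 \in E$.

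Once $E = X$ is established, the limit $g$ is holomorphic and non-constant on the connected set $X$. Each $g_n$ is a local biholomorphism, so $g_n'$ is zero-free on $\Dom(g_n)$; Hurwitz's theorem applied on compact subsets of $X$ then forces $g'$ to be zero-free on $X$, so $g$ is itself a local biholomorphism. The image $g(X)$ is connected, contains $w$, and — by locally uniform convergence together with the definition of the kernel — lies inside the kernel of $(\Rg(g_n))$, hence inside $Y$.

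It remains to verify that $g \colon X \to Y$ is a covering. Given $y_0 \in Y$, pick a small open disk $V$ around $y_0$ with $\overline V \subset Y$ and $\overline V \subset \Rg(g_n)$ for all large $n$. Each $g_n^{-1}(V)$ splits as a disjoint union of open subsets of $\Dom(g_n)$ mapped biholomorphically onto $V$ by $g_n$. Using that $g_n \to g$ locally uniformly, that $g$ is a local biholomorphism, and that $X$ is a kernel component, the $g$-sheets over $V$ match up with limits of $g_n$-sheets (every $g$-preimage of a point of $V$ inside $X$ arises as a limit of $g_n$-preimages, and conversely every limit point of such preimages lies in $X$). This yields the required trivialization of $g$ over $V$, completing the verification that $g$ is a holomorphic covering from $X$ onto $Y$.
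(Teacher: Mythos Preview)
The paper does not give its own proof of this proposition: it is quoted verbatim from \cite[Theorem~1]{Bargmann} and used as a black box. So there is nothing in the paper to compare your argument against, and the intended ``proof'' here is simply a citation.

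That said, your sketch has the right skeleton (an open--closed argument on the set $E$ of points of local uniform convergence, with Koebe distortion on local inverse branches as the engine), but the closedness step as written is circular. You invoke ``a neighbourhood of $g(z_0)$'' and talk about continuing the $h_n$ along paths in $Y$ toward $g(z_0)$ --- but $g$ is not yet defined at $z_0$; that is precisely what you are trying to establish. The clean way to finish is more local: Koebe $1/4$ applied to the inverse branches $h_n$ on a fixed disk $V$ about $w_1$ shows that $h_n(V)$ contains a fixed disk $D'$ about $z_1$ (since $h_n'(w_1)\to 1/g'(z_1)\neq 0$); taking $z_1$ close enough to $z_0$ forces $z_0\in D'$, whence $g_n(D')\subset V$ is uniformly bounded, and Montel together with Vitali gives locally uniform convergence of $(g_n)$ on $D'$. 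No continuation along paths in $Y$ is needed here.

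Your verification that $g\colon X\to Y$ is a covering is also too telegraphic. You assert that $g$-sheets ``match up'' with limits of $g_n$-sheets and that every limit point of $g_n$-preimages lies in $X$, but neither is obvious: surjectivity of $g$ onto $Y$ genuinely uses convergence of inverse branches (built from Koebe again) to produce $g$-preimages of every $y_0\in Y$ inside $X$, and the evenly-covered property needs an argument that no two $g$-sheets over $V$ coalesce and that each sheet lies in $X$. These are the non-formal parts of Bargmann's theorem and deserve at least a sentence each.
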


The following result due to \cite[Theorem 1]{Kisaka} and \cite[Lemma 1]{Krauskopf_Kriete} explains the behaviour of singular values of finite type maps that converge locally uniformly.

\begin{proposition}\label{prop: kisaka}
    Let $g_n, n \in \N$ and $g$ be entire maps of finite type such that the sequence~$(g_n)$ converges locally uniformly to $g$. Suppose that $w$ is a singular value of $g$. Then for all $n$ sufficiently large, there exists a singular value $w_n$ of $g_n$ such that the sequence $(w_n)$ converges to $w$ as $n \to \infty$.
\end{proposition}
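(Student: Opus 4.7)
The plan is to argue by cases depending on whether $w$ is a critical value or only an asymptotic value of $g$. In the first case, write $w = g(z_0)$ for some critical point $z_0$. Since $g_n \to g$ locally uniformly, so does $g_n' \to g'$, and $g'$ does not vanish identically near $z_0$ because $g$ is non-constant. Hurwitz's theorem produces zeros $z_n$ of $g_n'$ converging to $z_0$, and then $w_n := g_n(z_n)$ is a critical value of $g_n$ with $w_n \to w$.

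Assume now that $w$ is asymptotic but not critical. I would argue by contradiction: suppose there exists $\varepsilon > 0$ such that along a subsequence (still written $(g_n)$), $D(w, \varepsilon) \cap S_{g_n} = \emptyset$ for every $n$. Shrinking $\varepsilon$, one may assume $\overline{D(w, \varepsilon)} \cap S_g = \{w\}$. Proposition~\ref{prop: preimages without infinity} then provides an asymptotic tract $V$ of $g$ over $w$: an unbounded simply connected component of $g^{-1}(D(w, \varepsilon))$ with $g|V \colon V \to D(w, \varepsilon) \setminus \{w\}$ a universal covering. Fix a regular value $w_0 \in D(w, \varepsilon) \setminus \{w\}$ of $g$, a simple loop $\ell$ in $D(w, \varepsilon) \setminus \{w\}$ based at $w_0$ winding once around $w$, and a point $z_0 \in V \cap g^{-1}(w_0)$. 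The $g$-lift $\widetilde{\ell}$ of $\ell$ starting at $z_0$ is a non-closed path terminating at some $z_1 \in V$ with $z_1 \neq z_0$, since $\ell$ generates $\pi_1(D(w, \varepsilon) \setminus \{w\})$ and the cover is universal.

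The goal is to lift $\ell$ under $g_n$ in two incompatible ways. Applying Hurwitz to $g_n - w_0$ on small disks near $z_0$ and $z_1$ (simple zeros of $g - w_0$, since $V$ contains no critical points of $g$) produces points $z_0^{(n)} \to z_0$ and $z_1^{(n)} \to z_1$ with $g_n(z_0^{(n)}) = g_n(z_1^{(n)}) = w_0$. Because $D(w, \varepsilon)$ contains no singular values of $g_n$, Proposition~\ref{prop: preimages without infinity} ensures that every component of $g_n^{-1}(D(w, \varepsilon))$ maps homeomorphically onto $D(w, \varepsilon)$, so the $g_n$-lift of $\ell$ starting at $z_0^{(n)}$ is a loop, ending at $z_0^{(n)}$ and thus close to $z_0$, far from $z_1$. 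I would then establish the reverse: on a tubular neighbourhood $K$ of the compact arc $\widetilde{\ell}$, on which $g$ is a local biholomorphism and $g_n \to g$ uniformly, a continuation argument (subdividing $\ell$ into finitely many short sub-arcs and iterating local inverses of $g_n$, which exist and are close to those of $g$ by the inverse function theorem) yields that this same $g_n$-lift tracks $\widetilde{\ell}$ within $K$ and therefore terminates near $z_1$. Contradiction.

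The main obstacle is precisely this last continuation step: making rigorous that the $g_n$-lifts of $\ell$ converge uniformly to $\widetilde{\ell}$, using only locally uniform convergence $g_n \to g$ together with the non-vanishing of $g'$ along $\widetilde{\ell}$. Once the contradiction is in hand, the desired sequence is produced by a standard diagonal extraction: for each $k \in \N$, $S_{g_n} \cap D(w, 1/k) \neq \emptyset$ for all sufficiently large $n$, so one can choose $w_n \in S_{g_n}$ with $w_n \to w$.
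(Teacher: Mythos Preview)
The paper does not prove this proposition; it simply attributes the result to \cite[Theorem~1]{Kisaka} and \cite[Lemma~1]{Krauskopf_Kriete} and states it without argument. Your proposal therefore cannot be compared against a proof in the paper, but it can be assessed on its own merits.

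Your argument is correct. The critical-value case via Hurwitz on $g_n'$ is standard and clean. The asymptotic-value case is also sound: the contradiction between ``$\ell$ lifts to a loop under $g_n$'' (because $D(w,\varepsilon)$ is evenly covered by $g_n$) and ``the $g_n$-lift tracks the non-closed $g$-lift $\widetilde{\ell}$'' is exactly the right mechanism. You correctly identify the continuation step as the crux, and your sketch (finite subdivision of $\ell$, local inverses of $g_n$ via the inverse function theorem, uniform closeness to the local inverses of $g$ because $g'\neq 0$ on the compact set $\widetilde{\ell}\subset V$) is the standard way to make this rigorous.

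It is worth noting that the paper carries out precisely this continuation argument later, in Lemmas~\ref{lemm: convergence of inverses}--\ref{lemm: neighbourhood} and Proposition~\ref{prop: convergence of coverings}, in the setting of converging holomorphic covering maps. So your ``main obstacle'' is not a gap but a forward reference to machinery the paper develops anyway; your self-contained sketch is essentially a localized version of that machinery. One minor point: when you pass from the $g_n$-lift starting at $z_0^{(n)}$ to tracking $\widetilde{\ell}$, you should remark that $z_0^{(n)}\to z_0$ together with continuity of the local inverse branches ensures the two lifts agree, but this is routine.
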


Let $g \colon X \to Y$ be a holomorphic map, where $X, Y \subset \C$ are open subsets. Suppose that $U \subset Y$ is an open set and $v \in X$ satisfies $g(v) \in U$. Then define $V$ as the connected component of $v$ in $g^{-1}(U)$. In the case when $g|V\colon V \to U$ is bijective, we denote its inverse by $\widetilde{g}_{U, v}\colon U \to V$, i.e., $\widetilde{g}_{U, v} = (g|V)^{-1}$.

Further assume that $g\colon X \to Y$ is a covering map and $U$ is an open Jordan region such that $\overline{U} \subset Y$. In particular, $g|V \colon V \to U$ is bijective and the map $\widetilde{g}_{U, v} \colon U \to V$ is correctly defined, where $V$ is an open Jordan region satisfying $\overline{V} \subset X$. 

Suppose that $(g_n)$ is a sequence of holomorphic covering maps converging locally uniformly on $X$ to $g$. Similarly to the previous case, we can define maps $\widetilde{g}_{n, U, v} \colon V_n \to U$ that are inverses of $g|V_n \colon U \to V_n$, where $V_n$ is the connected component of $v$ in $g_n^{-1}(U)$. Now we are going to investigate relations between the maps $\widetilde{g}_{n, U, v}$ and $\widetilde{g}_{U, v}$.

\begin{lemma}\label{lemm: convergence of inverses}
    The sequence $(\widetilde{g}_{n, U, v})$ converges uniformly on~$U$ to $\widetilde{g}_{U, v}$. 
\end{lemma}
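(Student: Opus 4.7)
The plan is a normal-families argument combined with a separation result for the sheets of the covering. First I observe that since $g\colon X\to Y$ is a covering map and $\overline{U}\subset Y$ is compact and simply connected, the closure $\overline{V}$ is compact in $X$ and $g|\overline{V}\colon \overline{V}\to\overline{U}$ is a homeomorphism; in particular $\widetilde{g}_{U,v}$ extends to a uniformly continuous map $\overline{U}\to\overline{V}$. Since the components of $g^{-1}(\overline{U})$ are pairwise disjoint, I can choose a precompact open neighborhood $W$ of $\overline{V}$ in $X$ with $\overline{W}$ disjoint from every other component; then $g(\partial W)$ is a compact subset of $\C\setminus\overline{U}$, bounded away from $\overline{U}$ by some $2\delta>0$. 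By Proposition~\ref{prop: bargmann}, $g_n\to g$ uniformly on $\overline{W}$.

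Next I would establish the key technical step: for $n$ large, $\overline{V_n}\subset W$ and $g_n|\overline{V_n}\colon\overline{V_n}\to\overline{U}$ is a homeomorphism. Indeed, if $z\in V_n\cap\partial W$ for some large $n$, then $g(z)$ lies at distance at least $2\delta$ from $\overline{U}$ while $g_n(z)\in U\subset\overline{U}$, contradicting $|g_n-g|<\delta$ on $\overline{W}$. Hence $V_n\cap\partial W=\emptyset$; since $V_n$ is connected, contains $v\in W$, and is disjoint from $\partial W$, this forces $V_n\subset W$. The resulting extensions $\widetilde{g}_{n,U,v}\colon\overline{U}\to\overline{V_n}\subset\overline{W}$ therefore form a uniformly bounded family of continuous maps on $\overline{U}$ that are holomorphic on $U$.

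By Montel's theorem this family is normal, and by the equicontinuity on $\overline{U}$ coming from uniform boundedness, every subsequential limit $h$ is continuous on $\overline{U}$ and holomorphic on $U$. Passing to the limit in $g_n\circ\widetilde{g}_{n,U,v}=\id_U$ (using uniform convergence $g_n\to g$ on $\overline{W}\supset\overline{V_n}$) and in $\widetilde{g}_{n,U,v}(g_n(v))=v$ (using $g_n(v)\to g(v)$ together with equicontinuity), I obtain $g\circ h=\id_U$ and $h(g(v))=v$; hence $h$ is the unique branch of $g^{-1}$ sending $g(v)$ to $v$, namely $h=\widetilde{g}_{U,v}$. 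Since every subsequence has the same limit, the full sequence converges to $\widetilde{g}_{U,v}$ uniformly on $\overline{U}$, and a fortiori on $U$. The main technical obstacle is the separation argument of the previous paragraph, where the covering structure and the uniform closeness of $g_n$ to $g$ must be combined to prevent the component $V_n$ from escaping $W$; once this is in place, the rest reduces to a standard Montel and limit-identification argument.
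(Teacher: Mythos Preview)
Your trapping argument---separating $\overline V$ from the other sheets of $g^{-1}(\overline U)$ by a precompact $W$ and showing $V_n\subset W$ for large $n$---is correct and is essentially what the paper does (there the role of $W$ is played by the $v$-component $V'$ of $g^{-1}(U')$ for a slightly larger Jordan region $U'\supset\overline U$). The gap is in the convergence step: the assertion that ``equicontinuity on $\overline U$ comes from uniform boundedness'' is false for maps that are merely continuous on $\overline U$ and holomorphic on $U$; consider $z\mapsto z^n$ on $\overline\D$. Montel's theorem and Cauchy estimates give equicontinuity only on compact subsets of $U$, so your argument as written yields $\widetilde g_{n,U,v}\to\widetilde g_{U,v}$ only locally uniformly on $U$, not uniformly on $U$ as the lemma asserts.

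The fix is exactly the enlargement the paper performs: pick an open Jordan region $U'$ with $\overline U\subset U'\subset\overline{U'}\subset Y$, run your trapping argument on $U'$ to confine the $v$-component of $g_n^{-1}(U')$ to a bounded set, so that the $\widetilde g_{n,U',v}$ are bounded holomorphic on $U'$; Montel on $U'$ then gives locally uniform convergence on $U'$, hence uniform convergence on the compact $\overline U\subset U'$. The paper carries out this enlargement but finishes with a direct sequential contradiction (extract $u_n\to u\in\overline U$ and $\widetilde g_{n,U,v}(u_n)\to z\in\overline{V'}$, then use injectivity of $g$ on $\overline{V'}$) rather than Montel; once the enlargement is in place, either finish works. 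Two minor points: your appeal to Proposition~\ref{prop: bargmann} is unnecessary, since locally uniform convergence $g_n\to g$ on $X$ is already part of the standing hypotheses preceding the lemma; and the claim that $g_n|\overline{V_n}\colon\overline{V_n}\to\overline U$ is a homeomorphism is neither needed nor obviously justified.
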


\begin{proof}
    Choose an open Jordan region $U'$ such that $\overline{U} \subset U'$ and $\overline{U'} \subset Y$. Let $V'$ be a connected component of $v$ in $g^{-1}(U')$. Once again, $V'$ is an open Jordan region such that $\overline{V} \subset V'$ and $\overline{V'} \subset X$. Denote by $\gamma$ the simple closed curve that forms the boundary of $V'$. Since $g|\overline{V'}$ is injective, then $g(\gamma) \cap \overline{U} = \emptyset$. Therefore, for sufficiently large $n$, $g_n(\gamma) \cap U  = \emptyset$. It follows that $\gamma \cap V_n = \emptyset$. Since $v \in V_n$ and the set $V_n$ is connected, we must have $V_n \subset V'$.

    Now suppose that $(\widetilde{g}_{n, U, v})$ does not converge uniformly on $U$ to $\widetilde{g}_{U, v}$. Then we can find a real number $\varepsilon >0$ such that for infinitely many $n$,  there exists $u_n \in U$ with $|\widetilde{g}_{n, U, v}(u_n) -~\widetilde{g}_{U, v}(u_n)|~>~\varepsilon$. Without loss of generality, we can assume that $(u_n)$ converges to a point $u \in \overline{U}$, and due to the previous observation, that $(\widetilde{g}_{n, U, v}(u_n))$ converges to some point $z \in \overline{V'}$. Then the inequality $|z - \widetilde{g}_{U', v}(u)| \geq \varepsilon$ holds. Since $g$ is injective on $\overline{V'}$, we have $g(z) \neq g(\widetilde{g}_{U', v}(u))$, but at the same time, 
        \begin{align*}
            g(z) = \lim\limits_{n \to \infty} g_n(\widetilde{g}_{n, U, v}(u_n)) = \lim\limits_{n \to \infty} u_n = u = g(\widetilde{g}_{U', v}(u)),
        \end{align*}
        resulting in a contradiction.
\end{proof}

\begin{lemma}\label{lemm: neighbourhood}
    The map $g_n$ is injective on $V$ for sufficiently large $n$. Moreover, there exists an open set $W$ containing $g(v)$ such that $W \subset g_n(V)$ for $n$ large enough.

\end{lemma}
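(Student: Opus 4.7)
The plan is to deduce both claims from the uniform convergence of $g_n$ to $g$ on compact subsets of $X$ (which follows from local uniform convergence) together with classical Hurwitz-type arguments applied to the holomorphic limit $g$. The key preliminary observation is that $g$ is injective on a slight thickening of $\ovl{V}$.

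Concretely, I first pick an open Jordan region $U'$ with $\ovl{U} \subset U'$ and $\ovl{U'} \subset Y$, and let $V'$ denote the connected component of $v$ in $g^{-1}(U')$. Because $g \colon X \to Y$ is a covering map and $U'$ is simply connected, $g|V' \colon V' \to U'$ is a biholomorphism; since $g(\ovl{V}) \subset \ovl{U} \subset U'$ and $\ovl{V}$ is connected and contains $v$, one obtains $\ovl{V} \subset V'$. In particular $g|\ovl{V}$ is injective and $g'$ is non-vanishing on $\ovl{V}$.

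For the first claim, I argue by contradiction: assuming that $g_n$ fails to be injective on $V$ along a subsequence, extract pairs $z_n \neq w_n$ in $V$ with $g_n(z_n) = g_n(w_n)$, and pass to a further subsequence so that $z_n \to z$ and $w_n \to w$ in $\ovl{V}$. Uniform convergence on $\ovl{V}$ yields $g(z) = g(w)$, and injectivity of $g|V'$ forces $z = w$. Fixing a closed disk $\ovl{\D(z, r)} \subset V'$, the argument principle applied to $g_n - \alpha$ along $\partial \D(z, r)$, together with the fact that $g|\ovl{\D(z, r)}$ is injective (so that the winding number of $g \circ \partial \D(z, r)$ around every nearby value equals $1$), shows that for large $n$ the restriction $g_n|\D(z, r)$ attains every value in some neighborhood of $g(z)$ exactly once. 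Since $z_n, w_n \in \D(z, r)$ for large $n$ and $g_n(z_n) = g_n(w_n)$, this forces $z_n = w_n$, a contradiction.

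For the second claim, I fix a closed disk $\ovl{\D(v, \rho)} \subset V$. Since $g|\ovl{\D(v, \rho)}$ is injective, the loop $g \circ \partial \D(v, \rho)$ has winding number $1$ around $g(v)$ and stays at distance at least $2\delta$ from $g(v)$ for some $\delta > 0$. Uniform convergence of $g_n$ to $g$ on $\partial \D(v, \rho)$ ensures that, for all sufficiently large $n$, the loop $g_n \circ \partial \D(v, \rho)$ remains within $\delta$ of $g \circ \partial \D(v, \rho)$ and hence has winding number $1$ around every point of $W := \D(g(v), \delta)$. The argument principle then gives $W \subset g_n(\D(v, \rho)) \subset g_n(V)$, as required. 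The only technical point requiring care is the Hurwitz-type upgrade from injectivity of the holomorphic limit $g$ on a compact set to injectivity of $g_n$ on a slightly smaller set; everything else is bookkeeping with winding numbers.
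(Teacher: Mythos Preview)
Your proof is correct. Both claims follow cleanly from the Hurwitz-type argument you give, and the preliminary step of enlarging $U$ to $U'$ so that $g$ is injective on a neighborhood of $\ovl{V}$ is exactly what is needed.

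The paper takes a different route. Rather than the argument principle, it invokes Lemma~\ref{lemm: convergence of inverses}: the local inverses $\widetilde{g}_{n,U',v}\colon U'\to V_n'$ converge uniformly to $\widetilde{g}_{U',v}\colon U'\to V'$, and since $\ovl{V}\subset V'$ this forces $V\subset V_n'$ for large $n$, whence $g_n|V$ is injective. The second claim then drops out by the same mechanism: for any open $W$ with $\ovl{W}\subset U$, one has $\widetilde{g}_{U',v}(\ovl{W})\subset V$, so uniform convergence of inverses gives $\widetilde{g}_{n,U',v}(W)\subset V$, i.e., $W\subset g_n(V)$.

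The trade-off is this: your argument is self-contained and avoids appealing to the earlier lemma, but it genuinely uses holomorphicity through winding numbers and the argument principle. The paper's argument is purely covering-theoretic and never uses analyticity of $g_n$ or $g$; this matters because the paper explicitly records (Remark~\ref{rem: topological version}) that Lemmas~\ref{lemm: convergence of inverses} and~\ref{lemm: neighbourhood} hold for topological covering maps, and that generality is used downstream. Your proof would not survive that generalization without substantial modification.
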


\begin{proof}
    As in the proof of Lemma \ref{lemm: convergence of inverses}, we choose an open Jordan region $U'$ such that $\overline{U} \subset U'$ and $\overline{U'} \subset Y$. Lemma \ref{lemm: convergence of inverses} states that the sequence of maps $\widetilde{g}_{n, U', v}\colon U' \to V_n', n \in \N$ converges uniformly on $U'$ to the map $\widetilde{g}_{U', v} \colon U' \to V'$, where $V_n'$ and $V'$ are the connected components of~$v$ in $g^{-1}(U')$ and $g_n^{-1}(U')$, respectively. Therefore, since $\overline{V} \subset V'$, then $V$ is a subset of $V_n'$ for sufficiently large $n$, i.e., $g_n|V$ is injective. Further take $W$ to be an arbitrary open set containing $g(v)$ such that $\overline{W} \subset U$. Then the desired statement follows from the locally uniform (on $V$) convergence of $(g_n)$ to~$g$.
    
\end{proof}


Lemmas \ref{lemm: convergence of inverses} and \ref{lemm: neighbourhood} allow us to obtain the following proposition that states convergence of lifts of the same loop under converging holomorphic covering maps.


\begin{proposition}\label{prop: convergence of coverings}

Let $(g_n)$ be a sequence of holomorphic covering maps converging locally uniformly on $X \subset \C$ to a holomorphic covering map $g \colon X \to Y$. Suppose that $g_n(z_0) = g(z_0) = w_0$ for some $z_0 \in X$ and $w_0 \in Y$. Let 
$\alpha$
be a loop in $Y$ based at $w_0$ and $\beta$ be its $g$-lift starting at $z_0$. Then $\alpha \subset \Rg(g_n)$ for all sufficiently large $n$. In particular, the $g_n$-lift $\beta_n$ of $\alpha$ starting at $z_0$ is well-defined and, moreover, the following conditions hold:
\begin{enumerate}
    \item \label{it: convergence} 
    the sequence $(\beta_n)$ converges uniformly on $\I$ to $\beta$, i.e., for every $\varepsilon > 0$ we have $|\beta(t) -~\beta_n(t)| < \varepsilon$ for all $t \in \I$ and all sufficiently large $n$;



    \item \label{it: topology} there exists $N = N(\alpha)$ such that for each $n\geq N$, 
    $\beta_n$ is a loop if and only if $\beta$ is a loop. 
\end{enumerate}
    
\end{proposition}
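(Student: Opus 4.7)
The plan is to construct $\beta_n$ by a piecewise argument on a finite partition of $\I$, applying Lemma~\ref{lemm: convergence of inverses} on each subinterval. First, I would establish that $\alpha \subset \Rg(g_n)$ for all sufficiently large $n$: by Proposition~\ref{prop: bargmann}, $Y$ lies in the kernel of $(\Rg(g_n))$, so every point of the compact set $\alpha(\I) \subset Y$ has an open neighborhood eventually contained in $\Rg(g_n)$, and a finite subcover gives a uniform threshold $N$. For $n \geq N$, the classical path-lifting theorem yields the $g_n$-lift $\beta_n$ of $\alpha$ starting at $z_0$.

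Next, by compactness of $\alpha(\I)$ I would choose a partition $0 = t_0 < t_1 < \cdots < t_k = 1$ and open Jordan regions $U_0, \ldots, U_{k-1}$ with $\overline{U_i} \subset Y$ and $\alpha([t_i, t_{i+1}]) \subset U_i$. Writing $z_i := \beta(t_i)$ and letting $V_i$ be the component of $z_i$ in $g^{-1}(U_i)$, the covering property makes $g|V_i \colon V_i \to U_i$ a homeomorphism and gives $\beta|[t_i, t_{i+1}] = \widetilde{g}_{U_i, z_i} \circ (\alpha|[t_i, t_{i+1}])$. I would then prove item~(\ref{it: convergence}) by induction on $i$, the base case $\beta_n(0) = z_0$ being trivial. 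The inductive hypothesis gives $\beta_n(t_i) \to z_i$; for large $n$ we have $g_n(z_i) \in U_i$ (since $g_n(z_i) \to g(z_i) = \alpha(t_i) \in U_i$), and $\beta_n(t_i)$ lies in the same component $V_{n, i}$ of $z_i$ in $g_n^{-1}(U_i)$. Since $\beta_n|[t_i, t_{i+1}]$ is a connected subset of $g_n^{-1}(U_i)$ meeting $V_{n, i}$, it stays inside $V_{n, i}$, so $\beta_n(t) = \widetilde{g}_{n, U_i, z_i}(\alpha(t))$ for $t \in [t_i, t_{i+1}]$. Lemma~\ref{lemm: convergence of inverses} then yields uniform convergence $\widetilde{g}_{n, U_i, z_i} \to \widetilde{g}_{U_i, z_i}$ on $U_i$, hence $\beta_n \to \beta$ uniformly on $[t_i, t_{i+1}]$; finiteness of the partition closes the induction.

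For item~(\ref{it: topology}), if $\beta(1) \neq z_0$ then uniform convergence immediately forces $\beta_n(1) \neq z_0$ for large $n$. If $\beta$ is a loop, Lemma~\ref{lemm: neighbourhood} provides a neighborhood $V$ of $z_0$ on which $g_n$ is injective for all sufficiently large $n$; uniform convergence places $\beta_n(1)$ in $V$ eventually, and the identity $g_n(\beta_n(1)) = w_0 = g_n(z_0)$ together with injectivity forces $\beta_n(1) = z_0$. The main obstacle is the consistency check in the inductive step: although $\beta_n$ is the globally-defined lift coming from covering theory, on each subinterval it must be shown to coincide with the locally-defined inverse $\widetilde{g}_{n, U_i, z_i}$ based at $z_i$ rather than at the slightly displaced point $\beta_n(t_i)$. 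This rests on simultaneously knowing that $z_i \in g_n^{-1}(U_i)$ for large $n$ and that $\beta_n(t_i)$ lies in the same component of $g_n^{-1}(U_i)$ as $z_i$, both of which follow from the inductive closeness bound; once this consistency is in place, the remainder is a routine finite compactness argument.
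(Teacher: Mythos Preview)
Your proposal is correct and follows essentially the same approach as the paper. The only cosmetic differences are that the paper covers the lift $\beta$ by Jordan regions $V_j$ in the domain $X$ (then sets $U_j := g(V_j)$) rather than covering $\alpha$ in $Y$, and it invokes Lemma~\ref{lemm: neighbourhood} (not Proposition~\ref{prop: bargmann}) both to show $Y$ lies in the kernel of $(\Rg(g_n))$ and to justify the consistency step $\widetilde{g}_{n,U_l,\beta_n(t_l)} = \widetilde{g}_{n,U_l,\beta(t_l)}$ in the induction.
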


\begin{proof}

    Lemma \ref{lemm: neighbourhood} implies that for any $w \in Y$ there exists a neighbourhood of $w$ belonging to $\Rg(g_n)$ for all but finitely many $n \in \N$. In other words, $Y$ is a subset of the kernel of $(\Rg(g_n))$.
    In particular, there exists a neighbourhood of $\alpha$ belonging to $\Rg(g_n) \cap Y$ for all sufficiently large $n$. 

    We choose open Jordan regions $V_1, V_2, \dots, V_k$ covering $\beta$ and a strictly increasing finite sequence $t_0 := 0, t_1, t_2, \dots, t_{k - 1}, t_k := 1$ of points of $\I$ satisfying the following properties for each $j = 1, 2, \dots, k$:

    \begin{itemize}
        \item $\overline{V_j} \subset X$ and $g|\overline{V_j}$ is injective;

        \item $\beta([t_{j - 1}, t_j]) \subset V_j$.

    \end{itemize}

    
    Let $U_j := g(V_j)$ for each $j = 1, 2, \dots, k$. Assume that $n$ is large enough so that $\overline{V_j}~\subset~\Dom(g_n)$ and $\overline{U_j} \subset \Rg(g_n)$ for each $j = 1, 2, \dots, k$.

    Next, we prove by induction on $l$ that $(\beta_n)$ converges to $\beta$ uniformly on $[0, t_l]$. The base case $l = 0$ is obvious since $\beta_n(0) = \beta(0) = z_0$. Now we prove that if $(\beta_n)$ converges to $\beta$ uniformly on $[0, t_l]$, then it does so uniformly on $[0, t_{l + 1}]$. Indeed, since $\beta_n(t_l) \to \beta(t_l)$, due to Lemma \ref{lemm: neighbourhood}, $\varphi_{n, U_l, \beta_n(t_l)} = \varphi_{n, U_l, \beta(t_l)}$ for sufficiently large $n$. Thus,    
    \begin{align*}
        \beta|[t_l, t_{l + 1}] &= \varphi_{U_l, \beta(t_l)} (\alpha|[t_l, t_{l + 1}]),\\
        \beta_n|[t_l, t_{l + 1}] &= \varphi_{n, U_l, \beta_n(t_l)} (\alpha|[t_l, t_{l + 1}]) = \varphi_{n, U_l, \beta(t_l)} (\alpha|[t_l, t_{l + 1}]).
    \end{align*}
    Finally, it follows from Lemma \ref{lemm: convergence of inverses} that $\beta_n$ converges to $\beta$ uniformly on $[0, t_{l + 1}]$, and this completes the proof of item (\ref{it: convergence}).



Now assume that $\beta$ is a loop. By item (\ref{it: convergence}), the point $\beta_n(1)$ converge to $z_0$ as $n \to \infty$. But by Lemma \ref{lemm: neighbourhood}, there exists a neighbourhood $V$ of $z_0$ such that $g_n|V$ is injective for all sufficiently large $n$. Thus,
it is not possible that $\beta_n(0)$ and $\beta_n(1)$ are different for any sufficiently large $n$, i.e., $\beta_n$ is a loop. If $\beta$ is a non-closed path, then similarly to the previous case, we have $\beta_n(1) \to \beta(1) \neq z_0$. Thus, we have $z_0 = \beta_n(0) \neq \beta_n(1)$ for $n$ large enough, and the proof of item (\ref{it: topology}) follows.     \end{proof}

\begin{remark}\label{rem: topological version}
    Note that in Lemmas \ref{lemm: convergence of inverses}, \ref{lemm: neighbourhood}, and Proposition \ref{prop: convergence of coverings} we never used the fact that the maps $g_n$ and $g$ are holomorphic. This is why the corresponding results still hold when $g_n$ and~$g$ are covering maps, where $\Dom(g), \Rg(g), \Dom(g_n)$, and $\Rg(g_n)$ are open subsets of~$\R^2$.
\end{remark}

    \begin{corollary}\label{corr: convergence of different lifts}
        Let $g_n, n \in \N$ and $g$ be entire maps of finite type having the same number of singular values. Let $\gamma \subset \C \setminus S_{g}$ be a simple closed curve and $\widetilde{\gamma}$ be a connected component~of~$g^{-1}(\gamma)$.
        
        Suppose that $(g_n)$ converges locally uniformly to $g$. Then for any $z \in \widetilde{\gamma}$ there exists $\varepsilon_{z, \gamma} > 0$ such that for all $n$ sufficiently large, there is a unique connected component $\widetilde{\gamma}_n$ of $g_n^{-1}(\gamma)$ satisfying $d(z, \widetilde{\gamma}_n) < \varepsilon_{z, \gamma}$. Moreover, the following is true for all $n$ large enough:

        \begin{enumerate}
            \item if the degree of the covering $g|\widetilde{\gamma}$ is finite, then

            \begin{itemize}

                \item $\widetilde{\gamma}_n$ is a simple closed curve,

                \item $\deg(g_n|\widetilde{\gamma}_n) = \deg(g|\widetilde{\gamma})$, and

                \item there exists $N = N(\varepsilon)$ such that for $n \geq N$, we have $\widetilde{\gamma}_n \subset U_{\varepsilon}(\widetilde{\gamma})$, where $U_\varepsilon(X) := \{z \in \C: d(z, X) <~\varepsilon\}$ is the \textit{$\varepsilon$}-neighbourhood of the set $X \subset \C$.

            \end{itemize}
            

            \item if the degree of the covering $g|\widetilde{\gamma}$ is infinite (i.e., $\widetilde{\gamma}$ is an unbounded curve), then $\deg(g_n|\widetilde{\gamma}_n) \to \infty$ as $n \to \infty$.


                
        \end{enumerate}
    \end{corollary}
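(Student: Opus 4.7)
The plan is to convert the geometric claim about convergence of preimage components into a claim about convergence of loop lifts, and then exploit Proposition \ref{prop: convergence of coverings} applied to a loop parametrizing $\gamma$ and its iterates. To set this up, fix a simple loop $\alpha \colon \I \to \gamma$ parametrizing $\gamma$ once based at $w := g(z)$. Proposition \ref{prop: kisaka}, together with the hypothesis that $g$ and $g_n$ share the same finite number of singular values, implies that $S_{g_n} \to S_g$ as unordered finite sets, and in particular $\gamma \cap S_{g_n} = \emptyset$ for all sufficiently large $n$. Consequently, every connected component of $g_n^{-1}(\gamma)$ is either a simple closed curve or an unbounded simple curve on which $g_n$ restricts to a covering map of $\gamma$ of some degree $d_n \in \N \cup \{\infty\}$. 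Lemma \ref{lemm: neighbourhood} also provides a sequence $z_n \to z$ with $g_n(z_n) = w$.

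To define $\widetilde\gamma_n$ and prove the uniqueness statement, I choose $\varepsilon_{z, \gamma} > 0$ small enough that $g$ is injective on $\overline{D(z, \varepsilon_{z, \gamma})}$, that $\gamma \cap g(D(z, \varepsilon_{z, \gamma}))$ is a single arc through $w$, and that $g^{-1}(\gamma) \cap \overline{D(z, \varepsilon_{z, \gamma})}$ is a single simple arc through $z$ lying in $\widetilde\gamma$. A Hurwitz-type argument based on uniform convergence $g_n \to g$ ensures that $g_n$ is injective on $\overline{D(z, \varepsilon_{z, \gamma})}$ and that $\gamma \cap g_n(D(z, \varepsilon_{z, \gamma}))$ is a single arc through $w$ for all large $n$. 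Hence $g_n^{-1}(\gamma) \cap D(z, \varepsilon_{z, \gamma})$ is a single simple arc, automatically contained in a single connected component of $g_n^{-1}(\gamma)$; this component is nonempty since it contains $z_n$, and I define $\widetilde\gamma_n$ to be it. Every other component of $g_n^{-1}(\gamma)$ is disjoint from $D(z, \varepsilon_{z, \gamma})$, hence at distance at least $\varepsilon_{z, \gamma}$ from $z$, establishing uniqueness.

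For the degree analysis, let $\beta^{(k)}$ denote the $g$-lift of $\alpha^{\circ k}$ starting at $z$ and $\beta_n^{(k)}$ the $g_n$-lift of $\alpha^{\circ k}$ starting at $z_n$. Applying Proposition \ref{prop: convergence of coverings} to the restrictions $g \colon g^{-1}(\Omega) \to \Omega$ and $g_n \colon g_n^{-1}(\Omega) \to \Omega$, where $\Omega$ is a small annular neighborhood of $\gamma$ disjoint from $S_g$ (and from $S_{g_n}$ for large $n$), together with an immediate extension to allow converging basepoints $z_n \to z$ in place of a common basepoint (inspection of the inductive proof shows this change is harmless), yields $\beta_n^{(k)} \to \beta^{(k)}$ uniformly on $\I$, and equal closing behavior of $\beta_n^{(k)}$ and $\beta^{(k)}$ for each fixed $k$ and all sufficiently large $n$. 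Since $z_n \in \widetilde\gamma_n$, each lift $\beta_n^{(k)}$ stays in $\widetilde\gamma_n$, so $\beta_n^{(k)}$ is a loop if and only if $d_n \mid k$ (interpreted as never if $d_n = \infty$); analogously for $\beta^{(k)}$ with $d := \deg(g|\widetilde\gamma)$.

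In Case 1 ($d < \infty$), the facts that $\beta^{(d)}$ is a loop while $\beta^{(k)}$ is not for $1 \leq k \leq d - 1$ transfer to $\beta_n^{(d)}$ and $\beta_n^{(k)}$ for all large $n$, pinning down $d_n = d$; hence $\widetilde\gamma_n = \beta_n^{(d)}(\I)$ is a simple closed curve with $\deg(g_n|\widetilde\gamma_n) = d$, and uniform convergence $\beta_n^{(d)} \to \beta^{(d)}$ gives $\widetilde\gamma_n \subset U_\varepsilon(\widetilde\gamma)$ for any prescribed $\varepsilon > 0$ and all large $n$. In Case 2 ($d = \infty$), every $\beta^{(k)}$ is a non-loop, so $\beta_n^{(k)}$ is a non-loop for large $n$, forcing $d_n > k$; since $k$ is arbitrary, $d_n \to \infty$. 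The principal technical obstacle is establishing the uniqueness of $\widetilde\gamma_n$ at a distance scale independent of $n$; this is handled by the Hurwitz-type injectivity argument in the second paragraph, which locally pins down the preimage structure uniformly in $n$.
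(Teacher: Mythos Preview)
Your argument is correct and follows essentially the same route as the paper: reduce to covering maps away from the singular values using Proposition~\ref{prop: kisaka}, use local injectivity (your Hurwitz-type argument is exactly what Lemma~\ref{lemm: neighbourhood} provides) to pick out a unique nearby component, and then apply Proposition~\ref{prop: convergence of coverings} to the iterated loop $\alpha^{\circ k}$ to read off the degree from the closing behavior of its lifts. The one noteworthy difference is purely technical: where you invoke an ``immediate extension'' of Proposition~\ref{prop: convergence of coverings} to converging basepoints $z_n \to z$, the paper instead pre-composes each $h_n$ with the translation carrying $z_n$ to $z$, thereby reducing cleanly to the common-basepoint hypothesis already present in the proposition; the paper also works with the full restrictions $g|\C \setminus g^{-1}(S_g)$ and $g_n|\C \setminus g_n^{-1}(S_{g_n})$ rather than an auxiliary annular neighborhood $\Omega$, which avoids having to check that $g^{-1}(\Omega)$ sits inside the kernel of $(g_n^{-1}(\Omega))$.
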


    \begin{proof}
        Since $g$ and $g_n$ have the same number of singular values, Proposition \ref{prop: kisaka} implies that $S_{g_n}$ converges to $S_g$ in the sense of Hausdorff distance. We define holomorphic covering maps $h, h_n, n\in \N$, as $h := g| \C \setminus g^{-1}(S_g)$ and $h_n~:=~g_n|\C \setminus g_n^{-1}(S_{g_n})$. Then it is clear that $(h_n)$  converges locally uniformly on $\C \setminus g^{-1}(S_g)$ to $h$. 

        Let $w := g(z)$ and $\alpha \colon \I \to \R^2$ be a parametrization of $\gamma$, i.e., a simple loop based at~$w$ such that $\alpha(\I) = \gamma$. By Lemma \ref{lemm: neighbourhood} applied to the maps $h, h_n, n \in \N$, there exists an open bounded neighbourhood $V$ of $z$ such that for sufficiently large $n$, both $h$ and $h_n$ are injective on $V$. In particular, there exists a unique point $z_n \in V \cap g_n^{-1}(w)$. For all $n$ large enough, we take $\widetilde{\gamma}_n$ to be the connected component of $g_n^{-1}(\gamma)$ intersecting $V$, which is unique due to Lemma~\ref{lemm: neighbourhood}. In particular, it implies the existence of $\varepsilon_{z, \gamma}$ with the desired properties. By Lemma \ref{lemm: convergence of inverses}, the sequence $(z_n)$ converges to $z$ as $n\to \infty$, and, for simplicity, after pre-composing $h_n$ with an appropriate translation, we can assume that $z = z_n$. 

        Let us consider the case when the degree $d := \deg(g|\widetilde{\gamma})$ is finite. It follows that in this case the loops $\alpha^j, 1 \leq j < d$ do not lift under $g$ to a loop based at $z$, but $\alpha^d$ does so. 
        Note that by Proposition \ref{prop: convergence of coverings}, $\alpha \subset \Rg(h_n)$ for all sufficiently large $n$. Therefore, we can define 
        $\widetilde{\alpha}$ and~$\widetilde{\alpha}_n$ as $g$- and $g_n$-lifts of $\alpha^d$ based at $z$, respectively. Due to Proposition \ref{prop: convergence of coverings}, $\widetilde{\alpha}_n$ is a simple loop based at $z$ such that $(\widetilde{\alpha}_n)$ converges uniformly on $\I$ to $\widetilde{\alpha}$. In particular, $\widetilde{\gamma}_n = \widetilde{\alpha}_n(\I)$ and, therefore, $\widetilde{\gamma}_n \subset U_\varepsilon (\gamma)$ for any $\varepsilon > 0$ and sufficiently large $n$. Finally, item~(\ref{it: topology}) of Proposition~\ref{prop: convergence of coverings} implies that for $n$ large enough, the loops $\alpha^j, 1 \leq j < d$ do not lift under $g_n$ to a closed loop based at $z$. Therefore, $\deg(g_n|\widetilde{\gamma}_n) = \deg(g|\widetilde{\gamma})$. 

        Now consider the case when the degree of the covering $g|\widetilde{\gamma}$ is infinite. In this case, the loops $\alpha^j, j \in \N$ do not lift under $g$ to a loop based at $z$. Therefore, by item (\ref{it: topology}) of Proposition~\ref{prop: convergence of coverings}, the loop $\alpha^j$ does not lift under $g_n$ to a loop based at $z$ for sufficiently large~$n$. In particular, $\deg(g_n|\widetilde{\gamma}_n) \to \infty$ as $n \to \infty$.
    \end{proof}   

\begin{remark}
    Similarly to Remark \ref{rem: topological version}, the result of Corollary \ref{corr: convergence of different lifts} still holds if we assume that $g_n$ and $g$ are topologically holomorphic (and not necessarily holomorphic) maps of finite type.
\end{remark}

Propositions \ref{prop: bargmann} and \ref{prop: convergence of coverings} allow us to obtain the following topological criteria for the locally uniform convergence of finite type entire maps, all with the same number of singular values (the authors of \cite[Theorem 1]{biswas_2} prove a result of the same nature in the setting of \textit{log-Riemann surfaces}).

\begin{theorem}\label{thm: nevanlinna}
    Let $g_n, n \in \N$ and $g$ be entire maps of finite type and $B_n \supset S_{g_n}$, $B \supset S_g$ be finite subsets all of the same cardinality such that  $(B_n)$ converges to $B$ in the sense of Hausdorff distance. Let $z_0, w_0 \in \C$ be points such that  $w_0 \not \in B \cup \bigcup_{n \in \N} B_n$ and $g(z_0) = g_n(z_0) = w_0 $ for all $n \in \N$. Then $(g_n)$ converges locally uniformly to $g$ if and only if the following conditions hold:
    \begin{enumerate}
        \item \label{it: basepoint} $\lim\limits_{n \to \infty} g_n'(z_0) = g'(z_0)$;


        \item \label{it: fund_groups} for every loop $\alpha \subset \C \setminus B$ based at $w_0$, for all sufficiently large $n$, $\alpha$ lifts under $g_n$ to a loop based at $z_0$ if and only if it lifts under $g$ to a loop based at $z_0$. 
        
    \end{enumerate}
\end{theorem}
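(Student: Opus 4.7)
The proof splits into two directions.

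For the forward direction, assume $(g_n) \to g$ locally uniformly on $\C$. Condition (\ref{it: basepoint}) is immediate from locally uniform convergence of derivatives of holomorphic maps. For condition (\ref{it: fund_groups}), any loop $\alpha \subset \C \setminus B$ based at $w_0$ eventually avoids $B_n$ by Hausdorff convergence; the restrictions $g_n \colon \C \setminus g_n^{-1}(B_n) \to \C \setminus B_n$ and $g \colon \C \setminus g^{-1}(B) \to \C \setminus B$ are holomorphic coverings with $g_n \to g$ locally uniformly on $\C \setminus g^{-1}(B)$, and Proposition~\ref{prop: convergence of coverings}(\ref{it: topology}) yields the desired equivalence of closing behaviours.

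For the backward direction, the strategy is to show that every subsequence of $(g_n)$ admits a further subsequence converging locally uniformly to $g$. Fix a disk $D \ni w_0$ with $\overline D \cap B = \emptyset$, hence $\overline D \cap B_n = \emptyset$ eventually. Let $U, U_n$ be the connected components of $z_0$ in $g^{-1}(D), g_n^{-1}(D)$; by Proposition~\ref{prop: preimages without infinity}, the restrictions $g|U$ and $g_n|U_n$ are biholomorphisms onto $D$, and we set $\varphi \colon D \to U$ and $\varphi_n \colon D \to U_n$ to be their inverses. Then $\varphi_n(w_0) = \varphi(w_0) = z_0$ and, by (\ref{it: basepoint}), $\varphi_n'(w_0) \to \varphi'(w_0) \neq 0$. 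The family $\{\varphi_n\}$ is normal on $D$ by the Koebe distortion theorem, so after passing to a subsequence, $\varphi_n \to \psi$ locally uniformly on $D$ with $\psi$ univalent by Hurwitz's theorem. Passing to inverses, $g_n \to \psi^{-1}$ uniformly on a neighbourhood of $z_0$, with non-locally-constant holomorphic limit.

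We now invoke Proposition~\ref{prop: bargmann} for the holomorphic coverings $g_n \colon \C \setminus g_n^{-1}(B_n) \to \C \setminus B_n$. The codomain kernel contains $\C \setminus B$, whose component of $w_0$ is itself, so we obtain a domain $X \ni z_0$ and a holomorphic covering $\tilde g \colon X \to \C \setminus B$ with $g_n \to \tilde g$ locally uniformly on $X$ and $\tilde g$ extending $\psi^{-1}$. For any loop $\alpha \subset \C \setminus B$ based at $w_0$, Proposition~\ref{prop: convergence of coverings}(\ref{it: topology}) combined with (\ref{it: fund_groups}) shows that the $\tilde g$-lift of $\alpha$ at $z_0$ closes up if and only if the $g$-lift at $z_0$ does. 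Hence $\tilde g_* \pi_1(X, z_0) = g_* \pi_1(\C \setminus g^{-1}(B), z_0)$ as subgroups of $\pi_1(\C \setminus B, w_0)$, making $\tilde g$ and the covering $g|\C \setminus g^{-1}(B)$ isomorphic. This produces a biholomorphism $\phi \colon \C \setminus g^{-1}(B) \to X$ with $\tilde g \circ \phi = g$, $\phi(z_0) = z_0$, and, via (\ref{it: basepoint}) in the limit, $\phi'(z_0) = g'(z_0)/\tilde g'(z_0) = 1$. Extending $\phi$ across the discrete sets $g^{-1}(B)$ and $\C \setminus X$ yields an entire biholomorphism of $\C$, necessarily affine, fixing $z_0$ with unit derivative, forcing $\phi = \id_\C$. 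Thus $X = \C \setminus g^{-1}(B)$ and $\tilde g = g$, so $g_n \to g$ locally uniformly on $\C \setminus g^{-1}(B)$; the maximum modulus principle applied to $g_n - g$ on small disks around points of $g^{-1}(B)$ upgrades this to locally uniform convergence on all of $\C$.

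The main obstacle will be justifying the extension of the covering isomorphism $\phi$ to an entire biholomorphism, which hinges on $\C \setminus X$ being discrete. This requires a careful analysis of how the limit covering $\tilde g \colon X \to \C \setminus B$ inherits branching and end-behaviour at the punctures of $\C \setminus B$ from the approximants $g_n$, ensuring that the local model at each fibre point of $\tilde g$ matches that of $g$; once the monodromy equality is established, ramification data and asymptotic tracts of $\tilde g$ are forced to agree with those of $g$, yielding the discreteness of $\C \setminus X$ and completing the identification.
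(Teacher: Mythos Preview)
Your overall architecture matches the paper's proof closely: the forward direction via Proposition~\ref{prop: convergence of coverings}, the subsequence reduction, Koebe plus normality to get local convergence near $z_0$, Proposition~\ref{prop: bargmann} to produce the limit covering $\tilde g\colon X \to \C\setminus B$, and the covering-space isomorphism $\phi\colon \C\setminus g^{-1}(B)\to X$ with $\phi(z_0)=z_0$, $\phi'(z_0)=1$. Up to this point the argument is essentially the paper's.

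The gap you flag yourself is genuine, and your proposed fix is not the one the paper uses. You want to conclude $\phi=\id_\C$ by extending $\phi$ to an entire biholomorphism of $\C$, and you correctly identify that this would require $\C\setminus X$ to be discrete. But a priori $X$ is only the $z_0$-component of the kernel of $(\C\setminus g_n^{-1}(B_n))$, and nothing so far rules out $\C\setminus X$ having interior. Your suggested route---matching ramification data and asymptotic tracts of $\tilde g$ with those of $g$ to force discreteness of $\C\setminus X$---is plausible in spirit but would require substantial additional work and is not how the paper proceeds.

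The paper sidesteps the discreteness of $\C\setminus X$ entirely by working on $\widehat\C$. Since $g^{-1}(B)$ is discrete in $\C$ and $\phi$ is injective holomorphic on its complement, each point of $g^{-1}(B)$ is removable or a pole; this extends $\phi$ to an injective meromorphic map $\C\to\widehat\C$, hence a M\"obius transformation. The only remaining issue is showing $\phi(\infty)=\infty$, and this is done by a maximum modulus argument: if $\phi(\infty)=z\in\C$, pick a compact $K$ with $z\in\inter(K)$ and $\partial K\subset X$; since $g_n\to\tilde g$ uniformly on $\partial K$, the maximum principle bounds $|\tilde g|$ on $K\cap X$, but one can find $z_n\to\infty$ with $|g(z_n)|=|\tilde g(\phi(z_n))|\to\infty$ and $\phi(z_n)\to z\in K$, a contradiction. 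Once $\phi$ is affine fixing $z_0$ with derivative $1$, it is the identity, and the discreteness of $\C\setminus X$ falls out as a consequence rather than an input. This is the missing idea in your proposal.
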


\begin{proof}



If the sequence $(g_n)$ converges locally uniformly to $g$, then condition (\ref{it: basepoint}) is obvious and condition (\ref{it: fund_groups}) easily follows from Proposition \ref{prop: convergence of coverings} applied to the sequence of holomorphic covering maps $h_n := g_n|\C \setminus g_n^{-1}(B_n)$ converging locally uniformly on $\C \setminus g^{-1}(B)$ to the holomorphic covering map $g| \C\setminus g^{-1}(B)$.

Now suppose that conditions (\ref{it: basepoint}) and (\ref{it: fund_groups}) are satisfied. It suffices to show that an arbitrary subsequence of $(g_n)$ contains a further subsequence converging locally uniformly to $g$. For simplicity's sake, we will call the initial subsequence $(g_n)$. 

\begin{claim1}
    There exist an open neighborhood $W$ of $z_0$ and a subsequence $(g_{n_k})$ of $(g_n)$ converging uniformly on $W$ to a limiting function that is not locally constant at $z_0$.
\end{claim1}

\begin{subproof}[Proof of Claim 1]
    Let $\D(w_0, r)$ be a disk contained in $\C \setminus B_n$ for all sufficiently large $n$, and $V_n$ be the connected component of $z_0$ in $g_n^{-1}(\D(w_0, r))$. By Proposition~\ref{prop: preimages without infinity}, $g_n|V_n\colon V_n \to \D(w_0, r)$ is a biholomorphism and has an inverse $\varphi_n\colon \D(w_0, r) \to V_n$ satisfying $\varphi_n(w_0) = z_0$. Note that $\varphi_n'(w_0) = 1/g_n'(z_0)$ and, since $g_n'(z_0)$ converges to $g'(z_0)$, we can assume that there exists $\lambda >0$ such that $|\varphi_n'(w_0)|~>~\lambda$ for all sufficiently large $n$. Now the Koebe 1/4-theorem implies that the disk $W := \D(z_0, r\lambda/4)$ is contained in $V_n$ for all sufficiently large $n$. In particular, the map $g_n$ is injective on $W$ for all $n$ large enough.

    Since $g_n(W) \subset \D(w_0, r)$, thus, the sequence $(g_n|W)$ forms a normal family by Montel's theorem. Hence, we can extract a converging subsequence $(g_{n_k}|W)$ from $(g_n|W)$. Clearly, the limiting function cannot be locally constant at $z_0$ since $\lim\limits_{n \to \infty} g_n'(z_0) = g'(z_0) \neq 0$. 
\end{subproof}

    Once again, we relabel $(g_{n_k})$ as $(g_n)$ to assume that $(g_n)$ converges uniformly in a neighbourhood of $z_0$ to a function that is non-constant at $z_0$. Now we apply Proposition~\ref{prop: bargmann} to the sequence of holomorphic coverings $h_n = g_n|\C \setminus g_n^{-1}(B_n)$: letting $X \subset \C$ be the connected component of $z_0$ in the kernel of $(\C\setminus g_n^{-1}(B_n))$, we see that $(h_n)$ converges locally uniformly on $X$ to a holomorphic covering map $h\colon X \to \C\setminus B$. We will now show that $g$ and $h$ coincide~on~$X$.
    
    To begin with, we observe that $h_*\pi_1(X, z_0) = g_*\pi_1(\C\setminus g^{-1}(B), z_0) \subset \pi_1(\C\setminus B, w_0)$, which follows from condition (\ref{it: fund_groups}) and Proposition \ref{prop: convergence of coverings}. By the classical theory of covering maps and condition (\ref{it: basepoint}), there exists a biholomorphism $\varphi\colon \C\setminus g^{-1}(B) \to X$ such that $g = h \circ \varphi$ on $\C\setminus g^{-1}(B)$, $\varphi(z_0) = z_0$, and $\varphi'(z_0) = 1$.

\begin{claim2}\label{it: extension}
    The map $\varphi\colon \C \setminus g^{-1}(B) \to X$ extends to a M\"{o}bius transformation.
\end{claim2}

\begin{subproof}[Proof of Claim 2]
    As $g^{-1}(B)$ is discrete and $\varphi$ is injective, every point in $g^{-1}(B)$ is a removable singularity or pole of $\varphi$. Hence, we can extend $\varphi$ to a meromorphic map from $\C$ to $\widehat{\C}$.
    It is easy to see that $\varphi$ still has to be injective. 
    But the only domains in $\widehat{\C}$ that are biholomorphic to $\C$ are obtained from $\widehat{\C}$ by removing one point. Thus, by the classification of biholomorphisms of $\C$, we see that $\varphi$ has to be a M\"{o}bius transformation.
\end{subproof}
    

Note that Claim 2 implies that $X$ is obtained from $\widehat{\C}$ by removing countably many points which have at most one accumulation point.

\begin{claim3}\label{claim: phi is identity}
    The map $\varphi$ equals $\id_{\widehat{\C}}$.
\end{claim3}

\begin{subproof}[Proof of Claim 3]
    Let us first prove that $\varphi(\infty) = \infty$. Suppose that $\varphi(\infty) = z \neq \infty$. We choose a compact set $K \subset \C$ such that $z \in \inter(K)$ and $\partial K \subset X$. Note that $(g_n)$ converges to~$h$ uniformly on $\partial K$. Let $m \in (0, +\infty)$ be the maximum of $|h|$ on $\partial K$. Then by the maximum modulus principle, for any $\varepsilon > 0$, sufficiently large $n$, and any $z \in K$, we have $|g_n(z)| \leq m+\varepsilon$. Thus, $|h|$ is bounded by $m$ on $K\cap X$.

    On the other hand, one can find a sequence $(z_n) \subset \varphi^{-1}(K)$ such that $z_n \to \infty$ and $g(z_n) \to \infty$ as $n \to \infty$. At the same time, $(\varphi(z_n)) \subset X \cap K$ is a sequence converging to $z$. This means that $|g(z_n)| = |h(\varphi(z_n))|$ is bounded by $m$ for all $n \in \N$, leading to a contradiction.  Hence, we have shown that $\varphi(\infty) = \infty$.

    Thus, $\varphi$ is an affine map satisfying $\varphi(z_0) = z_0$ and $\varphi'(z_0) = 1$, and hence, $\varphi = \id_{\hat{\C}}$.
\end{subproof}

    Claim 3 implies that $g$ coincides with $h$ on $X = \C\setminus g^{-1}(B)$. In particular, $(g_n)$ converges locally uniformly on $\C\setminus g^{-1}(B)$ to $g$. This is enough to conclude that $(g_n)$ converges locally uniformly (on $\C$) to $g$.
\end{proof}
    

\begin{remark}
    Note that existence of the map $g$ in Theorem \ref{thm: nevanlinna} is essential. For instance, consider an arbitrary sequence of polynomials $(g_n)$ satisfying the following properties:
    \begin{enumerate}
        \item \label{it: base} $g_n(0) = 0$ and $g_n'(0) = 1$ for every $n \in \N$;

        \item \label{it: sing} $S_{g_n} = \{-1, 1\}$ for every $n \in \N$;

        \item \label{it: grps} $\lim\limits_{n \to \infty} (g_n)_* \pi_1(\C \setminus g_n^{-1}(S_{g_n}), 0)$ is trivial.
    \end{enumerate}
    
    Suppose that the sequence $(g_n)$ converges locally uniformly (at least up to a subsequence) on $\C$ to an entire map $g$. Proposition \ref{prop: kisaka} implies that $S_g \subset B := \{-1, 1\}$. Then one can easily observe that the sequence of holomorphic covering maps $(g_n|\C \setminus g_n^{-1}(B))$ converges to the holomorphic covering map $g|\C \setminus g^{-1}(B)$. Therefore, it follows from Proposition \ref{prop: convergence of coverings} that $g_*\pi_1(\C \setminus g^{-1}(B), 0)$ is trivial and $g|\C \setminus g^{-1}(B)$ is a universal covering of $\C \setminus B$. It leads to a contradiction since $\C \setminus g^{-1}(B)$ cannot be biholomorphic to $\D$.
    

    Now we outline the construction of such a sequence $(g_n)$ as above but leave technical details to the reader. We can start from a holomorphic universal covering map $h\colon \D \to \C \setminus\{-1, 1\}$. Using the framework of admissible quadruples (Sections \ref{subsec: admissible quadruples} and \ref{subsec: polynomial approximations}) and following the proof of Proposition \ref{prop: polynomial approximations} applied to $h$ (viewed as a topologically holomorphic map from $\R^2 $ to $\C$), one can construct a sequence of topological polynomials $f_n \colon \R^2 \to \C$ such that $S_{f_n} = \{-1, 1\}$ and $\lim\limits_{n \to \infty} (f_n)_* \pi_1(\R^2 \setminus f_n^{-1}(S_{f_n}), 0)$ is trivial. Finally, due to Proposition \ref{prop: pulling back complex structures} we can find a sequence of orientation-preserving homeomorphisms $\varphi_n \colon \R^2 \to \C$ so that $g_n := f_n \circ \varphi_n^{-1}$ is a complex polynomial satisfying the properties (\ref{it: base}), (\ref{it: sing}), and (\ref{it: grps}).




\end{remark}

\begin{example}\label{ex: applying nevanlinna}
    In this example, we apply Theorem \ref{thm: nevanlinna} to a particular sequence of entire maps of finite type.
    Let $g_n(z) = (1 + z/n)^n, n \in \N$ and $g(z) = \exp(z)$. Note that $S_{g_n} = S_g = \{0\}$, $g_n(0) = g(0) = 1$, and $g_n'(0) = g'(0) = 1$ for all $n \in \N$. Let $\alpha$ be a simple loop based at $w_0 = 1$ such that $0$ is contained in the bounded connected component of $\C \setminus \alpha$. Then $(g_n)_* \pi_1(\C \setminus \{-n\}, 0)$ is an infinite cyclic subgroup of $\pi_1(\C \setminus \{0\}, w_0)$ generated by $[\alpha^n]$ and $g_* \pi_1(\C, 0)$ is trivial. In other words, we can apply Theorem \ref{thm: nevanlinna} to obtain the classical locally uniform limit $\exp(z) = \lim\limits_{n \to \infty} (1 + z/n)^n$.
\end{example}


\subsection{Dynamical approximations} \label{subsec: dynamical approximations} Now we are ready to formulate and prove the main result of this section, which we will then use to prove Main Theorem~\ref{mainthm: main theorem A}.

\begin{theorem}\label{thm: dynamical approximations}
    Let $f_n \colon (\R^2, A) \righttoleftarrow$, $n \in \N$ be a sequence of Thurston map converging combinatorially to a Thurston map $f\colon (\R^2, A) \righttoleftarrow$. If $g \colon (\C, B) \righttoleftarrow$ is a postsingularly finite entire map Thurston equivalent to $f\colon (\R^2, A) \righttoleftarrow$, then there exists a sequence of postsingularly finite entire maps $g_n \colon (\R^2, B_n) \righttoleftarrow$ such that
    \begin{enumerate}
        \item $g_n$ converges locally uniformly to $g$;

        \item $B_n$ converges to $B$ in the sense of Hausdorff distance;

        \item $g_n \colon (\C, B_n) \righttoleftarrow$ is Thurston equivalent to $f_n \colon (\R^2, A) \righttoleftarrow$ for    sufficiently large $n$.
    \end{enumerate}

\end{theorem}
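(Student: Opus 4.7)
The strategy combines Thurston theory (to extract approximating realizations $g_n$ of $f_n$) with Theorem~\ref{thm: nevanlinna} (to establish locally uniform convergence). Since $g$ realizes $f$, Proposition~\ref{prop: fixed point of sigma} provides the unique fixed point $\tau \in \Teich(\R^2, A)$ of $\sigma_f$. Corollary~\ref{corr: fixed points of approximating maps} then gives, for all sufficiently large $n$, unique fixed points $\tau_n \to \tau$ of $\sigma_{f_n}$, each yielding an entire realization $g_n\colon (\C, B_n) \righttoleftarrow$ of $f_n$ (unique up to affine conjugation, by Proposition~\ref{prop: rigidity}). After Proposition~\ref{prop: equivalence of convergences} we may assume $f_n \to f$ topologically, and the degenerate cases $|A|\in\{1,2\}$ are handled directly via Proposition~\ref{prop: small postsingular set} and Remark~\ref{rem: two postsingular values}; we therefore focus on $|A|\geq 3$.

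To select compatible representatives, fix $\varphi$ representing $\tau$ with Thurston data $(\psi, g)$ normalized so that $\varphi \sim \psi$ rel.\ $A$ and $\varphi(A)=\psi(A)=B$. By Teichm\"{u}ller-metric convergence, write $\varphi_n = \eta_n \circ \varphi$ for representatives of $\tau_n$, where $\eta_n\colon\C\to\C$ is $K_n$-quasiconformal with $K_n \to 1$. Choose $z_0^* \in \R^2 \setminus f^{-1}(A)$, set $\widetilde w_0 := f(z_0^*)$, $w_0 := \varphi(\widetilde w_0)$, $z_0 := \psi(z_0^*)$, and use the affine freedom within the Teichm\"{u}ller class to arrange that $\eta_n$ fixes both $w_0$ and one additional point $b \in B$. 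Proposition~\ref{prop: qc homeos fixing three points} then yields $\eta_n \to \id_\C$ locally uniformly, so that $\varphi_n \to \varphi$ locally uniformly and $B_n := \varphi_n(A) \to B$ in Hausdorff distance (conclusion~(2)). Proposition~\ref{prop: def of sigma map} extracts $\psi_n$ and the entire map $g_n$ satisfying $\varphi_n \circ f_n = g_n \circ \psi_n$; the residual affine freedom in $\psi_n$ is used to arrange $\psi_n \sim \varphi_n$ rel.\ $A$, which provides the Thurston equivalence of conclusion~(3).

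For the locally uniform convergence of conclusion~(1) we apply Theorem~\ref{thm: nevanlinna} to $(g_n)$ and $g$, with $B_n, B$ playing the role of the enlarged singular sets. The fundamental-group condition is verified by pullback: a loop $\alpha \subset \C \setminus B$ at $w_0$ corresponds under $\varphi^{-1}$ to a loop $\gamma \subset \R^2 \setminus A$ based at $\widetilde w_0$; since $\eta_n(w_0)=w_0$ and $\eta_n \to \id_\C$ uniformly on the compact set $\alpha$, the loop $\varphi_n^{-1}(\alpha)$ is homotopic rel.\ $A$ to $\gamma$ for $n$ large. The factorizations $g = \varphi \circ f \circ \psi^{-1}$ and $g_n = \varphi_n \circ f_n \circ \psi_n^{-1}$ then reduce the closing behavior of the $g$- and $g_n$-lifts of $\alpha$ to that of the $f$- and $f_n$-lifts of $\gamma$ based at $z_0^*$, which coincide for large $n$ by the combinatorial convergence of $f_n \to f$ (using the trivial connecting paths enabled by topological convergence).

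The main obstacle is arranging condition~(1) of Theorem~\ref{thm: nevanlinna}, namely $g_n(z_0)=w_0$ together with $g_n'(z_0) \to g'(z_0)$, without breaking Thurston equivalence: the affine freedom is fully consumed when $|A|\geq 3$. The resolution follows the spirit of the proof of Main Theorem~\ref{mainthm: main theorem B}: one first establishes $\psi_n \to \psi$ locally uniformly, using that on any bounded $D$ on which $f_n=f$ the identity $g_n \circ \psi_n = \eta_n \circ g \circ \psi$ combined with the holomorphicity of $g_n$ and $\eta_n \to \id_\C$ locally uniformly forces $\psi_n \circ \psi^{-1}$ to converge to $\id_\C$ locally uniformly on $\psi(D)$ (a normal-families argument using that our normalization makes $\psi_n \circ \psi^{-1}$ asymptotically fix the $|A|\geq 3$ points of $B$). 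This gives $\psi_n(z_0^*) \to z_0$ and $g_n'(\psi_n(z_0^*)) \to g'(z_0)$, after which either a moving-basepoint adaptation of Theorem~\ref{thm: nevanlinna} or a simultaneous affine conjugation of $g_n$ tending to the identity (which preserves Thurston equivalence since it acts identically on $\varphi_n$ and $\psi_n$) yields $g_n \to g$ locally uniformly on $\C$.
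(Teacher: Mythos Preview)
Your overall architecture is right, and you correctly identify the crux: after committing the affine freedom in $\psi_n$ to enforce $\psi_n \sim \varphi_n$ rel.\ $A$ (so that $g_n$ is genuinely postsingularly finite), you can no longer pin down the basepoint and derivative needed for Theorem~\ref{thm: nevanlinna}. The gap is in your proposed workaround. You claim one can prove $\psi_n \to \psi$ first, via a ``normal-families argument'' applied to $\Psi_n := \psi_n \circ \psi^{-1}$ using that $\Psi_n$ asymptotically fixes the points of $B$. But $\Psi_n$ is merely an orientation-preserving self-homeomorphism of $\C$; it is neither holomorphic nor known to be globally quasiconformal with controlled dilatation, so no normal-families or Proposition~\ref{prop: qc homeos fixing three points}-type argument applies. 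The identity $g_n \circ \Psi_n = \eta_n \circ g$ on $\psi(D)$ does show that $\Psi_n$ has small Beltrami coefficient \emph{on $\psi(D)$}, but you have no control outside, and the Teichm\"uller convergence $[\psi_n]\to[\psi]$ only says $\psi_n$ is \emph{isotopic rel.\ $A$} to a map close to $\psi$, not that $\psi_n$ itself is close. In short, without already knowing $g_n \to g$ you cannot extract $\psi_n \to \psi$, so the argument is circular.

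The paper breaks this circularity by spending the affine freedom in $\psi_n$ differently. It normalizes $\psi_n$ so that $\psi_n(b)=\psi(b)$ and $h_n'(\psi(b))=g'(\psi(b))$, where $h_n := \varphi_n \circ f_n \circ \psi_n^{-1}$; this $h_n$ is entire but \emph{not} postsingularly finite in general. With these normalizations Theorem~\ref{thm: nevanlinna} applies directly and gives $h_n \to g$ locally uniformly. Only then is $\psi_n \to \psi$ proved, using the now-available convergence $h_n \to g$ together with Lemma~\ref{lemm: neighbourhood} to compare local inverse branches. Finally one writes the affine map $M_n$ with $\varphi_n|A = M_n \circ \psi_n|A$, sets $g_n := h_n \circ M_n^{-1}$ (which \emph{is} psf and Thurston equivalent to $f_n$), and deduces $M_n \to \id_\C$ from $\varphi_n \to \varphi$ and $\psi_n \to \psi$ on $A$; hence $g_n \to g$. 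The key idea you are missing is this decoupling: first produce a non-psf entire map $h_n$ tailored to Theorem~\ref{thm: nevanlinna}, and only afterwards correct by an affine map that is \emph{a posteriori} close to the identity.
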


\begin{proof}

    We can assume that $(f_n)$ converges topologically to $f$ due to Remark \ref{rem: remark on sigma maps} and Proposition \ref{prop: equivalence of convergences}. In particular, without loss of generality we suppose that there are two points $b \in \R^2$ and $t \in \R^2 \setminus A$ such that $f(b) = f_n(b) = t$ for all $n \in \N$.

    Since $f$ is Thurston equivalent to $g$, it follows from Definition \ref{def: thurston equivalence} and Proposition \ref{prop: rigidity} that there exists a unique point $\tau \in \Teich(\R^2, A)$ such that $\tau = [\varphi] = [\psi]$, $g = \varphi \circ f \circ \psi^{-1}$,  and $B = \varphi(A) = \psi(A)$, where $\varphi\colon \R^2 \to \C$ and $\psi \colon \R^2 \to \C$ are orientation-preserving homeomorphisms isotopic rel.\ $A$ to each other.


    By Corollary \ref{corr: fixed points of approximating maps}, there exists $N\in \N$ such that for all $n\geq N$ the map $\sigma_{f_n}$ has a unique fixed point $\tau_n \in \Teich(\R^2, A)$ and, moreover, the sequence $(\tau_n)$ converges to $\tau$.
    Without loss of generality, we assume $N=1$. Let $\varphi_n\colon \R^2 \to \C$ and $\psi_n \colon \R^2 \to \C$ be orientation-preserving homeomorphisms such that $\tau_n = [\varphi_n] = [\psi_n]$. Then $h_n := \varphi_n \circ f_n \circ \psi_n^{-1}$ is an entire map and the set $B_n := \varphi_n(A)$ contains its singular set $S_{h_n}$. Due to the definition of Teichm\"{u}ller distance (see Section \ref{subsec: quasiconformal maps and teichmuller spaces}) and Proposition \ref{prop: def of sigma map}, we can choose $\varphi_n$ and $\psi_n$ so that they satisfy the following conditions for all $n \in \N$: 

    \begin{itemize}
        \item $\varphi_n \circ \varphi^{-1}\colon \C \to \C$ is quasiconformal, and the dilitations  $K(\varphi_n \circ \varphi^{-1}) $ converge to 1 as $n\to \infty$,
        
        
        \item $\varphi_n(t) = \varphi(t)$ and $\varphi_n(y) = \varphi(y)$ for some point $y \neq t$, 
        
        \item $\psi_n(b) = \psi(b)$ and $h_n'(\psi_n(b)) = g'(\psi(b))$.
    \end{itemize}




        
        

    We note that $h_n$ is not necessarily postsingularly finite under this choice of $\varphi_n$ and $\psi_n$.

    It is evident that for each $n$, the homeomorphism $\varphi_n \circ \varphi^{-1}$ fixes two distinct points $\varphi(t)$ and $\varphi(y)$. Therefore, by Proposition \ref{prop: qc homeos fixing three points} the sequence $(\varphi_n)$ converges locally uniformly to~$\varphi$ as $n$ tends to infinity.  

    \begin{claim1}
        The sequence $(h_n)$ converges locally uniformly to $g$.
    \end{claim1}

    \begin{subproof}[Proof of Claim 1]
        We prove that the sequence $(h_n)$ and the map $g$ satisfy all conditions of Theorem \ref{thm: nevanlinna} with respect to the points $z_0 := \psi(b) = \psi_n(b)$, $w_0 := \varphi(t) = \varphi_n(t)$, and the sets $B_n$ and $B$.

        Indeed, $B_n \supset S_{h_n}$ converges to $B \supset S_g$ in the Hausdorff topology of $\C$ since $\varphi_n \to \varphi$ as $n \to \infty$. Due to our choices of $\varphi_n$ and $\psi_n$, the equalities $h_n'(z_0) = g'(z_0)$ and $h_n(z_0) = g(z_0) =~w_0$ are satisfied, and we have $w_0 \not \in B \cup \bigcup_{n\in \N}B_n$.
        Hence, condition (\ref{it: basepoint}) of Theorem~\ref{thm: nevanlinna} is satisfied for the  maps $h_n$. The condition (\ref{it: topology}) required in Theorem~\ref{thm: nevanlinna} easily follows from the topological convergence of $(f_n)$ to $f$ and the locally uniform convergence of $(\varphi_n)$ to $\varphi$.
        \end{subproof}


    \begin{claim2}
        The sequence $(\psi_n)$ converges locally uniformly to $\psi$.
    \end{claim2}

    \begin{subproof}[Proof of Claim 2]

        We prove that the sequence $(\psi_n^{-1})$ converges locally uniformly to $\psi^{-1}$. Let $z \in \C \setminus g^{-1}(B)$ be a point such that $\psi_n^{-1}(z) \to \psi^{-1}(z)$ as $n \to \infty$ (for instance, the point $z_0 := \psi_n(b) = \psi(b)$ satisfies this property).
        
        Take an arbitrary bounded open neighbourhood $V \subset \C\setminus g^{-1}(B)$ of $z$ such that $g$ is injective on $V$. Due to the previous claim and Lemma \ref{lemm: neighbourhood} applied to the sequence $(h_n|\C\setminus h_n^{-1}(B_n))$ and the map $g|\C\setminus g^{-1}(B)$, we have that $V \subset \C\setminus h_n^{-1}(B_n)$ and $h_n|V$ is injective for $n$ large enough. Therefore, the maps $f|\psi^{-1}(V)\colon \psi^{-1}(V) \to \varphi^{-1}(g(V))$ and $f_n|\psi_n^{-1}(V) \colon \psi_n^{-1}(V) \to \varphi_n^{-1}(h_n(V))$ are homeomorphisms. Note that for sufficiently large $n$, sets $\psi^{-1}(V)$ and $\psi^{-1}_n(V)$ intersect, since $\psi^{-1}(V)$ contain $\psi_n^{-1}(z)$ for $n$ large enough.
            
        Thus,
        \begin{align*}
            \psi^{-1}|V = (f|\psi^{-1}(V))^{-1} \circ \varphi^{-1} \circ (g|V),\\
            \psi_n^{-1}|V = (f|\psi_n^{-1}(V))^{-1} \circ \varphi_n^{-1} \circ (h_n|V).
        \end{align*}
    
    Since $h_n \to g$ and $\varphi_n \to \varphi$ locally uniformly (on $\C$ and $\R^2$, respectively) and $f_n \to f$ topologically as $n \to \infty$, we have that $(\psi^{-1}_n)$ converges locally uniformly on $V$ to $\psi^{-1}$. Applying the same argument repeatedly (starting from $z = z_0$), we show that $(\psi_n^{-1})$ converges to $\psi^{-1}$ locally uniformly on $\C \setminus g^{-1}(B)$. It is enough to conclude that $\psi_n \to \psi$ locally uniformly on $\R^2$ as $n \to \infty$.
    \end{subproof}

    Since $[\varphi_n] = [\psi_n]$, there exists an affine map $M_n$ such that $\varphi_n|A$ and $M_n \circ \psi_n|A$ coincide. Then $g_n := h_n \circ M_n^{-1} \colon (\C, B_n) \righttoleftarrow$ is a postsingularly finite entire map. It suffices to show that $(M_n)$ converges locally uniformly to $\id_{\C}$ to prove that the sequence $(g_n)$ converges locally uniformly to $g$. 



    
    Further we can assume that $|A| \geq 2$; otherwise, by Proposition \ref{prop: small postsingular set} the maps $f_n$ and $f$ are Thurston equivalent to $z \mapsto z^d$ with the same $d \geq 2$ for all sufficiently large $n$. Note that $M_n$ is an affine map satisfying $M_n(\psi_n(a)) = \varphi_n(a)$ for every $n \in \N$ and $a \in A$. Now the desired statement follows from the fact that for every $a\in A$, $\psi_n(a) \to \psi(a) = \varphi(a)$ and $\varphi_n(a) \to \varphi(a) = \psi(a)$ as $n \to \infty$.
 \end{proof}   

The next corollary is an immediate consequence of the proof of Theorem \ref{thm: dynamical approximations}.

\begin{corollary}
    Suppose that we are in the setting of Theorem \ref{thm: dynamical approximations}. Assume that the sequence $(f_n)$ converges topologically to the map $f$ and $g = \varphi \circ f \circ \psi^{-1}$, where $\varphi\colon \R^2 \to \C$ and $\psi\colon \R^2 \to \C$ are orientation-preserving homeomorphisms isotopic rel.\ $A$ to each other. Then for sufficiently large $n$ there exist orientation-preserving homeomorphisms $\varphi_n \colon \R^2 \to \C$ and $\psi_n \colon \R^2 \to \C$ such that $g_n = \varphi_n \circ f_n \circ \psi_n^{-1}$, $\varphi_n$ is isotopic rel.\ $A$ to $\psi_n$, $\varphi_n \to \varphi$ and $\psi_n \to \psi$ locally uniformly as $n \to \infty$.
\end{corollary}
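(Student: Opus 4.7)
The plan is to simply reorganize the ingredients produced in the proof of Theorem~\ref{thm: dynamical approximations} so that the representatives of $\tau_n = [\varphi_n] = [\psi_n]$ are chosen compatibly with the specific $\varphi$ and $\psi$ fixed in the statement of the corollary.

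First I would run the proof of Theorem~\ref{thm: dynamical approximations} verbatim for the chosen $\varphi$ and $\psi$. This produces, for all sufficiently large $n$, orientation-preserving homeomorphisms $\widetilde\varphi_n, \widetilde\psi_n \colon \R^2 \to \C$ with $\tau_n = [\widetilde\varphi_n] = [\widetilde\psi_n]$, an entire map $h_n = \widetilde\varphi_n \circ f_n \circ \widetilde\psi_n^{-1}$, and an affine map $M_n$ satisfying $\widetilde\varphi_n|A = M_n \circ \widetilde\psi_n|A$, so that the postsingularly finite entire map realizing $f_n$ is $g_n = h_n \circ M_n^{-1}$. The proof of Theorem~\ref{thm: dynamical approximations} already establishes that, after the normalizations made there, $\widetilde\varphi_n \to \varphi$ and $\widetilde\psi_n \to \psi$ locally uniformly, and that $M_n \to \id_\C$ locally uniformly.

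Now I would set
\[
    \varphi_n := \widetilde\varphi_n, \qquad \psi_n := M_n \circ \widetilde\psi_n.
\]
Then
\[
    \varphi_n \circ f_n \circ \psi_n^{-1} = \widetilde\varphi_n \circ f_n \circ \widetilde\psi_n^{-1} \circ M_n^{-1} = h_n \circ M_n^{-1} = g_n,
\]
as required. The equality $\varphi_n|A = M_n \circ \widetilde\psi_n|A = \psi_n|A$ implies that $\varphi_n$ and $\psi_n$ represent the same point of $\Teich(\R^2, A)$ and, since they agree on $A$, they are in fact isotopic rel.\ $A$.

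It remains to verify the locally uniform convergence $\psi_n \to \psi$. Fix a compact set $K \subset \R^2$. Since $\widetilde\psi_n \to \psi$ uniformly on $K$, the image $\bigcup_n \widetilde\psi_n(K)$ is contained in some compact set $K' \subset \C$; since $M_n \to \id_\C$ uniformly on $K'$, we obtain
\[
    \sup_{x \in K} |\psi_n(x) - \psi(x)| \le \sup_{x \in K}|M_n(\widetilde\psi_n(x)) - \widetilde\psi_n(x)| + \sup_{x \in K}|\widetilde\psi_n(x) - \psi(x)| \xrightarrow[n \to \infty]{} 0.
\]
Together with $\varphi_n = \widetilde\varphi_n \to \varphi$ locally uniformly (already established in the proof of Theorem~\ref{thm: dynamical approximations} using Proposition~\ref{prop: qc homeos fixing three points}), this yields the corollary. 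There is no genuine obstacle here: the content of the corollary is entirely absorbed by the bookkeeping of the proof of Theorem~\ref{thm: dynamical approximations}, and the only small point requiring care is the composition estimate above, which reduces the convergence of $M_n \circ \widetilde\psi_n$ to the two convergences that were already proved.
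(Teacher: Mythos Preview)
Your proposal is correct and is exactly the approach the paper intends: the corollary is stated there as ``an immediate consequence of the proof of Theorem~\ref{thm: dynamical approximations}'', and you have simply made that bookkeeping explicit by replacing $\widetilde\psi_n$ with $M_n\circ\widetilde\psi_n$. The only place worth tightening is the sentence ``since they agree on $A$, they are in fact isotopic rel.\ $A$'': this uses that $[\varphi_n]=[\psi_n]$ together with $\varphi_n|A=\psi_n|A$ forces the affine map in the Teichm\"uller equivalence to fix at least two points and hence be the identity (the case $|A|=1$ being trivial as in the proof of Theorem~\ref{thm: dynamical approximations}).
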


The following result establishes Main Theorem \ref{mainthm: main theorem A} from the introduction and easily follows from Proposition \ref{prop: polynomial approximations} and Theorem \ref{thm: dynamical approximations}.

\begin{corollary}\label{corr: main theorem A}
    Let $g$ be a postsingularly finite entire map. Then there exists a sequence of postcritically finite polynomials $(g_n)$ converging locally uniformly to $g$ such that $g_n$ has the same singular portrait as $g$ for every $n \in \N$.
\end{corollary}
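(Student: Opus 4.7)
The plan is to combine the two engines developed in the paper. Viewing $g$ itself as a Thurston map $f := g\colon (\C, P_g) \righttoleftarrow$ realized tautologically by $g$, Proposition \ref{prop: polynomial approximations} supplies polynomial Thurston maps $f_n \colon (\R^2, P_g) \righttoleftarrow$ converging combinatorially to $f$. Theorem \ref{thm: dynamical approximations} then promotes this to a sequence of pcf polynomials $g_n \colon (\C, B_n) \righttoleftarrow$ Thurston equivalent to $f_n$, with $g_n \to g$ locally uniformly and $B_n \to P_g$ in Hausdorff distance. The analytic content of the statement is then exactly the conclusion of Theorem \ref{thm: dynamical approximations}; since Thurston equivalence preserves marked portraits, the only remaining task reduces to showing that the $f_n$ delivered by Proposition \ref{prop: polynomial approximations} have the same marked portrait rel.\ $P_g$ as $f = g$ for all $n$ sufficiently large, which will force $B_n = P_{g_n}$ and give equality of the postsingular (``singular'') portraits of $g_n$ and $g$.

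For the portrait-matching step I would open up the construction behind Proposition \ref{prop: polynomial approximations}. Writing $\Delta(P_g, \Rose, f) = (P_g, \Rose, \Gamma, \Phi)$ and $\Delta_n = (P_g, \Rose, \Gamma_n, \Phi_n)$ for the dynamically admissible quadruples produced from an exhausting sequence of finite connected subgraphs $K_n \subset \Gamma$, the finiteness of $P_g$ together with the finite-type hypothesis on $f$ guarantees that for all $n$ sufficiently large, $K_n$ contains every bounded face of $\Gamma$ that either meets $P_g$ or carries a critical point of $f$ (there are only finitely many such faces). These faces therefore appear unchanged in $\Gamma_n$, so Proposition \ref{prop: quadruple by map} applied to both $\Delta$ and $\Delta_n$ immediately yields $f_n|P_g = f|P_g$ and $\deg(f_n, v) = \deg(f, v)$ for every $v \in P_g$.

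The genuinely delicate point is matching singular sets, which I would handle via Remark \ref{rem: conv below}. Critical values of $f$ remain critical values of $f_n$ because their preimage faces are preserved in $\Gamma_n$; asymptotic values of $f$ remain singular for $f_n$ as critical values of large multiplicity created by closing off the unbounded tracts of $\Gamma$; and for $a_j \in P_g \setminus S_f$, every bounded face of $\Gamma$ labelled $P_j$ is simple and no unbounded face of $\Gamma$ carries the label $P_j$, so no bounded face of $\Gamma_n$ labelled $P_j$ can acquire $|V(\partial F)| > 1$, keeping $a_j$ a regular value of $f_n$. Together these three observations give $S_{f_n} = S_f$, hence $P_{f_n} = P_f = P_g$, and finally identity of the marked portraits of $f_n$ and $f$ rel.\ $P_g$. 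Transporting this along the Thurston equivalence between $f_n$ and $g_n$ gives $B_n = P_{g_n}$ with $\P_{g_n}$ isomorphic to $\P_g$, completing the proof.
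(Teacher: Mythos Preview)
Your approach is exactly the one the paper uses: combine Proposition \ref{prop: polynomial approximations} with Theorem \ref{thm: dynamical approximations}. The paper's own proof is a single sentence to this effect, so your write-up is considerably more detailed than what appears there; in particular, the portrait-matching discussion is entirely your own elaboration of what the paper leaves implicit.

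There is one small slip in that elaboration. You claim that the finite-type hypothesis ensures there are only finitely many bounded faces of $\Gamma$ carrying a critical point of $f$, so that $K_n$ eventually contains all of them. This is false in general: a transcendental psf map can have infinitely many critical points (e.g.\ $\pi\cos(z)/2$ has critical points at all integer multiples of $\pi$), hence infinitely many such faces. Fortunately the argument does not need this. To see that a critical value $a_j$ of $f$ remains a critical value of $f_n$, you only need \emph{one} bounded face of $\Gamma$ labelled $P_j$ with $|V(\partial F)|\geq 2$ to have its boundary contained in $K_n$; since $(K_n)$ exhausts $\Gamma$, this holds for all large $n$. The rest of your reasoning (asymptotic values becoming high-multiplicity critical values after closing off tracts, and regular values $a_j\notin S_f$ staying regular because every face of $\Gamma$ labelled $P_j$ is a single loop-edge so the chain $C$ has at most one vertex) is correct as stated.
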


\begin{remark}
    Note that usually there is no canonical choice for a sequence of polynomials $(g_n)$ in Corollary \ref{corr: main theorem A}. Constructing different ``combinatorial approximations'' by Proposition \ref{prop: polynomial approximations} and then applying Theorem \ref{thm: dynamical approximations} lead to different sequences of polynomials.
\end{remark}

\bibliographystyle{alpha}
\bibliography{ApproximationPaper_v0.bib}

\end{document}